\newtheorem{theorem}{Theorem}[section]
\newtheorem{lemma}{Lemma}[section]
\theoremstyle{definition}
\newtheorem{Definition}{Definition}[section]
\theoremstyle{remark}
\newtheorem{remark}{Remark}[section]
\numberwithin{equation}{section}
\newcommand{\R}{{\mathbb R}}
\def\f{\frac}
\def\hf1{^\f{1}{1-\xi^2}}
\def\be{\begin{equation}}
\def\en{\end{equation}}
\def\bs{\begin{split}}
\def\es{\end{split}}
\def\ba{\begin{align}}
\def\ea{\end{align}}
\author[Alessandro Morando]{Alessandro Morando}
\address{DICATAM, University of Brescia, Via Valotti 9, 25133 Brescia, Italy}
\email{alessandro.morando@unibs.it}
\author[Paolo Secchi]{Paolo Secchi}
\address{DICATAM, University of Brescia, Via Valotti 9, 25133 Brescia, Italy}
\email{paolo.secchi@unibs.it}
\author[Paola Trebeschi]{Paola Trebeschi}
\address{DICATAM, University of Brescia, Via Valotti 9, 25133 Brescia, Italy}
\email{paola.trebeschi@unibs.it}
\author[Difan Yuan]{Difan Yuan}
\address{School of Mathematical Sciences,  Beijing Normal University and Laboratory of Mathematics and Complex Systems, Ministry of Education, Beijing 100875, China.}
\email{yuandf@amss.ac.cn}
\title[Compressible Current-Vortex Sheets in two-dimensional MHD]
{Nonlinear stability and existence of two-dimensional compressible current-vortex sheets}
\keywords{Vortex sheets, Ideal compressible Magnetohydrodynamics, Characteristic free boundary, Nonlinear stability,  Loss of derivatives}
\subjclass[2010]{76W05, 35L65, 76N10, 35Q35,  35R35, 76E17}
\date{}
\begin{document}
\begin{abstract}
We are concerned with nonlinear stability and existence of two-dimensional current-vortex sheets in ideal compressible magnetohydrodynamics. This is a nonlinear hyperbolic initial-boundary value problem with characteristic free boundary. It is well-known that current-vortex sheets may be at most weakly (neutrally) stable due to the existence of surface waves solutions that yield a loss of derivatives in the energy estimate of the solution with respect to the source terms. We first identify a sufficient condition ensuring the weak stability of the linearized current-vortex sheets problem. Under this stability condition for the background state, we show that the linearized problem obeys an energy estimate in anisotropic weighted Sobolev spaces with a loss of derivatives. Based on the weakly linear stability results, we then establish the local-in-time existence and nonlinear stability of current-vortex sheets by a suitable Nash-Moser iteration, provided the stability condition is satisfied at each point of the initial discontinuity. This result gives a new confirmation of the stabilizing effect of sufficiently strong magnetic fields on Kelvin-Helmholtz instabilities.

\end{abstract}
\maketitle

\section{Introduction}\label{intro}

In this paper, we are focusing on two-dimensional ideal compressible magnetohydrodynamics (MHD)(See \cite{Dafermos,Goedbloed,Laudau}):
\begin{equation}\label{MHD}
\begin{cases}
\partial_t\rho+\mathrm{div}(\rho \mathbf{u})=0,\\
\partial_t(\rho \mathbf{u})+\mathrm{div}(\rho \mathbf{u}\otimes \mathbf{u}-\mathbf{H}\otimes \mathbf{H})+\nabla (p+\frac{1}{2}|\mathbf{H}|^2)=0,\\
\partial_t\mathbf{H}-\nabla\times(\mathbf{u}\times \mathbf{H})=0,\\
\partial_t(\rho e+\frac{1}{2}|\mathbf{H}|^2)+ \mathrm{div}((\rho e+p)\mathbf{u}+\mathbf{H}\times(\mathbf{u}\times \mathbf{H}))=0,\\
\end{cases}
\end{equation}
supplemented with
\begin{equation}\label{H}
\mathrm{div}\mathbf{H}=0
\end{equation}
on the initial data for the Cauchy problem.
Here density $\rho,$ velocity $\mathbf{u}=(u_1,u_2)$, magnetic field $\mathbf{H}=(H_1,H_2)$ and pressure $p$ are unknown functions of time $t$ and spacial variables $\mathbf{x}=(x_1,x_2)$. $p=p(\rho,S)$ stands for the pressure, $S$ is the entropy, $e=E+\frac{1}{2}|\mathbf{u}|^2$, where $E=E(\rho,S)$ stands for the internal energy. By using the state equation of gas, $\rho=\rho(p,S),$ and the first principle of thermodynamics, we have that \eqref{MHD} is a closed system.
We write ${\bf{U}}={\bf{U}}(t,\mathbf{x})=(p,\mathbf{u},\mathbf{H},S)^T,$ with initial data  ${\bf{U}}(0,\mathbf{x})={\bf{U}}_0(\mathbf{x}).$
By \eqref{H}, we can rewrite \eqref{MHD} into the following form
\begin{equation}\label{MHD2}
\begin{cases}
(\partial_t+\mathbf{u}\cdot\nabla)p+\rho c^2\mathrm{div}\mathbf{u}=0,\\
\rho(\partial_t+\mathbf{u}\cdot\nabla)\mathbf{u}-(\mathbf{H}\cdot\nabla)\mathbf{H}+\nabla q=0,\\
(\partial_t+\mathbf{u}\cdot \nabla) \mathbf{H}-(\mathbf{H}\cdot\nabla)\mathbf{u}+\mathbf{H} \mathrm{div}\mathbf{u}=0,\\
(\partial_t+\mathbf{u}\cdot\nabla)S=0,\\
\end{cases}
\end{equation}
where $q=p+\frac{1}{2}|\mathbf{H}|^2$ represents the total pressure, $c$ denotes the speed of sound and $\frac{1}{c^2}:=\frac{\partial\rho}{\partial p}(\rho,S)=\rho_p(p,S)$.
\eqref{MHD2} can be written in the matrix form as
\begin{equation}\label{quasi}
A_0({\bf{U}})\partial_t{\bf{U}}+A_1({\bf{U}})\partial_1{\bf{U}}+A_2({\bf{U}})\partial_2{\bf{U}}=0,
\end{equation}
where
\begin{equation}\nonumber
A_0({\bf{U}}):=\mathrm{diag}\Big\{\frac{1}{\rho c^2},\rho,\rho,1,1,1\Big\},
\end{equation}
\begin{equation}\label{Am}
\begin{split}  %Â¿ÂªÃÅÃÃœÃÂ§Â»Â·ÅžÂ³
 &A_1({\bf{U}}):=\begin{bmatrix}%\begin{smallmatrix}             %ÃÃ³ÃÅ¡ÂºÃ
    \frac{u_1}{\rho c^2} & 1 & 0 & 0 & 0 & 0  \\
   1 & \rho u_1 & 0 & 0 & H_2  & 0 \\
    0 & 0 & \rho u_1 & 0 & -H_1 & 0 \\
    0 & 0 & 0 & u_1 & 0 & 0   \\
    0 & H_2 & -H_1 & 0 & u_1 & 0  \\
    0 & 0 & 0 & 0 & 0  & u_1  \\
%\end{smallmatrix}
\end{bmatrix},\\
& A_2({\bf{U}}):=\begin{bmatrix}%\begin{smallmatrix}             %ÃÃ³ÃÅ¡ÂºÃ
    \frac{u_2}{\rho c^2} & 0 & 1 & 0 & 0 & 0  \\
   0 & \rho u_2 & 0 & -H_2 & 0  & 0 \\
    1 & 0 & \rho u_2 & H_1 & 0 & 0 \\
    0 & -H_2 & H_1 & u_2 & 0 & 0   \\
    0 & 0&0 & 0 & u_2 & 0  \\
    0 & 0 & 0 & 0 & 0  & u_2  \\
%\end{smallmatrix}
\end{bmatrix}
.\\
\end{split}
\end{equation}
The quasilinear system \eqref{quasi} is symmetric hyperbolic if the state equation $\rho=\rho(p,S)$ satisfies
the hyperbolicity condition $A_0>0:$
\begin{equation}\label{hyper}
\rho(p,S)>0,\quad \rho_p(\rho,S)>0.
\end{equation}
We suppose that $\Gamma:=\bigcup\limits_{t\in[0,T]}\Gamma(t)$, where $\Gamma(t):=\{{\bf x}\in\mathbb R^2\,:\,\,x_1-\varphi(t,x_2)=0\}$, is a smooth hypersurface in $[0,T]\times \R^2.$
The weak solutions of \eqref{MHD} satisfy the Rankine-Hugoniot jump conditions on $\Gamma(t)$:
\begin{equation}\label{RH}
\begin{cases}
&[j]=0,\quad [H_N]=0,\\
& j[u_N]+\vert N\vert^2[q]=0,\quad j[u_{\tau}]=H_N[H_{\tau}],\\
&H_N[u_{\tau}]=j[\frac{H_{\tau}}{\rho}],\\ &j[e+\frac{1}{2}\frac{|\mathbf{H}|^2}{\rho}]+[qu_N-H_N(\mathbf{H}\cdot\mathbf{u})]=0.
\end{cases}
\end{equation}
Here we denote $[\upsilon]=(\upsilon^+-\upsilon^-)|_{\Gamma}$ the jump of $\upsilon$,  with $\upsilon^{\pm}:=\upsilon$ in $\Omega^{\pm}(t)=\{\pm(x_1-\varphi(t,x_2))>0\},$ and $j=\rho(u_N-\partial_t\varphi)$ is the mass transfer flux across the discontinuity surface. We also denote the tangential and normal components of velocity and magnetic fields   $u_{\tau}=\mathbf{u}\cdot\tau,H_{\tau}=\mathbf{H}\cdot\tau,$ $u_N=\mathbf{u}\cdot N, H_N=\mathbf{H}\cdot N,$ where $N=(1,-\partial_2\varphi),\tau=(\partial_2\varphi,1).$ \\

We are focusing on the current-vortex sheets solutions, which obey the following additional conditions along the interface $\Gamma$:
\begin{equation}\nonumber
j^{\pm}|_{\Gamma}=0,\quad H^{\pm}_N|_{\Gamma}=0.
\end{equation}
Then, the Rankine-Hugoniot conditions reduce to the following boundary conditions
\begin{equation}\label{boundary}
\partial_t\varphi=u^{\pm}_{N},\quad H^{\pm}_{N}=0,\quad[q]=0
\end{equation}
on $\Gamma(t).$

The  tangential components of the velocity and the magnetic field may undergo any jumps: $[u_{\tau}]\neq0,\quad [H_{\tau}]\neq0.$
The initial data are given as follows:
\begin{equation}\label{initial}
\mathbf{U}^{\pm}(0,\mathbf{x})=\mathbf{U}^{\pm}_0(\mathbf{x}),\quad \mathbf{x}\in \Omega^{\pm}(0),\quad \varphi(0,x_2)=\varphi_0(x_2),\quad x_2\in \R.
\end{equation}

It is obvious that there exist trivial vortex sheets (contact discontinuity) solutions consisting of two constant states separated by a flat surface as follows:
\begin{equation}\label{constant}
\bar{\mathbf{U}}(\mathbf{x}):=
\begin{cases}
\bar{\mathbf{U}}^+:=(\bar{p}^+,0,\bar{u}^+_2,0,\bar{H}^+_2,\bar{S}^+), & \text{ if } x_1>0,\\
\bar{\mathbf{U}}^-:=(\bar{p}^-,0,\bar{u}^-_2,0,\bar{H}^-_2,\bar{S}^-), & \text{ if } x_1<0,\\
\end{cases}
\end{equation}
where on account of \eqref{boundary}, we require that $$\bar{p}^+\neq\bar{p}^-,\quad\bar{u}^+_2\neq\bar{u}^-_2,\quad\bar{H}^+_2\neq\bar{H}^-_2,\quad\bar{S}^+_2\neq\bar{S}^-_2,\quad \bar{p}^++\frac{1}{2}|\bar{H}^+_2|^2=\bar{p}^-+\frac{1}{2}|\bar{H}^-_2|^2.$$

Compressible vortex sheets are fundamental waves in the study of entropy solutions to multidimensional hyperbolic conservation laws, arising in many important physical phenomena. For the two-dimensional compressible flows governed by the Euler equations, Fejer and Miles \cite{Fejer1963,Miles1957,Miles1958}, see also \cite{Coulombel2004-2,Serre}, proved that vortex sheets are unstable when $M<\sqrt{2}$, while the vortex sheets in three-dimensional
Euler flows are always violently unstable (the violent instability is the analogue of the Kelvin-Helmholtz instability for incompressible fluids).

In their pioneer works \cite{Coulombel2004,Coulombel2008} Coulombel and Secchi  proved the nonlinear stability and existence of two-dimensional vortex sheets when the Mach number $M>\sqrt{2}$.
The linear and nonlinear stability of vortex sheets has also been established in \cite{Morando2008,MTW2018} for two-dimensional nonisentropic Euler flows,  in \cite{ChenG2017} for two-dimensional relativistic fluids,  in \cite{WangYJDE2013,WangYSIAM2015} for three-dimensional steady Euler flows.

For the three-dimensional compressible flows, various stabilizing effects on vortex sheets have been considered.
Taking into account the effect of magnetic fields, Chen-Wang \cite{ChenG2008,ChenG2012} and Trakhinin \cite{Trakhinin2005,Trakhinin2009} proved that large non-parallel magnetic fields stabilize the motion of three dimensional current-vortex sheets. Elasticity can also provide stabilization of vortex sheets: Chen-Hu-Wang \cite{RChen2017,RChen2018} and Chen-Hu-Wang-Wang-Yuan \cite{RChen2020} successfully proved the linear stability and nonlinear stability, respectively,  of two-dimensional compressible vortex sheets in elastodynamics by introducing the upper-triangularization method. More recently, Chen-Huang-Wang-Yuan \cite{RChen2021} confirmed the stabilizing effect of elasticity also on three dimensional compressible vortex sheets.
Another stabilizing effect on vortex sheets is provided by surface tension: for the three-dimensional compressible Euler flows, the local existence and structural stability  were proved in Stevens \cite{Stevens}.

The analysis of three-dimensional current-vortex sheets in \cite{ChenG2008,ChenG2012,Trakhinin2005,Trakhinin2009} does not cover the two dimensional case.
In fact, such a case can be considered as the case when the third component of magnetic field $\mathbf{H}=(H_1,H_2,H_3)$ is zero, i.e. $H_3=0,$  and therefore the non-collinear stability conditions in \cite{ChenG2008,Trakhinin2005,Trakhinin2009} fail. As was shown in Trakhinin \cite{Trakhinin2005}, the case when either tangential magnetic fields are collinear or one of them is zero corresponds to the transition to violent instability.

For the two-dimensional compressible flows, Wang and Yu \cite{WangYARMA2013} proved the linear stability of rectilinear current-vortex sheets under suitable stability conditions by the spectral analysis technique, through the computation of the roots of the Lopatinski determinant and construction of a Kreiss symmetrizer.

In the present paper we investigate the nonlinear stability and existence of two dimensional current-vortex sheets. From the mathematical point of view, this is a nonlinear hyperbolic free boundary problem. Since the Kreiss--Lopatinski condition does not hold uniformly, there is a loss of tangential derivatives in the estimates of the solution. The free boundary is characteristic, which yields a possible loss of regularity in the normal direction to the boundary, and the loss of control of the traces of the characteristic part of the solution.

Differently from \cite{WangYARMA2013}, we consider the nonisentropic flows and for the analysis of linear stability, instead of spectral analysis, we use a direct energy estimate argument, adapting the dissipative symmetrizer technique introduced by Chen-Wang \cite{ChenG2008,ChenG2012} and Trakhinin \cite{Trakhinin2005,Trakhinin2009} for the three-dimensional problem. Moreover, in our linear stability result we study a general case of $2D$ current-vortex sheets, while in \cite{WangYARMA2013} both states of the background magnetic field are assumed to have the same strength, see the subsequent Remark \ref{rhs-stability cond}.

First, we introduce a secondary symmetrization of the system of equations by multiplying by a suitable ``secondary generalized Friedrichs symmetrizer'' and impose the hyperbolicity of the new system of equations. Then,  we identify a sufficient stability condition that makes the boundary conditions dissipative for the new symmetrized system.
The new stability condition on the boundary takes the form
\begin{equation}\label{intro-1.11}
\frac{c_A^+}{\sqrt{1+\left(\frac{c^{+}_A}{c^{+}}\right)^2}}  +\frac{c_A^-}{\sqrt{1+\left(\frac{c^{-}_A}{c^{-}}\right)^2}}-|[ \mathbf{u}\cdot\tau]|>0\,,
\end{equation}
where
%\begin{equation*}
%a^\pm=\sqrt{1+c^{\pm}_A)^2(c^{\pm})^2}
%\end{equation*}
$c^{\pm}_A={\vert\mathbf{H}^{\pm}\vert}/{\sqrt{\rho^{\pm}}}$ stands for the Alfv\'en speed.
This condition indicates that larger magnetic fields than the jump of tangential velocity play a stabilization effect; in some sense this corresponds to the ``subsonic'' bubble in linear stability result of \cite{WangYARMA2013}, see Remark \ref{rhs-stability cond}. Condition \eqref{intro-1.11} is in agreement with the stability result found for the three-dimensional current-vortex sheets of \cite{ChenG2008,Trakhinin2005,Trakhinin2009} that only holds in the subsonic regime.

We observe that condition \eqref{intro-1.11} has a strong similarity with the Syrovatskij stability condition \cite{axford,michael,syrovatskii} that is necessary and sufficient for the linear stability of the two-dimensional {\em incompressible} current-vortex sheets. For our problem we show that condition \eqref{intro-1.11} is sufficient for the linear stability and optimal with respect to the specific dissipative symmetrizer technique that we use in our proof. On the other hand, it is likely that \eqref{intro-1.11} is not necessary for the linear and nonlinear stability. Indeed, by taking the incompressible limit as $c^\pm\to +\infty$ in \eqref{intro-1.11}, we formally get the inequality
$$
\vert{\mathbf H}^+\vert/\sqrt{\bar{\rho}}+\vert{\mathbf H}^-\vert/\sqrt{\bar{\rho}}-\vert[{\mathbf u}\cdot\tau]\vert>0\,,
$$
where, because the incompressible flow has uniform density, $\rho^\pm$ have been replaced by a constant $\bar{\rho}>0$. This inequality describes somehow the ``half'' of the whole $2D$ neutral stability domain from \cite{michael}. Moreover, in Wang-Yu \cite{WangYARMA2013} even a ``supersonic'' linear stability domain is found for the studied case of particular piece-wise constant background states; the same kind of ``supersonic'' region is therefore expected to appear for a general case of $2D$ current-vortex sheet, see again Remark \ref{rhs-stability cond}.

Under condition \eqref{intro-1.11} for the background state, we show that the linearized problem obeys an energy estimate in anisotropic weighted Sobolev spaces (see \cite{ChenS,Ohno,secchi96, Secchi1996}) with a loss of derivatives.
The energy estimate for the linearized problem takes the form of a $\it{tame}$ estimate, since it exhibits a fixed loss of derivatives from the basic state to the solutions.
In order to compensate the loss of derivatives, for the proof of the existence of the solution to the nonlinear problem, we apply a modified Nash-Moser iteration scheme. For an introduction to the Nash-Moser technique, refer to \cite{Alinhac2,Secchi2016}.

Compared to \cite{ChenG2008, Trakhinin2009} for the three-dimensional problem, our existence result shows a lower loss of regularity from the initial data to the solution. This is mainly due to the use of finer Moser-type  and imbedding estimates in anisotropic Sobolev spaces.

The rest of the paper is organized as follows. In Section \ref{straighten} we formulate the nonlinear problem for the current-vortex sheets in a fixed domain. In Section \ref{functional spaces} we introduce the definition of anisotropic Sobolev spaces and then state our main Theorem \ref{maintheorem}. In Section \ref{linearized} we linearize the nonlinear problem with respect to the basic state. Then, we introduce dissipative Friedrichs symmetrizer for two dimensional MHD equations. In Section \ref{wellposedness} we study the well-posedness of the linearized problems \eqref{IVP2}, \eqref{ho2} and determine the stability condition. In Sections \ref{tameestimate} and \ref{tameestimate2} we prove the tame estimate in anisotropic Sobolev spaces for the linearized problems \eqref{ho2} and \eqref{IVP2}, respectively. In Section \ref{compa} we formulate the compatibility conditions for the initial data and construct the approximate solution. In Section \ref{nash} we introduce the Nash-Moser iteration scheme and in Section \ref{proof} we  prove the existence of the solution to the nonlinear problem.
In this section one important step is the new construction of the modified state, notably the modified magnetic field, and the delicate derivation of its  estimates.
In Appendix \ref{tracetheorem} we recall the trace theorem in anisotropic Sobolev space $H^m_{\ast}$ and in Appendix \ref{proof-Thm-5-1} we give a detailed proof of the well-posedness of the homogeneous linearized problem \eqref{ho2} stated in Theorem \ref{th-wp-hom}.

\bigskip

\section{Reformulate Current-Vortex Sheets Problem in a Fixed Domain}\label{straighten}

Let us reformulate the current-vortex sheets problem into an equivalent one posed in a fixed domain.
Motivated by M\'etivier \cite{Metivier}, we introduce the functions
$$\Phi^{\pm}(t,\mathbf{x}):=\Phi(t,\pm x_1,x_2)=\pm x_1+\Psi^{\pm}(t,\mathbf{x}),\quad \Psi^{\pm}(t,\mathbf{x}):=\chi(\pm x_1)\varphi(t,x_2),$$
where $\chi\in C^{\infty}_0(\R),\quad \chi\equiv1 \text{ on } [-1,1],$ and $||\chi'||_{L^{\infty}(\R)}\leq\frac{1}{2}$. Here, as in \cite{Metivier}, we use the cut-off function $\chi$ to avoid assumptions about compact support of the initial data in our subsequent nonlinear existence Theorem \ref{maintheorem}. The unknowns ${\mathbf U}^{\pm}$ are smooth in $\Omega^{\pm}(t)$ and can be replaced by the following
$${\mathbf U}^{\pm}_{\natural}(t,\mathbf{x}):={\mathbf U}^{\pm}(t,\Phi(t,\pm x_1,x_2),x_2)\,,$$
after changing the variables, which are smooth in the fixed domain $\Omega=\R^2_{+}=\{x_1>0, x_2\in \R\}$.
Dropping $\natural$ in ${\mathbf U}^{\pm}_{\natural}$ for convenience,
we reduce \eqref{MHD}, \eqref{boundary} and \eqref{initial} into the following initial boundary value problem (IBVP)
\begin{equation}\label{IVP}
\begin{cases}
\mathbb{L}({\mathbf U}^{\pm},\Psi^{\pm})=0 & \text{in } [0,T]\times \R^2_+,\\
\mathbb{B}({\mathbf U}^{+},{\mathbf U}^{-},\varphi)=0 & \text{on }[0,T]\times \Gamma,\\
{\mathbf U}^{\pm}(0,\mathbf{x})={\mathbf U}^{\pm}_0 & \text{ in } \R^2_+, \\
\varphi(0,x_2)=\varphi_0 & \text{ in }\R,
\end{cases}
\end{equation}
where $\Gamma:=\{x_1=0\}\times \R,$ $\mathbb{L}({\mathbf U}^\pm,\Psi^\pm)=L({\mathbf U}^\pm,\Psi^\pm){\mathbf U}^\pm$,
\begin{equation}\label{opL}
L({\mathbf U}^\pm,\Psi^\pm)=A_0({\mathbf U}^\pm)\partial_t+\tilde{A}_1({\mathbf U}^\pm,\Psi^\pm)\partial_1+A_2({\mathbf U}^\pm)\partial_2\,,
\end{equation}
with
\begin{equation}\label{A11}
\tilde{A}_1({\mathbf U}^{\pm},\Psi^{\pm})=\frac{1}{\partial_1\Phi^{\pm}}\Big(A_1({\mathbf U}^{\pm})-A_0({\mathbf U}^{\pm})\partial_t\Psi^{\pm}-A_2({\mathbf U}^{\pm})\partial_2\Psi^{\pm}\Big),
\end{equation}
where $\partial_1\Phi^{\pm}=\pm 1+\partial_1\Psi^{\pm}.$
The boundary condition in \eqref{IVP} takes the following form
$$\partial_t\varphi=u^{\pm}_{N},\quad [q]=q^+-q^-=0 \text{ on } [0,T]\times \{x_1=0\}\times \R,$$
where $u^{\pm}_{N}=u^{\pm}_1-u^{\pm}_2\partial_2\varphi$.
\newline
The following Lemma \ref{i3} yields that the divergence constraints \eqref{H} and the boundary conditions $H^{\pm}_N|_{x_1=0}=0$ on $\Gamma$ (that is not included in \eqref{IVP}) can be regarded as the conditions on the initial data. The proof follows by similar calculations as in \cite[Proposition 1]{Trakhinin2009}.
\begin{lemma}\label{i3}
Let the initial data in \eqref{IVP} satisfy the following
\begin{equation}\label{div}
\mathrm{div}\mathbf{h}^{\pm}=0,\quad\mbox{in}\,\,\,\mathbb R^2_+
\end{equation}
and the boundary conditions
\begin{equation}\label{Hnn}
H^{\pm}_{N}|_{x_1=0}=0,\quad\mbox{on}\,\,\,\{x_1=0\}\times\mathbb R\,,
\end{equation}
where
\begin{equation}\nonumber
\mathbf{h}^{\pm}=(H^{\pm}_n, H^{\pm}_2\partial_1\Phi^{\pm}),\quad H^{\pm}_n=H^{\pm}_1-H^{\pm}_2\partial_2\Psi^{\pm},\quad H^{\pm}_N|_{x_1=0}=H^{\pm}_n|_{x_1=0}.
\end{equation}
If the IBVP \eqref{IVP} admits a solution $({\mathbf U}^{\pm},\varphi),$ then this solution satisfies \eqref{div} and \eqref{Hnn} for all $t\in[0,T].$
\end{lemma}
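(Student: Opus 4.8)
The plan is to show that each of the two quantities obeys a \emph{closed, homogeneous, first-order linear} (transport-type) equation, so that it must vanish identically on $[0,T]$ as soon as it vanishes at $t=0$. We fix a sign and drop the superscript $\pm$ throughout, and we use that in the regularity class of the existence theorem $\mathbf U$ and $\varphi$ are smooth enough and $\partial_1\Phi=\pm1+\partial_1\Psi>0$, so that $\mathbf h$ and all the coefficients appearing below are well defined. Recall also the Piola-type identity $\mathrm{div}\,\mathbf h=\partial_1\Phi\,(\mathrm{div}_x\mathbf H)$ (composed with the change of variables), so that \eqref{div} is equivalent to $\mathrm{div}_x\mathbf H^\pm=0$ in the original domains.

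\emph{Interior constraint.} Start from the third (induction) block of $\mathbb L(\mathbf U,\Psi)=0$, i.e.\ the image under the change of variables of the equation $(\partial_t+\mathbf u\cdot\nabla)\mathbf H-(\mathbf H\cdot\nabla)\mathbf u+\mathbf H\,\mathrm{div}\,\mathbf u=0$ from \eqref{MHD2}. Taking in the original variables the divergence of this equation, the bilinear terms $\sum_{i,j}(\partial_i u_j)(\partial_j H_i)$ produced by $\mathrm{div}((\mathbf u\cdot\nabla)\mathbf H)$ and by $\mathrm{div}((\mathbf H\cdot\nabla)\mathbf u)$ cancel, leaving $\partial_t(\mathrm{div}_x\mathbf H)+(\mathbf u\cdot\nabla)(\mathrm{div}_x\mathbf H)+(\mathrm{div}\,\mathbf u)(\mathrm{div}_x\mathbf H)=0$. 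Rewriting this identity in the fixed variables yields a homogeneous linear transport equation for $w:=\mathrm{div}\,\mathbf h$, of the form $\partial_t w+(\mathbf v\cdot\nabla)w+c\,w=0$, with $\mathbf v$ and $c$ built from $\mathbf u,\partial_t\Psi,\partial_2\Psi,\partial_1\Phi$. Since $\{x_1=0\}$ is \emph{characteristic} for this operator — the $x_1$-component of the transport speed there equals $u_N-\partial_t\varphi=0$ by the boundary condition in \eqref{IVP} (cf.\ \eqref{boundary}) — no boundary data are needed, and uniqueness for this transport equation forces $w(t)\equiv0$ on $[0,T]$ whenever $w(0)=0$, which is \eqref{div}.

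\emph{Boundary condition.} Since $\chi\equiv1$ near $x_1=0$ we have $\partial_1\Psi|_{x_1=0}=0$ and $\partial_2\Psi|_{x_1=0}=\partial_2\varphi$, hence $H_N|_{x_1=0}=H_n|_{x_1=0}=H_1-H_2\partial_2\varphi$. Differentiating this in $t$ along the boundary, rewriting the term $H_2\,\partial_t\partial_2\varphi$ by means of $\partial_t\varphi=u_N=u_1-u_2\partial_2\varphi$, and expressing $\partial_t H_1,\partial_t H_2$ from the trace on $\{x_1=0\}$ of the induction block of $\mathbb L(\mathbf U,\Psi)=0$, a direct computation shows that all the normal ($\partial_1$) derivatives drop out and that the remaining right-hand side is a multiple of $H_N|_{x_1=0}$; thus $H_N|_{x_1=0}$ satisfies a homogeneous linear transport equation on $[0,T]\times\mathbb R$ of the form $\partial_t(H_N|_{x_1=0})+u_2\,\partial_2(H_N|_{x_1=0})+b\,H_N|_{x_1=0}=0$. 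Combined with $H_N|_{x_1=0,\,t=0}=0$ this gives $H_N|_{x_1=0}\equiv0$ on $[0,T]$, i.e.\ \eqref{Hnn}. The two arguments are independent: Step~1 is purely local and does not see $\Gamma$, while Step~2 closes using only the induction equation and the boundary conditions \eqref{boundary}. The only genuinely delicate point — and the one I expect to be the main obstacle — is the bookkeeping of the change of variables: one must check that taking the covariant divergence of the \emph{transformed} induction equation leaves no residual source term in $\mathbf U$ (only $w$ and its first derivatives survive), and likewise that the time-differentiation of $H_N$ on the boundary, after inserting \eqref{boundary} and the equation, yields no uncontrolled lower-order terms beyond a multiple of $H_N$. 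Both verifications reduce, with only notational changes, to the computations in \cite[Proposition~1]{Trakhinin2009}, which is why we do not reproduce them here; the absence of interior boundary contributions in Step~1 is precisely the manifestation of the characteristic nature of the free boundary ($j^\pm=0$).
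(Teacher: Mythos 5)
Your proposal is correct and follows essentially the same route as the paper, which itself does not spell out the computation but states that the result "follows by similar calculations as in \cite[Proposition 1]{Trakhinin2009}": namely, $\mathrm{div}\,\mathbf h^{\pm}$ satisfies a homogeneous linear transport equation in the fixed domain whose boundary is characteristic (because $u_N^\pm=\partial_t\varphi$), and $H^{\pm}_N|_{x_1=0}$ satisfies a homogeneous transport equation along $\Gamma$, so both vanish for all $t\in[0,T]$ once they vanish at $t=0$. This is exactly the structure later exploited in the linearized setting (cf.\ \eqref{F} and \eqref{G} in Lemma \ref{hoo}), so no further comment is needed.
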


\section{Properties of Function Spaces and Main Theorem}\label{functional spaces}

In this section, we first introduce some notations, then anisotropic Sobolev spaces are defined. At the end, we are ready to state the main result of this paper.
\subsection{Notations}\label{notations}

Let us denote $\Omega_T:=(-\infty,T)\times\Omega$ and $\Gamma_T:=(-\infty,T)\times\Gamma$ for $T>0.$ We write $\partial_t=\frac{\partial}{\partial t}, \partial_i=\frac{\partial}{\partial x_i},i=1,2, \nabla_{t,\mathbf{x}}=(\partial_t,\nabla).$ $D^{\alpha}_{\ast}:=\partial^{\alpha_0}_t(\sigma\partial_1)^{\alpha_1}\partial^{\alpha_2}_2\partial^{\alpha_3}_1, \alpha:=(\alpha_0,\alpha_1,\alpha_2,\alpha_3),\quad |\alpha|=\alpha_0+\alpha_1+\alpha_2+\alpha_3.$ Here $\sigma$ is an increasing smooth function, which satisfies  $\sigma(x_1)=x_1$ for $0\leq x_1\leq\frac{1}{2}$ and $\sigma(x_1)=1$ for $x_1\geq1.$ The symbol $A\lesssim B$ represents that $A\leq CB$ holds uniformly for some universal positive constant $C.$

\subsection{Anisotropic Sobolev spaces}

 For any integer $m\in \mathbb{N}$ and the interval $I\subseteq \R,$ function spaces $H^m_{\ast}(\Omega)$ and $H^{m}_{\ast}(I\times\Omega)$ are defined by
$$H^m_{\ast}(\Omega):=\{u\in L^2(\Omega): D^{\alpha}_{\ast}u\in L^2(\Omega) \text{ for } \langle\alpha\rangle:=|\alpha|+\alpha_3\leq m, \text{ with } \alpha_0=0\},$$
$$H^m_{\ast}(I\times\Omega):=\{u\in L^2(I\times\Omega): D^\alpha_{\ast}u\in L^2(I\times\Omega) \text{ for }\langle\alpha\rangle:=|\alpha|+\alpha_3\leq m\},$$
and equipped with the norm $||\cdot||_{H^m_{\ast}(\Omega)}$ and $||\cdot||_{H^m_{\ast}(I\times\Omega)}$ respectively, where
\begin{equation}\label{norm1}
||u||^2_{H^m_{\ast}(\Omega)}:=\sum_{\langle\alpha\rangle\leq m,\alpha_0=0}||D^{\alpha}_{\ast}u||^2_{L^2(\Omega)},
\end{equation}
\begin{equation}\label{norm2}
||u||^2_{H^m_{\ast}(I\times\Omega)}:=\sum_{\langle\alpha\rangle\leq m}||D^{\alpha}_{\ast}u||^2_{L^2(I\times\Omega)}.
\end{equation}
Define the norm
\begin{equation}\label{norma-equiv}
|||u(t)|||^2_{m,\ast}:=\sum^m_{j=0}||\partial^j_tu(t)||^2_{H^{m-j}_{\ast}(\Omega)}\,.
\end{equation}
We also write $||\cdot||_{m,\ast,t}:=||u||_{H^m_{\ast}(\Omega_t)}$ for convenience. Then, from \eqref{norm2}, we have
\begin{equation}\label{norma-int}
||u||^2_{m,\ast,t}=\int^t_{-\infty}|||u(s)|||^2_{m,\ast}ds\,.
\end{equation}

\subsection{Moser-type calculus inequalities}

Now, we introduce two lemmata, which will be useful in the proof of tame estimates in $H^m_{\ast}(\Omega_T)$ for the problem \eqref{IVP2} when $m$ is large enough. We first introduce the Moser-type calculus inequalities in $H^m,$ see \cite[Propositions 2.1.2 and 2.2]{Alinhac2}.

\begin{lemma}\label{moser1}
Let $m\in \mathbb{N}_+.$ Let $\mathcal{O}$ be an open subset of $\R^n$ with Lipschitz boundary. Assume that $F\in C^{\infty}$ in a neighbourhood of the origin with $F(0)=0$ and that $u,v\in H^m(\mathcal{O})\cap L^\infty(\mathcal{O})$. Then,
$$||\partial^{\alpha}u\,\partial^{\beta}v||_{L^2}\lesssim ||u||_{H^m}||v||_{L^{\infty}}+||u||_{L^{\infty}}||v||_{H^m},$$
$$||F(u)||_{H^m}\leq C||u||_{H^m},$$
for all $\alpha,\beta\in \mathbb{N}^n$ with $|\alpha|+|\beta|\leq m$ and where $C$ depends only on $F$ and $\Vert u\Vert_{L^\infty}$.
\end{lemma}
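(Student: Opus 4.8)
The plan is to derive both inequalities from the Gagliardo--Nirenberg interpolation inequalities together with H\"older's and Young's inequalities, following \cite[Propositions 2.1.2 and 2.2]{Alinhac2}. Since $\mathcal{O}$ is a Lipschitz domain it carries a bounded Sobolev extension operator, so I would first reduce everything to the case $\mathcal{O}=\R^n$ and thereby use the full-space Gagliardo--Nirenberg inequalities with no boundary complications.

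For the product inequality, fix $\alpha,\beta$ with $|\alpha|+|\beta|\le m$ and put $j=|\alpha|$, $k=|\beta|$; the cases $j=0$ or $k=0$ are trivial (just pull out the $L^\infty$ factor), so assume $j,k\ge 1$. Gagliardo--Nirenberg gives $\|\partial^{\alpha}u\|_{L^{2m/j}}\lesssim\|u\|_{H^m}^{j/m}\|u\|_{L^\infty}^{1-j/m}$ and $\|\partial^{\beta}v\|_{L^{2m/k}}\lesssim\|v\|_{H^m}^{k/m}\|v\|_{L^\infty}^{1-k/m}$. I would then choose H\"older exponents $p,q$ with $1/p+1/q=1/2$, $1/p\ge j/(2m)$, $1/q\ge k/(2m)$ (possible because $j+k\le m$), interpolate the $L^p,L^q$ norms between these Lebesgue spaces and $L^\infty$, and close with Young's inequality in the form $a^{j/m}b^{k/m}\le a+b$ applied to the product of the two bounds, where $a=\|u\|_{H^m}\|v\|_{L^\infty}$ and $b=\|u\|_{L^\infty}\|v\|_{H^m}$. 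The same scheme extends to a finite product: if $|\beta_1|+\cdots+|\beta_l|\le m$ then $\|\partial^{\beta_1}u_1\cdots\partial^{\beta_l}u_l\|_{L^2}\lesssim\sum_{i}\|u_i\|_{H^m}\prod_{r\ne i}\|u_r\|_{L^\infty}$, and this multilinear version is what I would feed into the composition bound.

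For the composition inequality, for $1\le|\alpha|\le m$ I would expand $\partial^{\alpha}(F(u))$ by the Fa\`a di Bruno formula into a finite sum of terms $F^{(l)}(u)\,\partial^{\beta_1}u\cdots\partial^{\beta_l}u$ with $|\beta_i|\ge 1$ and $|\beta_1|+\cdots+|\beta_l|=|\alpha|\le m$; since $\|u\|_{L^\infty}$ is finite and $F\in C^\infty$ near the origin, each $\|F^{(l)}(u)\|_{L^\infty}$ is bounded by a constant depending only on $F$ and $\|u\|_{L^\infty}$, and the multilinear product estimate bounds each such term by $C(F,\|u\|_{L^\infty})\|u\|_{H^m}$. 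For $\alpha=0$ I would use $F(0)=0$, writing $F(u)=\bigl(\int_0^1 F'(su)\,ds\bigr)u$ to get $\|F(u)\|_{L^2}\le C(F,\|u\|_{L^\infty})\|u\|_{L^2}$. Summing over $|\alpha|\le m$ gives the claim. The main point to watch is the bookkeeping in the multilinear Gagliardo--Nirenberg/H\"older step — one must verify that the chosen exponents are admissible for every partition of a derivative budget $\le m$, which forces the scaling-consistent choice above — and ensuring that the constant in the composition estimate never picks up a norm of $u$ stronger than $L^\infty$; both are handled by that same choice of exponents.
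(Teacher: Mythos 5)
The paper itself does not prove this lemma; it only cites \cite[Propositions 2.1.2 and 2.2]{Alinhac2}, and your plan is exactly the standard argument behind those propositions (extension to $\R^n$, Gagliardo--Nirenberg, H\"older, Young, then Fa\`a di Bruno together with the factorization $F(u)=u\int_0^1F'(su)\,ds$), so in substance it is correct and consistent with the route the paper relies on. One step needs repair, though: as literally written, the Young inequality $a^{j/m}b^{k/m}\le a+b$ is valid only in the critical case $j+k=m$; when $j+k<m$ it fails for small $a,b$ since $(j+k)/m<1$, and correspondingly the endpoint exponents $p=2m/j$, $q=2m/k$ give $1/p+1/q=(j+k)/(2m)<1/2$, so H\"older does not land in $L^2$ on an unbounded domain. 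Moreover, the missing integrability cannot be recovered by ``interpolating with $L^\infty$'': you have no control of $\|\partial^\alpha u\|_{L^\infty}$, and interpolation with $L^\infty$ raises rather than lowers the Lebesgue exponent. The clean fix is to apply Gagliardo--Nirenberg at the level $s:=|\alpha|+|\beta|\le m$, namely $\|\partial^\alpha u\|_{L^{2s/j}}\lesssim\|u\|_{L^\infty}^{1-j/s}\|D^s u\|_{L^2}^{j/s}$ and the analogous bound for $v$; then $j/(2s)+k/(2s)=1/2$, the Young exponents $j/s$ and $k/s$ sum to one, and $\|D^s u\|_{L^2}\le\|u\|_{H^m}$ yields the stated estimate (equivalently, rebalance $p,q$ by interpolating $\partial^\alpha u$ between $L^2$ and $L^{2m/j}$, using $\|\partial^\alpha u\|_{L^2}\le\|u\|_{H^m}$). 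With this adjustment the multilinear version and hence the composition estimate go through exactly as you describe, and the Lipschitz-domain reduction is fine because Stein's extension operator is bounded both on $H^m$ and on $L^\infty$, which is what you need to transfer both norms on the right-hand side.
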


Next, we introduce the Moser-type calculus inequalities for $H^m_{\ast}$.
\newline
Let us define the space
$$
W^{1,\infty}_\ast(\Omega_T):=\{u\in L^\infty(\Omega_T)\,:\,\,\,D^\alpha_\ast u\in L^\infty(\Omega_T)\,,\,\,\langle\alpha\rangle\le 1\}\,,
$$
equipped with the natural norm
\begin{equation}\label{normaW1inf}
||u||_{W^{1,\infty}_{\ast}(\Omega_T)}:=\sum_{\langle\alpha\rangle\leq1}||D^{\alpha}_{\ast}u||_{L^{\infty}(\Omega_T)}\,,
\end{equation}
where $D^{\alpha}_{\ast}$ and $\langle\alpha\rangle$ are defined in Section \ref{notations}.
\begin{lemma}[\cite{MTP2009,Trakhinin2009}]\label{moser2}
Let $m\in \mathbb{N}_+.$ Assume that $F$ is a $C^{\infty}-$function and $u,v\in H^m_{\ast}(\Omega_T)\cap W^{1,\infty}(\Omega_T)$. Then, there hold
\begin{equation}\label{moser3}
||D^{\alpha}_{\ast}uD^{\beta}_{\ast}v||_{L^2(\Omega_T)}\lesssim ||u||_{m,\ast,T}||v||_{W^{1,\infty}_{\ast}(\Omega_T)}+||v||_{m,\ast,T}||u||_{W^{1,\infty}_{\ast}(\Omega_T)},
\end{equation}
\begin{equation}\label{moser4}
||uv||_{m,\ast,T}\lesssim ||u||_{m,\ast,T}||v||_{W^{1,\infty}_{\ast}(\Omega_T)}+||v||_{m,\ast,T}||u||_{W^{1,\infty}_{\ast}(\Omega_T)},
\end{equation}
%\begin{equation}\label{moser5}
%||F(u)||_{m,\ast,T}\leq C(M_{\ast})(1+||u||_{m,\ast,T}),
%\end{equation}
for any multi-index $\alpha,\beta\in \mathbb{N}^4$ with $\langle\alpha\rangle+\langle\beta\rangle\leq m.$ Let $M_\ast$ be a positive constant such that
\[
||u||_{W^{1,\infty}_{\ast}(\Omega_T)}\le M_\ast\,.
\]
Moreover, if we assume that $F(0)=0$, then there holds
\begin{equation}\label{moser6}
||F(u)||_{m,\ast,T}\leq C(M_{\ast})||u||_{m,\ast,T}\,.
\end{equation}
\end{lemma}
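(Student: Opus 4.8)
\textbf{Proof proposal for Lemma \ref{moser2}.} The plan is to reduce all three estimates to the already-established isotropic Moser inequalities of Lemma \ref{moser1} by exploiting the structure of the anisotropic derivatives $D^\alpha_\ast = \partial_t^{\alpha_0}(\sigma\partial_1)^{\alpha_1}\partial_2^{\alpha_2}\partial_1^{\alpha_3}$ and the associated weight $\langle\alpha\rangle = |\alpha| + \alpha_3$. The key observation is that applying $D^\alpha_\ast$ costs ``one derivative per unit of $\alpha_0,\alpha_1,\alpha_2$'' but ``two derivatives per unit of $\alpha_3$'', which is exactly reflected in the definition of $\langle\alpha\rangle$; thus a term with $\langle\alpha\rangle + \langle\beta\rangle \le m$ can be handled like a product of ordinary derivatives of total order $\le m$, provided we are careful about the first-order-in-$x_1$ conormal direction $\sigma\partial_1$ versus the genuine normal $\partial_1$.

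First I would prove \eqref{moser3}. Fix a localization: away from $\{x_1 = 0\}$ the weight $\sigma$ is bounded above and below, $\sigma\partial_1$ and $\partial_1$ are comparable to ordinary derivatives, and $H^m_\ast$ coincides locally with $H^m$, so on such a patch \eqref{moser3} follows directly from the first inequality of Lemma \ref{moser1} together with $\|v\|_{L^\infty}, \|v\|_{W^{1,\infty}} \lesssim \|v\|_{W^{1,\infty}_\ast}$. Near $\{x_1=0\}$, where $\sigma(x_1)=x_1$, I would write $D^\alpha_\ast u$ in terms of the conormal fields $Z_0 = \partial_t$, $Z_1 = x_1\partial_1$, $Z_2 = \partial_2$ and the extra normal $\partial_1$; the commutators $[Z_1,\partial_1] = -\partial_1$ and $[\sigma\partial_1,\partial_1]=-\sigma'\partial_1$ only reshuffle terms among lower or equal values of $\langle\alpha\rangle$, so after expanding by Leibniz it suffices to bound a sum of products $(D^{\alpha'}_\ast u)(D^{\beta'}_\ast v)$ with $\langle\alpha'\rangle + \langle\beta'\rangle \le m$, and then distribute as usual: either all the ``extra'' regularity falls on $u$, leaving at most a $W^{1,\infty}_\ast$ worth of derivatives on $v$ (hence the $\|u\|_{m,\ast,T}\|v\|_{W^{1,\infty}_\ast}$ term), or vice versa. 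The second inequality \eqref{moser4} follows from \eqref{moser3} by summing over $\langle\alpha\rangle+\langle\beta\rangle\le m$ in the Leibniz expansion of $D^\gamma_\ast(uv)$ for $\langle\gamma\rangle\le m$, noting that $D^\gamma_\ast(uv)$ is a finite linear combination of such products (the non-constant coefficients $\sigma, \sigma', \ldots$ being smooth and bounded with all conormal derivatives). Finally, \eqref{moser6} is obtained from \eqref{moser4} by the standard composition argument: differentiating $F(u)$ with $D^\gamma_\ast$, $\langle\gamma\rangle \le m$, produces a sum of terms $F^{(k)}(u)\prod_{i=1}^k D^{\gamma^{(i)}}_\ast u$ with $\sum_i \langle\gamma^{(i)}\rangle = \langle\gamma\rangle \le m$; using $F(0)=0$ to write $F(u) = u\,\tilde F(u)$ and iterating \eqref{moser4}, together with $\|F^{(k)}(u)\|_{L^\infty} \le C(M_\ast)$, gives the bound $C(M_\ast)\|u\|_{m,\ast,T}$.

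The main obstacle I anticipate is the bookkeeping of the two inequivalent $x_1$-directions near the boundary: one must verify that replacing $\sigma\partial_1$ by $\partial_1$ (and vice versa) in commutators never increases $\langle\alpha\rangle$, and that the cutoff patching between the region $\{x_1 \le 1/2\}$ (where $\sigma = x_1$) and $\{x_1 \ge 1\}$ (where $\sigma \equiv 1$) is harmless because $\sigma$ is smooth, increasing, and bounded below on every compact subset of $\{x_1 > 0\}$. Since these facts are exactly the ones underlying the definition of $H^m_\ast$, the argument is essentially a careful Leibniz expansion, and I would cite \cite{MTP2009,Trakhinin2009} for the details rather than reproduce the full combinatorics here.
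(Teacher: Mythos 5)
You should first note that the paper does not actually prove this lemma: it defers \eqref{moser3}--\eqref{moser4} to \cite{MTP2009} and \eqref{moser6} to \cite{Trakhinin2009}, and those proofs rest on anisotropic Gagliardo--Nirenberg interpolation inequalities for the fields $\partial_t,\sigma\partial_1,\partial_2,\partial_1$. Your outline replaces exactly that step by the phrase ``distribute as usual'', and that is where it breaks down. After the Leibniz/commutator expansion you must control products $D^{\alpha'}_\ast u\,D^{\beta'}_\ast v$ with $\langle\alpha'\rangle+\langle\beta'\rangle\le m$ in which \emph{both} $\langle\alpha'\rangle\ge 2$ and $\langle\beta'\rangle\ge 2$ (e.g. $\langle\alpha'\rangle=\langle\beta'\rangle=2$, $m=4$). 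For such a term neither factor can be put in $L^\infty$ using only the $W^{1,\infty}_\ast$ norm, which controls just one conormal derivative, so the naive assignment ``all extra regularity on $u$, $W^{1,\infty}_\ast$ worth on $v$ (or vice versa)'' is simply not available. The standard mechanism is to place $D^{\alpha'}_\ast u\in L^p$ and $D^{\beta'}_\ast v\in L^q$ with $1/p+1/q=1/2$, where the $L^p$ and $L^q$ bounds come from interpolation estimates of the type $\|D^{\alpha'}_\ast u\|_{L^p}\lesssim\|u\|_{W^{1,\infty}_\ast(\Omega_T)}^{1-\theta}\|u\|_{m,\ast,T}^{\theta}$, and these anisotropic Gagliardo--Nirenberg inequalities have to be proved in the conormal setting; they are the actual content of the cited appendix of \cite{MTP2009}. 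Your localization away from $\{x_1=0\}$ does reduce matters there to Lemma \ref{moser1}, but in the boundary patch $H^m_\ast$ does not control $H^m$, so the isotropic lemma cannot be invoked and the interpolation step cannot be bypassed.

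The composition estimate \eqref{moser6} has the same gap plus a circularity. Fa\`a di Bruno produces terms $F^{(k)}(u)\prod_{i=1}^k D^{\gamma^{(i)}}_\ast u$ whose factors are \emph{derivatives} of $u$, not elements of $H^m_\ast\cap W^{1,\infty}_\ast$, so ``iterating \eqref{moser4}'' does not apply to them; and the factorization $F(u)=u\,\widetilde F(u)$ combined with \eqref{moser4} generates the term $\|\widetilde F(u)\|_{m,\ast,T}$, i.e. a composition estimate for $\widetilde F$, which does not vanish at $0$ (and the constant $\widetilde F(0)$ is not even in $L^2$ of the unbounded domain $\Omega_T$), so the argument as stated assumes what it is proving. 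The proof in \cite{Trakhinin2009} instead estimates each Fa\`a di Bruno term directly by the same multilinear H\"older/Gagliardo--Nirenberg scheme, using $\sum_i(\langle\gamma^{(i)}\rangle-1)\le m-1$ together with $\|F^{(k)}(u)\|_{L^\infty}\le C(M_\ast)$. In short, your skeleton (localization, commutator bookkeeping, Fa\`a di Bruno) matches the known strategy, but the interpolation inequalities that legitimize the distribution of derivatives are the heart of the proof and are missing from your proposal.
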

For the proof of \eqref{moser3} and \eqref{moser4}, one can check \cite{MTP2009}. For  \eqref{moser6} one can check \cite{Trakhinin2009}.

\subsection{Embedding and trace theorem}

Now, we introduce the Sobolev embedding theorem for $H^m_{\ast}(\Omega_T)$.
\begin{lemma}[\cite{MTP2009}]
The following inequalities hold:
\begin{equation}\label{sobolev1}
||u||_{L^{\infty}(\Omega_T)}\lesssim||u||_{3,\ast,T},\quad ||u||_{W^{1,\infty}_{\ast}(\Omega_T)}\lesssim||u||_{4,\ast,T},
\end{equation}
\begin{equation}\label{sobolev2}
||u||_{W^{1,\infty}(\Omega_T)}\lesssim||u||_{5,\ast,T},\quad ||u||_{W^{2,\infty}_{\ast}(\Omega_T)}\lesssim||u||_{6,\ast,T},
\end{equation}
where $||u||_{W^{1,\infty}_{\ast}(\Omega_T)}$ is defined by \eqref{normaW1inf} and
$$||u||_{W^{2,\infty}_{\ast}(\Omega_T)}:=\sum_{\langle\alpha\rangle\leq1}||D^{\alpha}_{\ast}u||_{W^{1,\infty}(\Omega_T)}.$$
\end{lemma}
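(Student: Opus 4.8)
\noindent\emph{Overall strategy.} The plan is to reduce the four anisotropic estimates \eqref{sobolev1}--\eqref{sobolev2} to the classical Sobolev embedding $H^2(Q)\hookrightarrow L^\infty(Q)\cap C(\overline{Q})$ on a \emph{fixed} slab $Q\subset\mathbb R^3$, by a dyadic localization and an anisotropic rescaling in the normal variable $x_1$ near the characteristic boundary $\{x_1=0\}$. Away from the boundary, say on $\Omega_T\cap\{x_1>\tfrac14\}$, the weight $\sigma$ and its derivatives are bounded from above and below, so the operators $D^\alpha_\ast$ with $\langle\alpha\rangle\le 3$ span, up to lower order terms, all mixed derivatives of total order $\le 2$ (for instance $\partial_1^2u$ comes from $(\sigma\partial_1)^2u$ with $\langle\alpha\rangle=2$, while $\partial_1\partial_tu,\partial_1\partial_2u$ have $\langle\alpha\rangle=3$); hence $\|u\|_{H^2(\Omega_T\cap\{x_1>1/4\})}\lesssim\|u\|_{3,\ast,T}$ and the classical embedding settles the first inequality there, and analogously for the higher order ones with the thresholds $3,4,5,6$.

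\noindent\emph{Dyadic rescaling near the boundary.} For $k\ge1$ set $C_k:=\{2^{-k-1}\le x_1\le 2^{-k}\}\cap\Omega_T$ and, after cutting off $u$ to $\{x_1\le\tfrac12\}$, put $v_k(\tau,z,y):=u(\tau,2^{-k}z,y)$, which lives on the fixed slab $Q$ obtained by letting $z=2^kx_1$ run in $[\tfrac12,1]$. Then $\partial_\tau,\partial_y$ act on $v_k$ without rescaling, $\partial_zv_k=2^{-k}(\partial_1u)(\tau,2^{-k}z,y)$, and the volume elements are related by $2^{-k}$; consequently every $z$-differentiation of $v_k$ produces a factor $2^{-k}$ that beats the volume factor $2^{k}$. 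Concretely, in $\|v_k\|_{H^2(Q)}^2$ the terms carrying one $z$-derivative reduce to $2^{-k}\|\partial_1\partial^\beta_{\mathrm{tan}}u\|_{L^2(C_k)}^2$ with $\langle\cdot\rangle\le 3$, hence are $\lesssim\|u\|_{3,\ast,T}^2$ uniformly in $k$, while $\partial_z^2v_k$ is handled by using $\sigma=x_1\sim 2^{-k}$ on $C_k$ to write $\partial_1^2u=x_1^{-1}(\sigma\partial_1)\partial_1u$, which again gives a bound by $2^{-k}\|(\sigma\partial_1)\partial_1u\|_{L^2(C_k)}^2\lesssim\|u\|_{3,\ast,T}^2$.

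\noindent\emph{The degenerate terms and conclusion.} The remaining terms of $\|v_k\|_{H^2(Q)}^2$, namely $\|\partial^\beta_{\mathrm{tan}}v_k\|_{L^2(Q)}^2=2^k\|\partial^\beta_{\mathrm{tan}}u\|_{L^2(C_k)}^2$ with $\langle\beta\rangle\le2$ (including $\beta=0$), carry the unfavorable factor $2^k$ and constitute the heart of the matter. To control $2^k\|w\|_{L^2(C_k)}^2$ for $w=\partial^\beta_{\mathrm{tan}}u$, I would combine the anisotropic trace theorem of Appendix~\ref{tracetheorem} (which furnishes a trace of $w$ on $\{x_1=0\}$ in $L^2(\Gamma_T)$ bounded by $\|u\|_{3,\ast,T}$) with the representation $w(x_1,\cdot)=w(0,\cdot)+\int_0^{x_1}\partial_1w$ and Hardy-type one-dimensional inequalities in $x_1$ that exploit precisely the normal-derivative regularity built into the weight $\langle\alpha\rangle=|\alpha|+\alpha_3$; these yield $\|w\|_{L^2(C_k)}^2\lesssim 2^{-k}\|u\|_{3,\ast,T}^2$, so $2^k\|w\|_{L^2(C_k)}^2\lesssim\|u\|_{3,\ast,T}^2$ uniformly in $k$. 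Collecting the estimates, $\|u\|_{L^\infty(C_k)}=\|v_k\|_{L^\infty(Q)}\lesssim\|v_k\|_{H^2(Q)}\lesssim\|u\|_{3,\ast,T}$ with a $k$-independent constant; taking the supremum over $k\ge1$ and adding the bulk estimate from the first step gives the first inequality in \eqref{sobolev1}. It is essential here that only a \emph{uniform}-in-$k$ bound is required — a summation over $k$ would fail — so taking the supremum suffices.

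\noindent\emph{The $W^{k,\infty}$-estimates and the main difficulty.} The remaining inequalities follow by applying the $L^\infty$ estimate to derivatives and counting the weight $\langle\cdot\rangle$: since composing $D^\beta_\ast$ ($\langle\beta\rangle\le3$) with $D^\alpha_\ast$ ($\langle\alpha\rangle\le1$) raises $\langle\cdot\rangle$ by at most one and commutators among $\partial_t,\partial_2,\sigma\partial_1,\partial_1$ are of no higher order, $\|u\|_{W^{1,\infty}_\ast(\Omega_T)}=\sum_{\langle\alpha\rangle\le1}\|D^\alpha_\ast u\|_{L^\infty}\lesssim\sum_{\langle\alpha\rangle\le1}\|D^\alpha_\ast u\|_{3,\ast,T}\lesssim\|u\|_{4,\ast,T}$; likewise, since a pure normal derivative $\partial_1$ costs $2$ in the $\langle\cdot\rangle$-count, $\|u\|_{W^{1,\infty}(\Omega_T)}\lesssim\sum_{|\alpha|\le1}\|\partial^\alpha u\|_{3,\ast,T}\lesssim\|u\|_{5,\ast,T}$ (the $\partial_1u$-term being the bottleneck), and similarly $\|u\|_{W^{2,\infty}_\ast(\Omega_T)}=\sum_{\langle\alpha\rangle\le1}\|D^\alpha_\ast u\|_{W^{1,\infty}}\lesssim\|u\|_{6,\ast,T}$. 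The main obstacle throughout is the characteristic degeneracy of $\sigma\partial_1$ at $\{x_1=0\}$: it forces the rescaled $L^2$-mass on $C_k$ to blow up like $2^k$, and only the interplay of the trace theorem with the extra normal regularity encoded in $H^m_\ast$ (the "$+\alpha_3$" in $\langle\alpha\rangle$) compensates this loss — which is also why the thresholds are $3,4,5,6$ rather than $2,3,4,5$.
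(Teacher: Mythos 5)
Your handling of the last three inequalities (writing $W^{1,\infty}_\ast$, $W^{1,\infty}$, $W^{2,\infty}_\ast$ norms as sums of $L^\infty$ norms of $D^\alpha_\ast u$ with $\langle\alpha\rangle\le 1$ or $\le 2$ and applying the first inequality, with $\partial_1$ costing $2$ in the $\langle\cdot\rangle$-count) is exactly the paper's argument. The difference is that the paper does not prove the first inequality $\|u\|_{L^\infty(\Omega_T)}\lesssim\|u\|_{3,\ast,T}$ at all: it is quoted from \cite[Theorem B.4]{MTP2009}. You instead attempt a self-contained proof by dyadic rescaling near $\{x_1=0\}$, and that attempt has a genuine gap precisely at the step you yourself call the heart of the matter.

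The problematic terms are the purely tangential ones of top order: for $w=\partial^\beta_{\mathrm{tan}}u$ with $|\beta|=2$ you claim $\|w\|^2_{L^2(C_k)}\lesssim 2^{-k}\|u\|^2_{3,\ast,T}$, to be obtained from the trace theorem together with $w(x_1,\cdot)=w(0,\cdot)+\int_0^{x_1}\partial_1 w$ and Hardy-type inequalities. But this representation requires $\partial_1\partial^2_{\mathrm{tan}}u\in L^2$, which is a derivative of anisotropic order $\langle\alpha\rangle=|\beta|+1+\alpha_3=4$ and is \emph{not} controlled by $\|u\|_{3,\ast,T}$ (in $H^3_\ast$ only one plain normal derivative is allowed, and only in combination with at most one tangential derivative). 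The space $H^3_\ast$ gives you only $\sigma\partial_1\partial^2_{\mathrm{tan}}u\in L^2$; writing $\partial_1 w=\sigma^{-1}(\sigma\partial_1)w$ and applying Cauchy--Schwarz on $[x_1',x_1]\subset C_k$ produces the factor $\bigl(\int s^{-2}ds\bigr)^{1/2}\sim 2^{k/2}$, so after squaring the unfavorable factor $2^{k}$ in $2^k\|w\|^2_{L^2(C_k)}$ is not absorbed and the estimate does not close. (For $|\beta|\le1$ the argument is fine, since $\partial_1\partial_{\mathrm{tan}}u$ has $\langle\cdot\rangle=3$; the interior part and the terms of $\|v_k\|_{H^2(Q)}$ carrying at least one $z$-derivative are also fine.) This is not a cosmetic issue: the uniform-in-$k$ control of the order-two tangential slab mass is essentially the crux of the anisotropic embedding with the threshold $3$ rather than $2$, so your scheme as written does not prove the first inequality in \eqref{sobolev1}. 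The simplest repair, and the route the paper takes, is to invoke \cite[Theorem B.4]{MTP2009} for that inequality and keep only your (correct) definitional reductions for the remaining three estimates in \eqref{sobolev1}--\eqref{sobolev2}.
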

\begin{proof}
Using \cite[Theorem B.4]{MTP2009}, we obtain the first inequality in \eqref{sobolev1} in $\Omega_T\subseteq \R^3.$  Then, the second one in \eqref{sobolev1} can be obtained by definition. Observing that $$||u||_{W^{1,\infty}(\Omega_T)}\lesssim\sum_{\langle\alpha\rangle\leq2}||D^{\alpha}_{\ast}u||_{L^{\infty}(\Omega_T)},$$
we can obtain the first inequality in \eqref{sobolev2} from the first inequality in \eqref{sobolev1}. Similarly the second one in \eqref{sobolev2} can be obtained by definition.
\end{proof}

For higher order energy estimate, we also need to use the following trace theorem by Ohno, Shizuta, Yanagisawa \cite{Ohno-Shizuta1994} for the anisotropic Sobolev spaces $H^m_{\ast},$ see also Appendix \ref{tracetheorem}.
\begin{lemma}[\cite{Ohno-Shizuta1994}]\label{tra}
Let $m\geq1$ be an integer. Then, the following arguments hold:\\
(i) If $u\in H^{m+1}_{\ast}(\Omega_T),$ then its trace $u|_{x_1=0}$ belongs to $H^m(\Gamma_T),$ and it satisfies
$$||u|_{x_1=0}||_{H^m(\Gamma_T)}\lesssim ||u||_{m+1,\ast,T}.$$
(ii) There exists a continuous lifting operator $\mathcal{R}_T:$
$$H^m(\Gamma_T)\rightarrow H^{m+1}_{\ast}(\Omega_T),$$ such that
$(\mathcal{R}_Tu)|_{x_1=0}=u \text{ and } ||\mathcal{R}_Tu||_{m+1,\ast,T}\lesssim ||u||_{H^m({\Gamma_T})}.$

\end{lemma}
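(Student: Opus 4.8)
The plan is to prove both statements (this is essentially the argument of Ohno--Shizuta--Yanagisawa \cite{Ohno-Shizuta1994}) by first localizing near the boundary $\{x_1=0\}$ and then applying a partial Fourier transform in the tangential variables, which reduces everything to two elementary one--dimensional weighted inequalities. Choosing a smooth partition of unity subordinate to $\{x_1<1/2\}\cup\{x_1>1/4\}$, I write $u=u_1+u_2$ with $\supp u_1\subset\{x_1<1/2\}$ and $\supp u_2\subset\{x_1>1/4\}$. On $\supp u_2$, which is disjoint from $\Gamma_T$, the function $\sigma$ is bounded away from $0$ and $\infty$, so there $H^m_{\ast}$ coincides with the usual $H^m$ and $u_2|_{x_1=0}=0$; hence only $u_1$ is relevant for (i). On $\supp u_1$ one has $\sigma(x_1)=x_1$, so $D^\alpha_{\ast}u_1=\partial_t^{\alpha_0}(x_1\partial_1)^{\alpha_1}\partial_2^{\alpha_2}\partial_1^{\alpha_3}u_1$, and after extending $u_1$ across $\{t=T\}$ by a fixed bounded operator (harmless since $\partial_t$ carries weight one in $\langle\cdot\rangle$) I may assume $u=u_1\in C_c^\infty(\R\times[0,\infty)\times\R)$ with support in $\{x_1<1/2\}$.

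Next I would apply the Fourier transform in $y'=(t,x_2)$, with dual variable $\eta=(\tau,\xi)$, and set $v(\eta,\cdot):=\widehat u(\eta,\cdot)$, which is compactly supported in $x_1\in[0,1/2)$. By Plancherel,
\[
\|u\|_{m+1,\ast,T}^2=\sum_{\langle\alpha\rangle\le m+1}\int_{\R^2}\!\int_0^\infty |\tau|^{2\alpha_0}|\xi|^{2\alpha_2}\bigl|(x_1\partial_1)^{\alpha_1}\partial_1^{\alpha_3}v(\eta,x_1)\bigr|^2\,dx_1\,d\eta .
\]
Retaining, for each fixed $\eta$, the terms with $\langle\alpha\rangle\le 2$, the purely tangential terms with $\alpha_0+\alpha_2=m+1$ (so $\alpha_1=\alpha_3=0$), and the terms $\partial_t^{\alpha_0}\partial_2^{\alpha_2}\partial_1$ with $\alpha_0+\alpha_2=m-1$ (available since $m\ge 1$), and using $1+\max(|\tau|,|\xi|)^{2k}\gtrsim\langle\eta\rangle^{2k}$, I obtain the lower bound
\[
\|u\|_{m+1,\ast,T}^2\ \gtrsim\ \int_{\R^2}\Bigl(\langle\eta\rangle^{2(m+1)}\|v(\eta,\cdot)\|_{L^2(0,\infty)}^2+\langle\eta\rangle^{2(m-1)}\|\partial_1 v(\eta,\cdot)\|_{L^2(0,\infty)}^2\Bigr)d\eta .
\]
On the other hand, since $v(\eta,\cdot)$ has compact support, $|v(\eta,0)|^2=-2\,\mathrm{Re}\int_0^\infty\overline{v}\,\partial_1 v\,dx_1\le 2\|v(\eta,\cdot)\|_{L^2}\|\partial_1 v(\eta,\cdot)\|_{L^2}$, and Young's inequality with the weight $\langle\eta\rangle^2$ gives the key one--dimensional bound
\[
|v(\eta,0)|^2\ \le\ \langle\eta\rangle^{2}\|v(\eta,\cdot)\|_{L^2(0,\infty)}^2+\langle\eta\rangle^{-2}\|\partial_1 v(\eta,\cdot)\|_{L^2(0,\infty)}^2 .
\]
Multiplying by $\langle\eta\rangle^{2m}$ and integrating in $\eta$, and comparing with the lower bound, yields $\int_{\R^2}\langle\eta\rangle^{2m}|v(\eta,0)|^2 d\eta\lesssim\|u\|_{m+1,\ast,T}^2$, that is $\|u|_{x_1=0}\|_{H^m(\Gamma_T)}^2\lesssim\|u\|_{m+1,\ast,T}^2$, which is (i); the time half-line is accommodated by defining $H^m(\Gamma_T)$ by extension/restriction in $t$, consistently with the extension used above.

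For (ii) I would exhibit an explicit lifting, adapted to the anisotropic scaling. Fix $\theta\in C_c^\infty([0,\infty))$ with $\theta(0)=1$ and a cut--off $\chi_0\in C_c^\infty([0,1/4))$ equal to $1$ near $0$, and define $\mathcal R_T$ through its tangential Fourier transform
\[
\widehat{\mathcal R_T g}(\eta,x_1):=\widehat g(\eta)\,\chi_0(x_1)\,\theta\bigl(\langle\eta\rangle^2 x_1\bigr).
\]
Then $\mathcal R_T$ is linear and $(\mathcal R_T g)|_{x_1=0}=g$ since $\chi_0(0)\theta(0)=1$. A rescaling of the $x_1$--variable gives, uniformly in $\eta$, $\bigl\|(x_1\partial_1)^{\alpha_1}\partial_1^{\alpha_3}[\chi_0(x_1)\theta(\langle\eta\rangle^2 x_1)]\bigr\|_{L^2(0,\infty)}^2\lesssim\langle\eta\rangle^{4\alpha_3-2}$ (for $\langle\eta\rangle$ large, $\theta(\langle\eta\rangle^2\cdot)$ is supported where $\chi_0\equiv 1$, so $\chi_0$ drops out and this is an exact rescaling with the conormal factor $(x_1\partial_1)^{\alpha_1}$ contributing no power of $\langle\eta\rangle$; for $\langle\eta\rangle$ in a bounded range both sides are comparable to $1$). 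Hence each summand of $\|\mathcal R_T g\|_{m+1,\ast,T}^2$ is $\lesssim |\widehat g(\eta)|^2|\tau|^{2\alpha_0}|\xi|^{2\alpha_2}\langle\eta\rangle^{4\alpha_3-2}\le|\widehat g(\eta)|^2\langle\eta\rangle^{2(\alpha_0+\alpha_2+2\alpha_3)-2}\le|\widehat g(\eta)|^2\langle\eta\rangle^{2m}$, using $\alpha_0+\alpha_2+2\alpha_3\le\langle\alpha\rangle\le m+1$. Summing over $\langle\alpha\rangle\le m+1$ and integrating in $\eta$ gives $\|\mathcal R_T g\|_{m+1,\ast,T}^2\lesssim\int_{\R^2}\langle\eta\rangle^{2m}|\widehat g(\eta)|^2 d\eta=\|g\|_{H^m(\Gamma_T)}^2$, which is (ii).

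I expect the genuine difficulty to be organizational rather than conceptual: one must make the partition of unity, and in particular the transition region $\{1/4<x_1<1/2\}$ where $\sigma$ is neither $x_1$ nor $1$, interact cleanly with the weights defining $H^m_\ast$, and one must set up the half-line in $t$ so that the tangential Fourier transform and the identity $\|g\|_{H^m(\Gamma_T)}^2\simeq\int\langle\eta\rangle^{2m}|\widehat g|^2\,d\eta$ are legitimate. Once these reductions are in place, the whole proof rests on the two displayed one--dimensional inequalities above together with the choice of the $\langle\eta\rangle^{2}$--rescaled profile $\theta(\langle\eta\rangle^2 x_1)$, which is precisely the scaling matching the anisotropic weight $\langle\alpha\rangle=|\alpha|+\alpha_3$.
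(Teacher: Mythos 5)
Your argument is correct, but it follows a different route from the paper: the paper does not prove Lemma \ref{tra} at all, it invokes the trace theorem of Ohno--Shizuta--Yanagisawa, whose purely spatial form on $H^s_\ast(\mathbb{R}^2_+)$ (with the whole family of traces $\partial_1^j u|_{x_1=0}\in H^{s-2j-1}(\mathbb{R})$, $j\le [s/2]-1$, and a continuous right inverse) is recalled in Appendix \ref{tracetheorem}, and the space--time statement on $\Omega_T$, $\Gamma_T$ is taken from that reference. What you do instead is reconstruct, in a self-contained way, the $j=0$ case of that theorem adapted to the cylinder: localization near $\{x_1=0\}$ where $\sigma(x_1)=x_1$, a bounded extension in $t$ across $\{t=T\}$ (legitimate because the anisotropy only concerns $x_1$), tangential Fourier transform, and the two one-dimensional inequalities $|v(\eta,0)|^2\le\langle\eta\rangle^{2}\|v\|^2_{L^2}+\langle\eta\rangle^{-2}\|\partial_1v\|^2_{L^2}$ and the exact rescaling $\|(x_1\partial_1)^{\alpha_1}\partial_1^{\alpha_3}\theta(\langle\eta\rangle^2x_1)\|^2_{L^2}\simeq\langle\eta\rangle^{4\alpha_3-2}$; the bookkeeping $\alpha_0+\alpha_2+2\alpha_3\le\langle\alpha\rangle\le m+1$ then gives exactly the $H^{m+1}_\ast\to H^m(\Gamma_T)$ trace and the lifting bound, and your choice of multi-indices $(\alpha_1=\alpha_3=0,\ \alpha_0+\alpha_2=m+1)$ and $(\alpha_3=1,\ \alpha_0+\alpha_2=m-1)$ is admissible precisely because $m\ge1$. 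The citation route buys more (all higher-order traces and surjectivity onto the product of boundary spaces, which the appendix's lifting inverts simultaneously), while your route buys a transparent, elementary proof of exactly what the lemma asserts and makes the anisotropic scaling $\theta(\langle\eta\rangle^2x_1)$ explicit; the only points that need the routine care you already flag are the boundedness of the cutoff multiplication and of the Babich--Hestenes-type extension in $t$ on $H^{m+1}_\ast$, and the identification of $H^m(\Gamma_T)$ with the restriction space so that Plancherel applies.
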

\subsection{Main Theorem}\label{mainresult}
\begin{theorem}\label{maintheorem}
Let $m\in \mathbb{N}$ and $m\geq15,$ and let $\bar{\mathbf{U}}^{\pm}$ be defined in \eqref{constant}. Suppose that the initial data in \eqref{IVP} satisfy
$$({\mathbf U}^{\pm}_0-\bar{\mathbf{U}}^{\pm},\varphi_0)\in H^{m+11.5}(\R^2_+)\times H^{m+11.5}(\R),$$
and also satisfy the hyperbolicity condition \eqref{hyper}, the divergence-free constraint \eqref{div} for all $\mathbf{x}\in \R^2_+.$ Let the initial data at $x_1=0$ satisfy the stability condition \eqref{intro-1.11}. The hyperbolicity condition \eqref{hyper} and the stability condition  \eqref{intro-1.11} have to be satisfied uniformly in the sense of \eqref{hy} and \eqref{stability1}, for suitable $k>0$. Assume that the initial data are compatible up to order m+10 in the sense of Definition \ref{def} and satisfy the boundary constraints \eqref{Hnn}. Assume also that
$$
\| \varphi_0\|_{L^\infty(\mathbb R)}< \frac{1}{2}.
$$
Then, there exists a sufficiently short time $T>0$ such that problem \eqref{IVP} has a unique solution on the time interval $[0,T]$ satisfying
$$({\mathbf U}^{\pm}-\bar{\mathbf{U}}^{\pm},\varphi)\in H^{m}_{\ast}([0,T]\times\R^2_+)\times H^m([0,T]\times\R).$$
\end{theorem}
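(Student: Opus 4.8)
The plan is to prove Theorem \ref{maintheorem} by a Nash--Moser iteration scheme adapted to the anisotropic Sobolev spaces $H^m_\ast$, following the strategy that has become standard for weakly stable characteristic free boundary problems (as in \cite{Coulombel2008,ChenG2012,Trakhinin2009}), but exploiting the secondary symmetrization and the sharper embedding/Moser estimates in $H^m_\ast$ to control the loss of derivatives. First I would reduce the problem to one with zero initial data: using the compatibility conditions up to order $m+10$, I would construct an approximate solution $({\mathbf U}^{a,\pm},\varphi^a)$ such that $\mathbb L({\mathbf U}^{a,\pm},\Psi^{a,\pm})$ vanishes to high order at $t=0$, and the boundary and divergence/$H_N$ constraints hold identically at $t=0$. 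Writing ${\mathbf U}^\pm = {\mathbf U}^{a,\pm}+{\mathbf V}^\pm$, $\varphi=\varphi^a+\psi$, the problem becomes $\mathcal L({\mathbf V}^\pm,\psi)=f^a$, $\mathcal B({\mathbf V}^+,{\mathbf V}^-,\psi)=0$ with $f^a$ small and flat at $t=0$, supported in $\{t\ge 0\}$; by Lemma \ref{i3} the divergence constraints persist, so we may work with the reduced system.

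Next, I would set up the iteration. The core analytic input is the \emph{tame linear estimate}: for the problem linearized around a basic state ${\mathbf U}^{e}$ satisfying the hyperbolicity \eqref{hy} and stability \eqref{stability1} conditions and staying in a fixed $W^{2,\infty}_\ast$-neighbourhood of $\bar{\mathbf U}^\pm$, one has, for the effective linear problems \eqref{ho2} and \eqref{IVP2}, an estimate of the schematic form
\[
\|{\mathbf V}\|_{m,\ast,T} + \|\psi\|_{H^m(\Gamma_T)} \lesssim \|f\|_{m+1,\ast,T} + \big(\|f\|_{\tilde m,\ast,T}\big)\,\|({\mathbf U}^e-\bar{\mathbf U},\varphi^e)\|_{m+2,\ast,T},
\]
i.e. a fixed loss of derivatives together with tame dependence on the coefficients — this is exactly what Sections \ref{tameestimate}--\ref{tameestimate2} provide. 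Given this, I would introduce the smoothing operators $S_\theta$ in $H^m_\ast$ (with the standard Nash--Moser properties, including the smoothing of traces), define the modified states $({\mathbf U}^{n+1/2},\varphi^{n+1/2})$, and crucially the \emph{modified magnetic field} that enforces the divergence constraint $\mathrm{div}\,{\mathbf h}^{n+1/2}=0$ and $H_N^{n+1/2}|_{x_1=0}=0$ exactly at each step — this is needed because the tame estimate is only available for basic states satisfying these constraints. I would then solve the linearized equation at each step for the increment $\delta{\mathbf V}^n,\delta\psi^n$ with source $S_{\theta_n}$ applied to the accumulated error, and set ${\mathbf V}^{n+1}={\mathbf V}^n+\delta{\mathbf V}^n$, etc.

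The heart of the convergence proof is the decomposition of the residual $\mathcal L({\mathbf V}^{n+1},\psi^{n+1})-\mathcal L({\mathbf V}^n,\psi^n)$ into the operator at the modified state plus quadratic error terms $e_n$, and the analogous decomposition for $\mathcal B$; one then shows, by induction, that the errors satisfy $\|e_n\|_{m,\ast,T}\lesssim \theta_n^{-2}\Delta_n$ type bounds, so that the Moser inequalities \eqref{moser3}--\eqref{moser6}, the embeddings \eqref{sobolev1}--\eqref{sobolev2}, and the trace Lemma \ref{tra} close the induction for $\|({\mathbf V}^n,\psi^n)\|_{m,\ast,T}$ bounded and $\|\delta{\mathbf V}^n\|_{m,\ast,T}$, $\|\delta\psi^n\|_{H^m(\Gamma_T)}$ summable after fixing the loss of derivatives and choosing $T$ small. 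Convergence in $H^m_\ast$ then yields the solution; uniqueness follows from an $L^2$-type energy estimate for the difference of two solutions, using the symmetrizer. I expect the main obstacle to be the construction and the $H^m_\ast$-estimates of the modified magnetic field (the ``modified state'' mentioned at the end of the introduction): one must correct the smoothed state so that the divergence and boundary constraints hold exactly, while keeping the correction of the same size as $\delta{\mathbf V}^n$ in the anisotropic norm and flat at $t=0$ — this requires solving a div-type problem in $\Omega$ with a normal-trace condition and carefully tracking anisotropic regularity, and it is what forces the precise numerology $m\ge 15$ and the $H^{m+11.5}$ regularity of the data in the statement. Once this is in place, together with the tame estimate already proved, the remaining steps are the by-now classical Nash--Moser bookkeeping.
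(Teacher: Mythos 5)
Your overall strategy is the paper's: construct an approximate solution from the compatibility conditions and absorb the data into the interior equation (Section \ref{compa}), reformulate as a problem whose solution vanishes in the past, and run a Nash--Moser scheme with smoothing operators, Alinhac good unknowns, modified states and the tame estimate of Theorem \ref{keytame}; the inductive bookkeeping you describe is exactly that of Sections \ref{nash}--\ref{proof}.

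The one step where your mechanism diverges from the paper's, and as written would not suffice, is the modified state. The tame estimate can only be applied at basic states satisfying \emph{all} of \eqref{hy}--\eqref{c} and \eqref{c2}: not just $\mathrm{div}\,\hat{\mathbf h}^\pm=0$ and $\hat H^\pm_N|_{x_1=0}=0$, but also the Rankine--Hugoniot/eikonal condition \eqref{jump} and, crucially, the induction equation \eqref{c} for the background magnetic field, on which Lemma \ref{hoo}, the constraints \eqref{div2}--\eqref{Hn2} and the dissipative boundary structure \eqref{boundary-q-form_lambda_1} all rest. Your proposal to restore the constraints by ``solving a div-type problem with a normal-trace condition'' would deliver a divergence-free corrected field with the right normal trace, but would not make the modified background satisfy \eqref{c} (nor \eqref{jump}), so the linear theory could not be invoked at that state. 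The paper instead first builds the modified normal velocity by a lifting so that the boundary condition \eqref{boundary a} holds exactly, and then defines the modified magnetic field as the solution of the induction equation \eqref{nonl_H}, which is linear in $\mathbf H$ and needs no boundary condition precisely because the coefficient of $\partial_1$ vanishes on $\{x_1=0\}$ thanks to \eqref{boundary a}; the constraints \eqref{c2} are then not imposed but propagated from $t<0$, and the key estimate \eqref{mo32} for $\mathbf H_{i+1/2}-S_{\theta_i}\mathbf H_i$ follows from an energy estimate for this transport equation (see \eqref{stima-FH}--\eqref{stima-H-mod}). A minor correction on the numerology: $m\ge 15$ is forced by the accumulated-error estimate (Lemma \ref{Eerror}, which needs $\alpha=m+1\ge 16$) combined with the fixed loss in the tame estimate, not specifically by the magnetic-field construction, while $\mu=m+10$ and data in $H^{m+11.5}$ enter through Lemma \ref{ccc} and the smallness requirement \eqref{small} with $\tilde\alpha+6=\mu+2$. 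Finally, note that the paper does not carry out the uniqueness argument you sketch; an $L^2$ energy estimate for the difference of two solutions is a reasonable way to supply it, but it is not part of the paper's proof.
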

\begin{remark}\label{dato_iniziale}
Since the initial data ${\bf U}_0^\pm$ have the form ${\bf U}_0^\pm=\overline{{\bf U}}^\pm+\tilde{{\bf U}}_0^\pm$, with $\tilde{{\bf U}}_0^\pm\in H^m(\mathbb R^2_+)$ vanishing at infinity (as $\vert{\bf x}\vert\to +\infty$), the hyperbolicity and stability conditions satisfied in the sense of \eqref{hy}, \eqref{stability1} (see Remark \ref{remark-hyp}) yield that the same conditions hold for the constant states \eqref{constant}.
\end{remark}
\begin{remark}
We note that Theorem \ref{maintheorem} implies corresponding results for the original free boundary problem \eqref{MHD}, \eqref{boundary} and \eqref{initial}, posed in the moving domain, because Lemma \ref{i3} and the relation $\partial_1\Phi^+\geq\frac{1}{2}$ and $\partial_1\Phi^-\leq-\frac{1}{2}$ hold in $[0,T]\times \R^2_+$ for sufficiently small $T>0.$
\end{remark}
\begin{remark}
	Compared to \cite{ChenG2008, Trakhinin2009}, in Theorem \ref{maintheorem} there is less loss of regularity from the initial data to the solution. This is mainly due to the use of finer Moser-type  and imbedding estimates in anisotropic Sobolev spaces.
\end{remark}
\begin{remark}
The analysis in this paper could also be applied to prove the nonlinear stability and existence of two dimensional relativistic current-vortex sheets, see \cite{ChenG2017, Freistuhler}.
\end{remark}
\bigskip

\section{Linearized Problem}\label{linearized}
\subsection{The basic state}
Let the basic state
\begin{equation}\label{basic}
(\hat{{\mathbf U}}^{\pm}(t,\mathbf{x}),\hat{\varphi}(t,x_2))
\end{equation}
be a given vector-valued and sufficiently smooth function, where $\hat{{\mathbf U}}^{\pm}=(\hat{p}^{\pm},\hat{\mathbf{u}}^{\pm},\hat{\mathbf{H}}^{\pm},\hat{S}^{\pm})^T$ are defined in
$\Omega_T.$
We assume that we shall linearize the problem \eqref{IVP} around the basic state \eqref{basic}, which satisfy the {\it hyperbolicity condition} \eqref{hyper} in $\Omega_T:$
\begin{equation}\label{hy}
\rho(\hat{p}^{\pm},\hat{S}^{\pm})\ge k>0,\quad \rho_{p}(\hat{p}^{\pm},\hat{S}^{\pm})\ge k>0\,,
\end{equation}
the Rankine-Hugoniot jump condition
\begin{equation}\label{jump}
\partial_t\hat{\varphi}=\hat{u}^{\pm}_N|_{x_1=0}\,,
\end{equation}
where $\hat{u}^{\pm}_{N}=\hat{u}^{\pm}_1-\hat{u}^{\pm}_2\partial_2\hat{\varphi}$ and the {\it stability condition} on the boundary:
\begin{equation}\label{stability1}
\hat a^+|\hat H^+_2| + \hat a^-|\hat H^-_2| -|[\hat u_2]|\ge k>0\,,
\end{equation}
for a suitable constant $k$, where $\hat{c}^{\pm}_A=\frac{\vert\hat{\mathbf{H}}^{\pm}\vert}{\sqrt{\hat{\rho}^{\pm}}}$ denotes the Alfv\'en speed and
\begin{equation}\label{apm}
\hat a^\pm:=\sqrt{\frac{1}{\hat{\rho}^{\pm}(1+\frac{(\hat{c}^{\pm}_A)^2}{(\hat{c}^{\pm})^2})}}\,.
\end{equation}
%
%Observe that the left-hand side of \eqref{stability1} corresponds to \eqref{intro-1.11} with $\mathbf{H}^\pm=(0,\hat{H}_2^\pm), \tau=(0,1)$.
%
\begin{remark}\label{remark-hyp}
	The stability condition \eqref{intro-1.11} can be written in uniform form as \eqref{stability1}
by making use of \eqref{jump} and the following boundary constraint in \eqref{c2}.
\end{remark}
\begin{remark}\label{Rem_vincoli_st_base}
Let us observe that, differently from \cite{Trakhinin2009}, in the Rankine-Hugoniot conditions we dot not require that the basic state satisfies $[\hat q]=0$. It seems that this condition could not be helpful to simplify the boundary quadratic form which appear from the application of the energy method to the linearized problem.
\end{remark}
\begin{remark}\label{H_2nonzero}
Estimate \eqref{stability1} implies in particular that at least one among $\hat H^+_2$ and $\hat H^-_2$ must be nonzero along the boundary.
\end{remark}
\begin{remark}
The presence of the positive constant $k$ in the hyperbolicity and stability assumptions \eqref{hy}, \eqref{stability1} is needed in order to ensure the boundedness of all coefficients, nonlinearly depending on the basic state $\hat{\mathbf U}^\pm$, $\hat\varphi$, appearing in the arguments of the energy method developed in the sequel.
\end{remark}
Let us assume
\begin{equation}\label{space}
\begin{split}
 &\hat{{\mathbf U}}^{\pm}\in W^{3,\infty}(\Omega_T),\quad\hat{\varphi}\in W^{4,\infty}(\Gamma_T),\\
 &||\hat{{\mathbf U}}^{\pm}||_{W^{3,\infty}(\Omega_T)}+||\hat{\varphi}||_{W^{4,\infty}(\Gamma_T)}\leq K,
\end{split}
\end{equation}
where $K>0$ is a constant.
Moreover, without loss of generality, we assume
\begin{equation}\label{phi-piccola}
||\hat{\varphi}||_{L^{\infty}(\Gamma_T)}<\frac{1}{2}.
\end{equation}
This implies
$$\partial_1\hat{\Phi}^+\geq\frac{1}{2},\quad \partial_1\hat{\Phi}^-\leq-\frac{1}{2},$$
with $$\hat{\Phi}^{\pm}(t,\mathbf{x}):=\pm x_1+\hat{\Psi}^{\pm}(t,\mathbf{x}),\quad \hat{\Psi}^{\pm}(t,\mathbf{x}):=\chi(\pm x_1)\hat{\varphi}(t,x_2).$$
%By using \eqref{space},
%$\hat{\mathbf W}:=(\hat{{\mathbf U}}^{\pm},\nabla_{t,\mathbf{x}}\hat{\Psi}^{\pm})$ satisfies $||\hat{\mathbf W}||_{W^{2,\infty}(\Omega_T)}\leq C(K)$ for some positive constant $C(K)$ depending on $K.$
We also assume the following nonlinear constraints on the background states:
\begin{equation}\label{c}
\partial_t\hat{\mathbf{H}}^{\pm}+\frac{1}{\partial_1\hat{\Phi}^{\pm}}\Big((\hat{\mathbf{w}}^{\pm}\cdot\nabla)\hat{\mathbf{H}}^{\pm}-(\hat{\mathbf{h}}\cdot\nabla)\hat{\mathbf{u}}^{\pm}+\hat{\mathbf{H}}^{\pm}\mathrm{div}{\hat{\mathrm{\mathbf{v}}}^{\pm}}\Big)=0,
\end{equation}
where
\begin{equation}\label{uvw}
\begin{split}
&\hat{\mathbf{v}}^{\pm}=(\hat{u}^{\pm}_n,\hat{u}^{\pm}_2\partial_1\hat{\Phi}^{\pm}),\quad\hat{u}^{\pm}_n=\hat{u}^{\pm}_1-\hat{u}^{\pm}_2\partial_2\hat{\Psi}^{\pm},\\
&\hat{\mathbf{h}}^{\pm}=(\hat{H}^{\pm}_n,\hat{H}^{\pm}_2\partial_1\hat{\Phi}^{\pm}),\quad \hat{H}^{\pm}_n=\hat{H}^{\pm}_1-\hat{H}^{\pm}_2\partial_2\hat{\Psi}^{\pm},\\
&\hat{\mathbf{w}}^{\pm}=\hat{\mathbf{v}}^{\pm}-(\partial_t\hat{\Psi}^{\pm},0)\,.
\end{split}
\end{equation}
Then, we can obtain that
the constraints \eqref{div} and \eqref{Hnn}, that is
\begin{equation}\label{c2}
\mathrm{div}\hat{\mathbf{h}}^{\pm}=0,\quad \hat{H}^{\pm}_N|_{x_1=0}=0,
\end{equation}
hold for all times $t>0$ if they hold initially (see Appendix A in \cite{Trakhinin2009}), where $\hat{H}^{\pm}_N=\hat{H}^{\pm}_1-\hat{H}^{\pm}_2\partial_2\hat{\varphi}.$
%%%%%%%%%%%%%%%%%%%%%%%%%%%%%%%%%%%%%%%%%%%%%%%%%%%%%%%%%%%%%%%%%%%%%%%%%
\subsection{The linearized equations}
The linearized equations for \eqref{IVP} around the basic state \eqref{basic} can be defined as follows:
$$\mathbb{L'}(\hat{{\mathbf U}}^{\pm},\hat{\Psi}^{\pm})(\delta {\mathbf U}^{\pm},\delta \Psi^{\pm}):=\frac{d}{d\varepsilon}\mathbb{L}({\mathbf U}^{\pm}_{\varepsilon},\Psi^{\pm}_{\varepsilon})|_{\varepsilon=0}=\mathbf{f}^{\pm} \text{ in } \Omega_T,$$
$$\mathbb{B'}(\hat{{\mathbf U}}^{+}, \hat{{\mathbf U}}^{-}, \hat{\varphi})(\delta {\mathbf U}^{+}, \delta{\mathbf U}^{-}, \delta\varphi)=\frac{d}{d\varepsilon}\mathbb{B}({\mathbf U}^{+}_{\varepsilon},{\mathbf U}^{-}_{\varepsilon},\varphi_{\varepsilon})|_{\varepsilon=0}=\mathbf{g} \text{ on }\Gamma_T,$$
where ${\mathbf U}^{\pm}_{\varepsilon}=\hat{{\mathbf U}}^{\pm}+\varepsilon\delta {\mathbf U}^{\pm},\quad \varphi_{\varepsilon}=\hat{\varphi}+\varepsilon\delta\varphi$
and
$$\Psi^{\pm}_{\varepsilon}(t,\mathbf{x}):=\chi(\pm x_1)\varphi_{\varepsilon}(t,x_2),\quad \Phi^{\pm}_{\varepsilon}(t,\mathbf{x}):=\pm x_1+\Psi^{\pm}_{\varepsilon}(t,\mathbf{x}),\quad
\delta\Psi^{\pm}(t,\mathbf{x}):=\chi(\pm x_1)\delta\varphi(t,x_2).$$
In the following argument, we shall drop $\delta$ for simplicity.
The linearized operators have the following form:
\begin{equation}\label{Lop}
\begin{split}
&\mathbb{L}'(\hat{{\mathbf U}}^{\pm},\hat{\Psi}^{\pm})({\mathbf U}^{\pm},\Psi^{\pm})\\
&=L(\hat{{\mathbf U}}^{\pm},\hat{\Psi}^{\pm}){\mathbf U}^{\pm}+C(\hat{{\mathbf U}}^{\pm},\hat{\Psi}^{\pm}){\mathbf U}^{\pm}-\{L(\hat{{\mathbf U}}^{\pm},\hat{\Psi}^{\pm})\Psi^{\pm}\}\frac{\partial_1\hat{{\mathbf U}}^{\pm}}{\partial_1\hat{\Phi}^{\pm}},
\end{split}
\end{equation}
\begin{equation}\label{Bop}
\mathbb{B}'(\hat{{\mathbf U}}^{+},\hat{{\mathbf U}}^{-},\hat{\varphi})({\mathbf U}^{+},{\mathbf U}^{-},\varphi)=\left[
\begin{array}{c}
\partial_t\varphi+\hat{u}^+_2\partial_2\varphi-u^+_N\\
\partial_t\varphi+\hat{u}^-_2\partial_2\varphi-u^-_N\\
q^+-q^-
\end{array}\right],
\end{equation}
where the operator $L(\hat{{\mathbf U}}^{\pm},\hat{\Psi}^{\pm})$ is defined in \eqref{opL}, $q^{\pm}=p^{\pm}+\hat{\mathbf{H}}^{\pm}\cdot \mathbf{H}^{\pm},u^{\pm}_N=u^{\pm}_1-u^{\pm}_2\partial_2\hat{\varphi},$
and the matrix $C(\hat{{\mathbf U}}^{\pm},\hat{\Psi}^{\pm})$ is defined as follows:
\begin{equation}\nonumber
\begin{split}
C(\hat{{\mathbf U}}^{\pm},\hat{\Psi}^{\pm})Y=(Y,\nabla_yA_0(\hat{{\mathbf U}}^{\pm}))\partial_t\hat{{\mathbf U}}^{\pm}+(Y,\nabla_y\tilde{A}_1(\hat{{\mathbf U}}^{\pm},\hat{\Psi}^{\pm}))\partial_1\hat{{\mathbf U}}^{\pm}+(Y,\nabla_yA_2(\hat{{\mathbf U}}^{\pm}))\partial_2\hat{{\mathbf U}}^{\pm}\,,
\end{split}
\end{equation}
\begin{equation}\nonumber
(Y,\nabla_yA(\hat{{\mathbf U}}^{\pm}))=\sum_{i=1}^6y_i(\frac{\partial A(Y)}{\partial y_i}\Big|_{Y=\hat{{\mathbf U}}^{\pm}}),\quad Y=(y_1,\cdots,y_6).
\end{equation}
We introduce the Alinhac's good unknown \cite{Alinhac}
\begin{equation}\label{alinhac}
\dot{{\mathbf U}}^{\pm}:={\mathbf U}^{\pm}-\frac{\partial_1\hat{{\mathbf U}}^{\pm}}{\partial_1\hat{\Phi}^{\pm}}\Psi^{\pm}.
\end{equation}
In terms of \eqref{alinhac}, the linearized interior equations have the following form
\begin{equation}\label{linearized22}
L(\hat{{\mathbf U}}^{\pm},\hat{\Psi}^{\pm})\dot{{\mathbf U}}^{\pm}+C(\hat{{\mathbf U}}^{\pm},\hat{\Psi}^{\pm})\dot{{\mathbf U}}^{\pm}-\frac{\Psi^{\pm}}{\partial_1\hat{\Phi}^{\pm}}\partial_1\{\mathbb{L}(\hat{{\mathbf U}}^{\pm},\hat{\Psi}^{\pm})\}=\mathbf{f^{\pm}}.
\end{equation}
Since the zero order terms in $\Psi^{\pm}$ can be regarded as the error terms in the Nash-Moser iteration,  we drop these terms and  consider the effective linear operators:
\begin{equation}\label{Le1}
\begin{split}
\mathbb{L}'_e(\hat{{\mathbf U}}^{\pm},\hat{\Psi}^{\pm})\dot{{\mathbf U}}^{\pm}&:=L(\hat{{\mathbf U}}^{\pm},\hat{\Psi}^{\pm})\dot{{\mathbf U}}^{\pm}+C(\hat{{\mathbf U}}^{\pm},\hat{\Psi}^{\pm})\dot{{\mathbf U}}^{\pm}\\
&=A_0(\hat{{\mathbf U}}^{\pm})\partial_t\dot{{\mathbf U}}^{\pm}+\tilde{A}_1(\hat{{\mathbf U}}^{\pm},\hat{\Psi}^{\pm})\partial_1\dot{{\mathbf U}}^{\pm}+A_2(\hat{{\mathbf U}}^{\pm})\partial_2\dot{{\mathbf U}}^{\pm}+C(\hat{{\mathbf U}}^{\pm},\hat{\Psi}^{\pm})\dot{{\mathbf U}}^{\pm}.
\end{split}
\end{equation}
Concerning the boundary differential operator $\mathbb{B}'$, we rewrite \eqref{Bop} in terms of the Alinhac's good unknowns \eqref{alinhac} to get
\begin{equation}\label{Be1}
\mathbb{B}'_e(\hat{{\mathbf U}},\hat{\varphi})(\dot{{\mathbf U}},\varphi)=\left[
\begin{array}{c}
\partial_t\varphi+\hat{u}^+_2\partial_2\varphi-\dot{u}^+_N-\varphi\partial_1\hat{u}^+_N\\
\partial_t\varphi+\hat{u}^-_2\partial_2\varphi-\dot{u}^-_N+\varphi\partial_1\hat{u}^-_N\\
\dot{q}^+-\dot{q}^-+\varphi(\partial_1\hat{q}^++\partial_1\hat{q}^-)
\end{array}\right],
\end{equation}
where $\dot{u}^{\pm}_N=\dot{u}^{\pm}_1-\dot{u}^{\pm}_2\partial_2\hat{\varphi}$ (recall also that $\partial_1\hat\Phi^\pm\vert_{x_1=0}=\pm 1$).
\begin{remark}
Notice that, due to the fact that we have transformed the domains $\Omega^\pm (t)$ into the same half-space $\mathbb{R}^2_+$, the jump on the boundary of a normal derivative of a function $a=a(t,{\bf x})$ is defined as follows
\begin{equation}
[\partial_1a]:=\partial_1a^+_{|x_1=0}+\partial_1a^-_{|x_1=0}.
\label{norm_jump}
\end{equation}
This is why the jump of the total pressure $q$ in the last row of \eqref{Be1} reduces under Alinhac's change of unknowns to
\[
\dot{q}^+-\dot{q}^-+\varphi(\partial_1\hat{q}^++\partial_1\hat{q}^-)\,.
\]
In the following, according to \eqref{norm_jump}, we will set
\begin{equation}\label{jump_q}
[\partial_1\hat q]:=\partial_1\hat{q}^+\vert_{x_1=0}+\partial_1\hat{q}^-\vert_{x_1=0}\,.
\end{equation}
\end{remark}
Denote the operator:
\begin{equation}
\mathbb{L}'_e(\hat{{\mathbf U}},\hat{\Psi})\dot{{\mathbf U}}:=\left[
\begin{array}{c}
\mathbb{L}'_e(\hat{{\mathbf U}}^+,\hat{\Psi}^+)\dot{{\mathbf U}}^+\\
\mathbb{L}'_e(\hat{{\mathbf U}}^-,\hat{\Psi}^-)\dot{{\mathbf U}}^-
\end{array}\right]\,.
\end{equation}
Now, we are focusing on the following linear problem for $(\dot{{\mathbf U}}^{\pm},\varphi):$
\begin{equation}\label{IVP2}
\begin{cases}
\mathbb{L}'_e(\hat{{\mathbf U}},\hat{\Psi})\dot{{\mathbf U}}=\mathbf{f} & \text{in } \Omega_T,\\
\mathbb{B}'_e(\hat{{\mathbf U}},\hat{\varphi})(\dot{{\mathbf U}},\varphi)=\mathbf{g} & \text{on } \Gamma_T,\\
(\dot{{\mathbf U}},\varphi)=0 & \text{for } t<0, \\
\end{cases}
\end{equation}
where $\mathbf{f}=(\mathbf{f}^+,\mathbf{f}^-)=(f^+_1,\cdots,f^+_6,f^-_1,\cdots,f^-_6),$
and $\mathbf{g}=(g^+_1,g^-_1,g_2)$ vanish in the past.
In order to prove the well-posedness of the linearized problem \eqref{IVP2}, we can state the following Lemma \ref{hoo}, that can be proved as in \cite[Proposition 2]{Trakhinin2009}.
\begin{lemma}\label{hoo}
Let the basic state \eqref{basic} satisfy the assumption \eqref{c} and \eqref{c2}. Then, the solutions of the problem \eqref{IVP2} satisfy
\begin{equation}
\mathrm{div} \dot{\mathbf h}^{\pm}=r^{\pm} \text{ in } \Omega_T,
\end{equation}
\begin{equation}\label{g3}
\hat{H}_2^{\pm}\partial_2\varphi-\dot{H}^{\pm}_N\mp\varphi\partial_1\hat{H}^{\pm}_N=g^{\pm}_3\\
\text{ on } \Gamma_{T}.
\end{equation}
Here $$\dot{\mathbf h}^{\pm}=(\dot{H}^{\pm}_n,\dot{H}^{\pm}_2\partial_1\hat{\Phi}^{\pm}),\quad \dot{H}^{\pm}_n=\dot{H}^{\pm}_1-\dot{H}^{\pm}_2\partial_2\hat{\Psi}^{\pm}, \quad  \dot{H}^{\pm}_N |_{x_1=0}=\dot{H}^{\pm}_n |_{x_1=0}, $$
where $r^{\pm}=r^{\pm}(t,\mathbf{x}),\quad g^{\pm}_3=g^{\pm}_3(t,x_2),$ which vanish in the past, are determined by the source terms and the basic state as solutions to the linear equations
\begin{equation}\label{F}
\partial_tR^{\pm}+\frac{1}{\partial_1\hat{\Phi}^{\pm}}\Big\{(\hat{\mathbf{w}}^{\pm}\cdot\nabla R^{\pm})+R^{\pm}\mathrm{div}\hat{\mathbf{v}}^{\pm}\Big\}=\mathcal{F}^{\pm} \text{ in } \Omega_T,
\end{equation}
\begin{equation}\label{G}
\partial_tg^{\pm}_3+\hat{u}^{\pm}_2\partial_2g^{\pm}_3+\partial_2\hat{u}^{\pm}_2g^{\pm}_3=\mathcal{G}^{\pm} \text{ on } \Gamma_T,
\end{equation}
where $R^{\pm}=\frac{r^{\pm}}{\partial_1\hat{\Phi}^{\pm}},\quad\mathcal{F}^{\pm}=\frac{\mathrm{div} \mathbf{f}^{\pm}_h}{\partial_1\hat{\Phi}^{\pm}}, \quad \mathbf{f}^{\pm}_h=(f^{\pm}_n,f^{\pm}_5),\quad f^{\pm}_n=f^{\pm}_4-f^{\pm}_5\partial_2\hat{\Psi}^{\pm}$,
\newline
$\mathcal{G}^{\pm}=\{\partial_2(\hat{H}^{\pm}_2g^{\pm}_1)-f^{\pm}_n\}|_{x_1=0}$, with  $\hat{\mathbf w}^\pm$ and $\hat{\mathbf v}^\pm$  defined in \eqref{uvw}.
\end{lemma}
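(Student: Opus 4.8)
\emph{Proof plan.} The idea, following \cite[Proposition 2 and Appendix A]{Trakhinin2009}, is that the divergence constraint on the straightened magnetic field and the boundary identity \eqref{g3} are \emph{not} additional conditions to be imposed on \eqref{IVP2}, but are automatically propagated by the effective linearized system once the basic state obeys the structural constraints \eqref{c}, \eqref{c2}; the residuals $r^\pm$, $g_3^\pm$ are then governed by the linear transport equations \eqref{F}, \eqref{G}, driven only by the source terms.

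First I would treat the interior constraint. Selecting from $\mathbb L'_e(\hat{\mathbf U}^\pm,\hat\Psi^\pm)\dot{\mathbf U}^\pm=\mathbf f^\pm$ the two rows corresponding to $\dot H_1^\pm$ and $\dot H_2^\pm$, I rewrite them in terms of the straightened field $\dot{\mathbf h}^\pm=(\dot H_n^\pm,\dot H_2^\pm\partial_1\hat\Phi^\pm)$ and form the combination $\partial_1(\cdot)+\partial_2(\cdot)$, i.e.\ apply the flat divergence. The decisive point is that, since $\hat{\mathbf U}^\pm$, $\hat\varphi$ satisfy \eqref{c} together with $\mathrm{div}\hat{\mathbf h}^\pm=0$ from \eqref{c2}, the zero-order terms in $\dot{\mathbf U}^\pm$ produced by the matrix $C(\hat{\mathbf U}^\pm,\hat\Psi^\pm)$ and by the good-unknown substitution \eqref{alinhac} reorganize so that the resulting identity closes on $r^\pm:=\mathrm{div}\dot{\mathbf h}^\pm$ alone; dividing by $\partial_1\hat\Phi^\pm$ puts it in the transported form \eqref{F} for $R^\pm=r^\pm/\partial_1\hat\Phi^\pm$ with $\mathcal F^\pm=\mathrm{div}\mathbf f_h^\pm/\partial_1\hat\Phi^\pm$, $\mathbf f_h^\pm=(f_n^\pm,f_5^\pm)$, exactly as in the three-dimensional computation of \cite{Trakhinin2009}, adapted to two dimensions and to the present definition of $\dot{\mathbf U}^\pm$.

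For the boundary identity I would restrict the $\dot H_1^\pm$, $\dot H_2^\pm$ equations to $\{x_1=0\}$, where $\partial_1\hat\Phi^\pm=\pm 1$ and $\hat H_N^\pm=0$, and combine their traces with the two kinematic rows of $\mathbb B'_e$ in \eqref{Be1}, namely $\partial_t\varphi+\hat u_2^\pm\partial_2\varphi-\dot u_N^\pm\mp\varphi\,\partial_1\hat u_N^\pm=g_1^\pm$. Differentiating these in $x_2$ and eliminating the trace of $\dot u_N^\pm$ by means of the magnetic equations yields an evolution equation on $\Gamma_T$ for the quantity $\hat H_2^\pm\partial_2\varphi-\dot H_N^\pm\mp\varphi\,\partial_1\hat H_N^\pm$; this is precisely $g_3^\pm$ of \eqref{g3}, and it satisfies \eqref{G} with $\mathcal G^\pm=\{\partial_2(\hat H_2^\pm g_1^\pm)-f_n^\pm\}|_{x_1=0}$.

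Finally, since $\mathbf f$, $\mathbf g$ and the basic-state perturbations vanish in the past and the coefficients of \eqref{F}, \eqref{G} are smooth and bounded by \eqref{space}, these linear transport equations admit unique solutions $R^\pm$, $g_3^\pm$ vanishing for $t<0$; this determines $r^\pm$ and $g_3^\pm$ and completes the proof. I expect the real work to be the bookkeeping in the second and third steps: checking that after the Alinhac substitution \eqref{alinhac} the numerous first-order and zero-order terms in the linearized magnetic equations cancel or recombine — this is exactly where the constraints \eqref{c} on the basic state enter — so that the clean transport structure \eqref{F}, \eqref{G} survives, and that the trace manipulations are consistent with the characteristic role of the normal magnetic component at the free boundary.
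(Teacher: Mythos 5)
Your proposal is correct and follows essentially the same route as the paper, which proves Lemma \ref{hoo} precisely by the computation you outline (taking the divergence of the straightened linearized magnetic equations and their boundary traces combined with the kinematic conditions), citing \cite[Proposition 2]{Trakhinin2009} for the details. The adaptation to two dimensions and the good-unknown bookkeeping you flag are exactly the content of that reference, so no further ideas are needed.
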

\subsection{Reduction to homogeneous boundary conditions}\label{hom_Sect}
In this section, we reduce the inhomogeneous  boundary condition in \eqref{IVP2} to the homogeneous one. We follow the same ideas in Trakhinin \cite{Trakhinin2009}, that for reader's convenience, we recall here.  Suppose there exists a solution $(\dot{{\mathbf U}},\varphi)\in H^s_{\ast}(\Omega_T)\times H^s({\Gamma_T})$ to  problem \eqref{IVP2}, with a given $s\in \mathbb{N}.$  We define a vector-valued function
$$\tilde{{\mathbf U}}=(\tilde{p}^+,\tilde{\mathbf{u}}^+,\tilde{\mathbf H}^+,\tilde{S}^+,\tilde{p}^-,\tilde{\mathbf{u}}^-,\tilde{\mathbf H}^-,\tilde{S}^-)\in H^{s+2}_{\ast}(\Omega_T)$$
that vanishes in the past and such that it is a ``suitable lifting" of the boundary data $({\mathbf g}, g^+_3, g^-_3)\in H^{s+1}(\Gamma_T)$. We choose $\tilde{{\mathbf U}}$ such that on the boundary $\Gamma_T$, it satisfies the boundary conditions in \eqref{IVP2} with $\varphi=0$, i.e.
$$\mathbb{B}'_e(\hat{{\mathbf U}},\hat{\varphi})(\tilde{{\mathbf U}},0)=\mathbf{g},$$
and both conditions \eqref{g3} and \eqref{G} with $\varphi=0$ (where $\tilde q$, $\tilde u^\pm_N$, $\tilde H_N^\pm$ are defined similarly to $\dot q$, $\dot u_N^\pm$ and $\dot H_N^\pm$). More explicitly we require
 $$\tilde{u}^{\pm}_N |_{x_1=0}=-g^{\pm}_{1},$$
 $$
 \tilde q^+ |_{x_1=0} - \tilde q^- |_{x_1=0}= g_2,
 $$
 $$
 \tilde{H}^{\pm}_N |_{x_1=0}=-g^{\pm}_{3}
 $$
 with $g^\pm_3$ solution of \eqref{G}.
Then, we define $\tilde q^\pm$, $\tilde u_n^\pm$ and $\tilde H_n^\pm$ in the interior domain $\Omega_T$ by using the lifting operator (that exists thanks to the trace theorem in anisotropic Sobolev spaces $H^s_*$, see \cite{Ohno-Shizuta1994} and Appendix \ref{tracetheorem})
$$
\mathcal{R}_T: H^{s+1}(\Gamma_T) \rightarrow H^{s+2}_*(\Omega_T)
$$
which gives
$$
\tilde q^\pm=\mathcal{R}_T(\tilde q^\pm |_{x_1=0}), \quad \tilde u^\pm_n=\mathcal{R}_T(\tilde u^\pm_N |_{x_1=0}), \quad \tilde H^\pm_n=\mathcal{R}_T(\tilde H^\pm_N |_{x_1=0}).
$$
Let also $\tilde u_1^\pm$ and $\tilde H^\pm_1$ be such that
$$
\tilde u_n^\pm= \tilde u_1^\pm - \tilde u_2^\pm \partial_2 \hat\Psi^\pm, \quad \tilde H_n^\pm= \tilde H_1^\pm - \tilde H_2^\pm \partial_2 \hat\Psi^\pm,
$$
where $\tilde u_2^\pm$ is arbitrary and can be taken for instance as zero, and where we can define $\tilde H^\pm_2$ in such a way that it satisfies equation \eqref{F} for $R^\pm=\frac{\text{div} \tilde{\mathbf h}^\pm}{\partial_1\hat\Phi^\pm}$, where $\tilde{\mathbf h}^\pm=(\tilde H_n^\pm, \tilde H_2^\pm\partial_1\hat\Phi^\pm)$ (this is possible since we  have a freedom in the choice of ``characteristic unknown" $\tilde H^\pm_2$ ). The last components $\tilde S^\pm$ of $\tilde{\mathbf U}$  can again be taken as zero. To sum up, the vector $\tilde{\mathbf U}$ is defined as
$$
\tilde{{\mathbf U}}=(\tilde{p}^+,\tilde{u}^+_n, 0,\tilde{H}^+_n+\tilde{H}^+_2\partial_2\hat{\Psi}^+,\tilde{H}^+_2,0,\tilde{p}^-,\tilde{u}^-_n, 0,\tilde{H}^-_n+\tilde{H}^-_2\partial_2\hat{\Psi}^-,\tilde{H}^-_2,0)\,,
$$
where $\tilde H^\pm_2$ satisfies equation \eqref{F} for $R^\pm=\frac{\text{div} \tilde{\mathbf h}^\pm}{\partial_1\hat\Phi^\pm}$ and $\tilde{\mathbf h}^\pm=(\tilde H_n^\pm, \tilde H_2^\pm\partial_1\hat\Phi^\pm)$.
\newline
We define $\dot{{\mathbf U}}^{\natural}=\dot{{\mathbf U}}-\tilde{{\mathbf U}},$ then $\dot{{\mathbf U}}^{\natural}$ satisfies
\begin{equation}\label{ho}
\begin{cases}
\mathbb{L}'_e(\hat{{\mathbf U}},\hat{\Psi})\dot{{\mathbf U}}^{\natural}=\mathbf{F}=\mathbf{f}-\mathbb{L}'_e(\hat{{\mathbf U}},\hat{\Psi})\tilde{{\mathbf U}} & \text{ in } \Omega_T,\\
\mathbb{B}'_e(\hat{{\mathbf U}},\hat{\Psi})(\dot{{\mathbf U}}^{\natural},\varphi)=0 & \text{ on } \Gamma_T.
\end{cases}
\end{equation}
Here $\mathbf{F}=(F^+_1,\cdots,F^+_6,F^-_1,\cdots,F^-_6).$
Moreover, in view of equations \eqref{F} for $R^{\pm}=\frac{\mathrm{div}\tilde{\mathbf h}^{\pm}}{\partial_1\hat{\Phi}^{\pm}},$ condition \eqref{g3} for $\tilde{\mathbf U}$ with $\varphi=0,$ and \eqref{G}, we have, from \eqref{ho}, that \eqref{F} and \eqref{G} are satisfied for $R^{\pm}=\frac{\mathrm{div}\dot{\mathbf h}^{\natural\pm}}{\partial_1\hat{\Phi}^{\pm}},$ $g^{\pm}_3=(\hat{H}^{\pm}_2\partial_2\varphi-\dot{H}^{\natural\pm}_N\mp\varphi\partial_1\hat{H}^{\pm}_N)|_{x_1=0}$
with right-hand sides $\mathcal{F}^{\pm}=\mathcal{G}^{\pm}=0$. Here $\dot{\mathbf h}^{\natural\pm}$ and $\dot{H}^{\natural\pm}_N$ are defined similarly to $\dot{\mathbf h}^{\pm}$ and $\dot{H}^{\pm}_N.$
Notice that $\dot{{\mathbf U}}^{\natural}=0, \text{ for } t<0.$  Hence, the conditions
\begin{equation}\label{diesis-h-hN}
\mathrm{div}\dot{\mathbf h}^{\natural\pm}=0,\quad (\hat{H}^{\pm}_2\partial_2\varphi-\dot{H}^{\natural\pm}_N{\mp}\varphi\partial_1\hat{H}^{\pm}_N)|_{x_1=0}=0
\end{equation}
hold for $t<0$. Then, by standard method of characteristic curves, we get that equations \eqref{diesis-h-hN} are satisfied for all  $t\in(-\infty,T].$
Notice that from
$$||\partial_1(\cdot)||_{s,\ast,T}\leq||\cdot||_{s+2,\ast,T},
$$
we have that
\begin{equation}\label{2}
||\mathbb{L}'_e(\hat{{\mathbf U}},\hat{\Psi})\tilde{{\mathbf U}}||_{s,\ast,T}\leq C||\tilde{{\mathbf U}}||_{s+2,\ast,T}.
\end{equation}
By the definition of ${\mathbf F}$, using \eqref{2} and the definition of $\tilde {\mathbf U}$ as a lifting of the boundary data $({\mathbf g},g_3^+, g_3^-)$, we obtain that
\begin{equation}\label{stima_F}
||\mathbf{F}||_{s,\ast,T}\leq C\Big(||\mathbf{f}||_{s,\ast,T}+||\mathbf{g}||_{H^{s+1}(\Gamma_T)} + ||(g_3^+,g_3^-)||_{H^{s+1}(\Gamma_T)}\Big).
\end{equation}
Using \eqref{G} and the trace Theorem in the anisotropic Sobolev spaces to estimate
$$
\Vert f_n\vert_{x_1=0}\Vert_{H^{s+1}(\Gamma_T)}\le C\Vert\mathbf{f}\Vert_{s+2,\ast,T}\,,
$$
we get
\begin{equation}\label{stima_g3}
\begin{split}
||(g^{+}_3,g^{-}_3) ||_{H^{s+1}(\Gamma_T)}\leq C||(\mathcal{G}^{+},\mathcal{G}^{-}) ||_{H^{s+1}(\Gamma_T)}\leq C\Big(||\mathbf{f}||_{s+2,\ast,T}+||(g^{+}_1, g^{-}_1) ||_{H^{s+2}(\Gamma_T)}\Big).
\end{split}
\end{equation}
Then, from \eqref{stima_F} and \eqref{stima_g3}, we derive
\begin{equation}\label{estimate}
||\mathbf{F}||_{s,\ast,T}\leq C\Big(||\mathbf{f}||_{s+2,\ast,T}+||\mathbf{g}||_{H^{s+2}(\Gamma_T)}\Big).
\end{equation}
 We obtain that $\dot{{\mathbf U}}^{\natural}$ solves the following problem:
\begin{equation}\label{ho2}
\begin{cases}
\mathbb{L}'_e(\hat{{\mathbf U}},\hat{\Psi})\dot{{\mathbf U}^\natural}=\mathbf{F} & \text{ in } \Omega_T,\\
\mathbb{B}'_e(\hat{{\mathbf U}},\hat{\varphi})(\dot{{\mathbf U}^\natural},\varphi)=0 & \text{ on } \Gamma_T,\\
(\dot{{\mathbf U}^\natural},\varphi)=0 & \text{ for } t<0,
\end{cases}
\end{equation}
where the source term $\mathbf{F}$ satisfies the estimate \eqref{estimate} and solutions of \eqref{ho2} satisfy the constraints
\begin{equation}\label{div2}
\mathrm{div}\dot{\mathbf h^\natural
}^{\pm}=0 \text{ in }\Omega_T,
\end{equation}
\begin{equation}\label{Hn2}
\hat{H}^{\pm}_2\partial_2\varphi-\dot{H}^{\natural\,\pm}_{N}{\mp}\varphi\partial_1\hat{H}^{\pm}_{N}=0 \text{ on }\Gamma_T.
\end{equation}
We have thus proved the following result.
\begin{lemma}\label{lemma4.2}
Let problem \eqref{ho2} be well-posed and its unique solution $(\dot{{\mathbf U}^\natural},\varphi)$ belongs to $H^s_{\ast}(\Omega_T)\times H^s({\Gamma_T})$ for $\mathbf{F}\in H^s_{\ast}(\Omega_T),$ where $s\in \mathbb{N}$ is a given number. Then  problem \eqref{IVP2} is well-posed, namely it admits a unique solution $(\dot{\mathbf U},\varphi) \in H^s_{\ast}(\Omega_T)\times H^s({\Gamma_T})$ for data $(\mathbf{f},\mathbf{g})\in H^{s+2}_{\ast}(\Omega_T)\times H^{s+2}(\Gamma_T).$
\end{lemma}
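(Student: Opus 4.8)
The plan is to carry out the bookkeeping of the reduction already performed in Section~\ref{hom_Sect}. Fix data $(\mathbf{f},\mathbf{g})\in H^{s+2}_\ast(\Omega_T)\times H^{s+2}(\Gamma_T)$ vanishing in the past. First I would solve the transport equations \eqref{G} for $g_3^\pm$ on $\Gamma_T$; the source $\mathcal{G}^\pm$ is linear in $f_n^\pm|_{x_1=0}$ and in $g_1^\pm$, so using the trace theorem (Lemma~\ref{tra}) to control $\| f_n^\pm|_{x_1=0}\|_{H^{s+1}(\Gamma_T)}\lesssim\|\mathbf{f}\|_{s+2,\ast,T}$ one obtains $g_3^\pm\in H^{s+1}(\Gamma_T)$ with the bound \eqref{stima_g3}. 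Then I would build the lifting $\tilde{\mathbf{U}}$ exactly as in Section~\ref{hom_Sect}: prescribe the traces $\tilde u_N^\pm|_{x_1=0}=-g_1^\pm$, $\tilde q^+|_{x_1=0}-\tilde q^-|_{x_1=0}=g_2$, $\tilde H_N^\pm|_{x_1=0}=-g_3^\pm$, lift them into $\Omega_T$ through the operator $\mathcal{R}_T$ of Lemma~\ref{tra}, set $\tilde u_2^\pm=\tilde S^\pm=0$, and determine the characteristic component $\tilde H_2^\pm$ as the solution of the transport equation \eqref{F} with $R^\pm=\mathrm{div}\,\tilde{\mathbf h}^\pm/\partial_1\hat\Phi^\pm$, $\tilde{\mathbf h}^\pm=(\tilde H_n^\pm,\tilde H_2^\pm\partial_1\hat\Phi^\pm)$. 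Continuity of $\mathcal{R}_T$ together with the Moser-type estimates yields $\tilde{\mathbf{U}}\in H^{s+2}_\ast(\Omega_T)$, vanishing in the past, with $\mathbb{B}'_e(\hat{\mathbf{U}},\hat\varphi)(\tilde{\mathbf{U}},0)=\mathbf{g}$ and $\|\tilde{\mathbf{U}}\|_{s+2,\ast,T}\le C\big(\|\mathbf{f}\|_{s+2,\ast,T}+\|\mathbf{g}\|_{H^{s+2}(\Gamma_T)}\big)$.

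Next I would set $\mathbf{F}:=\mathbf{f}-\mathbb{L}'_e(\hat{\mathbf{U}},\hat\Psi)\tilde{\mathbf{U}}$. Since each $\partial_1$ costs at most two $D_\ast$-derivatives in the anisotropic scale, inequality \eqref{2} gives $\mathbf{F}\in H^s_\ast(\Omega_T)$ with the bound \eqref{estimate}. By the assumed well-posedness of \eqref{ho2}, there is a unique $(\dot{\mathbf{U}}^\natural,\varphi)\in H^s_\ast(\Omega_T)\times H^s(\Gamma_T)$ solving \eqref{ho2}; applying Lemma~\ref{hoo} with the vanishing right-hand sides described around \eqref{diesis-h-hN}, this pair also satisfies the constraints \eqref{div2}--\eqref{Hn2}.

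Then I would undo the shift and set $\dot{\mathbf{U}}:=\dot{\mathbf{U}}^\natural+\tilde{\mathbf{U}}\in H^s_\ast(\Omega_T)$ (using $H^{s+2}_\ast\subset H^s_\ast$). Linearity of $\mathbb{L}'_e$ gives $\mathbb{L}'_e(\hat{\mathbf{U}},\hat\Psi)\dot{\mathbf{U}}=\mathbf{F}+\mathbb{L}'_e(\hat{\mathbf{U}},\hat\Psi)\tilde{\mathbf{U}}=\mathbf{f}$, and the joint linearity of $\mathbb{B}'_e$ in $(\dot{\mathbf{U}},\varphi)$ gives $\mathbb{B}'_e(\hat{\mathbf{U}},\hat\varphi)(\dot{\mathbf{U}},\varphi)=\mathbb{B}'_e(\hat{\mathbf{U}},\hat\varphi)(\dot{\mathbf{U}}^\natural,\varphi)+\mathbb{B}'_e(\hat{\mathbf{U}},\hat\varphi)(\tilde{\mathbf{U}},0)=\mathbf{g}$; since both summands vanish for $t<0$, the pair $(\dot{\mathbf{U}},\varphi)$ is the desired solution of \eqref{IVP2}. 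For uniqueness, I would note that $\tilde{\mathbf{U}}$ depends only on the data: if $(\dot{\mathbf{U}}_1,\varphi_1)$ and $(\dot{\mathbf{U}}_2,\varphi_2)$ both solve \eqref{IVP2}, then $\dot{\mathbf{U}}_i-\tilde{\mathbf{U}}$ both solve \eqref{ho2} with the same $\mathbf{F}$, so the uniqueness part of the well-posedness of \eqref{ho2} forces $\dot{\mathbf{U}}_1=\dot{\mathbf{U}}_2$ and $\varphi_1=\varphi_2$.

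The only step requiring genuine care is the construction of the lifting $\tilde{\mathbf{U}}$: one must simultaneously match all three inhomogeneous boundary conditions, respect the auxiliary boundary identity \eqref{g3} as well as the divergence/transport structure — which is precisely why $\tilde H_2^\pm$ is obtained from \eqref{F} rather than lifted directly — and retain the $H^{s+2}_\ast$ bound, so that after applying $\mathbb{L}'_e$ one still lands in $H^s_\ast$. This is the source of the two-derivative loss from data to interior source in \eqref{estimate}, and hence of the gap between the regularity required of $(\mathbf{f},\mathbf{g})$ and that obtained for the solution.
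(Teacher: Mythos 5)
Your proposal is correct and follows essentially the same route as the paper: the paper's proof is precisely the reduction of Section~\ref{hom_Sect} (solve \eqref{G} for $g_3^\pm$, build the lifting $\tilde{\mathbf U}$ via $\mathcal R_T$ with $\tilde H_2^\pm$ from \eqref{F}, set $\mathbf F=\mathbf f-\mathbb L'_e\tilde{\mathbf U}$ with the estimates \eqref{stima_F}--\eqref{estimate}, solve \eqref{ho2}, and add $\tilde{\mathbf U}$ back), which you reproduce in the constructive direction together with the linearity and uniqueness bookkeeping.
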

\begin{remark}
{\rm Let us observe that loss of regularity from the data to the solution in the inhomogeneous problem \eqref{IVP2} is due to the introduction of lifting function $\tilde{{\mathbf U}}$.}
\end{remark}
\subsection{New Friedrichs Symmetrizer for 2D MHD equations}

Motivated by the idea of Trakhinin \cite{Trakhinin2009}, in the following we will make use of a new symmetric form of the MHD system. This symmetric form is the result of the application of a ``secondary generalized Friedrichs symmetrizer"  $\mathbb S=(S({\mathbf U}), \mathcal T({\mathbf U}))$ to system \eqref{quasi}:
\begin{equation}\label{sec}
\begin{split}
&\mathcal{S}({\mathbf U})A_0({\mathbf U})\partial_t{\mathbf U}+\mathcal{S}({\mathbf U})A_1({\mathbf U})\partial_1{\mathbf U}+\mathcal{S}({\mathbf U})A_2({\mathbf U})\partial_2{\mathbf U}+\mathcal{T}({\mathbf U})\mathrm{div} \mathbf{H}\\
&\quad:=B_0({\mathbf U})\partial_t{\mathbf U}+B_1({\mathbf U})\partial_1{\mathbf U}+B_2({\mathbf U})\partial_2{\mathbf U}=0.
\end{split}
\end{equation}
The Friedrichs symmetrizer can be written as (see \cite{Trakhinin2005})
\begin{equation}\label{new}
\mathcal{S}({\mathbf U})=\left[\begin{array}{cccccc}
    1 & \frac{\lambda H_1}{\rho c^2} & \frac{\lambda H_2}{\rho c^2} & 0 & 0 & 0 \\
    \lambda H_1\rho & 1 & 0 & -\rho \lambda & 0 & 0\\
    \lambda H_2\rho & 0 & 1 & 0 & -\rho \lambda & 0\\
    0 & -\lambda & 0 & 1 & 0 & 0 \\
    0 & 0 & -\lambda & 0 & 1 & 0\\
    0 & 0 & 0 & 0 & 0 & 1
  \end{array}\right],\quad
 \mathcal{ T}({\mathbf U})=-\lambda\left[\begin{array}{c}
    1 \\
    0\\
    0\\
  H_1 \\
    H_2\\
   0
  \end{array}\right],
\end{equation}
\begin{equation}\label{B0}
B_0=\mathcal{S}A_0=\left[\begin{array}{cccccc}
    \frac{1}{\rho c^2} & \frac{\lambda H_1}{ c^2} & \frac{\lambda H_2}{ c^2} & 0 & 0 & 0 \\
    \frac{\lambda H_1}{c^2} & \rho & 0 & -\rho \lambda & 0 & 0\\
    \frac{\lambda H_2}{c^2} & 0 & \rho & 0 & -\rho \lambda & 0\\
    0 & -\rho\lambda & 0 & 1 & 0 & 0 \\
    0 & 0 & -\rho\lambda & 0 & 1 & 0\\
    0 & 0 & 0 & 0 & 0 & 1
  \end{array}\right],
\end{equation}
where $\lambda=\lambda(\mathbf U)$ is an arbitrary function.
\newline
In order to make system \eqref{sec} symmetric hyperbolic, we need $B_0>0,$ i.e.,
\begin{equation}\label{B01}
\rho\lambda^2<\frac{1}{1+\frac{c^2_A}{c^2}}.
\end{equation}
Condition \eqref{B01} ensures the equivalence of system \eqref{quasi} and \eqref{sec} on smooth solutions if $\lambda({\mathbf U})$ is a smooth function of ${\mathbf U}$ (see Trakhinin \cite{Trakhinin2005} and \cite{Trakhinin2009}).
\medskip
\newline
Now let us apply the {\it{new symmetrization}} to the {\it homogeneous} linearized problem \eqref{ho2}. From now on {\it we drop the index $\natural$} from the unknown $\dot{\mathbf U}^\natural$ of system \eqref{ho2}.
\newline
Multiplying \eqref{ho2} on the left  by $\mathcal{S}(\hat{{\mathbf U}})$ and adding to the result the vector
\begin{equation}
\left[\begin{array}{c}
    \frac{\mathrm{div} \dot{\mathbf h}^{+}}{\partial_1\hat{\Phi}^{+}}\mathcal{T}(\hat{{\mathbf U}}^+) \\
     \frac{\mathrm{div} \dot{\mathbf h}^{-}}{\partial_1\hat{\Phi}^{-}}\mathcal{T}(\hat{{\mathbf U}}^-) \\
  \end{array}\right],
\end{equation}
we obtain
\begin{equation}\label{B1}
B_0(\hat{{\mathbf U}})\partial_t\dot{{\mathbf U}}+\tilde{B}_1(\hat{{\mathbf U}},\hat{\Psi})\partial_1\dot{{\mathbf U}}+B_2(\hat{{\mathbf U}})\partial_2\dot{{\mathbf U}}+\tilde{C}(\hat{{\mathbf U}},\hat{\Psi})\dot{{\mathbf U}}=\tilde{\mathbf F}(\hat{{\mathbf U}}),
\end{equation}
where $\tilde{C}(\hat{{\mathbf U}},\hat{\Psi})=\mathcal{S}(\hat{{\mathbf U}})C(\hat{{\mathbf U}},\hat{\Psi}),\quad \tilde{\mathbf F}(\hat{{\mathbf U}})=\mathcal{S}(\hat{{\mathbf U}}){\mathbf F}$,
$$S(\hat{{\mathbf U}})=\mathrm{diag}(\mathcal{S}(\hat{{\mathbf U}}^+),S(\hat{{\mathbf U}}^-)),\quad B_{\alpha}(\hat{{\mathbf U}})=\mathrm{diag}( B_{\alpha}(\hat{{\mathbf U}}^+), B_{\alpha}(\hat{{\mathbf U}}^-)),\,\,\,\alpha=0,2\,,$$
$$C(\hat{{\mathbf U}},\hat{\Psi})=\mathrm{diag}(C(\hat{{\mathbf U}}^+,\hat{\Psi}^+),C(\hat{{\mathbf U}}^-,\hat{\Psi}^-)),\quad \tilde{B}_1(\hat{{\mathbf U}},\hat{\Psi})=\mathrm{diag}(\tilde{ B}_{1}(\hat{{\mathbf U}}^+,\hat{\Psi}^+), \tilde{B}_{1}(\hat{{\mathbf U}}^-,\hat{\Psi}^-)), $$
$$\tilde{B}_1({\mathbf U}^{\pm},\Psi^{\pm})=\frac{1}{\partial_1\Phi^{\pm}}\Big(B_1({\mathbf U}^{\pm})-B_0({\mathbf U}^{\pm})\partial_t\Psi^{\pm}-B_2({\mathbf U}^{\pm})\partial_2\Psi^{\pm}\Big).$$

\section{Stability condition and well-posedness of the linearized problem}\label{wellposedness}
In this section, let us introduce the new unknown $\mathbf V=(\mathbf V^+,\mathbf V^-),$ where
$$\mathbf V^{\pm}=(\dot{q}^{\pm},\dot{u}^{\pm}_n,\dot{u}^{\pm}_2,\dot{H}^{\pm}_n,\dot{H}^{\pm}_2,\dot{S}^{\pm}).$$
Rewriting system \eqref{ho2} in terms of $\mathbf V$ shows in clear way that the boundary matrix of the resulting system  has constant rank at the boundary (see \eqref{A1-rank4}), i.e. the system is symmetric hyperbolic with characteristic boundary of constant multiplicity (in the sense of Rauch \cite{Rauch1985}). Indeed, we obtain that
\begin{equation}\label{V}
\dot{{\mathbf U}}=J{\mathbf V}\,,
\end{equation}
where $J=\mathrm{diag}(J^+,J^-),$
\begin{equation}
J^{\pm}=\left[\begin{array}{cccccc}
   1 & 0 & 0 & -\hat{H}^{\pm}_1 & -\hat{H}^{\pm}_{\tau}  & 0 \\
   0 & 1 & \partial_2\hat{\Psi}^{\pm} & 0  & 0 & 0\\
    0 & 0 & 1 & 0 & 0 & 0\\
    0 & 0 & 0 & 1 & \partial_2\hat{\Psi}^{\pm} & 0 \\
    0 & 0 & 0 & 0 & 1 & 0\\
    0 & 0 & 0 & 0 & 0 & 1
  \end{array}\right],
\end{equation}
with $\hat{H}^{\pm}_{\tau}=\hat{H}\cdot\tau^{\pm},\quad \tau^{\pm}=(\partial_2\hat{\Psi}^{\pm},1).$
In terms of $\mathbf V$, systems \eqref{ho2} and \eqref{B1} can be equivalently rewritten as:
\begin{equation}\label{A}
\mathcal{A}_0(\hat{{\mathbf U}},\hat{\Psi})\partial_t{\mathbf V}+\mathcal{A}_1(\hat{{\mathbf U}},\hat{\Psi})\partial_1{\mathbf V}+\mathcal{A}_2(\hat{{\mathbf U}},\hat{\Psi})\partial_2{\mathbf V}+\mathcal{A}_3(\hat{{\mathbf U}},\hat{\Psi}){\mathbf V}=\mathcal{F}(\hat{{\mathbf U}},\hat{\Psi}),
\end{equation}
\begin{equation}\label{BBB}
\mathcal{B}_0(\hat{{\mathbf U}},\hat{\Psi})\partial_t{\mathbf V}+\mathcal{B}_1(\hat{{\mathbf U}},\hat{\Psi})\partial_1{\mathbf V}+\mathcal{B}_2(\hat{{\mathbf U}},\hat{\Psi})\partial_2{\mathbf V}+\mathcal{B}_3(\hat{{\mathbf U}},\hat{\Psi}){\mathbf V}=\tilde{\mathcal{F}}(\hat{{\mathbf U}},\hat{\Psi}),
\end{equation}
where $\mathcal{A}_{\alpha}=J^TA_{\alpha}J,\quad \mathcal{B}_{\alpha}=J^TB_{\alpha}J,\quad \alpha=0,2,$
$$\mathcal{A}_1=J^T\tilde{A}_1J,\quad \mathcal{B}_1=J^T\tilde{B}_1J,\quad \mathcal{F}=J^T\mathbf{F},\quad\tilde{\mathcal{F}}=J^T\tilde{\mathbf{F}},$$
$$\mathcal{A}_3=J^T\Big(CJ+A_0\partial_tJ+\tilde{A}_1\partial_1J+A_2\partial_2J\Big),$$
$$\mathcal{B}_3=J^T\Big(CJ+B_0\partial_tJ+\tilde{B}_1\partial_1J+B_2\partial_2J\Big).$$
In view of constraints \eqref{jump} and \eqref{c2} on the basic state $\hat{\mathbf U}$, the boundary matrix in \eqref{A} has the following form
\begin{equation}\label{AA}
\mathcal{A}_1=\mathcal{A}+\mathcal{A}_{(0)},\quad \mathcal{A}=\mathrm{diag}\Big(\frac{1}{\partial_1\hat{\Phi}^+}\mathcal{E}_{12},\frac{1}{\partial_1\hat{\Phi}^-}\mathcal{E}_{12}\Big),\quad \mathcal{A}_{(0)}|_{x_1=0}=0.
\end{equation}
Here, $\mathcal{E}_{ij}$ is a $6\times6$ symmetric matrix, in which $(ij)$th and $(ji)$th elements are 1 and the remaining elements are 0. The explicit form of $\mathcal{A}_{(0)}$ is of no interest and it is only important that the all non zero elements of $\mathcal{A}_{(0)}$ are multiplied either by the function $\hat{u}_n -\partial_t \hat{\Psi}$ or by the function $\hat{H}_n$.
%In view of constraints \eqref{jump} and \eqref{c2} on the basic state $\hat{\mathbf U}$, we have
%$$
%\mathcal{A}_{(0)}|_{x_1=0}=0.
%$$
Therefore, the boundary matrix
\begin{equation}\label{A1-rank4}
\mathcal{A}_1|_{x_1=0}=\mathrm{diag}(\mathcal{E}_{12},-\mathcal{E}_{12}).
\end{equation}
 This is a matrix of constant rank 4 and has two positive and two negative eigenvalues.
 \newline
Concerning system \eqref{BBB}, note that $\mathcal{B}_0>0$ because of the hyperbolicity condition \eqref{B01} satisfied for the basic state,  hence the symmetric system \eqref{BBB} is hyperbolic.  Moreover, considering the boundary matrix $\mathcal{B}_1$ in \eqref{BBB}, we will only need its explicit form on the boundary:
$$\mathcal{B}_1|_{x_1=0}=\mathrm{diag}\Big(\mathcal{B}(\hat{{\mathbf U}}^+|_{x_1=0}),-\mathcal{B}(\hat{{\mathbf U}}^-|_{x_1=0})\Big),\quad \mathcal{B}(\hat{\mathbf U}^\pm)=\mathcal{E}_{12}-\lambda(\hat{\mathbf U}^\pm)\mathcal{E}_{14},$$
which gives
\begin{equation}\label{boundary-q-form-lambda}
(\mathcal{B}_1{\mathbf V},{\mathbf V})|_{x_1=0}=2[\dot{q}(\dot{u}_N-\hat{\lambda}\dot{H}_N)],
\end{equation}
with $\hat\lambda:= \lambda(\hat{\mathbf U})$.
\newline
Using  \eqref{ho2} and \eqref{Hn2} (see also \eqref{Be1}) for $\dot{u}^{\pm}_{N}$ and $\dot{H}^{\pm}_N,$ (recall that we have dropped the index $\natural$ from the unknown) and the boundary constraint $[\dot{q}]=-\varphi[\partial_{1}\hat{q}]$ (recall the definition of $[\partial_1\hat q]$ in \eqref{jump_q}) we obtain that
%\begin{equation}\label{diss}
%\begin{split}
%[\dot{q}(\dot{u}_N-\hat{\lambda}\dot{H}_N)]=\dot{q}^+[\hat{u}_2-\hat\lambda\hat{H}_2]\partial_{2}\varphi + \{\text{lower order terms}\}
%\end{split}
%\end{equation}
%where
%\begin{equation*}
%\begin{split}
%\{\text{lower order terms}\}:=-\dot{q}^+\varphi [\partial_{1}\hat{u}_N-\hat{\lambda} \partial_{1}\hat{H}_N]-\varphi [\partial_{1}\hat{q}]\partial_t\varphi\\
%-\varphi [\partial_{1}\hat{q}]\left((\hat{u}_2^--\hat{\lambda}^-\hat{H}_2^-)\partial_{2}\varphi  + \varphi(\partial_{1}\hat{u}_N^--\hat{\lambda}^- \partial_{1}\hat{H}_N^-) \right)\,.
%\end{split}
%\end{equation*}
\begin{equation}\label{boundary-q-form_lambda_1}
(\mathcal B_1{\mathbf V}\cdot{\mathbf V})=2[\hat u_2-\hat\lambda\hat H_2]\dot q^+\partial_2\varphi+\mbox{l.o.t}\,,\quad\mbox{on}\,\,\,\{x_1=0\}\,,
\end{equation}
with
\begin{equation}\label{lot_1}
\begin{split}
\mbox{l.o.t.}:=&-2[\partial_1\hat u_N-\hat\lambda\partial_1\hat H_N]
\dot q^+\varphi-2[\partial_1\hat q]\varphi\partial_t\varphi-2[\partial_1\hat q](\hat u^-_2-\hat\lambda\hat H^-_2)\varphi\partial_2\varphi\\
&-2[\partial_1\hat q](\partial_1\hat u^-_N-\hat\lambda^-\partial_1\hat H^-_N)\varphi^2\,.
\end{split}
\end{equation}
We use ``l.o.t." to mean boundary terms that can be manipulated in the energy estimate by passing to volume integral and using integration by parts, so that they do not give any trouble in the derivation of the energy estimate, see Appendix \ref{proof-Thm-5-1}.
%%%%%%%%%%%%%%%%%%%%%%%%%scelta di $\hat\lambda$  %%%%%%%%%%%%%%%%%%%%%

\medskip
\noindent
Let us now make a suitable choice of the function $\hat{\lambda}=\lambda(\hat{\mathbf U})$. We first take $\hat{\lambda}$ as follows, see \cite{Trakhinin2009},
\begin{equation}\label{lambda}
\hat{\lambda}^\pm:=\lambda(\hat{{\mathbf U}}^{\pm}):=\eta(x_1)\lambda^{\pm}(t,x_2),
\end{equation}
with $\eta(x_1)$  a smooth monotone decreasing function satisfying $\eta(0)=1$ and $\eta(x_1)=0,$ for $x_1>\varepsilon$, with $\varepsilon>0$ sufficiently small. The functions $\lambda^{\pm}$ will be chosen below.
\begin{remark}
The motivation of the definition of $\hat{\lambda}$ by using the cut-off function $\eta=\eta(x_1)$ is that it guarantees that the hyperbolicity condition \eqref{B01}, that trivially holds for $\hat\lambda=0$ remains true for the basic state in a small neighborhood $\{0<x_1<\varepsilon\}$ of the boundary, thanks to the continuity of the basic state. Hence, by definition of $\eta$, the hyperbolicity condition still holds in the whole domain $\Omega_T$, see \cite{Trakhinin2009} for more details.
\end{remark}
The functions $\lambda^{\pm}(t,x_2)$ are chosen in this way: if the jump $[\hat{u}_2](t,x_2)=0,$ we define $\lambda^{\pm}(t,x_2)=0$; otherwise, if the jump $[\hat{u}_2](t,x_2)\neq0,$ we choose $\lambda^{+}(t,x_2)$ and $\lambda^{-}(t,x_2)$ satisfying the following relation
\begin{equation*}
[\hat{u}_2-\lambda\hat{H}_2]=0.
\end{equation*}
The following Lemma ensures the existence of such functions $\lambda^{+}(t,x_2)$, $\lambda^{-}(t,x_2)$ satisfying the above equation and the hyperbolicity assumption \eqref{B01} written for the basic state, see \eqref{hyp-constraints}.

%%%%%%%%%%%%%%%%%%%%%%%%NOSTRO LEMMA%%%%%%%%%%%%%
\begin{lemma}\label{lemma-Paolo-Ale-Paola}
For all $(t,x_2)\in \Gamma_T$, there exist $\lambda^{\pm}(t,x_2)$ satisfying
\begin{equation}\label{0-equality}
[\hat{u}_2-\lambda\hat{H}_2]=0,
\end{equation}
 and the hyperbolicity conditions
\begin{equation}\label{hyp-constraints}
|{\lambda}^{\pm}|< \hat a^\pm
\end{equation}
if and only if the basic states $\hat{\mathbf U}^\pm$ obey the stability estimate
\begin{equation}\label{new-stability}
|[\hat u_2]|<|\hat H^+_2| \hat a^++ |\hat H^-_2|\hat a^-\,,
\end{equation}
where $\hat a^\pm$ are defined in \eqref{apm}. In particular, under \eqref{new-stability} we can set
\begin{equation}\label{def-lambda}
\begin{split}
\lambda^+= sgn(\hat H^+_2) \frac{\hat a^+ [\hat u_2]}{\hat a^+ |\hat H^+_2| + \hat a^- |\hat H^-_2|},\\
\lambda^-=- sgn(\hat H^-_2) \frac{\hat a^- [\hat u_2]}{\hat a^+ |\hat H^+_2| + \hat a^- |\hat H^-_2|}.
\end{split}
\end{equation}
\end{lemma}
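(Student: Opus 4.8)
The plan is to prove the equivalence directly by algebraic manipulation, treating equation \eqref{0-equality} as a linear constraint relating $\lambda^+$ and $\lambda^-$ and then optimizing. First I would rewrite \eqref{0-equality} as $\hat u_2^+ - \lambda^+ \hat H_2^+ = \hat u_2^- - \lambda^- \hat H_2^-$, i.e. $\lambda^+ \hat H_2^+ - \lambda^- \hat H_2^- = [\hat u_2]$. By Remark \ref{H_2nonzero} the stability condition forces at least one of $\hat H_2^\pm$ to be nonzero, but in fact for the ``if'' direction we may assume both are nonzero: if, say, $\hat H_2^- = 0$, then \eqref{new-stability} reads $|[\hat u_2]| < |\hat H_2^+|\hat a^+$, the constraint becomes $\lambda^+ \hat H_2^+ = [\hat u_2]$ which forces $\lambda^+ = [\hat u_2]/\hat H_2^+$ with $|\lambda^+| < \hat a^+$, and $\lambda^-$ is free (take $\lambda^- = 0$, which satisfies $|\lambda^-| < \hat a^-$ since $\hat a^- > 0$); so that degenerate case is immediate and I would dispatch it first.

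For the generic case $\hat H_2^\pm \neq 0$, the key observation is that the set of pairs $(\lambda^+,\lambda^-)$ satisfying \eqref{0-equality} and \eqref{hyp-constraints} is nonempty precisely when the open interval swept by $\lambda^+ \hat H_2^+ - \lambda^- \hat H_2^-$, as $\lambda^\pm$ range over $(-\hat a^\pm, \hat a^\pm)$, contains $[\hat u_2]$. That image is the open interval $(-(\hat a^+|\hat H_2^+| + \hat a^-|\hat H_2^-|),\; \hat a^+|\hat H_2^+| + \hat a^-|\hat H_2^-|)$, since the extreme values are attained by pushing $\lambda^\pm$ to the appropriate signed endpoints. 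Hence solvability is equivalent to $|[\hat u_2]| < \hat a^+|\hat H_2^+| + \hat a^-|\hat H_2^-|$, which is exactly \eqref{new-stability}. To exhibit the explicit solution \eqref{def-lambda}, I would simply plug those formulas back in: one checks $\lambda^+ \hat H_2^+ - \lambda^- \hat H_2^- = \mathrm{sgn}(\hat H_2^+)\hat H_2^+ \frac{\hat a^+[\hat u_2]}{D} + \mathrm{sgn}(\hat H_2^-)\hat H_2^- \frac{\hat a^-[\hat u_2]}{D} = \frac{(\hat a^+|\hat H_2^+| + \hat a^-|\hat H_2^-|)[\hat u_2]}{D} = [\hat u_2]$ where $D = \hat a^+|\hat H_2^+| + \hat a^-|\hat H_2^-|$, so \eqref{0-equality} holds; and $|\lambda^+| = \frac{\hat a^+|[\hat u_2]|}{D} < \hat a^+$ exactly because $|[\hat u_2]| < D$, similarly for $\lambda^-$. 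This choice corresponds to distributing the ``load'' $[\hat u_2]$ between the two sides in proportion to their available capacity $\hat a^\pm |\hat H_2^\pm|$.

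The ``only if'' direction is the contrapositive of the image computation: if $|[\hat u_2]| \ge \hat a^+|\hat H_2^+| + \hat a^-|\hat H_2^-|$, then for any $\lambda^\pm$ with $|\lambda^\pm| < \hat a^\pm$ the triangle inequality gives $|\lambda^+\hat H_2^+ - \lambda^-\hat H_2^-| \le |\lambda^+||\hat H_2^+| + |\lambda^-||\hat H_2^-| < \hat a^+|\hat H_2^+| + \hat a^-|\hat H_2^-| \le |[\hat u_2]|$, so \eqref{0-equality} cannot hold. I expect no serious obstacle here; the only point requiring a little care is the handling of the degenerate cases where one (or both) of $\hat H_2^\pm$ vanishes — one must make sure the formulas \eqref{def-lambda} and the interval argument still make literal sense (the denominator $D$ is positive as soon as \eqref{new-stability} holds, since its right-hand side is then strictly positive). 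I would state the case split explicitly at the start of the proof to keep the argument clean.
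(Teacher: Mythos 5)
Your argument is correct, and it reaches the conclusion by a genuinely different and more economical route than the paper. The paper's proof solves \eqref{0-equality} for $\lambda^+$ as a function of $\lambda^-$, translates $|\lambda^+|<\hat a^+$ into an interval condition on $\lambda^-$, and then runs a case analysis on the sign of $\hat H^-_2/\hat H^+_2$ (plus the degenerate cases where one of $\hat H^\pm_2$ vanishes) to show that the two intervals for $\lambda^-$ intersect precisely when \eqref{new-stability} holds; sufficiency is then obtained, exactly as in your write-up, by substituting \eqref{def-lambda} and checking \eqref{0-equality} and \eqref{hyp-constraints}. You instead note that $(\lambda^+,\lambda^-)\mapsto\lambda^+\hat H^+_2-\lambda^-\hat H^-_2$ is a nonzero linear functional whose image of the open box $(-\hat a^+,\hat a^+)\times(-\hat a^-,\hat a^-)$ is the open interval $(-D,D)$, $D=\hat a^+|\hat H^+_2|+\hat a^-|\hat H^-_2|$, which gives the equivalence in one stroke: the triangle inequality yields the ``only if'' direction and the explicit formulas the ``if''. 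This avoids the sign case analysis entirely and makes the optimality of \eqref{new-stability} transparent; what it gives up is only the paper's intermediate description of the full admissible range for $\lambda^-$ (the intersection of the two intervals), which is recorded there but not used elsewhere. One small caveat: the strict inequality $|\lambda^+||\hat H^+_2|+|\lambda^-||\hat H^-_2|<D$ in your ``only if'' step requires at least one of $\hat H^\pm_2$ to be nonzero; this is harmless, since it is exactly the situation in which the lemma is applied (see Remark \ref{H_2nonzero}), and the paper's proof makes the same tacit assumption.
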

%\section{Proof of Lemma \ref{lemma-Paolo-Ale-Paola}}\label{proof-lemma}
%In this appendix we prove Lemma \ref{lemma-Paolo-Ale-Paola}.
\begin{proof}
	For shortness in the proof we drop the hat  and the subscript $2$ on the variables, moreover we do not write the considered variables are restricted along the boundary $\{x_1=0\}$ that is we write
	$$
	u=\hat u^\pm_2\vert_{x_1=0}, \quad H^\pm= \hat H^\pm_2\vert_{x_1=0},\quad\text{etc.}\,.
	$$
	Let us first note that equation \eqref{0-equality} can be restated as
	\begin{equation}\label{1-equality}
		[u]= \lambda^+ H^+ -\lambda^- H^-.
	\end{equation}
	Let us consider the simple case $H^+\neq 0$ and $H^-=0$. Then, from equation \eqref{1-equality} we get at once
	$$
	\lambda^+= \frac{[u]}{H^+}
	$$
	and plugging the above into the constraint in \eqref{hyp-constraints} for $\lambda^+$ gives
	\begin{equation}\label{3-Ale}
		|[u]| < a^+ |H^+|\,,
	\end{equation}
	that is \eqref{new-stability}. In this case $\lambda^-$ can be whatever function satisfying the corresponding constraint \eqref{hyp-constraints}
	$$
	|\lambda^-|<a^-
	$$
	(for instance $\lambda^-\equiv 0$); it does not yield any condition on the background state in addition to \eqref{3-Ale}. Of course the symmetric case $H^+=0$ and $H^-\neq 0$ is similar with
	$$
	\lambda^- =- \frac{[u]}{H^-}, \quad \lambda^+\equiv 0
	$$
	and the condition
	\begin{equation}\label{4-Ale}
		|[u]| < a^- |H^-|
	\end{equation}
	on the background state, which is again \eqref{new-stability}.
	\newline
	Let us consider now the case $H^+\neq 0$ and $H^-\neq 0$. From \eqref{1-equality}, we write $\lambda^+$ as a known function of $\lambda^-$ as
	\begin{equation}\label{lp}
		\lambda^+= \frac{[u] + \lambda^- H^-}{H^+}
	\end{equation}
	and plug the above into $|\lambda^+| < a^+$ to get, after simple manipulations,
	\begin{equation}\label{A-Ale}
		-a^+ - \frac{[u]}{H^+} < \frac{H^-}{H^+}\lambda^- < a^+ - \frac{[u]}{H^+}.
	\end{equation}
	We have that $\lambda^-$ must obey simultaneously the two constraints \eqref{A-Ale} and
	\begin{equation}\label{B-Ale}
		-a^- <\lambda^- < a^-.
	\end{equation}
	Let assume now that $\frac{H^-}{H^+}>0$. Then equation \eqref{A-Ale} becomes
	\begin{equation}\label{Aprime-Ale}
		b_2< \lambda^- < b_1,
	\end{equation}
	where
	\begin{equation}\label{def-b1,b2}
		b_1= \frac{H^+}{H^-} \left( a^+ - \frac{[u]}{H^+} \right)\quad\mbox{and}\quad  b_2= \frac{H^+}{H^-} \left(- a^+ - \frac{[u]}{H^+} \right).
	\end{equation}
	%If we assume
	%\begin{equation}
	%b_1 \leq - a^- \quad \text{or} \quad a^-\leq b_2
	%\end{equation}
	%then no $\lambda^-$ exists satisfying both \eqref{Aprime-Ale} and \eqref{B-Ale}. Therefore, if there exists $\lambda^-$ satisfying \eqref{Aprime-Ale} and \eqref{B-Ale}, it must hold
	A necessary condition for \eqref{B-Ale} and \eqref{Aprime-Ale} to hold simultaneously
	is that
	\begin{equation}\label{C-Ale}
		b_1 > - a^- \quad \text{and} \quad  a^- > b_2.
	\end{equation}
	After some calculations, in view of the definition \eqref{def-b1,b2}, the above becomes equivalent to
	\begin{equation*}
		- \left(\frac{H^+}{H^-} a^+ + a^-\right) <\frac{[u]}{H_-}< \frac{H^+}{H^-} a^+ + a^- .
	\end{equation*}
	Assuming $H^- <0$ (hence $H^+<0$)  the above is equivalent to
	%\begin{equation*}
	%\left( H^+ a^+ + H^- a^-\right) < [u] < - \left( H^+ a^+ + H^- a^-\right),
	%\end{equation*}
	%that is equivalent to
	\begin{equation*}
		|[u]| < - \left( H^+ a^+ + H^- a^-\right)= |H^+| a^+ + |H^-| a^-,
	\end{equation*}
	which is the estimate \eqref{new-stability}.
	Similar calculation in the case $H^- >0$ and $H^+ >0$  still yield to \eqref{new-stability}.
	
	To sum up if $\frac{H^-}{H^+} >0$ we get that condition \eqref{new-stability} is at least necessary in order to find $\lambda^-$ satisfying \eqref{B-Ale} and \eqref{Aprime-Ale}, that is
	\begin{equation*}
		\max\{ -a^-, b_2\} < \lambda^- < \min\{a^-, b_1 \}.
	\end{equation*}
	If such $\lambda^-$ actually exists, then $\lambda^+$ will be defined by \eqref{lp}.
	
	\medskip
	From \eqref{A-Ale}, arguing in the same way as above when $\frac{H^-}{H^+}<0$ we obtain once again that \eqref{new-stability} is at least necessary for the existence of such a $\lambda^-$ satisfying both \eqref{A-Ale} and \eqref{B-Ale}, that in this case are equivalent to
	\begin{equation*}
		\max\{ -a^-, b_1\} < \lambda^- < \min\{a^-, b_2 \}\,,
	\end{equation*}
	where $b_1$ and $b_2$ are defined in \eqref{def-b1,b2}.
	
	\medskip
	\noindent
	To complete the proof, it remains to show that condition \eqref{new-stability} is also sufficient for the existence of functions $\lambda^\pm$ satisfying \eqref{0-equality} and \eqref{hyp-constraints}.
	\newline
	We already proved it in the simplest cases $H^+\neq0$ and $H^-=0$ or viceversa. Now let us do the same in the case $H^+\neq 0$ and $H^-\neq 0$. So let us assume that the background state satisfies condition \eqref{new-stability}. If we assume for a while that $\lambda^\pm$ exist and satisfy \eqref{0-equality} and \eqref{hyp-constraints}, we derive that
	\[
	\vert[u]\vert\le\vert\lambda^+\vert\vert H^+\vert+\vert\lambda^-\vert\vert H^-\vert< a^+\vert H^+\vert+a^-\vert H^-\vert\,,
	\]
	that is
	\begin{equation}\label{6-Ale}
		\frac{\vert[u]\vert}{a^+\vert H^+\vert+a^-\vert H^-\vert}<1\,.
	\end{equation}
	The last inequality \eqref{6-Ale} suggests to take $\lambda^\pm$ such that
	\begin{equation}\label{mod-lambda}
		\vert\lambda^\pm\vert=\frac{a^\pm\vert[u]\vert}{a^+\vert H^+\vert+a^-\vert H^-\vert}\,.
	\end{equation}
	Indeed from \eqref{6-Ale} it immediately follows that $\lambda^\pm$ as above satisfy the constraint \eqref{hyp-constraints}. Formula \eqref{mod-lambda} defines $\lambda^\pm$ up to the sign. One can directly check that taking
	\begin{equation}\label{lambdapm}
		\lambda^+=sgn(H^+)\frac{a^+ [u]}{a^+\vert H^+\vert+a^-\vert H^-\vert}\quad\mbox{and}\quad \lambda^-=-sgn(H^-)\frac{a^- [u]}{a^+\vert H^+\vert+a^-\vert H^-\vert}
	\end{equation}
	makes the equation \eqref{0-equality} to be satisfied. The above definitions of $\lambda^\pm$ are just equal to \eqref{def-lambda}. This ends the proof.
\end{proof}

	\begin{remark}\label{rhs-stability cond}
	In order to make a comparison between our stability condition \eqref{new-stability} and the {\em subsonic} part of the stability condition found in \cite{WangYARMA2013}, let us
consider the case of background piece-wise constant state
		\begin{equation}\label{backstate}
			\hat{\mathbf U}^\pm:=(\hat{\rho}^\pm, 0, \hat{u}^\pm_2, 0, \hat{H}_2^\pm, \hat S^\pm)^T\,.
		\end{equation}
		Then the right-hand side of estimate \eqref{new-stability} can be restated in terms of sound speeds $\hat c^\pm$ and the Alfv\'en speeds $\hat c^\pm_A=\frac{\vert\hat{\mathbf  H}^\pm\vert}{\sqrt{\rho^\pm}}=\frac{\vert\hat H^\pm_2\vert}{\sqrt{\rho^\pm}}$ as follows
		\[
		\hat a^+\vert \hat{H}_2^+\vert+\hat a^-\vert\hat{H}_2^-\vert=\frac{\hat c^+\hat c^+_A}{((\hat c^+)^2+(\hat c^+_A)^2)^{1/2}}+\frac{\hat c^-\hat c^-_A}{((\hat c^-)^2+(\hat c^-_A)^2)^{1/2}}
		\]
		and \eqref{new-stability} becomes
		\begin{equation}\label{new-stability-speed}
			\vert[\hat u_2]\vert<\frac{\hat c^+\hat c^+_A}{((\hat c^+)^2+(\hat c^+_A)^2)^{1/2}}+\frac{\hat c^-\hat c^-_A}{((\hat c^-)^2+(\hat c^-_A)^2)^{1/2}}\,.
		\end{equation}
		
\bigskip
In their two dimensional linear stability analysis \cite{WangYARMA2013}, Wang and Yu are able to perform a complete normal modes analysis of the linearized problem for an {\em isentropic flow}, when the planar piece-wise constant basic state \eqref{backstate} (without $\hat S^\pm$)	
%		\begin{equation*}
%			\hat{\mathbf U}^\pm:=(\hat{\rho}^\pm, 0, \hat{u}^\pm_2, 0, \hat{H}_2^\pm)^T
%		\end{equation*}
		satisfies suitable technical restrictions. Precisely, the constant components of $\hat{{\mathbf U}}^\pm$ are required to satisfy, besides the Rankine--Hugoniot conditions
		\begin{equation}\label{RH-Wang-Yu}
			\hat{u}_{2}^++\hat{u}_{2}^-=0,\quad \hat{u}_{2}^+>0,\quad \hat{p}^++\frac{(\hat{H}^+_2)^2}{2}=\hat{p}^-+\frac{(\hat{H}^+_{2})^2}{2}\,,
		\end{equation}
the following additional assumptions
		\begin{equation}\label{WY-restrictions}
			|\hat{H}_{2}^+|=|\hat{H}_{2}^-|=:|\hat{H}_2|,\quad \hat{c}>\hat{c}_A=\sqrt{\frac{|\hat{H}_2|^2}{\hat\rho}}
		\end{equation}
		(here the notation of \cite{WangYARMA2013} is adapted to our current setting).
	 Notice in particular that, since the flow is isentropic, the assumption
		$|\hat H_{2}^+|=|\hat H_{2}^-|$ and the last Rankine--Hugoniot condition in \eqref{RH-Wang-Yu} imply $\hat\rho^-=\hat\rho^+=:\hat\rho$ so that the sound speed and the Alfv\'{e}n speed take the same value $\hat c=c(\hat\rho):=\sqrt{p^\prime(\hat\rho)}$ and $\hat c_A:=\sqrt{\frac{\vert\hat H_2\vert^2}{\hat\rho}}$ on both $\pm$-states of the flow.
		\newline
		The linear stability conditions by Wang-Yu \cite{WangYARMA2013} read
		as follows:
\begin{equation}\label{WYstability}		
		(\hat{u}_{2}^+)^2<\hat c^2-\hat c^2\sqrt{\frac{\hat c^2-\hat c^2_A}{\hat c^2+\hat c^2_A}}\qquad\text{or}\qquad(\hat{u}^+_2)^2>\hat c^2+\hat c^2\sqrt{\frac{\hat c^2-\hat c^2_A}{\hat c^2+\hat c^2_A}}
		\,.
		\end{equation}
		The left inequality in \eqref{WYstability}
%		\begin{equation}\label{WYsubsonic}
%			(\hat u_2^+)^2<\hat c^2-\hat c^2\sqrt{\frac{\hat c^2-\hat c_A^2}{\hat c^2+\hat c_A^2}}\,.
%		\end{equation}
		identifies a ``subsonic'' weak stability region, becoming empty in the absence of a magnetic field (namely for compressible Euler equations), whereas the right inequality corresponds to the ``supersonic'' weak stability region of $2D$ compressible vortex sheets (to which it reduces, when formally setting $\hat c_A=0$). With respect to \eqref{WYstability}, our stability condition in \eqref{new-stability-speed} provides only a {\em subsonic} stability domain. For a planar isentropic background state obeying \eqref{WY-restrictions}, condition \eqref{new-stability-speed} reduces to
%		Then the subsonic part reads
%		\begin{equation}\label{WYsubsonic}
%			(\hat u_2^+)^2<\hat c^2-\hat c^2\sqrt{\frac{\hat c^2-\hat c_A^2}{\hat c^2+\hat c_A^2}}\,.
%		\end{equation}
%		In the case of a piecewise constant background state $\hat {\mathbf U}^\pm$ under the same restrictions as in \eqref{WY-restrictions}, condition \eqref{new-stability-speed} reduces to
		%\[
		%\vert[\hat u_2]\vert=2\vert\hat u_2^+\vert<2\frac{\hat c_A\hat c}{((\hat c)^2+(\hat c_A)^2)^{1/2}},
		%\]
		%which leads to
		\begin{equation}\label{new-stability-speed-reduced}
			(\hat u_2^+)^2<\frac{\hat c^2_A\hat c^2}{(\hat c)^2+(\hat c_A)^2}.
		\end{equation}
		To compare the left inequality in \eqref{WYstability} and \eqref{new-stability-speed-reduced}, one can easily check that
		\[
		\frac{\hat c^2_A\hat c^2}{(\hat c)^2+(\hat c_A)^2}<\hat c^2-\hat c^2\sqrt{\frac{\hat c^2-\hat c_A^2}{\hat c^2+\hat c_A^2}}\,,
		\]
		so that our subsonic condition \eqref{new-stability-speed-reduced} is more restrictive than the one in \cite{WangYARMA2013}.
		\newline
		Let us also note that we are not able, by our approach, to obtain a counterpart of the {\em supersonic} condition in \cite{WangYARMA2013} for isentropic flows in the constant coefficients case (that is the right inequality in \eqref{WYstability}). On the other hand, our stability condition applies also to {\em nonisentropic} flows and to general ({\em non piece-wise constant}) background states.
	\end{remark}

We are now in the position to state the well-posedness of the ``homogeneous" linearized problem \eqref{ho2}.
\begin{theorem}\label{th-wp-hom}
Let all assumptions \eqref{hy}-\eqref{c2} be satisfied for the basic state \eqref{basic} (note that \eqref{stability1} implies the stability condition \eqref{new-stability}). Then, for all $\mathbf{F}\in H^1_{\ast}(\Omega_T)$ that vanish in the past, problem \eqref{ho2} has a unique solution $(\dot{{\mathbf U}}^\natural,\varphi)\in H^{1}_{\ast}(\Omega_T)\times H^1(\Gamma_T).$ The solution satisfies the following a priori estimate
\begin{equation}\label{est-wp-hom}
||\dot{{\mathbf U}}^\natural||_{1,\ast,T}+||\varphi||_{H^1(\Gamma_T)}\leq C ||\mathbf{F}||_{1,\ast,T},
\end{equation}
where $C=C(K,T)>0$ is a constant independent of the data $\mathbf{F}$.
\end{theorem}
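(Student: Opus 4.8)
The plan is to obtain the a priori estimate \eqref{est-wp-hom} by the dissipative symmetrizer technique for the symmetrized system \eqref{BBB}, and then deduce existence by a standard duality argument combined with the theory of symmetric hyperbolic systems with characteristic boundary of constant multiplicity (Rauch, Ohno--Shizuta--Yanagisawa). The core is the $L^2$-type estimate, i.e. the case $m=1$ reduced (via the norm \eqref{norma-equiv}) to controlling $\|\mathbf V(t)\|_{L^2(\Omega)}$, $\|\partial_t\mathbf V(t)\|_{L^2(\Omega)}$, the tangential derivatives $\|\sigma\partial_1\mathbf V\|$, $\|\partial_2\mathbf V\|$, and the boundary term $\|\varphi\|_{H^1(\Gamma_T)}$; the estimate of $\partial_1\dot{\mathbf U}$ (the normal derivative of the noncharacteristic part) is recovered afterwards from the equation, using the constant-rank structure \eqref{A1-rank4} together with the divergence constraint \eqref{div2}.

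First I would work with the secondarily symmetrized form \eqref{BBB}: multiply by $\mathbf V$, integrate over $\Omega_t$, and integrate by parts. Since $\mathcal B_0>0$ uniformly by the hyperbolicity condition \eqref{B01} (guaranteed for the basic state by the cut-off choice \eqref{lambda} of $\hat\lambda$), the volume terms produce $\|\mathbf V(t)\|_{L^2}^2$ plus lower-order terms absorbed by Gr\"onwall, and the only boundary contribution is $-\int_{\Gamma_t}(\mathcal B_1\mathbf V,\mathbf V)$. The decisive point is that, by \eqref{boundary-q-form-lambda}--\eqref{lot_1}, this boundary quadratic form equals
\[
(\mathcal B_1\mathbf V,\mathbf V)=2[\hat u_2-\hat\lambda\hat H_2]\,\dot q^+\partial_2\varphi+\mbox{l.o.t.}\,,
\]
and with the choice of $\lambda^\pm$ from Lemma \ref{lemma-Paolo-Ale-Paola}, which is exactly where the stability condition \eqref{new-stability} (equivalently \eqref{stability1}) is used, the coefficient $[\hat u_2-\hat\lambda\hat H_2]$ vanishes on $\{x_1=0\}$. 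Hence the leading boundary term disappears, and the ``l.o.t.'' terms — all proportional to $\varphi$ times a derivative of the basic state — are handled by rewriting the boundary integrals as volume integrals over $\Omega_t$ via the lifting $\mathcal R_T$ of $\varphi$ and integrating by parts, as indicated after \eqref{lot_1}, producing only quantities controlled by $\|\mathbf V\|_{L^2(\Omega_t)}$, $\|\varphi\|_{L^2(\Gamma_t)}$ and $\|\partial_t\varphi\|_{L^2(\Gamma_t)}$. To close on $\varphi$ itself one uses the boundary conditions in \eqref{Be1}: the first two rows give $\partial_t\varphi$ in terms of traces of $\dot{\mathbf U}$ and $\varphi$, yielding $\|\varphi\|_{H^1(\Gamma_T)}$ once a trace estimate on $\dot{\mathbf U}$ is available, and the difference of the two rows gives the ``eikonal-type'' transport equation for $\varphi$.

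To upgrade from $L^2$ to $H^1_\ast$, the same energy identity is applied to the tangential derivatives $D^\alpha_\ast\mathbf V$ with $\langle\alpha\rangle\le 1$ (i.e. $\partial_t$, $\partial_2$, $\sigma\partial_1$), using that these operators commute with the boundary up to harmless lower-order commutators and that the weight $\sigma$ kills the normal derivative at $x_1=0$ so no new boundary terms of bad sign appear; the commutator terms are bounded by the coefficients' $W^{1,\infty}$-norms from \eqref{space}. The normal derivative $\partial_1\dot{\mathbf U}$ is then recovered from the equation itself: because $\mathcal A_1|_{x_1=0}=\mathrm{diag}(\mathcal E_{12},-\mathcal E_{12})$ has constant rank $4$, the equation expresses $\partial_1$ of the noncharacteristic components ($\dot q$, $\dot u_n$) through already-controlled quantities, while $\partial_1$ of the characteristic components is supplied by the divergence constraint \eqref{div2} together with the entropy and remaining evolution equations. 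Finally, existence follows in the usual way: the a priori estimate, applied also to a suitably defined dual (backward) problem, gives weak solutions by the Riesz/Lax--Milgram argument, and the extra regularity of $\mathbf F$ together with the characteristic-boundary regularity theory promotes them to $H^1_\ast$. The main obstacle, and the crux of the whole argument, is the boundary quadratic form: everything hinges on the algebraic miracle that the secondary symmetrizer $\mathcal S(\hat{\mathbf U})$ combined with the admissible range \eqref{hyp-constraints} for $\hat\lambda$ — available precisely under \eqref{new-stability} — makes the principal boundary term vanish identically, leaving only manageable lower-order terms; verifying that the residual ``l.o.t.'' are genuinely harmless (no hidden loss of a tangential derivative on $\varphi$) requires the careful integration-by-parts bookkeeping deferred to Appendix \ref{proof-Thm-5-1}.
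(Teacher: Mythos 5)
Your overall architecture coincides with the paper's proof in Appendix \ref{proof-Thm-5-1}: energy identities for the symmetrized system \eqref{BBB} and for its tangential derivatives $\partial_t{\mathbf V}$, $\partial_2{\mathbf V}$, $\sigma\partial_1{\mathbf V}$, with the stability condition entering exactly through Lemma \ref{lemma-Paolo-Ale-Paola} so that the leading boundary term $2[\hat u_2-\hat\lambda\hat H_2]\dot q^+\partial_2\varphi$ vanishes, the normal derivative of the noncharacteristic block recovered from the system and from \eqref{div2}, and Gr\"onwall at the end. The genuine gap is in how you control the front, i.e. $\|\varphi\|_{H^1(\Gamma_T)}$ and, implicitly, the derivatives of $\varphi$ that reappear in the lower-order boundary terms once the energy identity is differentiated in $t$ or $x_2$. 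You propose to get $\partial_2\varphi$ from the difference of the two kinematic conditions in \eqref{Be1}; that relation reads $[\hat u_2]\partial_2\varphi=[\dot u_N]+\varphi[\partial_1\hat u_N]$ (the last bracket in the sense of \eqref{norm_jump}) and is solvable for $\partial_2\varphi$ only where $[\hat u_2]\neq0$, which \eqref{stability1} does not guarantee (it may vanish identically). The alternative of differentiating the transport equation $\partial_t\varphi+\hat u^+_2\partial_2\varphi=\dot u^+_N|_{x_1=0}+\varphi\,\partial_1\hat u^+_N$ in $x_2$ is also unavailable at this regularity: it would require $\partial_2\dot u^+_N|_{x_1=0}\in L^2(\Gamma_T)$, i.e. an $H^1$ trace, whereas $\dot{\mathbf U}\in H^1_\ast(\Omega_T)$ only yields $L^2$ traces (Lemma \ref{tra}).

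The missing ingredient is the magnetic boundary constraint \eqref{Hn2} together with Remark \ref{H_2nonzero}: since \eqref{stability1} forces $(\hat H_2^+)^2+(\hat H_2^-)^2>0$ on the boundary, the two relations $\hat H_2^{\pm}\partial_2\varphi=\dot H^{\pm}_N\pm\varphi\,\partial_1\hat H^{\pm}_N$ can be solved algebraically for $\partial_2\varphi$ (formula \eqref{d2phi}) and then, via the kinematic condition, for $\partial_t\varphi$ (formula \eqref{dtphi}), purely in terms of \emph{underived} traces of the noncharacteristic unknowns and of $\varphi$; these traces are controlled through the trace-type inequality \eqref{trace-in} by the $L^2(\Omega_t)$ norms of ${\mathbf V}$ and $\partial_1{\mathbf V}_n$. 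This is what yields $\|\nabla_{t,x_2}\varphi\|_{L^2(\Gamma_T)}\lesssim\|{\mathbf F}\|_{1,\ast,T}$, and the same substitution is what renders the ``l.o.t.'' in the $\partial_2$- and $\partial_t$-energy identities harmless: terms such as $\hat c\,\partial_2\dot q^+\partial_2\varphi$ or $\hat c\,\partial_t\dot q^+\partial_t\varphi$ are first rewritten via \eqref{d2phi}, \eqref{dtphi} and only then moved to volume integrals and integrated by parts (with an $\varepsilon$-absorption against the normal-derivative quantity at time $t$ in the $\partial_t$ case). Without this step the deferred bookkeeping cannot be closed: one loses a tangential derivative either on $\varphi$ or on the trace of ${\mathbf V}$. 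Incidentally, \eqref{div2} supplies $\partial_1\dot H^{\pm}_n$, which belongs to the noncharacteristic block $(\dot q^\pm,\dot u^\pm_n,\dot H^\pm_n)$, not to the characteristic one; no normal derivatives of the characteristic components are needed for the $H^1_\ast$ norm. The remainder of your plan (no boundary term for the $\sigma\partial_1$ derivative, recovery of $\partial_1{\mathbf V}_n$ from the equation, duality for existence) matches the paper.
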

The proof of Theorem \ref{th-wp-hom} is given in Appendix \ref{proof-Thm-5-1}.

\smallskip
As a consequence of Lemma \ref{lemma4.2} we have the following well-posedness result for the nonhomogeneous problem \eqref{IVP2}.
\begin{theorem}
Let all assumptions of Theorem \ref{th-wp-hom} be satisfied for the basic state \eqref{basic}. Then, for all $(\mathbf{f},\mathbf{g})\in H^3_{\ast}(\Omega_T)\times H^{3}(\Gamma_T)$ that vanish in the past, problem \eqref{IVP2} has a unique solution $(\dot{{\mathbf U}},\varphi)\in H^{1}_{\ast}(\Omega_T)\times H^1(\Gamma_T)$. The solution satisfies the following a priori estimate
\begin{equation}
||\dot{{\mathbf U}}||_{1,\ast,T}+||\varphi||_{H^1(\Gamma_T)}\leq C\Big(||\mathbf{f}||_{3,\ast,T}+||\mathbf{g}||_{H^3(\Gamma_T)}\Big),
\end{equation}
where $C=C(K,T)>0$ is a constant independent of the data $\mathbf{f},\mathbf{g}.$
\end{theorem}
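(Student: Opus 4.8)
The plan is to obtain the statement as a direct consequence of the reduction Lemma~\ref{lemma4.2} applied with $s=1$, whose only hypothesis (well-posedness of the homogeneous problem \eqref{ho2} in $H^1_\ast$) is precisely Theorem~\ref{th-wp-hom}; the quantitative estimate will then be assembled from \eqref{est-wp-hom}, the source-term bound \eqref{estimate}, and the $H^3_\ast$-bound for the lifting $\tilde{\mathbf U}$ introduced in Section~\ref{hom_Sect}. Since problem \eqref{IVP2} is linear, it suffices to prove existence together with the a priori estimate; uniqueness then follows by applying the estimate to the difference of two solutions corresponding to the same data $(\mathbf f,\mathbf g)$.

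For existence, given $(\mathbf f,\mathbf g)\in H^3_\ast(\Omega_T)\times H^3(\Gamma_T)$ vanishing in the past, I would first run the reduction of Section~\ref{hom_Sect}: solve the transport equations \eqref{G} for $g_3^\pm$, construct the lifting $\tilde{\mathbf U}\in H^3_\ast(\Omega_T)$ of the boundary data $(\mathbf g,g_3^+,g_3^-)$ that matches the boundary conditions of \eqref{IVP2} with $\varphi=0$ as well as \eqref{g3} and \eqref{F}--\eqref{G} with $\varphi=0$, and set $\dot{\mathbf U}^\natural:=\dot{\mathbf U}-\tilde{\mathbf U}$, which has to solve \eqref{ho2} with source $\mathbf F=\mathbf f-\mathbb{L}'_e(\hat{\mathbf U},\hat\Psi)\tilde{\mathbf U}$. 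This $\mathbf F$ vanishes in the past and, by \eqref{estimate} with $s=1$, belongs to $H^1_\ast(\Omega_T)$ with
\[
\|\mathbf F\|_{1,\ast,T}\le C\big(\|\mathbf f\|_{3,\ast,T}+\|\mathbf g\|_{H^3(\Gamma_T)}\big)\,.
\]
Next, since all the assumptions \eqref{hy}--\eqref{c2} on the basic state hold (and \eqref{stability1} implies \eqref{new-stability}), Theorem~\ref{th-wp-hom} yields a unique $(\dot{\mathbf U}^\natural,\varphi)\in H^1_\ast(\Omega_T)\times H^1(\Gamma_T)$ satisfying \eqref{est-wp-hom}. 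Then $\dot{\mathbf U}:=\dot{\mathbf U}^\natural+\tilde{\mathbf U}$ solves \eqref{IVP2} and lies in $H^1_\ast(\Omega_T)$ because $\tilde{\mathbf U}\in H^3_\ast(\Omega_T)\subset H^1_\ast(\Omega_T)$. To close the estimate I would bound $\|\tilde{\mathbf U}\|_{1,\ast,T}\le\|\tilde{\mathbf U}\|_{3,\ast,T}$ through the anisotropic lifting theorem (Lemma~\ref{tra}), the estimate \eqref{stima_g3} on $(g_3^+,g_3^-)$ and the trace bound $\| f_n|_{x_1=0}\|_{H^2(\Gamma_T)}\le C\|\mathbf f\|_{3,\ast,T}$, which gives $\|\tilde{\mathbf U}\|_{3,\ast,T}\le C(\|\mathbf f\|_{3,\ast,T}+\|\mathbf g\|_{H^3(\Gamma_T)})$; combining this with \eqref{est-wp-hom} and the bound on $\|\mathbf F\|_{1,\ast,T}$ produces the asserted inequality. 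For uniqueness, when $(\mathbf f,\mathbf g)=0$ one has $\mathbf F=0$ and $\tilde{\mathbf U}=0$, hence $(\dot{\mathbf U}^\natural,\varphi)=0$ by Theorem~\ref{th-wp-hom} and therefore $\dot{\mathbf U}=0$.

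The genuinely delicate point, already carried out in Section~\ref{hom_Sect}, is the construction of the lifting $\tilde{\mathbf U}\in H^{s+2}_\ast$ that simultaneously realizes the boundary data of \eqref{IVP2}, the auxiliary boundary identity \eqref{g3} and the interior/boundary transport relations \eqref{F}--\eqref{G}, exploiting the freedom in the characteristic components $\tilde H_2^\pm$ and $\tilde u_2^\pm$, together with the check that the resulting $\dot{\mathbf U}^\natural$ obeys the constraints \eqref{div2}--\eqref{Hn2} so that Theorem~\ref{th-wp-hom} is applicable; this construction is also the source of the loss of two derivatives from $(\mathbf f,\mathbf g)$ to $(\dot{\mathbf U},\varphi)$. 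Since Section~\ref{hom_Sect}, Lemma~\ref{lemma4.2}, estimate \eqref{estimate} and Theorem~\ref{th-wp-hom} are all available, the present proof amounts to the routine composition of these ingredients described above.
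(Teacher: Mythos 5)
Your proposal is correct and follows essentially the same route as the paper: the theorem is obtained exactly as a consequence of the reduction of Section \ref{hom_Sect} (Lemma \ref{lemma4.2} with $s=1$, together with the source bound \eqref{estimate} and the lifting estimates \eqref{stima_F}--\eqref{stima_g3}) combined with the homogeneous well-posedness Theorem \ref{th-wp-hom} and estimate \eqref{est-wp-hom}. The bookkeeping of regularities ($\tilde{\mathbf U}\in H^3_\ast$, $\mathbf F\in H^1_\ast$, loss of two derivatives from the lifting) and the uniqueness argument by linearity match the paper's intended proof.
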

\section{Higher-order energy estimates for the homogeneous problem \eqref{ho2}}\label{tameestimate}
In order to get an a \textit{priori} tame estimate in $H^s_{\ast}$ for solution of problem \eqref{IVP2} with $s$ sufficiently large, as a preliminary step we derive a tame estimate for the homogenous problem \eqref{ho2}, that we state in the following theorem.

For shortness, in all this section we write $\dot{{\mathbf U}}^\natural=\dot{{\mathbf U}}$.

\begin{theorem}\label{tame}
	Let $T>0$ and $s$ be positive integer, $s\geq6.$
	Assume that the basic state $(\breve{\mathbf U}^{\pm},\hat{\varphi})\in H^{s+4}_{\ast}(\Omega_T)\times H^{s+5}(\Gamma_T),$
	\begin{equation}\label{assumption}
		||\breve{\mathbf U}^{\pm}||_{9,\ast,T}+||\hat{\varphi}||_{H^{10}(\Gamma_T)}\leq\hat{K}, \text{ where } \breve{\mathbf U}^{\pm}:=\hat{\mathbf {U}}^{\pm}-\bar{\mathbf{U}}.
	\end{equation}
	Assume that $\mathbf{F}\in H^s_{\ast}(\Omega_T)$ vanishes in the past. Then, there exists a positive constant $K_0,$ that does not depend on $s$ and $T,$ and there exists a constant $C(K_0)>0$ such that if $\hat{K}\leq K_0,$ then there exists a unique solution $(\dot{{\mathbf U}},\varphi)\in H^s_{\ast}(\Omega_T)\times H^s(\Gamma_T)$ to homogeneous problem \eqref{ho2} satisfying the following estimate:
	\begin{equation}\label{tameess2}
		\begin{split}
			||\dot{{\mathbf U}}||_{s,\ast,T}+||\varphi||_{H^s(\Gamma_T)}\leq C(K_0)\Big(||\mathbf{F}||_{s,\ast,T}+||\mathbf{F}||_{6,\ast,T}||\hat{W}||_{s+4,\ast,T}\Big)\,,
		\end{split}
	\end{equation}
	for $T$ small enough, where $\hat{W}=(\breve{\mathbf{U}},\nabla_{t,\bf x}\hat{\Psi}).$
\end{theorem}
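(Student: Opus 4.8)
The plan is to derive the tame a priori estimate \eqref{tameess2} by differentiating the homogeneous problem \eqref{ho2} in the tangential and conormal directions, re-running on the differentiated system the dissipative-symmetrizer energy argument of Appendix~\ref{proof-Thm-5-1} that underlies Theorem~\ref{th-wp-hom}, and then recovering the genuine normal regularity from the interior equations; the existence and uniqueness of the $H^s_\ast\times H^s$ solution will follow from this estimate together with the $H^1_\ast$ well-posedness of Theorem~\ref{th-wp-hom} by a standard regularization/duality procedure. Since the smallness $\hat K\le K_0$ (together with Remark~\ref{dato_iniziale} and the embeddings \eqref{sobolev1}--\eqref{sobolev2}) ensures that the basic state $\hat{\mathbf U}^\pm=\bar{\mathbf U}^\pm+\breve{\mathbf U}^\pm$, $\hat\varphi$ still satisfies the hyperbolicity, Rankine--Hugoniot and stability constraints \eqref{hy}--\eqref{c2} with uniform constants, the energy method is available; the whole point is to keep the right-hand side in the \emph{tame} form, i.e.\ to write every commutator as a product whose top-order factor carries at most $s$ derivatives of $\dot{\mathbf U}$ or at most $s+4$ derivatives of $\hat W=(\breve{\mathbf U},\nabla_{t,\mathbf x}\hat\Psi)$, the remaining factors being placed in the fixed norms $W^{1,\infty}_\ast$, $W^{2,\infty}_\ast$ controlled through \eqref{assumption}.

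First I would estimate the purely tangential derivatives. Working with the unknown $\mathbf V$ and the symmetric form \eqref{BBB} (for which $\mathcal B_0>0$), for a multi-index $\beta=(\beta_0,0,\beta_2,0)$ with $\langle\beta\rangle\le s$ I apply $D^\beta_\ast=\partial_t^{\beta_0}\partial_2^{\beta_2}$ to \eqref{BBB} and to the boundary conditions \eqref{Be1}, \eqref{Hn2}, obtaining a problem of the same type for $(D^\beta_\ast\mathbf V,\partial^\beta\varphi)$, with interior source $D^\beta_\ast\tilde{\mathcal F}$ plus a commutator $\mathcal C_\beta(\mathbf V)$ built from derivatives of the coefficients $\mathcal B_\alpha,\mathcal B_3$ (hence of $\hat{\mathbf U},\hat\Psi$), and an analogous boundary source. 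Re-running the energy argument of Appendix~\ref{proof-Thm-5-1}, the decisive structural fact is that, with $\hat\lambda$ chosen as in Lemma~\ref{lemma-Paolo-Ale-Paola}, the leading boundary quadratic form $2[\hat u_2-\hat\lambda\hat H_2]\dot q^+\partial_2\varphi$ of \eqref{boundary-q-form_lambda_1} vanishes, so that $(\mathcal B_1\mathbf V\cdot\mathbf V)|_{x_1=0}$ reduces to the l.o.t.\ of \eqref{lot_1}, which are turned into interior integrals by integration by parts; this structure persists after tangential differentiation. One thus gets
\[
|||D^\beta_\ast\dot{\mathbf U}(t)|||_{0,\ast}^2+\|\partial^\beta\varphi(t)\|_{L^2(\Gamma)}^2\lesssim\int_{-\infty}^{t}\!\Big(\|D^\beta_\ast\mathbf F\|_{L^2(\Omega)}^2+\|\mathcal C_\beta(\mathbf V)\|_{L^2(\Omega)}^2+\text{bdry terms}\Big)\,ds\,,
\]
and, after summation over $\langle\beta\rangle\le s$ and use of the trace theorem Lemma~\ref{tra} for the boundary commutators, control of the tangential part of $\|\dot{\mathbf U}\|_{s,\ast,T}$ and of all of $\|\varphi\|_{H^s(\Gamma_T)}$.

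Next I would recover the normal derivatives. Since the boundary is characteristic of constant multiplicity, $\mathcal A_1|_{x_1=0}=\mathrm{diag}(\mathcal E_{12},-\mathcal E_{12})$ having rank $4$ (see \eqref{A1-rank4}), only the four noncharacteristic components $\dot q^\pm,\dot u_N^\pm$ of $\mathbf V$ need genuine normal regularity, and they can be solved from the interior system \eqref{A} as $\partial_1(\text{noncharacteristic part})=(\text{smooth coefficients})\times(\text{tangential and conormal derivatives of }\mathbf V)+\text{l.o.t.}$ Inserting this identity repeatedly converts each extra $\partial_1$ applied to the noncharacteristic part into one unit of the weight counted in $\langle\alpha\rangle$; hence an induction on $\alpha_3$, together with the conormal bounds for the $(\sigma\partial_1)$-derivatives (handled exactly as the tangential ones above) and the $L^2$-type control on the characteristic components, yields the full norm $\|\dot{\mathbf U}\|_{s,\ast,T}$.

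It remains to put the right-hand side in tame form and close. All interior and boundary commutators are products of derivatives of the coefficients ($\hat{\mathbf U},\hat\Psi$, hence of $\hat W$, with the derivatives of $\tilde B_1$ already carrying one derivative of $\hat\Psi$ and those from the zero-order matrix $C(\hat{\mathbf U},\hat\Psi)$ already carrying one derivative of $\hat{\mathbf U}$) times derivatives of $\dot{\mathbf U}$; applying the anisotropic Moser inequalities \eqref{moser3}--\eqref{moser6} I split each product so that the top-order factor is either $\dot{\mathbf U}$, bounded by $\|\dot{\mathbf U}\|_{s,\ast,T}$ and absorbed on the left thanks to $\hat K\le K_0$ and (for $T$ small) the smallness of the time integral, or $\hat W$, bounded by $\|\hat W\|_{s+4,\ast,T}$ with companion factor $\|\dot{\mathbf U}\|_{6,\ast,T}\lesssim\|\mathbf F\|_{6,\ast,T}$ supplied by Theorem~\ref{th-wp-hom} and the embeddings; the remaining factors go into $W^{1,\infty}_\ast$ or $W^{2,\infty}_\ast$ and are controlled by \eqref{sobolev1}--\eqref{sobolev2} and \eqref{assumption}. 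A Gronwall argument in $t$ (or simply taking $T$ small) then gives \eqref{tameess2}. I expect the main obstacle to be precisely the bookkeeping of the last two paragraphs: arranging the commutator terms — especially those generated by $C(\hat{\mathbf U},\hat\Psi)$ and by the $\hat\Psi$-dependence of $\tilde B_1$ — to split in exactly the claimed tame form with $s+4$ derivatives on $\hat W$, while simultaneously managing the interplay between the conormal-to-normal reduction and the boundary so that no uncontrolled trace of the characteristic part of $\dot{\mathbf U}$ ever enters the estimate.
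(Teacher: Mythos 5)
Your overall skeleton coincides with the paper's (Section \ref{tameestimate}): differentiate the symmetrized homogeneous problem in the tangential/conormal directions, exploit the vanishing of the leading boundary quadratic form guaranteed by Lemma \ref{lemma-Paolo-Ale-Paola}, recover the normal derivatives of the noncharacteristic part from the interior equations, and close with Moser-type inequalities and $T$ small. However, the way you close the low norm is wrong as stated: you claim $\|\dot{\mathbf U}\|_{6,\ast,T}\lesssim\|\mathbf F\|_{6,\ast,T}$ is ``supplied by Theorem \ref{th-wp-hom} and the embeddings''. Theorem \ref{th-wp-hom} only gives an $H^1_\ast\times H^1$ bound in terms of $\|\mathbf F\|_{1,\ast,T}$, and no embedding upgrades an $H^1_\ast$ bound to an $H^6_\ast$ bound on the solution. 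The correct closure is a bootstrap internal to the tame estimate itself (this is exactly \eqref{7.9}--\eqref{7.10} in the paper): first prove, for every $s\ge 6$, the pre-tame inequality with right-hand side $\|\mathbf F\|_{s,\ast,T}+(\|\dot{\mathbf U}\|_{6,\ast,T}+\|\varphi\|_{H^6(\Gamma_T)}+\|\mathbf F\|_{4,\ast,T})\|\hat W\|_{s+4,\ast,T}$; then specialize to $s=6$, where \eqref{assumption} gives $\|\hat W\|_{10,\ast,T}\le\hat K\le K_0$, so that the term carrying $\|\dot{\mathbf U}\|_{6,\ast,T}$ can be absorbed for $T$ small, yielding $\|\dot{\mathbf U}\|_{6,\ast,T}+\|\varphi\|_{H^6(\Gamma_T)}\le C(K_0)\|\mathbf F\|_{6,\ast,T}$; finally substitute this back into the general-$s$ inequality. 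Without this step your tame form \eqref{tameess2} does not close.

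Second, the part you defer as ``bookkeeping'' is in fact the substance of the theorem, and your diagnosis of where the $s+4$ loss comes from is off. The interior commutators generated by $C(\hat{\mathbf U},\hat\Psi)$ and by the $\hat\Psi$-dependence of $\tilde B_1$, as well as the normal-derivative recovery, only cost $\|\hat W\|_{s+2,\ast,t}$ (this is the quantity $\mathcal M(t)$ in \eqref{defM}). The two extra derivatives, up to $s+4$, are produced exclusively in the purely tangential case $\alpha_1=\alpha_3=0$ by the zero-order terms in $\varphi$ of the boundary operator \eqref{Be1} ($\varphi\partial_1\hat q$, $\varphi\partial_1\hat u_N$, $\varphi\hat\lambda\partial_1\hat H_N$): after distributing $s$ tangential derivatives and converting the boundary integrals into volume integrals, one must control $\partial_1^2 D^{\alpha''}_\ast$ of the coefficients, i.e.\ $\hat W$ at level $s+4$ (the terms $\mathcal J_0$ and $\Sigma_4$ in Lemma \ref{estimate3}). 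Moreover, when tangential derivatives fall on $\varphi$ one must invoke the identity \eqref{Hc} (a consequence of the boundary constraint \eqref{Hn2} and the boundary condition) to trade $\nabla_{t,x_2}\varphi$ for traces of the noncharacteristic unknown $\mathbf V_n$, and the divergence constraint \eqref{div2} is what supplies $\partial_1\dot H^\pm_n$ — your rank-$4$ reduction recovers only $\partial_1\dot q^\pm$ and $\partial_1\dot u^\pm_n$ from the system. Since your proposal neither carries out this boundary analysis nor identifies these mechanisms, the precise quantitative content of \eqref{tameess2} — a loss of exactly four derivatives on $\hat W$ multiplied by the fixed norm $\|\mathbf F\|_{6,\ast,T}$ — is not established.
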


To prove the above theorem we need to obtain several higher-order energy estimates.

\subsection{Estimate of the normal derivative of the ``non-characteristic'' unknown}
In this section we will prove the estimate of the normal derivative of the ``non-characteristic'' unknown
$${\mathbf V}_n=({\mathbf V}^+_n,{\mathbf V}^-_n),\quad {\mathbf V}^{\pm}_n=(\dot{q}^{\pm},\dot{u}^{\pm}_n,\dot{H}^{\pm}_n).$$
Let $s$ be positive integer, we need to estimate $\partial_1{\mathbf V}_n$ in $H^{s-1}_{\ast},$  obtained from \eqref{A} and the divergence constraint \eqref{div2} as follows
\begin{equation}\label{Vn}
\partial_1{\mathbf V}^+_n=\left[\begin{array}{c}
 \mathcal{K}_{1}\\
 \mathcal{K}_{2}\\
  -\partial_2(\dot{H}^+_2\partial_1\hat{\Phi}^+)
  \end{array}\right],\quad \partial_1{\mathbf V}^-_n=\left[\begin{array}{c}
 \mathcal{K}_{7}\\
 \mathcal{K}_{8}\\
  -\partial_2(\dot{H}^-_2\partial_1\hat{\Phi}^-)
  \end{array}\right]\,.
\end{equation}
Here $\mathcal{K}_{i}\in\R$ is the $i-$th scalar component of the vector
\begin{equation}\label{R}
\mathcal{K}:=\tilde{\mathcal{A}}\Big(\mathcal{F}-\mathcal{A}_0\partial_t{\mathbf V}-\mathcal{A}_2\partial_2{\mathbf V}-\mathcal{A}_3{\mathbf V}-\mathcal{A}_{(0)}\partial_1{\mathbf V}\Big),
\end{equation}
where $\tilde{\mathcal{A}}=\mathrm{diag}\Big((\partial_1\hat{\Phi}^+)\mathcal{E}_{12},(\partial_1\hat{\Phi}^-)\mathcal{E}_{12}\Big),$ $\partial_1\hat{\Phi}^{\pm}=\pm1+\partial_1\hat{\Psi}^{\pm},$ $\mathcal{F}=J^T\mathbf{F}.$

Now, we are ready to prove the following Lemma, which is needed in the proof of weighted normal derivatives and non-weighted tangential derivatives, see Section \ref{wd} and Section \ref{nonwt}.
\begin{lemma}\label{noncha} The following estimate
\begin{equation}\label{noncha3}
||\partial_1{\mathbf V}_n||^2_{s-1,\ast,t}\leq C(K)\mathcal{M}(t),
\end{equation}
with
\begin{equation}\label{defM}
\begin{split}
\mathcal{M}(t)&=||({\mathbf V},\mathbf{F})||^2_{s,\ast,t}+(||\dot{{\mathbf U}}||^2_{W^{2,\infty}_{\ast}(\Omega_t)}+||\mathbf{F}||^2_{W^{1,\infty}_{\ast}(\Omega_t)})||\hat{W}||^2_{s+2,\ast,t}\\
\end{split}
\end{equation}
holds for problem \eqref{ho2} for all $t\leq T.$
\end{lemma}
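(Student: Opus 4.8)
The plan is to estimate the normal derivative $\partial_1{\mathbf V}_n$ directly from the explicit representation \eqref{Vn}--\eqref{R}, in which $\partial_1{\mathbf V}_n$ is written algebraically in terms of $\mathcal F$, the tangential derivatives $\partial_t{\mathbf V}$, $\partial_2{\mathbf V}$, the zeroth-order term $\mathcal A_3{\mathbf V}$, the factor $\mathcal A_{(0)}\partial_1{\mathbf V}$ (whose entries vanish on $\{x_1=0\}$), and the tangential derivative $\partial_2(\dot H_2^\pm\partial_1\hat\Phi^\pm)$ coming from the divergence constraint \eqref{div2}. The key observation is that every term on the right-hand side of \eqref{R} involves only operators that belong to the class $D^\alpha_\ast$ with $\langle\alpha\rangle$ dropping by one from what we want to control, or is multiplied by the weight $\sigma$ (via the coefficients of $\mathcal A_{(0)}$ that vanish at the boundary, hence are $O(\sigma)$ there), so that applying $D^\alpha_\ast$ with $\langle\alpha\rangle\le s-1$ and taking $L^2(\Omega_t)$ norms never costs more than $s$ anisotropic derivatives on ${\mathbf V}$ or on the basic state.

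First I would apply $D^\alpha_\ast$ with $\langle\alpha\rangle\le s-1$ to the identity \eqref{Vn}, using that $\partial_1$ of the third component is harmless because $\partial_2(\dot H_2^\pm\partial_1\hat\Phi^\pm)$ is a purely tangential derivative of ${\mathbf V}$ (times a basic-state coefficient); and using that on the ``non-characteristic'' block the operator $D^\alpha_\ast\partial_1$ applied to ${\mathbf V}_n$ is controlled by $D^\beta_\ast$ applied to the right-hand side $\mathcal K$ with $\langle\beta\rangle\le s-1$, hence by $\|{\mathbf V}\|_{s,\ast,t}$ and $\|\mathbf F\|_{s,\ast,t}$ after the product rule. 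Second I would invoke the Moser-type calculus inequalities \eqref{moser3}--\eqref{moser6} together with the embeddings \eqref{sobolev1}--\eqref{sobolev2} to split each product $D^{\alpha_1}_\ast(\text{coefficient})\,D^{\alpha_2}_\ast(\text{unknown})$ into the ``low-high'' piece, bounded by $\|{\mathbf V},\mathbf F\|_{s,\ast,t}$ times $\|\hat W\|_{W^{1,\infty}_\ast}\lesssim$ const (using $\hat K\le K_0$), and the ``high-low'' piece, bounded by $\|\hat W\|_{s+2,\ast,t}$ times $\|\dot{\mathbf U}\|_{W^{2,\infty}_\ast}+\|\mathbf F\|_{W^{1,\infty}_\ast}$; the $s+2$ rather than $s+1$ in $\mathcal M(t)$ appears precisely because the coefficients of the operator $\mathcal A_1,\mathcal A_2,\mathcal A_3$ and the divergence constraint involve $\partial_1\hat\Phi^\pm$ and $\nabla_{t,{\mathbf x}}\hat\Psi^\pm$, which cost one extra derivative of $\hat\varphi$. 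Summing over $\alpha$ and integrating in time over $(-\infty,t)$ then yields \eqref{noncha3} with $\mathcal M(t)$ as in \eqref{defM}.

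The main obstacle is the careful bookkeeping of which terms carry the weight $\sigma$ and which do not: the term $\mathcal A_{(0)}\partial_1{\mathbf V}$ in \eqref{R} is the dangerous one, since $\partial_1{\mathbf V}$ contains the normal derivative of the characteristic components $\dot u_2,\dot H_2$ as well, and naively this would not close. Here one must use that all nonzero entries of $\mathcal A_{(0)}$ are multiplied by $\hat u_n-\partial_t\hat\Psi$ or by $\hat H_n$, both of which vanish on $\{x_1=0\}$ by \eqref{jump} and \eqref{c2}, hence are $O(\sigma)$ near the boundary; thus $\mathcal A_{(0)}\partial_1{\mathbf V}\sim(\sigma\partial_1){\mathbf V}\cdot(\text{smooth coefficient})=D^{(0,1,0,0)}_\ast{\mathbf V}\cdot(\cdots)$, which is again a legitimate $H^{s-1}_\ast$-controlled quantity. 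Once this structural point is in place, the remaining estimates are a routine application of Lemmas \ref{moser1}, \ref{moser2} and the Sobolev embeddings, exactly as in the analogous step of \cite{Trakhinin2009}.
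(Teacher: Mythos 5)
Your plan follows essentially the same route as the paper: estimate $\partial_1{\mathbf V}_n$ from the representation \eqref{Vn}--\eqref{R} term by term with the Moser-type inequalities \eqref{moser3}--\eqref{moser6}, treat the divergence-constraint component as a tangential derivative, and handle the only delicate term $\mathcal A_{(0)}\partial_1{\mathbf V}$ by exploiting that $\mathcal A_{(0)}$ vanishes at $\{x_1=0\}$ (via \eqref{jump}, \eqref{c2}) so it can be written as $\sigma$ times a bounded coefficient acting on $\sigma\partial_1{\mathbf V}$, exactly the Hardy-type argument of \cite[Lemma B.9]{MTP2009} used in the paper. The proposal is correct; the only cosmetic difference is your attribution of the exponent $s+2$ in $\mathcal M(t)$, which in the paper is simply an upper bound accommodating the companion estimates (the individual terms here need at most $\Vert\hat W\Vert_{s+1,\ast,t}$).
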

\begin{proof}
Using Moser-type calculus inequalities \eqref{moser4} and \eqref{moser6}, we can estimate the right-hand side of \eqref{R}:
\begin{equation}\label{Fes}
\begin{split}
||\tilde{\mathcal{A}}\mathcal{F}||^2_{s-1,\ast,t}\leq C(K)\Big(||\mathbf{F}||^2_{s-1,\ast,t}+||\mathbf{F}||^2_{W^{1,\infty}_{\ast}(\Omega_t)}||\hat{W}||^2_{s-1,\ast,t}\Big).
\end{split}
\end{equation}
For $j=0,2,$
\begin{equation}
\begin{split}
||\tilde{\mathcal{A}}\mathcal{A}_j\partial_j{\mathbf V}||^2_{s-1,\ast,t}&\lesssim ||\tilde{\mathcal{A}}\mathcal{A}_j{\mathbf V}||^2_{s,\ast,t}\\
&\leq C(K)\Big(||{\mathbf V}||^2_{s,\ast,t}+||\dot{{\mathbf U}}||^2_{W^{1,\infty}_{\ast}(\Omega_t)}||\hat{W}||^2_{s,\ast,t}\Big),
\end{split}
\end{equation}
\begin{equation}
\begin{split}
||\tilde{\mathcal{A}}\mathcal{A}_3{\mathbf V}||^2_{s-1,\ast,t}\leq C(K)\Big( ||{\mathbf V}||^2_{s-1,\ast,t}+||\dot{{\mathbf U}}||^2_{W^{1,\infty}_{\ast}(\Omega_t)}||\hat{W}||^2_{s+1,\ast,t}\Big),
\end{split}
\end{equation}
where $C(K)$ stands for positive constants that depends on $K$ and we used that $\tilde{\mathcal{A}}J^T$, $\tilde{\mathcal{A}}\mathcal{A}_j$ for $j=0,2$ and $\tilde{\mathcal{A}}\mathcal{A}_3$ are all nonlinear smooth functions of the basic state $\hat{W}$.

\smallskip
Now, we estimate the last term in \eqref{R}, where for simplicity we denote $\mathcal A=\tilde{\mathcal A}\mathcal A_{(0)}$. We get
\begin{equation}
\begin{split}
||\mathcal{A}\partial_1{\mathbf V}||_{s-1,\ast,t}&=||\frac{\mathcal{A}}{\sigma}\sigma\partial_1{\mathbf V}||_{s-1,\ast,t}\\
&\le||\frac{\mathcal{A}}{\sigma}||_{W^{1,\infty}_\ast(\Omega_t)}||\sigma\partial_1{\mathbf V}||_{s-1,\ast,t}+||\frac{\mathcal{A}}{\sigma}||_{s-1,\ast,t}||\sigma\partial_1{\mathbf V}||_{W^{1,\infty}_\ast(\Omega_t)}\\
&\le ||\mathcal{A}||_{W^{2,\infty}_\ast(\Omega_t)}||{\mathbf V}||_{s,\ast,t}+||\mathcal{A}||_{s+1,\ast,t}||{\mathbf V}||_{W^{2,\infty}_\ast(\Omega_t)}\\
&\le||\hat{W}||_{W^{2,\infty}_\ast(\Omega_t)}||{\mathbf V}||_{s,\ast,t}+||\hat{W}||_{s+1,\ast,t}||{\mathbf V}||_{W^{2,\infty}_\ast(\Omega_t)}\,,
\end{split}
\end{equation}
where we used that $\mathcal A\vert_{x_1=0}=0$, thus the $L^\infty-$norm of $\mathcal A/\sigma$ can be estimated by the $L^\infty$-norms of $\mathcal A$ and $\partial_1\mathcal A$, see \cite[Lemma B.9]{MTP2009}, \cite{Secchi2012}.

For the rest of the term in \eqref{Vn}, using \eqref{moser4} and \eqref{moser6}, we obtain that
\begin{equation}\label{333}
\begin{split}
||\partial_2(\dot{H}^+_2\partial_1\hat{\Phi}^{+})||^2_{s-1,\ast,t}& \leq||\dot{H}^+_2\partial_1\hat{\Phi}^{+}||^2_{s,\ast,t}\\
 & \leq C(K)\Big(||{\mathbf V}||^2_{s,\ast,t}+||\dot{{\mathbf U}}||^2_{W^{1,\infty}_{\ast}(\Omega_t)}||\hat{W}||^2_{s,\ast,t}\Big).
 \end{split}
\end{equation}
The second term in \eqref{Vn} can be controlled similarly.
Summarizing all the estimates \eqref{Fes}-\eqref{333} for the terms in \eqref{Vn} and \eqref{R},  Lemma \ref{noncha} is concluded.
\end{proof}

We also need the following estimate, still for $\partial_1{\mathbf V}_n$, which is also essential in the proof of non-weighted tangential derivatives, see \eqref{6.39} in Section \ref{nonwt}.
\begin{lemma}
The estimate
\begin{equation}\label{noncha32}
|||\partial_1{\mathbf V}_n(t)|||^2_{s-1,\ast}\leq C(K)\Big(|||{\mathbf V}(t)|||^2_{s,\ast}+\mathcal{M}(t)\Big)
\end{equation}
holds for problem \eqref{ho2} for all $t\leq T$, where $s$ is a positive integer and $\mathcal{M}(t)$ is defined in \eqref{defM}.
\end{lemma}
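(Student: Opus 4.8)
The plan is to repeat the argument of Lemma \ref{noncha} \emph{at fixed time}, i.e. without integrating over $(-\infty,t)$: one replaces the space-time norms $\|\cdot\|_{m,\ast,t}$ by the fixed-time norms $|||\cdot(t)|||_{m,\ast}$ and uses the fixed-time counterparts of the Moser-type calculus inequalities of Lemma \ref{moser2}, while the $L^\infty$ factors $\|\cdot\|_{W^{1,\infty}_\ast(\Omega_t)}$ and $\|\cdot\|_{W^{2,\infty}_\ast(\Omega_t)}$ are kept unchanged (only $L^\infty$-in-time control of the basic state and of $\dot{\mathbf U}$, $\mathbf F$ is available). First I would start from the representation \eqref{Vn}--\eqref{R} of $\partial_1{\mathbf V}_n$ obtained from the interior system \eqref{A} and the divergence constraint \eqref{div2}, apply $\partial_t^j D^\alpha_\ast$ with $\alpha_0=0$ and $j+\langle\alpha\rangle\le s-1$ (as dictated by \eqref{norma-equiv}), and estimate each resulting term.

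The key structural observation is that the right-hand side of \eqref{R} is linear in ${\mathbf V}$, $\partial_t{\mathbf V}$, $\partial_2{\mathbf V}$, $\sigma\partial_1{\mathbf V}$ and $\mathbf F$, with coefficients that are smooth nonlinear functions of the basic state $\hat W$; the fixed-time Moser inequalities then split each contribution into a ``top-order'' piece in which the extra derivatives hit ${\mathbf V}$ (or $\mathbf F$) and a remainder in which the surplus derivatives hit $\hat W$. For $\tilde{\mathcal A}\mathcal A_0\partial_t{\mathbf V}$ and $\tilde{\mathcal A}\mathcal A_2\partial_2{\mathbf V}$ I would use the elementary inequalities $|||\partial_t{\mathbf V}(t)|||_{s-1,\ast}\le|||{\mathbf V}(t)|||_{s,\ast}$ and $|||\partial_2{\mathbf V}(t)|||_{s-1,\ast}\le|||{\mathbf V}(t)|||_{s,\ast}$ ($\partial_2$ being one of the operators entering $D_\ast$), and analogously the divergence-constraint components $-\partial_2(\dot H^{\pm}_2\partial_1\hat\Phi^{\pm})$ and the zeroth-order term $\tilde{\mathcal A}\mathcal A_3{\mathbf V}$ are bounded by $C(K)|||{\mathbf V}(t)|||_{s,\ast}$ after a fixed-time use of \eqref{moser4}--\eqref{moser6}. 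These are precisely the terms that cannot be hidden inside $\mathcal M(t)$, which contains only the \emph{time-integrated} norm $\|{\mathbf V}\|_{s,\ast,t}$, and they account for the explicit $|||{\mathbf V}(t)|||^2_{s,\ast}$ on the right of \eqref{noncha32}; the source term $\tilde{\mathcal A}J^T\mathbf F$ and all the remainders carrying the extra derivatives on $\hat W$ are controlled exactly as in Lemma \ref{noncha}, and their contributions are absorbed into $\mathcal M(t)$.

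The one term deserving care is $\tilde{\mathcal A}\mathcal A_{(0)}\partial_1{\mathbf V}$, since it involves the \emph{unweighted} normal derivative of the full unknown ${\mathbf V}$, whose characteristic components are controlled only in weighted form. Here I would argue as in Lemma \ref{noncha}: because $\mathcal A_{(0)}|_{x_1=0}=0$, the ratio $\mathcal A_{(0)}/\sigma$ lies in $W^{2,\infty}_\ast(\Omega_t)$ with norm bounded by $C(K)$ (cf. \cite[Lemma B.9]{MTP2009}), so that $\tilde{\mathcal A}\mathcal A_{(0)}\partial_1{\mathbf V}=(\tilde{\mathcal A}\mathcal A_{(0)}/\sigma)\,(\sigma\partial_1{\mathbf V})$ with $|||\sigma\partial_1{\mathbf V}(t)|||_{s-1,\ast}\lesssim|||{\mathbf V}(t)|||_{s,\ast}$ ($\sigma\partial_1$ being again a $D_\ast$-operator, up to harmless commutators with $\sigma'$); the fixed-time Moser inequality then yields $C(K)|||{\mathbf V}(t)|||_{s,\ast}$ plus a remainder absorbed into $\mathcal M(t)$. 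Collecting all these estimates gives \eqref{noncha32}.

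I expect the whole difficulty to be bookkeeping rather than analysis: once the fixed-time Moser calculus inequalities are in place, one only has to track carefully which contributions are genuinely of top order in ${\mathbf V}$ (and must therefore be displayed as $|||{\mathbf V}(t)|||^2_{s,\ast}$) versus which carry the surplus derivatives on the basic state and can be thrown into the already-estimated quantity $\mathcal M(t)$; the sole point where the characteristic nature of the boundary enters is the treatment of $\mathcal A_{(0)}\partial_1{\mathbf V}$ via the vanishing of $\mathcal A_{(0)}$ at $\{x_1=0\}$, handled exactly as in Lemma \ref{noncha}.
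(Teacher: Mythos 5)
Your proposal is correct and follows essentially the same route as the paper: start from the representation \eqref{Vn}--\eqref{R}, keep at fixed time the contributions where the derivatives fall on ${\mathbf V}$ (using $L^\infty$/$W^{1,\infty}_\ast$ bounds of the coefficients, and the vanishing of $\mathcal A_{(0)}$ at $\{x_1=0\}$ to trade $\partial_1$ for $\sigma\partial_1$), and push the pieces with surplus derivatives on $\hat W$ into $\mathcal M(t)$ after converting to space-time norms. The only cosmetic difference is that the paper does not invoke fixed-time Moser inequalities literally, but splits each term (its $\Sigma_1,\Sigma_2,\Sigma_3$) and uses the elementary inequality \eqref{ele} to return to space-time norms whenever two or more derivatives hit the coefficients — which is exactly what your phrase ``controlled exactly as in Lemma \ref{noncha}'' amounts to.
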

\begin{proof}
Denote the differential operator $D^{\alpha}_{\ast}=\partial^{\alpha_0}_t(\sigma\partial_1)^{\alpha_1}\partial^{\alpha_2}_2\partial^{\alpha_3}_1$, $\langle\alpha\rangle:=|\alpha|+\alpha_3.$ We estimate the right-hand side of \eqref{R}. Using the elementary estimate
\begin{equation}\label{ele}
|||u(t)|||^2_{s-1,\ast}\lesssim||u||^2_{s,\ast,t},
\end{equation}
we get
\begin{equation}\label{FF}
\begin{split}
|||(\tilde{\mathcal{A}}\mathcal{F})(t)|||^2_{s-1,\ast}&\lesssim||\tilde{\mathcal{A}}J^T\mathbf{F}||^2_{s,\ast,t}\\
&\leq C(K)\Big(||\mathbf{F}||^2_{s,\ast,t}+||\mathbf{F}||^2_{W^{1,\infty}_{\ast}(\Omega_t)}||\hat{W}||^2_{s,\ast,t}\Big)\\
&\leq C(K)\mathcal{M}(t).
\end{split}
\end{equation}
The second term of \eqref{R} can be controlled as follows:
\begin{equation*}
\begin{split}
|||\tilde{\mathcal{A}}\mathcal{A}_{0}&\partial_t{\mathbf V}(t)|||^2_{s-1,\ast}\lesssim\sum\limits_{\langle\alpha\rangle\le s-1}||\tilde{\mathcal{A}}\mathcal{A}_{0}D^\alpha_\ast\partial_t{\mathbf V}(t)||^2_{L^2(\mathbb R^2_+)}\\
&+\sum\limits_{\langle\alpha\rangle\le s-1}||D_\ast(\tilde{\mathcal{A}}\mathcal{A}_{0})D^{\alpha-1}_\ast\partial_t{\mathbf V}(t)||^2_{L^2(\mathbb R^2_+)}+\sum\limits_{\langle\alpha\rangle=2}||D^\alpha_\ast(\tilde{\mathcal{A}}\mathcal{A}_{0})\partial_t{\mathbf V}(t)||^2_{s-3,\ast}\\
&=\Sigma_1^\prime+\Sigma^\prime_2+\Sigma_3^\prime\,,
\end{split}
\end{equation*}
with $D_\ast$ denoting any tangential derivative in $t$ or ${\bf x}$ of order one and $D^{\alpha-1}_\ast$ denoting the derivative of order $\langle\alpha\rangle-1$ obtained \lq\lq subtracting" $D_\ast$ from $D_{\ast}^\alpha$.
\newline
We estimate separately each $\Sigma^\prime_i$, $i=1,2,3$ as follows:
\begin{equation*}
\Sigma_1^\prime\lesssim||\hat W||^2_{L^\infty(\Omega_t)}|||\mathbf{V}(t)|||^2_{s,\ast}\le C(K)|||\mathbf{V}(t)|||^2_{s,\ast}\,;
\end{equation*}
\begin{equation*}
\Sigma_2^\prime\lesssim||\hat W||^2_{W^{1,\infty}_\ast(\Omega_t)}|||\mathbf{V}(t)|||^2_{s-1,\ast}\le C(K)|||\mathbf{V}(t)|||^2_{s,\ast}\,;
\end{equation*}
\begin{equation*}
\begin{split}
\Sigma_3^\prime&\lesssim\sum\limits_{\langle\alpha\rangle=2}||D^\alpha_\ast(\tilde A\mathcal A_0)\partial_t\mathbf{V}|||^2_{s-2,\ast,t}\\
&\lesssim\sum\limits_{\langle\alpha\rangle=2}\left(||D^\alpha_\ast(\tilde A\mathcal A_0)||^2_{W^{1,\infty}_\ast(\Omega_t)}||\partial_t\mathbf{V}||^2_{s-2,\ast,t}+||D^\alpha_\ast(\tilde{\mathcal A}\mathcal A_0)||^2_{s-2,\ast,t}||\partial_t\mathbf{V}||^2_{W^{1,\infty}_\ast(\Omega_t)}\right)\\
&\le C(K)||\mathbf V||^2_{s-1,\ast,t}+||\hat W||^2_{s,\ast,t}||\mathbf V||^2_{W^{2,\infty}_\ast(\Omega_t)}\,.
\end{split}
\end{equation*}
Adding the above inequalities, we get
\begin{equation}\label{AAA}
|||\tilde{\mathcal{A}}\mathcal{A}_{0}\partial_t{\mathbf V}(t)|||^2_{s-1,\ast}\leq C(K)|||{\mathbf V}(t)|||^2_{s,\ast}+C(K)||\mathbf V||^2_{s-1,\ast,t}+||\hat W||^2_{s,\ast,t}||\mathbf V||^2_{W^{2,\infty}_\ast(\Omega_t)}\,.
\end{equation}
Similarly, the third term can be estimated by
\begin{equation}\label{A000}
|||\tilde{\mathcal{A}}\mathcal{A}_2\partial_2{\mathbf V}(t)|||^2_{s-1,\ast}\leq C(K)|||{\mathbf V}(t)|||^2_{s,\ast}+C(K)||\mathbf V||^2_{s-1,\ast,t}+||\hat W||^2_{s,\ast,t}||\mathbf V||^2_{W^{2,\infty}_\ast(\Omega_t)}.
\end{equation}
Using Moser-type calculus inequalities \eqref{moser4} and \eqref{moser6},
we can estimate
\begin{equation}\label{AA33}
\begin{split}
|||(\tilde{\mathcal{A}}\mathcal{A}_3\mathbf V)(t)|||^2_{s-1,\ast}&\lesssim||\tilde{\mathcal{A}}\mathcal{A}_3\mathbf V||^2_{s,\ast,t}\\
&\leq C(K)\Big(||\mathbf V||^2_{s,\ast,t}+||\dot{{\mathbf U}}||^2_{W^{1,\infty}_{\ast}(\Omega_t)}||\hat{W}||^2_{s+2,\ast,t}\Big)\\
&\leq C(K)\mathcal{M}(t).
\end{split}
\end{equation}
Now, we estimate the last term $\tilde{\mathcal{A}}\mathcal{A}_{(0)}\partial_1{\mathbf V}:$
\begin{equation*}
|||(\tilde{\mathcal{A}}\mathcal{A}_{(0)}\partial_1{\mathbf V})(t)|||^2_{s-1,\ast}\lesssim \Sigma_1+\Sigma_2+\Sigma_3\,,
\end{equation*}
where:
\begin{equation*}
\begin{split}
\Sigma_1&=\sum_{\langle\alpha\rangle\leq s-1}||\tilde{\mathcal{A}}\mathcal{A}_{(0)}D^{\alpha}_{\ast}\partial_1{\mathbf V}(t)||^2_{L^2(\mathbb R^2_+)}\\
&=\sum_{\langle\alpha\rangle\leq s-1}||\frac{\tilde{\mathcal{A}}\mathcal{A}_{(0)}}{\sigma}\sigma D^{\alpha}_{\ast}\partial_1{\mathbf V}(t)||^2_{L^2(\mathbb R^2_+)}\\
&\lesssim ||\tilde{\mathcal{A}}\mathcal{A}_{(0)}||^2_{W^{1,\infty}(\Omega_t)}|||\mathbf{V}(t)|||^2_{s,\ast}\lesssim ||\hat W||_{W^{1,\infty}(\Omega_t)}^2|||\mathbf{V}(t)|||^2_{s,\ast}\,;
\end{split}
\end{equation*}
\begin{equation*}
\begin{split}
\Sigma_2 &=\sum\limits_{\langle\alpha\rangle\leq s-1}||\frac{D_\ast(\tilde{\mathcal{A}}\mathcal{A}_{(0)})}{\sigma}\sigma D^{\alpha-1}_{\ast}\partial_1{\mathbf V}(t)||^2_{L^2(\mathbb R^2_+)}\\
&\lesssim ||D_\ast(\tilde{\mathcal A}\mathcal A_{(0)})||^2_{W^{1,\infty}(\Omega_t)}|||\mathbf{V}(t)|||^2_{s-1,\ast}\lesssim ||\hat W||^2_{W^{2,\infty}_\ast(\Omega_t)}|||\mathbf{V}(t)|||^2_{s-1,\ast}\,;
\end{split}
\end{equation*}
\begin{equation*}
\begin{split}
\Sigma_3 &=\sum\limits_{\langle\alpha\rangle=2}|||D^\alpha_\ast(\tilde{\mathcal A}\mathcal A_{(0)})\partial_1\mathbf V(t)|||^2_{s-3,\ast}\lesssim\sum\limits_{\langle\alpha\rangle=2}|||D^\alpha_\ast(\tilde{\mathcal A}\mathcal A_{(0)})\partial_1\mathbf V|||^2_{s-2,\ast,t}\\
&\lesssim ||\hat W||^2_{W^{3,\infty}(\Omega_t)}||\mathbf V||^2_{s,\ast,t}+||\hat W||^2_{s,\ast,t}||\mathbf V||^2_{W^{2,\infty}_\ast(\Omega_t)},
\end{split}
\end{equation*}
and where we exploit again the vanishing of $\tilde{\mathcal A}\mathcal A_{(0)}$ and $D_\ast(\tilde{\mathcal A}\mathcal A_{(0)})$ along the boundary $\{x_1=0\}$ in the estimates of $\Sigma_1$ and $\Sigma_2$, see \cite[Lemma B.9]{MTP2009}, \cite{Secchi2012}.
\newline
Adding the estimates of $\Sigma_i$, $i=1,2,3$ above, we get
\begin{equation}\label{AA0}
|||(\tilde{\mathcal{A}}\mathcal{A}_{(0)}\partial_1{\mathbf V})(t)|||^2_{s-1,\ast}\leq C(K)|||{\mathbf V}(t)|||^2_{s,\ast}+C(K)||\mathbf V||^2_{s,\ast,t}+||\hat W||^2_{s,\ast,t}||\mathbf V||^2_{W^{2,\infty}_\ast(\Omega_t)}\,.
\end{equation}

%Denote commutator $[a,b]c:=a(bc)-b(ac).$ Using \eqref{ele}, we obtain that
%\begin{equation}\nonumber
%\begin{split}
%\mathcal{K}_4&=\sum_{\langle\beta\rangle\leq s-1}||[D^{\beta}_{\ast},\tilde{\mathcal{A}}\mathcal{A}_{(0)}]\partial_1{\mathbf V}(t)||^2_{L^2(\Omega)}\\
%&\leq \sum_{\langle\beta\rangle\leq s-1}||[D^{\beta}_{\ast},\tilde{\mathcal{A}}\mathcal{A}_{(0)}]\partial_1{\mathbf V}||^2_{1,\ast,t}\\
%&\leq C(K)\Big(||\mathbf V||^2_{s,\ast,t}+||\dot{{\mathbf U}}||^2_{W^{1,\infty}_{\ast}(\Omega_t)}||\hat{W}||^2_{s+2,\ast,t}\Big).
%\end{split}
%\end{equation}
%We write $D^{\beta}=D^{\alpha}_{\star}\partial^k_1,$ $\langle\beta\rangle=\alpha+2k, D^{\alpha}_{\star}=\partial^{\alpha_0}_t(\sigma\partial_1)^{\alpha_1}\partial^{\alpha_2}_2.$
%Using elementary estimate \eqref{ele}, we can estimate $\mathcal{K}_5$ as follows:
%\begin{equation}\nonumber
%\begin{split}
%\mathcal{K}_5&=\sum_{\langle\beta\rangle\geq2,\langle\beta'\rangle+\langle\beta''\rangle\leq s-1}||(D^{\beta'}_{\ast}(\tilde{\mathcal{A}}\mathcal{A}_{(0)})D^{\beta''}_{\ast}{\partial_1\mathbf V})(t)||^2_{L^2(\Omega)}\\
%&\leq C(K)\Big(||{\mathbf V}||^2_{s,\ast,t}+||\dot{{\mathbf U}}||^2_{W^{2,\infty}_{\ast}(\Omega_t)}(1+||\hat{W}||^2_{s+4,\ast,t})\Big)\\
%&\leq C(K)\mathcal{M}(t).
%\end{split}
%\end{equation}
For the rest of the term in \eqref{Vn}, using \eqref{moser4} and \eqref{moser6}, we obtain that
\begin{equation}\label{3333}
\begin{split}
|||\partial_2(\dot{H}^+_2\partial_1\hat{\Phi}^{+})(t)|||^2_{s-1,\ast,t}& \leq||\dot{H}^+_2\partial_1\hat{\Phi}^{+}||^2_{s,\ast,t}\\
 & \leq C(K)\Big(||{\mathbf V}||^2_{s,\ast,t}+||\dot{{\mathbf U}}||^2_{W^{1,\infty}_{\ast}(\Omega_t)}||\hat{W}||^2_{s+2,\ast,t}\Big)\\
 & \leq C(K)\mathcal{M}(t).
 \end{split}
\end{equation}
The second term in \eqref{Vn} can be controlled similarly.
Summarizing from \eqref{FF}-\eqref{3333}, we conclude that \eqref{noncha32} holds.
\end{proof}

The following lemma gives the estimate of the normal derivative of the entropy, which is treated differently from the other components of the vector ${\mathbf V}$.
\begin{lemma}
	The estimate
	\begin{equation}\label{tame-entropia}
		|||S^\pm(t)|||^2_{s,\ast}\leq C(K)\mathcal{M}(t)
	\end{equation}
	holds for problem \eqref{ho2} for all $t\leq T$, where $s$ is a positive integer and $\mathcal{M}(t)$ is defined in \eqref{defM}.
\end{lemma}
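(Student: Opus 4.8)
The plan is to exploit that the entropy unknown $\dot S^\pm$ --- the last component of $\dot{\mathbf U}^\pm$ (equivalently of $\mathbf V^\pm$), written $S^\pm$ in \eqref{tame-entropia} --- decouples at the level of the principal part and solves a scalar transport equation whose characteristic field is tangent to $\{x_1=0\}$. First I would write this equation down. One may read the sixth scalar equation directly from \eqref{ho2}, since the secondary symmetrizer $\mathcal S$ and the matrix $J$ act as the identity on the sixth component (so \eqref{B1} and \eqref{BBB} give the same equation); using that the sixth rows of $A_0(\hat{\mathbf U}^\pm)$ and $A_2(\hat{\mathbf U}^\pm)$ are $e_6$ and $\hat u_2^\pm e_6$, and that the $(6,6)$ entry of $\tilde A_1(\hat{\mathbf U}^\pm,\hat\Psi^\pm)$ equals $(\hat u_n^\pm-\partial_t\hat\Psi^\pm)/\partial_1\hat\Phi^\pm$, one obtains
\[
\partial_t\dot S^\pm+\frac{\hat u_n^\pm-\partial_t\hat\Psi^\pm}{\partial_1\hat\Phi^\pm}\,\partial_1\dot S^\pm+\hat u_2^\pm\,\partial_2\dot S^\pm
= F_6^\pm-\frac{\dot u_n^\pm}{\partial_1\hat\Phi^\pm}\,\partial_1\hat S^\pm-\dot u_2^\pm\,\partial_2\hat S^\pm=:g^\pm,
\]
where the explicit right-hand side follows from a direct inspection of $C(\hat{\mathbf U}^\pm,\hat\Psi^\pm)$, whose sixth row involves only the velocity components $\dot u_n^\pm,\dot u_2^\pm$; in particular $g^\pm$ contains no entropy term. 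By the anisotropic Moser inequalities \eqref{moser4}, \eqref{moser6}, the embeddings \eqref{sobolev1}--\eqref{sobolev2}, and $\partial_1\hat\Phi^\pm$ bounded away from $0$, one has $\|g^\pm\|_{s,\ast,t}^2\le C(K)\,\mathcal M(t)$ with $\mathcal M(t)$ as in \eqref{defM}, since $\dot u_n^\pm,\dot u_2^\pm$ are components of $\mathbf V$ and $\partial_1\hat S^\pm$, $\partial_1\hat\Phi^\pm$ are controlled by $\|\hat W\|_{s+2,\ast,t}$. The key structural point is that, by \eqref{jump}, $(\hat u_n^\pm-\partial_t\hat\Psi^\pm)|_{x_1=0}=\hat u_N^\pm-\partial_t\hat\varphi=0$; hence $\hat a_1^\pm:=(\hat u_n^\pm-\partial_t\hat\Psi^\pm)/\partial_1\hat\Phi^\pm$ vanishes on $\{x_1=0\}$, so $\hat a_1^\pm=\sigma\,b_1^\pm$ with $b_1^\pm$ bounded in the relevant $W^{k,\infty}_\ast(\Omega_T)$ norms by the Hardy-type estimate (cf. \cite[Lemma B.9]{MTP2009}, \cite{Secchi2012}), and the transport field $(\partial_t,\hat a_1^\pm\partial_1,\hat u_2^\pm\partial_2)$ is tangent to the boundary.

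Next I would estimate $|||\dot S^\pm(t)|||^2_{s,\ast}=\sum_{\langle\alpha\rangle\le s}\|D^\alpha_\ast\dot S^\pm(t)\|^2_{L^2(\Omega)}$ by the standard $L^2$ energy method for a scalar transport equation with characteristic boundary, inducting on the number $k=\alpha_3$ of plain normal derivatives. For $k=0$ I would apply the conormal operators $D^\beta_\ast=\partial_t^{\beta_0}(\sigma\partial_1)^{\beta_1}\partial_2^{\beta_2}$ ($\langle\beta\rangle\le s$) to the transport equation; writing $\hat a_1^\pm\partial_1=b_1^\pm(\sigma\partial_1)$ one checks that the commutators $[D^\beta_\ast,\hat a_1^\pm\partial_1]$ and $[D^\beta_\ast,\hat u_2^\pm\partial_2]$ are conormal operators of lower order with coefficients controlled by the $\hat W$-norms through \eqref{moser4}, \eqref{moser6}, and the basic energy identity produces no boundary term because $\sigma$ (resp. $\hat a_1^\pm$) vanishes on $\{x_1=0\}$; Gronwall's inequality together with $\dot S^\pm\equiv0$ for $t<0$ then gives $\sum_{\langle\beta\rangle\le s}\|D^\beta_\ast\dot S^\pm(t)\|^2\le C(K)\mathcal M(t)$. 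For the inductive step, applying $\partial_1^{k+1}$ shows that $\partial_1^{k+1}\dot S^\pm$ solves the same transport equation with source $\partial_1^{k+1}g^\pm-\sum_{1\le j\le k+1}\binom{k+1}{j}\big[(\partial_1^j\hat a_1^\pm)\partial_1^{k+2-j}\dot S^\pm+(\partial_1^j\hat u_2^\pm)\partial_2\partial_1^{k+1-j}\dot S^\pm\big]$; the single term $(k+1)(\partial_1\hat a_1^\pm)\partial_1^{k+1}\dot S^\pm$ has a bounded coefficient and is absorbed as a zero-order term, every other term contains at most $k$ plain normal derivatives of $\dot S^\pm$ and is therefore controlled, at conormal order $\le s-2(k+1)$, by the previous steps, and $\partial_1^{k+1}g^\pm$ at conormal order $s-2(k+1)$ is part of $\|g^\pm\|_{s,\ast,t}$ because a plain $\partial_1$ counts with weight $2$ in $\langle\cdot\rangle$. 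Summing over $k=0,\dots,\lfloor s/2\rfloor$ gives \eqref{tame-entropia}.

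The main obstacle is precisely the plain normal derivatives $\partial_1^k\dot S^\pm$: because $\hat a_1^\pm$ vanishes on the boundary one cannot recover $\partial_1\dot S^\pm$ algebraically from the equation (as is done for the non-characteristic unknown $\mathbf V_n$ in Lemma \ref{noncha}), so the normal derivatives must be propagated one at a time through the transport equation, and one must check carefully that each differentiation generates only source terms of strictly lower normal order together with the already controlled $g^\pm$. The rest is bookkeeping: all coefficients --- in particular $b_1^\pm=\hat a_1^\pm/\sigma$ and the $\partial_1^j\hat a_1^\pm$, which require the regularity $\hat{\mathbf U}^\pm\in H^{s+4}_\ast$ assumed in the theorem --- are estimated in the anisotropic $W^{k,\infty}_\ast$ norms, and all products are handled by the anisotropic Moser inequalities \eqref{moser4}, \eqref{moser6}, so that everything closes into $\mathcal M(t)$.
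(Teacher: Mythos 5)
Your proposal is correct and follows essentially the same route as the paper: the paper's proof simply observes that the linearized entropy satisfies an evolution-like scalar equation whose normal-derivative coefficient vanishes on $\{x_1=0\}$ (by \eqref{jump}), so no boundary terms arise and the standard energy method with Moser-type commutator estimates (as in Lemma \ref{estimate1}) closes the bound. You have merely made explicit what the paper leaves as a sketch — the exact transport equation, the factorization of the tangential coefficient through $\sigma$, and the induction on plain normal derivatives — all of which is consistent with the paper's intended argument.
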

\begin{proof}
The linearized equation for the entropy $ S^\pm$ is an evolution-like equation because the coefficient of the normal derivative of the entropy vanishes on the boundary; this yields that no boundary conditions are needed to be coupled to the equation in order to derive an apriori energy estimate. So, to estimate $ S^\pm$, we just handle the equation of $ S^\pm$ alone by the standard energy method tools.
The details of the proof are similar to those of the following Lemma \ref{estimate1}.
\end{proof}

Now, we derive weighted derivative estimates.
\subsection{Estimate of weighted derivatives}\label{wd}
Since the differential operators $(\sigma\partial_1)^{\alpha_1}$ and $\sigma^{\alpha_1}\partial^{\alpha_1}_1$ are equivalent, see \cite{Ohno}, in the following we discuss the term $D^{\alpha}_{\ast}{\mathbf V}$, with $\alpha_1>0$ and $\langle\alpha\rangle=|\alpha|+\alpha_3\leq s$, in its equivalent form $\sigma^{\alpha_1}D^{\alpha^\prime}_{t,x}\partial^{\alpha_3}_1\mathbf V$, where $D^{\alpha^\prime}_{t,x}:=\partial^{\alpha_0}_t\partial^{\alpha_1}_{1}\partial^{\alpha_2}_{2}$ ($\alpha^\prime=(\alpha_0,\alpha_1,\alpha_2)$).
\begin{lemma}\label{estimate1}
The following estimate holds for \eqref{ho2} for all $t\leq T:$
\begin{equation}\label{DVV}
\sum_{\langle\alpha\rangle\leq s,\alpha_1>0}||D^{\alpha}_{\ast}{\mathbf V}(t)||^2_{L^2(\Omega)}\leq C(K)\mathcal{M}(t),
\end{equation}
where $s$ is a positive integer and $\mathcal{M}(t)$ is defined in \eqref{defM}.
\end{lemma}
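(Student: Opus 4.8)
\emph{Proof plan.} Since $\alpha_1>0$, each operator $D^\alpha_\ast$ occurring in \eqref{DVV} carries at least one factor $\sigma\partial_1$, and it is precisely this feature that lets the weighted derivatives be controlled by a plain energy argument in which \emph{no boundary condition intervenes}. The plan is to apply $D^\alpha_\ast$ (equivalently, modulo lower-order terms in $\partial_1$, the operator $\sigma^{\alpha_1}\partial_t^{\alpha_0}\partial_1^{\alpha_1+\alpha_3}\partial_2^{\alpha_2}$) to the symmetric system \eqref{A}. Since $D^\alpha_\ast$ commutes with $\partial_t$ and $\partial_2$ but not with $\partial_1$, the function $\mathbf W_\alpha:=D^\alpha_\ast{\mathbf V}$ solves
\[
\mathcal A_0\partial_t\mathbf W_\alpha+\mathcal A_1\partial_1\mathbf W_\alpha+\mathcal A_2\partial_2\mathbf W_\alpha+\mathcal A_3\mathbf W_\alpha=\mathbf F_\alpha\,,
\]
where $\mathbf F_\alpha:=D^\alpha_\ast\mathcal F-[D^\alpha_\ast,\mathcal A_0]\partial_t{\mathbf V}-[D^\alpha_\ast,\mathcal A_2]\partial_2{\mathbf V}-[D^\alpha_\ast,\mathcal A_3]{\mathbf V}-[D^\alpha_\ast,\mathcal A_1\partial_1]{\mathbf V}-\mathcal A_1[D^\alpha_\ast,\partial_1]{\mathbf V}$ collects the differentiated source $\mathcal F=J^T\mathbf F$ and all the commutators.

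Taking the $L^2(\Omega)$ inner product with $\mathbf W_\alpha$, using that $\mathcal A_0=J^TA_0J$ is uniformly positive definite by \eqref{hy} and the bounds \eqref{space}, and integrating by parts in $x_1$, one gets
\[
\frac{d}{dt}\int_\Omega(\mathcal A_0\mathbf W_\alpha,\mathbf W_\alpha)\,d\mathbf x=2\int_\Omega(\mathbf F_\alpha,\mathbf W_\alpha)\,d\mathbf x+\int_{x_1=0}(\mathcal A_1\mathbf W_\alpha,\mathbf W_\alpha)\,dx_2+\mathcal R_\alpha(t)\,,
\]
where the remainder $\mathcal R_\alpha(t)$ (coming from $\partial_t\mathcal A_0$, $\partial_1\mathcal A_1$, $\partial_2\mathcal A_2$ and $\mathcal A_3$) is $\le C(K)\|\mathbf W_\alpha(t)\|_{L^2(\Omega)}^2$ by \eqref{space}. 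The crucial gain is that the boundary integral \emph{vanishes identically}, because $\mathbf W_\alpha|_{x_1=0}=0$: indeed $\mathbf W_\alpha=\sigma^{\alpha_1}(\cdots)$ with $\alpha_1\ge1$ and $\sigma(0)=0$. This is exactly why the weighted derivatives can be estimated separately from (and before) the tangential ones. Integrating in time from $-\infty$ to $t$ (all data vanish in the past) and applying Gronwall's lemma then reduces \eqref{DVV} to the bound $\|\mathbf F_\alpha\|_{0,\ast,t}^2\le C(K)\mathcal M(t)$, after which one sums over the finitely many admissible multi-indices.

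It thus remains to estimate $\mathbf F_\alpha$ in $L^2(\Omega_t)$. The source $D^\alpha_\ast(J^T\mathbf F)$ is handled by the Moser inequalities \eqref{moser4}, \eqref{moser6} applied to the smooth matrix $J^T=J^T(\hat W)$, giving $\|\mathbf F\|_{s,\ast,t}^2+\|\mathbf F\|_{W^{1,\infty}_\ast(\Omega_t)}^2\|\hat W\|_{s,\ast,t}^2\le\mathcal M(t)$. The commutators $[D^\alpha_\ast,\mathcal A_0]\partial_t{\mathbf V}$, $[D^\alpha_\ast,\mathcal A_2]\partial_2{\mathbf V}$ and $[D^\alpha_\ast,\mathcal A_3]{\mathbf V}$ contain only tangential derivatives of ${\mathbf V}$ of order $\le\langle\alpha\rangle\le s$ multiplied by (derivatives of) coefficients that are smooth functions of $\hat W$, so Lemma \ref{moser2} turns them into the tame expression $C(K)\big(\|{\mathbf V}\|_{s,\ast,t}^2+\|\dot{{\mathbf U}}\|_{W^{2,\infty}_\ast(\Omega_t)}^2\|\hat W\|_{s+2,\ast,t}^2\big)\le C(K)\mathcal M(t)$, exactly along the lines of the estimates of $\tilde{\mathcal A}\mathcal A_j\partial_j{\mathbf V}$ and $\tilde{\mathcal A}\mathcal A_3{\mathbf V}$ in the proof of Lemma \ref{noncha}.

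The genuinely delicate term is the ``normal'' commutator $[D^\alpha_\ast,\mathcal A_1\partial_1]{\mathbf V}+\mathcal A_1[D^\alpha_\ast,\partial_1]{\mathbf V}$, in which the differentiations may produce a normal derivative of ${\mathbf V}$ carrying \emph{one power of $\sigma$ too few}. Here I would split $\mathcal A_1=\mathcal A+\mathcal A_{(0)}$ as in \eqref{AA}. For the $\mathcal A_{(0)}$-part, since $\mathcal A_{(0)}|_{x_1=0}=0$ — each of its nonzero entries carries the factor $\hat u_n-\partial_t\hat\Psi$ or $\hat H_n$, which vanish on $\{x_1=0\}$ by \eqref{jump} and \eqref{c2} — and $\partial_1\mathcal A_{(0)}\in L^\infty$, the ratio $\mathcal A_{(0)}/\sigma$ is bounded (Hardy-type inequality, cf.\ \cite[Lemma B.9]{MTP2009} and \cite{Secchi2012}); rewriting any under-weighted contribution as $\mathcal A_{(0)}\,\sigma^{\alpha_1-1}\partial_1^{\alpha_1+\alpha_3}(\cdots)=\frac{\mathcal A_{(0)}}{\sigma}\,\sigma^{\alpha_1}\partial_1^{\alpha_1+\alpha_3}(\cdots)$ restores the missing weight, so that the term is bounded by a constant times $\|{\mathbf V}\|_{s,\ast,t}$. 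For the $\mathcal A$-part, the key structural remark is that $\mathcal A=\mathrm{diag}\big(\tfrac{1}{\partial_1\hat\Phi^+}\mathcal E_{12},\tfrac{1}{\partial_1\hat\Phi^-}\mathcal E_{12}\big)$ couples \emph{only} the non-characteristic components $(\dot q^\pm,\dot u_n^\pm)$ of ${\mathbf V}$; hence every dangerous contribution involves a normal derivative of ${\mathbf V}_n=(\dot q,\dot u_n,\dot H_n)$ alone, which after writing $\partial_1^{\alpha_1+\alpha_3}(\cdot)=\partial_1^{\alpha_1+\alpha_3-1}(\partial_1{\mathbf V}_n)$ is controlled by $\|\partial_1{\mathbf V}_n\|_{s-1,\ast,t}$ — precisely the quantity estimated in Lemma \ref{noncha}, eq.\ \eqref{noncha3}, and therefore $\le C(K)\mathcal M(t)$. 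Collecting all the bounds yields $\|\mathbf F_\alpha\|_{0,\ast,t}^2\le C(K)\mathcal M(t)$ and, after Gronwall (which absorbs the self-referential $\int\|\mathbf W_\alpha\|_{L^2}^2$ contribution), completes the proof. I expect this two-pronged handling of the $\partial_1$-commutator to be the main obstacle, since it requires \emph{both} the structural vanishing of $\mathcal A_{(0)}$ at the boundary and the fact that $\mathcal A$ sees only the non-characteristic unknown — for which \eqref{noncha3} supplies the missing normal-derivative control — and neither ingredient alone would close the estimate.
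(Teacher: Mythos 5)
Your proposal is correct and follows essentially the same route as the paper's proof: apply $D^{\alpha}_{\ast}$ to \eqref{A}, use the standard energy argument with the boundary term vanishing because $D^{\alpha}_{\ast}{\mathbf V}|_{x_1=0}=0$ when $\alpha_1>0$, estimate the tangential commutators and the source by the Moser-type inequalities, and handle the weight-losing normal term through the splitting $\mathcal{A}_1=\mathcal{A}+\mathcal{A}_{(0)}$, combining the Hardy-type boundedness of $\mathcal{A}_{(0)}/\sigma$ with the fact that $\mathcal{A}$ acts only on the non-characteristic part ${\mathbf V}_n$, whose normal derivative is controlled by \eqref{noncha3}. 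The only cosmetic deviations (double-listing the normal commutator $[D^{\alpha}_{\ast},\mathcal{A}_1\partial_1]{\mathbf V}$ and $\mathcal{A}_1[D^{\alpha}_{\ast},\partial_1]{\mathbf V}$, and invoking Gr\"onwall here instead of simply leaving the $\|{\mathbf V}\|^2_{s,\ast,t}$ contribution inside $\mathcal{M}(t)$ as the paper does) do not affect the validity of the argument.
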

\begin{proof}

It is obvious that when $\alpha_1>0,$ $D^{\alpha}_{\ast}{\mathbf V}|_{x_1=0}=0.$ Note that
$$\sigma^{\alpha_1}\partial^{m+1}_1=\partial_1(\sigma^{\alpha_1}\partial^m_1)-\alpha_1\sigma'\sigma^{\alpha_1-1}\partial^m_1,$$ for some nonnegative integer $m.$
Applying $D^{\alpha}_{\ast}$ to \eqref{A} and using the standard energy method,
we obtain that
$$\int_{\R^2_+}(\mathcal{A}_0D^{\alpha}_{\ast}{\mathbf V}\cdot D^{\alpha}_{\ast}{\mathbf V})(t)d\mathbf{x}=\int_{\Omega_t}\Big(((\partial_t\mathcal{A}_0+\partial_1\mathcal{A}_1+\partial_2\mathcal{A}_2)D^{\alpha}_{\ast}{\mathbf V}+2\mathcal{R})\cdot D^{\alpha}_{\ast}{\mathbf V}\Big)d\mathbf{x}d\tau,$$
where
$$\quad \mathcal{R}=D^{\alpha}_{\ast}\mathcal{F}+\mathcal{R}_0+\mathcal{R}_1,\quad\mathcal{R}_0=\alpha_1\sigma'\mathcal{A}_1\sigma^{\alpha_1-1}D^{\alpha'}_{t,x}\partial^{\alpha_3}_1{\mathbf V},$$
$$\mathcal{R}_1=-[D^{\alpha}_{\ast},\mathcal{A}_0]\partial_t{\mathbf V}-\sum^2_{j=1}[D^{\alpha}_{\ast},\mathcal{A}_j]\partial_j{\mathbf V}-D^{\alpha}_{\ast}(\mathcal{A}_3{\mathbf V})$$
(the brackets $[\cdot,\cdot]$ here denotes the commutator between the operators).
 Notice that $\mathcal{A}_0$ is positive definite, then
$$\int_{\R^2_+}(\mathcal{A}_0D^{\alpha}_{\ast}{\mathbf V}\cdot D^{\alpha}_{\ast}{\mathbf V})(t)d\mathbf{x}\geq c_0||D^{\alpha}_{\ast}\mathbf V(t)||^2_{L^2(\mathbb R^2_+)},$$
where $c_0$ depends on the number $k$ in \eqref{hy} and \eqref{stability1}. Hence,  we obtain that
\begin{equation}\label{DV33}
||D^{\alpha}_{\ast}{\mathbf V}(t)||^2_{L^{2}(\R^2_+)}\leq C(K)\Big(||{\mathbf V}||^2_{s,\ast,t}+||\mathcal{R}||^2_{L^2(\Omega_t)}\Big).
\end{equation}
Now, we estimate $||\mathcal{R}||^2_{L^2(\Omega_t)}$ in \eqref{DV33}. Recall that $\mathcal{R}=D^{\alpha}_{\ast}\mathcal{F}+\mathcal{R}_0+\mathcal{R}_1.$
Using Moser-type calculus inequalities \eqref{moser4} and \eqref{moser6}, we can prove that
\begin{equation}\label{FFFF}
\begin{split}
||D^{\alpha}_{\ast}\mathcal{F}||^2_{L^2(\Omega_t)}&\lesssim||J^T\mathbf{F}||^2_{s,\ast,t}\\
&\leq C(K)\Big(||\mathbf{F}||^2_{s,\ast,t}+||\mathbf{F}||^2_{W^{1,\infty}_{\ast}(\Omega_t)}||\hat{W}||^2_{s,\ast,t}\Big).
\end{split}
\end{equation}
Using the decomposition of boundary matrix $\mathcal{A}_1$ in \eqref{AA}, $\mathcal{A}_1=\mathcal{A}+\mathcal{A}_{(0)}$, following arguments similar to those used in Lemma \ref{noncha}, we obtain that
\begin{equation}\label{R0}
\begin{split}
||\mathcal{R}_0||^2_{L^2(\Omega_t)}&\lesssim ||\mathcal A\sigma^{\alpha_1-1}D^{\alpha'}_{t,x}\partial^{\alpha_3}_1{\mathbf V}||^2_{L^2(\Omega_t)}+||\frac{\mathcal A_{(0)}}{\sigma}\sigma^{\alpha_1}D^{\alpha'}_{t,x}\partial^{\alpha_3}_1{\mathbf V}||^2_{L^2(\Omega_t)}\\
&\lesssim C(K)\left(||\sigma^{\alpha_1-1}D^{\alpha'}_{t,x}\partial^{\alpha_3}_1{\mathbf V}_n||^2_{L^2(\Omega_t)}+ ||\sigma^{\alpha_1}D^{\alpha'}_{t,x}\partial^{\alpha_3}_1{\mathbf V}||^2_{L^2(\Omega_t)}\right)\\
&\leq C(K)\Big(||\partial_1{\mathbf V}_n||^2_{s-1,\ast,t}+||{\mathbf V}||^2_{s,\ast,t}\Big)\\
&\leq C(K)\mathcal{M}(t),
\end{split}
\end{equation}
recall here above that the matrix $\mathcal A$ acts only on the noncharacteristic part ${\mathbf V}_n$ of the unknown $\mathbf V$.
\newline
Now, we estimate the commutators in  $\mathcal{R}_1:$
For $j=0,2,$
we obtain that
\begin{equation}\label{AJJ}
\begin{split}
||[D^{\alpha}_{\ast},\mathcal{A}_j]\partial_j{\mathbf V}||^2_{L^2(\Omega_t)}\leq & \sum_{0<\beta\leq\alpha}||D^{\beta}_{\ast}\mathcal{A}_jD^{\alpha-\beta}_{\ast}\partial_j{\mathbf V}||^2_{L^2(\Omega_t)}\,.
\end{split}
\end{equation}
For $\langle\beta\rangle=1$, we get
\begin{equation}
\sum_{\langle\beta\rangle=1\,,\,\beta\leq\alpha}||D^{\beta}_{\ast}\mathcal{A}_jD^{\alpha-\beta}_{\ast}\partial_j{\mathbf V}||^2_{L^2(\Omega_t)}\le C(K)||{\mathbf V}||^2_{s,\ast,t}\,.
\end{equation}
For $\langle\beta\rangle\geq2,$ we obtain that
\begin{equation}\label{AJJ222}
\begin{split}
\sum_{\langle\beta\rangle\geq 2\,,\,\beta\leq\alpha}||D^{\beta}_{\ast}\mathcal{A}_jD^{\alpha-\beta}_{\ast}\partial_j{\mathbf V}||^2_{L^2(\Omega_t)}&\lesssim\sum_{\langle\beta'\rangle=2,\beta'\leq\beta\leq\alpha}||D^{\beta-\beta'}_{\ast}(D^{\beta'}_{\ast}\mathcal{A}_j)D^{\alpha-\beta}_{\ast}(\partial_j{\mathbf V})||^2_{L^2(\Omega_t)}\\
&\leq C(K)\Big(||{\mathbf V}||^2_{s-1,\ast,t}+||\dot{{\mathbf U}}||^2_{W^{2,\infty}_{\ast}(\Omega_t)}||\hat{W}||^2_{s,\ast,t}\Big)\,.
\end{split}
\end{equation}

For $j=1,$ we need to be very careful. We have
\begin{equation}\label{j1}
\begin{split}
||[D^{\alpha}_{\ast},\mathcal{A}_1]\partial_1{\mathbf V}||^2_{L^2(\Omega_t)}&\lesssim\sum_{0<\beta\leq\alpha}||D^{\beta}_{\ast}\mathcal{A}_1D^{\alpha-\beta}_{\ast}(\partial_1{\mathbf V})||^2_{L^2(\Omega_t)}\,.
\end{split}
\end{equation}
For $\langle\beta\rangle=1$ we get $\beta_3=0.$ Therefore, from \eqref{AA} it follows that $D^{\beta}_{\ast}\mathcal{A}_1|_{x_1=0}=0$ and we use Moser-type calculus inequalities \eqref{moser4}, \eqref{moser6} to obtain
\begin{equation}\label{beta1}
\sum_{\langle\beta\rangle=1\,,\,\beta\leq\alpha}||D^{\beta}_{\ast}\mathcal{A}_1D^{\alpha-\beta}_{\ast}(\partial_1{\mathbf V})||^2_{L^2(\Omega_t)}\leq C(K)||{\mathbf V}||^2_{s,\ast,t}\,.
\end{equation}
For $\langle\beta\rangle\geq 2$, we have
\begin{equation}\label{beta maggiore 2}
\begin{split}
&\sum_{\langle\beta\rangle\ge 2\,,\,\beta\leq\alpha}||D^{\beta}_{\ast}\mathcal{A}_1D^{\alpha-\beta}_{\ast}(\partial_1{\mathbf V})||^2_{L^2(\Omega_t)}\\
&\lesssim\sum_{\langle\beta'\rangle=2,\beta'\leq\beta\leq\alpha}||D^{\beta-\beta'}_{\ast}(D^{\beta'}_{\ast}\mathcal{A}_1)D^{\alpha-\beta}_{\ast}(\partial_1{\mathbf V})||^2_{L^2(\Omega_t)}\\
&\leq C(K)\Big(||{\mathbf V}||^2_{s,\ast,t}+||\dot{{\mathbf U}}||^2_{W^{2,\infty}_{\ast}(\Omega_t)}||\hat{W}||^2_{s,\ast,t}\Big).
\end{split}
\end{equation}
Using Moser-type calculus inequalities \eqref{moser4}, \eqref{moser6}, we can prove that
\begin{equation}\label{DV2}
\begin{split}
||D^{\alpha}_{\ast}(\mathcal{A}_3{\mathbf V})||^2_{L^2(\Omega_t)}&\lesssim||\mathcal{A}_3{\mathbf V}||^2_{s,\ast,t}\\
&\leq C(K)\Big(||{\mathbf V}||^2_{s,\ast,t}+||\dot{{\mathbf U}}||^2_{W^{1,\infty}_{\ast}(\Omega_t)}||\hat{W}||^2_{s+2,\ast,t}\Big).
\end{split}
\end{equation}
Notice that summing up \eqref{AJJ}-\eqref{DV2} gives
\begin{equation}\label{R1}
||\mathcal{R}_1||^2_{L^2(\Omega_t)}\leq C(K)\mathcal{M}(t).
\end{equation}
Hence, using  \eqref{FFFF}, \eqref{R0} and \eqref{R1}, we obtain \eqref{DVV}. Lemma \ref{estimate1} is concluded.
\end{proof}

Now, we are going to estimate the non-weighted normal derivatives.

\subsection{Estimate of non-weighted normal derivatives}
Now we perform the differential operator $D^{\alpha}_{\ast}$, in the case $\alpha_1=0, \alpha_3\geq1$, that is $D^\alpha_\ast=\partial^{\alpha_0}_t\partial^{\alpha_2}_2\partial^{\alpha_3}_1$ with $\langle\alpha\rangle\le s$. Now, we are ready to prove the following Lemma \ref{estimate2}.

\begin{lemma}\label{estimate2}
The estimate
\begin{equation}\label{DV}
\sum_{\langle\alpha\rangle\le s, \alpha_1=0, \alpha_3\geq1}||D^{\alpha}_{\ast}{\mathbf V}(t)||^2_{L^2(\mathbb R^2_+)}\leq C(K)\mathcal{M}(t)
\end{equation}
holds for problem \eqref{ho2} for all $t\leq T,$ where $s$ is a positive integer, $\mathcal{M}(t)$ is given in \eqref{defM}.
\end{lemma}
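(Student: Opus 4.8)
The plan is to prove \eqref{DV} by the energy method applied to system \eqref{A}, differentiated by the operator $D^\alpha_\ast=\partial^{\alpha_0}_t\partial^{\alpha_2}_2\partial^{\alpha_3}_1$ for each fixed multi-index with $\alpha_1=0$, $\alpha_3\ge1$ and $\langle\alpha\rangle\le s$, following closely the scheme already used in the proof of Lemma \ref{estimate1}, and then summing over all such $\alpha$. It is essential to work with the energy identity (and not merely to solve \eqref{Vn} algebraically for the normal derivatives, as in Lemma \ref{noncha}), because the energy identity converts the instantaneous quantity $||D^\alpha_\ast{\mathbf V}(t)||^2_{L^2(\R^2_+)}$ on the left of \eqref{DV} into \emph{time-integrated} quantities on the right, which is exactly what lets us absorb them into $\mathcal M(t)$: recall that $\mathcal M(t)$ contains $||{\mathbf V}||^2_{s,\ast,t}=\int^t_{-\infty}|||{\mathbf V}(\tau)|||^2_{s,\ast}\,d\tau$, so that same-order normal derivatives of ${\mathbf V}$ occurring on the right under a time integral are harmless, whereas using the instantaneous estimate \eqref{noncha32} would make the argument circular. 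Applying $D^\alpha_\ast$ to \eqref{A}, taking the $L^2(\Omega_t)$ scalar product with $D^\alpha_\ast{\mathbf V}$ and using $\mathcal A_0\ge c_0>0$ (the hyperbolicity condition \eqref{hy}), one arrives at
\[
||D^\alpha_\ast{\mathbf V}(t)||^2_{L^2(\R^2_+)}\le C(K)\left(\Big|\int_{\Gamma_t}\big(\mathcal A_1\,D^\alpha_\ast{\mathbf V}\cdot D^\alpha_\ast{\mathbf V}\big)\big|_{x_1=0}\Big|+||{\mathbf V}||^2_{s,\ast,t}+||\mathcal R||^2_{L^2(\Omega_t)}\right),
\]
where $\mathcal R=D^\alpha_\ast\mathcal F-[D^\alpha_\ast,\mathcal A_0]\partial_t{\mathbf V}-\sum_{j=1}^2[D^\alpha_\ast,\mathcal A_j]\partial_j{\mathbf V}-D^\alpha_\ast(\mathcal A_3{\mathbf V})$. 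Note that, unlike in Lemma \ref{estimate1} where $\alpha_1>0$ forces $D^\alpha_\ast{\mathbf V}|_{x_1=0}=0$, here a genuine boundary term appears and has to be controlled.

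The treatment of this boundary term is the only genuinely new point. By \eqref{A1-rank4} one has $\mathcal A_1|_{x_1=0}=\mathrm{diag}(\mathcal E_{12},-\mathcal E_{12})$, so the boundary quadratic form pairs only the traces of the non-characteristic components $\dot q^\pm$ and $\dot u_n^\pm$; the characteristic components $\dot u^\pm_2,\dot H^\pm_2,\dot S^\pm$ do not enter it, which is why no boundary condition is needed for them here. Since $\alpha_3\ge1$, we have $D^\alpha_\ast\dot q^\pm|_{x_1=0}=\partial^{\alpha_0}_t\partial^{\alpha_2}_2\partial^{\alpha_3-1}_1(\partial_1\dot q^\pm)|_{x_1=0}$, and similarly for $\dot u_n^\pm$; i.e.\ these are derivatives of order $\langle\alpha\rangle-2\le s-2$ of the traces of components of $\partial_1{\mathbf V}_n$. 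Hence, by the trace theorem (Lemma \ref{tra}) and the estimate \eqref{noncha3} of Lemma \ref{noncha},
\[
\Big|\int_{\Gamma_t}\big(\mathcal A_1\,D^\alpha_\ast{\mathbf V}\cdot D^\alpha_\ast{\mathbf V}\big)\big|_{x_1=0}\Big|\le C(K)\,||\partial_1{\mathbf V}_n||^2_{s-1,\ast,t}\le C(K)\,\mathcal M(t).
\]
Again it is important to invoke \eqref{noncha3}, whose right-hand side is time-integrated, rather than the instantaneous \eqref{noncha32}.

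The term $||\mathcal R||^2_{L^2(\Omega_t)}$ is then estimated exactly as in the proofs of Lemmas \ref{noncha} and \ref{estimate1}: $||D^\alpha_\ast\mathcal F||^2_{L^2(\Omega_t)}$ by the Moser inequalities \eqref{moser4}, \eqref{moser6}; the commutators $[D^\alpha_\ast,\mathcal A_0]\partial_t{\mathbf V}$, $[D^\alpha_\ast,\mathcal A_2]\partial_2{\mathbf V}$ — which involve only tangential derivatives of ${\mathbf V}$ — and $D^\alpha_\ast(\mathcal A_3{\mathbf V})$ again by Moser, giving at most $C(K)\big(||{\mathbf V}||^2_{s,\ast,t}+||\dot{\mathbf U}||^2_{W^{2,\infty}_\ast(\Omega_t)}||\hat W||^2_{s+2,\ast,t}\big)\le C(K)\mathcal M(t)$; and the delicate commutator $[D^\alpha_\ast,\mathcal A_1]\partial_1{\mathbf V}$ is handled via the splitting $\mathcal A_1=\mathcal A+\mathcal A_{(0)}$ of \eqref{AA}, using that $\mathcal A=\mathrm{diag}(\mathcal E_{12}/\partial_1\hat\Phi^+,\mathcal E_{12}/\partial_1\hat\Phi^-)$ acts only on $\partial_1(\dot q^\pm,\dot u_n^\pm)$ (components of $\partial_1{\mathbf V}_n$, controlled by Lemma \ref{noncha}) and that $D^\beta_\ast\mathcal A_1|_{x_1=0}=0$ whenever $\langle\beta\rangle=1$ — exactly as in \eqref{j1}--\eqref{beta maggiore 2} — so that no normal derivative of full order $s$ survives uncontrolled. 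Summing the resulting bounds over all $\alpha$ with $\alpha_1=0$, $\alpha_3\ge1$, $\langle\alpha\rangle\le s$ yields \eqref{DV}. The main obstacle is the new boundary integral: one must exploit the non-characteristic structure of $\mathcal A_1|_{x_1=0}$ together with the condition $\alpha_3\ge1$ to reduce it to the already-estimated quantity $\partial_1{\mathbf V}_n$, and take care to use throughout the time-integrated estimate \eqref{noncha3} — not the instantaneous \eqref{noncha32} — wherever traces or normal derivatives of ${\mathbf V}_n$ occur, so as to avoid circularity.
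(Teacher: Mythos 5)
Your overall scheme (energy identity for each $D^\alpha_\ast$ with $\alpha_1=0$, $\alpha_3\ge1$, the treatment of $\mathcal R$ via the splitting $\mathcal A_1=\mathcal A+\mathcal A_{(0)}$, and the insistence on the time-integrated estimate \eqref{noncha3}) matches the paper, but the one genuinely new step — the boundary integral — is where your argument has a gap. Your claim is that Cauchy--Schwarz, Lemma \ref{tra} and \eqref{noncha3} give $\big|\int_{\Gamma_t}(\mathcal A_1D^\alpha_\ast{\mathbf V}\cdot D^\alpha_\ast{\mathbf V})|_{x_1=0}\big|\le C(K)\Vert\partial_1{\mathbf V}_n\Vert^2_{s-1,\ast,t}$. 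Writing $D^\alpha_\ast\dot q|_{x_1=0}=\partial_t^{\alpha_0}\partial_2^{\alpha_2}\big((\partial_1^{\alpha_3}\dot q)|_{x_1=0}\big)$ and applying Lemma \ref{tra}(i) with $m=\alpha_0+\alpha_2$ indeed gives $\Vert D^\alpha_\ast\dot q|_{x_1=0}\Vert_{L^2(\Gamma_t)}\lesssim\Vert\partial_1^{\alpha_3}\dot q\Vert_{\alpha_0+\alpha_2+1,\ast,t}\le\Vert\partial_1\dot q\Vert_{\langle\alpha\rangle-1,\ast,t}\le\Vert\partial_1{\mathbf V}_n\Vert_{s-1,\ast,t}$, \emph{but only when} $m=\alpha_0+\alpha_2\ge1$: Lemma \ref{tra} is stated (and is only true) for $m\ge1$, since an $L^2(\Gamma_t)$ trace is not controlled by an $H^1_\ast$ interior norm (also, strictly speaking, $D^\alpha_\ast\dot q|_{x_1=0}$ is a trace of a normal derivative, not a derivative of a trace, though this by itself is harmless). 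The sum \eqref{DV} contains the purely normal multi-indices $\alpha=(0,0,0,\alpha_3)$, and for these the cheapest replacements — bounding the $L^2(\Gamma_t)$ trace by the $H^1(\Gamma_t)$ trace, or the elementary inequality $\Vert f|_{x_1=0}\Vert^2_{L^2(\Gamma_t)}\le\Vert f\Vert^2_{L^2(\Omega_t)}+\Vert\partial_1 f\Vert^2_{L^2(\Omega_t)}$ — cost $\Vert\partial_1{\mathbf V}_n\Vert_{2\alpha_3,\ast,t}=\Vert\partial_1{\mathbf V}_n\Vert_{\langle\alpha\rangle,\ast,t}$. This is fine while $\langle\alpha\rangle\le s-1$, but at the top order $2\alpha_3=s$ it requires $\Vert\partial_1{\mathbf V}_n\Vert_{s,\ast,t}$, which neither \eqref{noncha3} nor $\mathcal M(t)$ provides; and no integration by parts on $\Gamma_t$ can help, since for such $\alpha$ there is no tangential derivative to move. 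So the key boundary estimate is unjustified exactly for $\alpha_0=\alpha_2=0$, $2\alpha_3=s$.

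This is precisely the difficulty the paper's proof is built to avoid: before estimating anything on the boundary, it substitutes \eqref{Vn}--\eqref{R} for $\partial_1{\mathbf V}_n$, so that $\tilde{\mathcal M}(t)\lesssim\Vert\partial_t^{\alpha_0}\partial_2^{\alpha_2}\partial_1^{\alpha_3-1}\mathcal K\Vert^2_{L^2(\Gamma_t)}$ and the top-order content becomes \emph{tangential} derivatives $\partial_t{\mathbf V},\partial_2{\mathbf V}$ accompanied by only $\alpha_3-1$ normal derivatives; the pieces $\Sigma_2$, $\Sigma_3$ and the $\mathcal A_3{\mathbf V}$, $\mathcal F$ contributions are then within reach of Lemma \ref{tra} and Moser, while the remaining top-order piece $\Sigma_1$ is handled \emph{not} by a trace inequality but by passing back to a volume integral, integrating by parts in $t$ or $x_2$, and absorbing $\varepsilon\Vert D^\alpha_\ast{\mathbf V}(t)\Vert^2_{L^2(\mathbb R^2_+)}$ via Young's inequality into the left-hand side — an argument that works uniformly in $\alpha$, including the purely normal ones. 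You need to incorporate this substitution-plus-integration-by-parts device (or an equivalent mechanism) for the indices with $\alpha_0=\alpha_2=0$; for the indices with $\alpha_0+\alpha_2\ge1$ your direct trace-theorem shortcut does close and is in fact a mild simplification of the paper's treatment.
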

\begin{proof}
Similar as in Lemma \ref{estimate1}, applying the operator $D^{\alpha}_{\ast}$ on \eqref{A} and using the standard energy method,
we obtain that
\begin{equation}\nonumber
\begin{split}
\int_{\R^2_+}(\mathcal{A}_0D^{\alpha}_{\ast}{\mathbf V}\cdot D^{\alpha}_{\ast}{\mathbf V})(t)d\mathbf{x}&=\int_{\Omega_t}\Big(((\partial_t\mathcal{A}_0+\partial_1\mathcal{A}_1+\partial_2\mathcal{A}_2)D^{\alpha}_{\ast}{\mathbf V}+2\mathcal{R})\cdot D^{\alpha}_{\ast}{\mathbf V}\Big)d\mathbf{x}d\tau\\
&\quad+\int_{\Gamma_t}(\mathcal{A}_1D^{\alpha}_{\ast}{\mathbf V}\cdot D^{\alpha}_{\ast}{\mathbf V})|_{x_1=0}dx_2d\tau\,,
\end{split}
\end{equation}
where $\mathcal R$ is defined as in the proof of Lemma \ref{estimate1}. Thus
$$||D^{\alpha}_{\ast}{\mathbf V}(t)||^2_{L^2(\mathbb R^2_+)}\leq C(K)\Big(||{\mathbf V}||^2_{s,\ast,t}+||\mathcal{R}||^2_{L^2(\Omega_t)}+\tilde{\mathcal{M}}(t)\Big),$$
where
$$\tilde{\mathcal{M}}(t)=\Big|\int_{\Gamma_t}(\mathcal{A}_1D^{\alpha}_{\ast}{\mathbf V}\cdot D^{\alpha}_{\ast}{\mathbf V})|_{x_1=0}dx_2d\tau\Big|=\Big|\int_{\Gamma_t}(D^{\alpha}_{\ast}\dot{q}D^{\alpha}_{\ast}\dot{u}_n)|_{x_1=0}dx_2d\tau\Big|.$$
When $\alpha_1=0,$ by definition, $\mathcal{R}_0=0.$
Hence, we obtain the estimate
\begin{equation}\nonumber
||\mathcal{R}||^2_{L^2(\Omega_t)}\leq C(K)\mathcal{M}(t).
\end{equation}
This yields that
\begin{equation}\label{DV22}
||D^{\alpha}_\ast{\mathbf V}(t)||^2_{L^2(\R^2_+)}\leq C(K)(\mathcal{M}(t)+\tilde{\mathcal{M}}(t)).
\end{equation}
Using \eqref{Vn} and \eqref{R}, we obtain that
$$\tilde{\mathcal{M}}(t)\lesssim||\partial^{\alpha_0}_t\partial^{\alpha_2}_2\partial^{\alpha_3-1}_1\mathcal{K}||^2_{L^2(\Gamma_t)}\,,$$
where $\mathcal K$ is defined in \eqref{R}. Moreover, recalling that $\tilde{\mathcal{A}}|_{x_1=0}=\mathrm{diag}\Big(\mathcal{E}_{12},-\mathcal{E}_{12}\Big)$ because  $\partial_1\hat{\Phi}^{\pm}|_{x_1=0}=\pm1,$  we notice that $||\tilde{\mathcal{A}}|_{x_1=0}||_{L^{\infty}(\Gamma_t)}=1.$ Using $\mathcal{A}_{(0)}|_{x_1=0}=0$
we obtain that
\begin{equation}\label{S(t)}
\tilde{\mathcal{M}}(t)\leq C\Big(\Sigma_1(t)+\Sigma_2(t)+\Sigma_3(t)+||\partial^{\alpha_0}_t\partial^{\alpha_2}_2\partial^{\alpha_3-1}_1(\mathcal{A}_3{\mathbf V})||^2_{L^2(\Gamma_t)}+||\partial^{\alpha_0}_t\partial^{\alpha_2}_2\partial^{\alpha_3-1}_1\mathcal{F}||^2_{L^2(\Gamma_t)}\Big),
\end{equation}
where
$$\Sigma_1(t)=\sum_{j=0,2}||\mathcal{A}_j\partial^{\alpha_0}_t\partial^{\alpha_2}_2\partial^{\alpha_3-1}_1\partial_j{\mathbf V}||^2_{L^2(\Gamma_t)},$$
$$\Sigma_2(t)=\sum_{j=0,2}\sum_{\langle\beta'\rangle\geq1,\beta'+\beta''=(\alpha_0,\alpha_2,\alpha_3-1)}||D^{\beta'}_{\ast}\mathcal{A}_jD^{\beta''}_{\ast}\partial_j{\mathbf V}||^2_{L^2(\Gamma_t)};$$ notice that in spite of the notation here $D^{\beta'}_\ast$ and $D^{\beta''}_\ast$ dot not involve any weighted derivative $\sigma\partial_1$ (see the beginning of this section); notice also that $\beta'+\beta''=(\alpha_0,\alpha_2,\alpha_3-1)$ implies that $\langle\beta'\rangle+\langle\beta''\rangle\le s-2$;
$$\Sigma_3(t)=\sum_{\alpha_3'+\alpha_3''\leq \alpha_3,\alpha_3',\alpha_3''\geq1}||\partial^{\alpha_0}_t\partial^{\alpha_2}_2(\partial^{\alpha_3'}_1\mathcal{A}_{(0)}\partial^{\alpha_3''}_1{\mathbf V})||^2_{L^2(\Gamma_t)}.$$
Now, we estimate $\Sigma_2(t)$ and $\Sigma_3(t)$, which can be controlled by using Lemma \ref{tra} (i) and Moser-type calculus inequalities \eqref{moser4} and \eqref{moser6}:
\begin{equation}
\begin{split}
\Sigma_2(t)&\lesssim\sum_{j=0,2}\sum_{\langle\beta'\rangle=1}\Big(||D^{\beta'}_{\ast}\mathcal{A}_j\partial_j{\mathbf V}||^2_{s-3,\ast,t}+||\partial_1(D^{\beta'}_{\ast}\mathcal{A}_j\partial_j{\mathbf V})||^2_{s-3,\ast,t}\Big)\\
&\leq C(K)\Big(||{\mathbf V}||^2_{s,\ast,t}+||\dot{{\mathbf U}}||^2_{W^{2,\infty}_{\ast}(\Omega_t)}||\hat{W}||^2_{s,\ast,t}\Big).
\end{split}
\end{equation}
In the above estimate, it is noted that $\langle\beta''\rangle\leq s-3$.
\begin{equation}\label{K8}
\begin{split}
\Sigma_3(t)&\lesssim||\partial_1\mathcal{A}_{(0)}\partial_1{\mathbf V}||^2_{s-4,\ast,t}+||\partial_1(\partial_1\mathcal{A}_{(0)}\partial_1{\mathbf V})||^2_{s-4,\ast,t}\\
&\leq C(K)\Big(||{\mathbf V}||^2_{s,\ast,t}+||\dot{{\mathbf U}}||^2_{W^{2,\infty}_{\ast}(\Omega_t)}||\hat{W}||^2_{s,\ast,t}\Big).
\end{split}
\end{equation}
For the term $\Sigma_1(t),$ passing to the volume integral then using Leibniz's rule, we get (for shortness, in the sequel we denote $D^\alpha:=\partial^{\alpha_0}_t\partial^{\alpha_2}_2$ whereas $D^\alpha_\ast:=\partial^{\alpha_0}_t\partial^{\alpha_2}_2\partial^{\alpha_3}_1$):
\begin{equation}\nonumber
\begin{split}
\Sigma_1(t)&=-\sum_{j=0,2}\int_{\Omega_t}\partial_1|\mathcal{A}_jD^{\alpha}\partial^{\alpha_3-1}_1\partial_j{\mathbf V}|^2d\mathbf{x}d\tau\\
&=-2\sum_{j=0,2}\int_{\Omega_t}\Big((\mathcal{A}^2_jD^{\alpha}\partial^{\alpha_3-1}_1\partial_j{\mathbf V}\cdot D^{\alpha}_{\ast}\partial_j{\mathbf V})\\
&\quad+(\mathcal{A}_jD^{\alpha}\partial^{\alpha_3-1}_1\partial_j{\mathbf V}\cdot\partial_1\mathcal{A}_jD^{\alpha}\partial^{\alpha_3-1}_1\partial_j{\mathbf V})\Big)d\mathbf{x}d\tau\,.
\end{split}
\end{equation}
Then integration by parts with respect to $\partial_j$ (for $j=0,2)$ gives
\begin{equation}\nonumber
\Sigma_1(t)=\tilde{\Sigma}_1(t)+J_0(t)\,,
\end{equation}
where
\begin{equation}\nonumber
\begin{split}
\tilde{\Sigma}_1(t)&=2\sum_{j=0,2}\int_{\Omega_t}\Big(\mathcal{A}^2_jD^{\alpha}\partial^{\alpha_3-1}_1\partial^2_j{\mathbf V}\cdot D^{\alpha}_{\ast}{\mathbf V}\Big)\\
&\quad+\Big((\mathcal{A}_j\partial_j\mathcal{A}_j+\partial_j\mathcal{A}_j\mathcal{A}_j)D^{\alpha}\partial^{\alpha_3-1}_1\partial_j{\mathbf V}\cdot D^{\alpha}_{\ast}{\mathbf V}\Big)\\
&\quad-\Big(\mathcal{A}_jD^{\alpha}\partial^{\alpha_3-1}_1\partial_j{\mathbf V}\cdot\partial_1\mathcal{A}_jD^{\alpha}\partial^{\alpha_3-1}_1\partial_j{\mathbf V}\Big)d\mathbf{x}d\tau,
\end{split}
\end{equation}
\begin{equation}\nonumber
J_0(t)=-2\int_{\mathbb R^2_+}\Big(\mathcal{A}^2_0D^{\alpha}\partial^{\alpha_3-1}_1\partial_t{\mathbf V}\cdot D^{\alpha}_{\ast}{\mathbf V}\Big)(t)d\mathbf{x}.
\end{equation}
Since $|\alpha|+2+2(\alpha_3-1)\leq s$ we obtain that
\begin{equation}\label{k61}
\tilde{\Sigma}_1(t)\leq C(K)||{\mathbf V}||^2_{s,\ast,t}.
\end{equation}
Using Young's inequality, we obtain that
\begin{equation}\label{k62}
J_0(t)\leq C(K)\Big(\varepsilon||D^{\alpha}_{\ast}{\mathbf V}(t)||^2_{L^2(\mathbb R^2_+)}+\frac{1}{\varepsilon}|||{\mathbf V}(t)||||^2_{s-1,\ast}\Big),
\end{equation}
for small $\varepsilon.$
Therefore, we conclude from \eqref{k61}, \eqref{k62} and elementary inequalities \eqref{ele} that
\begin{equation}\label{K6}
\Sigma_1(t)\leq C(K)\Big(\varepsilon||D^{\alpha}_{\ast}{\mathbf V}(t)||^2_{L^2(\mathbb R^2_+)}+\frac{1}{\varepsilon}||{\mathbf V}||^2_{s,\ast,t}\Big).
\end{equation}
The last two terms in \eqref{S(t)} can be estimated by using Lemma \ref{tra} (i), Moser-type calculus inequalities \eqref{moser4} and \eqref{moser6}:
\begin{equation}\label{K66}
||\partial^{\alpha_0}_t\partial^{\alpha_2}_2\partial^{\alpha_3-1}_1(\mathcal{A}_3{\mathbf V})||^2_{L^2(\Gamma_t)}+||\partial^{\alpha_0}_t\partial^{\alpha_2}_2\partial^{\alpha_3-1}_1\mathcal{F}||^2_{L^2(\Gamma_t)}\leq C(K)\mathcal{M}(t).
\end{equation}
Summarizing \eqref{DV22}-\eqref{K8} and using \eqref{K6} and \eqref{K66}, taking $\varepsilon$ sufficiently small, we get the estimate \eqref{DV}. Therefore, Lemma \ref{estimate2} is proved.
\end{proof}

\subsection{Estimate of non-weighted tangential derivatives}\label{nonwt}
Now, we are going to obtain estimates of non-weighted tangential derivatives, i.e. $\alpha_1=\alpha_3=0$, that is $D^\alpha_\ast=\partial^{\alpha_0}_t\partial^{\alpha_2}_2$ with $|\alpha|\le s$. This is the most important case because we shall use the boundary conditions. This gives the loss of two additional derivatives that imply that in final tame estimate we will have the \lq\lq$s+4, \ast, t$" loss of derivatives from the coefficients, see Theorem \ref{tame}. This loss is caused by the presence of zero order terms in $\varphi$ (see \eqref{Be1}).
\begin{lemma}\label{estimate3}
The following estimate holds for \eqref{ho2} for all $t\leq T:$
\begin{equation}\label{nonweighted}
\begin{split}
\sum_{|\alpha|\leq s\,,\,\alpha_1=\alpha_3=0}||D^{\alpha}_{\ast}&{\mathbf V}(t)||^2_{L^2(\mathbb R^2_+)}\leq \varepsilon C(K)\Big(|||{\mathbf V}(t)|||^2_{s,\ast}+|||\varphi(t)|||^2_{H^{s-1}(\R)}\Big) \\
&+\frac{C(K)}{\varepsilon}\Big(\mathcal{M}(t)+||\varphi||^2_{H^{s-1}(\Gamma_t)}+||\varphi||^2_{W^{2,\infty}_{\ast}(\Gamma_t)}||\hat{W}||^2_{s+4,\ast,t}\Big),
\end{split}
\end{equation}
where  $s$ is positive integer, $\varepsilon$ is a positive constant and $\mathcal{M}(t)$ is defined in \eqref{defM}.
\end{lemma}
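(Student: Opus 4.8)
The plan is to carry out the standard energy method on the \emph{secondary symmetrized} system \eqref{BBB}, whose symmetrizer $\mathcal B_0$ is positive definite by the hyperbolicity condition \eqref{B01} for the basic state, and whose boundary matrix $\mathcal B_1$ carries the dissipative quadratic form produced by the choice of $\hat\lambda$ in Lemma \ref{lemma-Paolo-Ale-Paola}. Concretely, fix $\alpha$ with $|\alpha|\le s$ and $\alpha_1=\alpha_3=0$, apply $D^\alpha_\ast=\partial^{\alpha_0}_t\partial^{\alpha_2}_2$ to \eqref{BBB}, take the $L^2(\Omega_t)$ scalar product with $D^\alpha_\ast\mathbf V$, and integrate by parts. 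Since $\mathcal B_0\ge c_0>0$ (with $c_0$ depending on the constant $k$ in \eqref{hy}, \eqref{stability1}), this produces a boundary integral and gives
\[
\|D^\alpha_\ast\mathbf V(t)\|^2_{L^2(\R^2_+)}\le C(K)\Big(\|\mathbf V\|^2_{s,\ast,t}+\|\mathcal R\|^2_{L^2(\Omega_t)}+\mathcal I_{\mathrm{bd}}(t)\Big),
\]
where $\mathcal R$ collects $D^\alpha_\ast\tilde{\mathcal F}$, the commutators $[D^\alpha_\ast,\mathcal B_j]\partial_j\mathbf V$ for $j=0,1,2$, and $D^\alpha_\ast(\mathcal B_3\mathbf V)$, and $\mathcal I_{\mathrm{bd}}(t)=\big|\int_{\Gamma_t}(\mathcal B_1D^\alpha_\ast\mathbf V\cdot D^\alpha_\ast\mathbf V)|_{x_1=0}\,dx_2\,d\tau\big|$. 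The interior term $\|\mathcal R\|^2_{L^2(\Omega_t)}$ is handled exactly as in the proofs of Lemmas \ref{estimate1} and \ref{estimate2}: the Moser inequalities \eqref{moser4}, \eqref{moser6} and the embeddings \eqref{sobolev1} control $D^\alpha_\ast\tilde{\mathcal F}$, $D^\alpha_\ast(\mathcal B_3\mathbf V)$ and the commutators with $\mathcal B_0,\mathcal B_2$, while the commutator with $\mathcal B_1$ is treated through the decomposition \eqref{AA} (vanishing of the boundary part along $\{x_1=0\}$) together with the non-characteristic estimate \eqref{noncha32}; the outcome is $\|\mathcal R\|^2_{L^2(\Omega_t)}\le C(K)\mathcal M(t)$.

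The core is the boundary term. By \eqref{boundary-q-form-lambda}, up to commutators in which $D^\alpha_\ast$ lands on $\hat\lambda$ (zero-order-in-$\varphi$ contributions controlled by \eqref{moser4} and the trace theorem), $\mathcal I_{\mathrm{bd}}(t)=2\big|\int_{\Gamma_t}[\,D^\alpha_\ast\dot q\,(D^\alpha_\ast\dot u_N-\hat\lambda\,D^\alpha_\ast\dot H_N)\,]\,dx_2\,d\tau\big|$. I would then differentiate the \emph{homogeneous} boundary conditions \eqref{Be1} and the identity \eqref{Hn2} by $D^\alpha_\ast$ and substitute
\[
D^\alpha_\ast\dot u_N^\pm=(\partial_t+\hat u_2^\pm\partial_2)D^\alpha_\ast\varphi\mp\partial_1\hat u_N^\pm\,D^\alpha_\ast\varphi+\mathcal C_u^\pm,\qquad D^\alpha_\ast\dot H_N^\pm=\hat H_2^\pm\partial_2 D^\alpha_\ast\varphi\mp\partial_1\hat H_N^\pm\,D^\alpha_\ast\varphi+\mathcal C_H^\pm,
\]
where $\mathcal C_u^\pm,\mathcal C_H^\pm$ are commutators in which a derivative falls on a coefficient, together with the (differentiated) boundary relation $[\dot q]=-\varphi[\partial_1\hat q]$. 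The only contribution of $\mathcal I_{\mathrm{bd}}(t)$ carrying $\partial_2 D^\alpha_\ast\varphi$ has coefficient proportional to $[\hat u_2-\hat\lambda\hat H_2]$, which vanishes identically by the defining property of $\lambda^\pm$ in Lemma \ref{lemma-Paolo-Ale-Paola}; hence $\mathcal I_{\mathrm{bd}}(t)$ reduces to a sum of boundary terms of the form \eqref{lot_1} (products of $D^\alpha_\ast\dot q$ with $D^\alpha_\ast\varphi$, $\partial_t D^\alpha_\ast\varphi$, $\partial_2 D^\alpha_\ast\varphi$, weighted by first-order derivatives of the basic state) plus commutator terms.

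These leftover terms are disposed of by passing to volume integrals and integrating by parts: the factors $\partial_t D^\alpha_\ast\varphi$ and $\partial_2 D^\alpha_\ast\varphi$ are integrated by parts in $t$ and $x_2$, respectively, transferring the extra derivative onto $D^\alpha_\ast\dot q$ or onto a second-order coefficient, after which each product is split by Young's inequality: one factor is absorbed into $\varepsilon C(K)\big(|||\mathbf V(t)|||^2_{s,\ast}+|||\varphi(t)|||^2_{H^{s-1}(\R)}\big)$, and the complementary factor — the trace of the non-characteristic component $\dot q$ — is controlled by Lemma \ref{tra}(i) together with \eqref{noncha32}, giving a contribution $\tfrac{C(K)}{\varepsilon}\big(\mathcal M(t)+\|\varphi\|^2_{H^{s-1}(\Gamma_t)}\big)$. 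The fully commuted low-order terms, in which up to $s$ derivatives hit the basic state, are bounded via \eqref{moser4} and Lemma \ref{tra}(i) by $\tfrac{C(K)}{\varepsilon}\|\varphi\|^2_{W^{2,\infty}_\ast(\Gamma_t)}\|\hat W\|^2_{s+4,\ast,t}$; this is the origin of the extra two-derivative loss (from $\|\hat W\|_{s+2,\ast,t}$ inside $\mathcal M(t)$ to $\|\hat W\|_{s+4,\ast,t}$), since \eqref{Be1} involves the first normal derivatives $\partial_1\hat u_N^\pm,\partial_1\hat q^\pm$ of the basic state and the trace theorem costs one more. Summing over all admissible $\alpha$ yields \eqref{nonweighted}. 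The main obstacle will be precisely this bookkeeping of the boundary quadratic form: arranging the substitution of the differentiated boundary conditions so that the non-dissipative $\partial_2 D^\alpha_\ast\varphi$-term is visibly killed by $[\hat u_2-\hat\lambda\hat H_2]=0$, and then checking that every remaining boundary term is of a shape amenable either to integration by parts in $t$ or $x_2$ or to the trace theorem plus \eqref{noncha32}, with $\varepsilon$ correctly placed on the strong norms $|||\mathbf V(t)|||_{s,\ast}$, $|||\varphi(t)|||_{H^{s-1}(\R)}$ and $1/\varepsilon$ on the data-type quantities.
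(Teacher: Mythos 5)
Your overall architecture (tangential energy identity for \eqref{BBB}, interior commutators bounded by $C(K)\mathcal M(t)$ as in Lemmata \ref{estimate1}--\ref{estimate2}, and the killing of the leading boundary term through $[\hat u_2-\hat\lambda\hat H_2]=0$ from Lemma \ref{lemma-Paolo-Ale-Paola}) matches the paper, but there is a genuine gap in your treatment of the leftover boundary terms. The decisive device in the paper is the relation \eqref{Hc}, obtained from the constraint \eqref{Hn2} and the kinematic boundary condition (cf. \eqref{d2phi}, \eqref{dtphi}), which expresses $\nabla_{t,x_2}\varphi$ as $\mathbf H(\hat{\mathbf U},\hat\varphi)\mathbf V_n|_{x_1=0}+\mathbf G(\hat{\mathbf U},\hat\varphi)\varphi$; whenever $\varphi$ carries between $1$ and $s$ tangential derivatives one writes $D^{\alpha'}_\ast\varphi=D^{\gamma'}(\partial_j\varphi)$ with $|\gamma'|\le s-1$ and substitutes \eqref{Hc}, so that only derivatives of order at most $s-1$ act on $(\mathbf V_n|_{x_1=0},\varphi)$, and these are then absorbed into $\mathcal M(t)$, $\|\varphi\|^2_{H^{s-1}(\Gamma_t)}$ and the $\|\hat W\|_{s+4,\ast,t}$ term. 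You never invoke this (or any substitute), and without it your integration-by-parts/Young scheme does not reach the right-hand side of \eqref{nonweighted}: a term such as $\int_{\Gamma_t}D^\alpha_\ast\dot q^+\,D^\alpha_\ast\bigl(\varphi[\partial_1\hat u_N-\hat\lambda\partial_1\hat H_N]\bigr)dx_2\,d\tau$ with $|\alpha|=s$ still contains $s$ tangential derivatives of $\varphi$ after your manipulations, i.e. quantities at the level of $\|\varphi\|_{H^{s}(\Gamma_t)}$ or $\|D^\alpha_\ast\varphi(t)\|_{L^2(\R)}$, neither of which is majorized by $\varepsilon\,|||\varphi(t)|||^2_{H^{s-1}(\R)}+\varepsilon^{-1}\|\varphi\|^2_{H^{s-1}(\Gamma_t)}$ as claimed.

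A second, related problem is created by your substituting the differentiated boundary conditions on the $+$ and $-$ sides separately rather than through the jumps. Doing so produces terms $[D^\alpha_\ast\dot q]\,\partial_tD^\alpha_\ast\varphi$ and $[D^\alpha_\ast\dot q]\,(\hat u_2^--\hat\lambda^-\hat H_2^-)\,\partial_2D^\alpha_\ast\varphi$, i.e. order-$(s+1)$ derivatives of $\varphi$ multiplied (after $[\dot q]=-\varphi[\partial_1\hat q]$) by order-$s$ derivatives of $\varphi$; integration by parts in $t$ then yields the fixed-time quantity $\int_{\R}\hat c(t)\,(D^\alpha_\ast\varphi(t))^2dx_2$, which is of order $s$ and is controlled nowhere in the scheme (Lemma \ref{front} and the Gr\"onwall quantity in Lemma \ref{7.1} only involve $|||\varphi(t)|||_{H^{s-1}(\R)}$). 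The paper avoids these terms altogether by mimicking the $H^1_\ast$ computation \eqref{boundary-q-form_lambda_1}--\eqref{lot_1}: the boundary form is decomposed via the jumps $[D^\alpha_\ast\dot w]$ and $[D^\alpha_\ast\dot q]$, in which the $\partial_tD^\alpha_\ast\varphi$ contributions cancel, leaving products of $D^\alpha_\ast\dot q^+$ or $D^\alpha_\ast\dot w^-$ with $D^\alpha_\ast(\varphi\,\hat c)$; these are then split according to whether $\varphi$ is differentiated (and reduced via \eqref{Hc}) or not (and integrated by parts, which is where the $\|\hat W\|_{s+4,\ast,t}$ loss originates). Until your boundary bookkeeping is reorganized along these lines, the estimate \eqref{nonweighted} does not close.
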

\begin{proof} We only need to estimate the highest-order tangential derivatives with $|\alpha|=s, $ since the lower order terms can be controlled by definition of the anisotropic Sobolev norm, through the following estimate:
\begin{equation}\label{DV1}
\sum_{|\alpha|\leq s-1}||D^{\alpha}_{\ast}{\mathbf V}(t)||^2_{L^2(\mathbb R^2_+)}\leq C||{\mathbf V}||^2_{s,\ast,t}.
\end{equation}
Therefore,
applying same argument as in Lemmata \ref{estimate1} and \ref{estimate2}, we obtain that
\begin{equation}\label{nV}
||D^{\alpha}_{\ast}{\mathbf V}(t)||^2_{L^2(\mathbb R^2_+)}\leq C(K)(\mathcal{M}(t)+\mathcal{J}(t)),
\end{equation}
for $\alpha=(\alpha_0,\alpha_2),$ with $|\alpha|=s,$ where
$$\mathcal{J}(t)=\Big|\int_{\Gamma_t}(\mathcal{B}_1D^{\alpha}_{\ast}{\mathbf V}\cdot D^{\alpha}_{\ast}{\mathbf V})|_{x_1=0}dx_2d\tau\Big|.$$
Taking into account the boundary conditions \eqref{ho2} and \eqref{Hn2}, and also the important dissipative structure \eqref{boundary-q-form_lambda_1}, the explicit quadratic boundary term in the integral can be rewritten in a suitable form: let us denote
$$c^{\pm}=\sum_{|\alpha'|+|\alpha''|=s,|\alpha'|\geq1}D^{\alpha'}_{\ast}\hat{\lambda}^{\pm}D^{\alpha''}_{\ast}\dot{H}^{\pm}_n,\quad \dot{w}^{\pm}=\dot{u}^{\pm}_n-\hat{\lambda}^{\pm}\dot{H}^{\pm}_n\,,$$
where
$$
[\partial_1\hat{q}]=(\partial_1\hat{q}^++\partial_1\hat{q}^-)|_{x_1=0}\,,\,\,\dot{w}^{\pm}|_{x_1=0}=(\dot{u}^{\pm}_N-\hat{\lambda}^{\pm}\dot{H}^{\pm}_N)|_{x_1=0}
$$
and $\hat\lambda^\pm$ were defined in \eqref{lambda}, see also Lemma \ref{lemma-Paolo-Ale-Paola}.
\newline
The boundary quadratic form becomes:
\begin{equation}\label{RHS}
\begin{split}
(\mathcal{B}_1D^{\alpha}_{\ast}{\mathbf V}\cdot D^{\alpha}_{\ast}{\mathbf V})|_{x_1=0}&=2[(D^{\alpha}_{\ast}\dot{u}_N-\hat{\lambda} D^{\alpha}_{\ast}\dot{H}_N)D^{\alpha}_{\ast}\dot{q}]=2[cD^{\alpha}_{\ast}\dot{q}]+2[D^{\alpha}_{\ast}\dot{w}D^{\alpha}_{\ast}\dot{q}]\\
&=2[cD^{\alpha}_{\ast}\dot{q}]+2D^{\alpha}_{\ast}\dot{w}^-|_{x_1=0}[D^{\alpha}_{\ast}\dot{q}]+2D^{\alpha}_{\ast}\dot{q}^+|_{x_1=0}[D^{\alpha}_{\ast}\dot{w}]\\
&=2D^{\alpha}_{\ast}\dot{q}^+|_{x_1=0}D^{\alpha}_{\ast}(\partial_2\varphi[\hat{u}_2-\hat{\lambda}\hat{H}_2])+\text{l.o.t},
\end{split}
\end{equation}
where lower order term l.o.t can be expressed by $$2[cD^{\alpha}_{\ast}\dot{q}]-2D^{\alpha}_{\ast}\dot{w}^-|_{x_1=0}D^{\alpha}_{\ast}([\partial_1\hat{q}]\varphi)-2D^{\alpha}_{\ast}\dot{q}^+|_{x_1=0}D^{\alpha}_{\ast}([\partial_1\hat{u}_N-\hat{\lambda} \partial_1\hat{H}_N]\varphi).$$
Since the boundary conditions are dissipative, it is noted that first term on the right-hand side of \eqref{RHS} vanish by Lemma \ref{lemma-Paolo-Ale-Paola}.
The boundary terms can be estimated separately:
\begin{equation}\label{decom}
\mathcal{J}(t)\leq \sum_{\pm}\sum^4_{i=1}\mathcal{J}^{\pm}_i(t),
\end{equation}
where
$$\mathcal{J}^{\pm}_1(t)=\Big|\int_{\Gamma_t}(c^{\pm}D^{\alpha}_{\ast}\dot{q}^{\pm})|_{x_1=0}d{x_2}d\tau\Big|,$$
$$\mathcal{J}^{\pm}_2(t)=\Big|\int_{\Gamma_t}(D^{\alpha}_{\ast}\dot{w}^-D^{\alpha}_{\ast}(\varphi\partial_1\hat{q}^{\pm}))|_{x_1=0}d{x_2}d\tau\Big|,$$
$$\mathcal{J}^{\pm}_3(t)=\Big|\int_{\Gamma_t}(D^{\alpha}_{\ast}\dot{q}^+D^{\alpha}_{\ast}(\varphi \partial_1\hat{u}^{\pm}_n))|_{x_1=0}d{x_2}d\tau\Big|,$$
$$\mathcal{J}^{\pm}_4(t)=\Big|\int_{\Gamma_t}(D^{\alpha}_{\ast}\dot{q}^+D^{\alpha}_{\ast}(\varphi \hat{\lambda}^{\pm}\partial_1\hat{H}^{\pm}_n))|_{x_1=0}d{x_2}d\tau\Big|.$$

Recall that $|\alpha|=s\geq2.$ We denote $D^{\alpha}_{\ast}=\partial_lD^{\gamma}$, $D^\gamma:=\partial^{\gamma_0}_t\partial^{\gamma_2}_2$ for $\gamma=(\gamma_0,\gamma_2),|\gamma|=s-1\geq1$, where
 \begin{equation}\nonumber
 l=
 \begin{cases}
 2, & \text{ if } \alpha_0\neq s,\\
 0,   & \text{ if } \alpha_0=s.
 \end{cases}
 \end{equation}

In the following estimate of $\mathcal{J}^{+}_1(t)$, we separate the analysis into two cases.\\

\noindent \textbf{Case A:} When $\alpha_0=s$, then $l=0, D^{\alpha}_{\ast}=\partial_tD^{\gamma}.$ Using the normal derivative estimates \eqref{noncha3}, \eqref{noncha32} for non-characteristic variables, the elementary inequality \eqref{ele} and integration by parts, we obtain that
\begin{equation}\label{6.39}
\begin{split}
\mathcal{J}^{+}_1(t)&=\Big|\int_{\Omega_t}(\partial_tc^{+}\partial_1D^{\gamma}\dot{q}^{+}-\partial_1c^{+}D^{\alpha}_{\ast}\dot{q}^{+})d\mathbf{x}d\tau-\int_{\mathbb R^2_+}c^{+}\partial_1D^{\gamma}\dot{q}^{+}d\mathbf{x}\Big|\\
&\leq C\Big(||\dot{q}^{+}||^2_{s,\ast,t}+||\partial_1\dot{q}^{+}||^2_{s-1,\ast,t}+||\partial_tc^{+}||^2_{L^2(\Omega_t)}\\
&\quad+||\partial_1c^{+}||^2_{L^2(\Omega_t)}+\frac{1}{\varepsilon}||c^{+}(t)||^2_{L^2(\mathbb R^2_+)}+\varepsilon||\partial_1\dot{q}^{+}||^2_{H^{s-1}_{\ast}(\mathbb R^2_+)}
\Big)\\
&\leq C\Big(||\dot{q}^{+}||^2_{s,\ast,t}+||\partial_1\dot{q}^{+}||^2_{s-1,\ast,t}+||\partial_tc^{+}||^2_{L^2(\Omega_t)}\\
&\quad+||\partial_1c^{+}||^2_{L^2(\Omega_t)}+\frac{1}{\varepsilon}||c^{+}(t)||^2_{L^2(\mathbb R^2_+)}+\varepsilon|||\partial_1\dot{q}^{+}(t)|||^2_{s-1,\ast}
\Big)\\
&\leq \varepsilon C(K)|||{\mathbf V}(t)|||^2_{s,\ast}+\frac{C(K)}{\varepsilon}\mathcal{M}(t).
\end{split}
\end{equation}
Here, $\varepsilon$ is an arbitrary fixed constant.
Similar argument also holds for $\mathcal J^-_1(t).$
Therefore, we obtain that $$\mathcal{J}^{+}_1(t)+\mathcal{J}^{-}_1(t)\leq \varepsilon C(K)|||{\mathbf V}(t)|||^2_{s,\ast}+\frac{C(K)}{\varepsilon}\mathcal{M}(t).$$

\noindent \textbf{Case B:} When $\alpha_0\neq s$, then $l=2, D^{\alpha}_{\ast}=\partial_2D^{\gamma}.$ Using integration by parts, we could deduce that
\begin{equation}\label{6.42}
|\mathcal{J}^{\pm}_1(t)|\leq C(K)\mathcal{M}(t).
\end{equation}

Next, we estimate $\mathcal{J}^+_3$ and separate this term into two parts
\begin{equation}\nonumber
\mathcal{J}^+_3\leq \mathcal{J}_0(t)+\Sigma_4(t),
\end{equation}
where
$$\mathcal{J}_0(t)=\Big|\int_{\Gamma_t}(\varphi D^{\alpha}_{\ast}\dot{q}^+D^{\alpha}_{\ast}\partial_1\hat{u}^+_n)|_{x_1=0}dx_2d\tau\Big|,$$
$$\Sigma_4(t)=\sum_{|\alpha'|+|\alpha''|=s,|\alpha'|\geq1}\Big|\int_{\Gamma_t}(D^{\alpha}_{\ast}\dot{q}^+D^{\alpha'}_{\ast}\varphi D^{\alpha''}_{\ast}\partial_1\hat{u}^+_n)\Big|_{x_1=0}dx_2d\tau\Big|.$$

For the term $\mathcal{J}_0(t),$ after integrating by parts, we obtain that
\begin{equation}\nonumber
\begin{split}
\mathcal{J}_0(t)&=\Big|\int_{\Omega_t}(\partial_1\partial_lD^{\alpha}_{\ast}\hat{u}^+_n)\varphi\partial_1D^{\gamma}\dot{q}^+-(\partial^2_1D^{\alpha}_{\ast}\hat{u}^+_n)\varphi D^{\alpha}_{\ast}\dot{q}^+\\
&\quad+\partial_1D^{\alpha}_{\ast}\hat{u}^+_n\partial_l\varphi\partial_1D^{\gamma}\dot{q}^+d\mathbf{x}d\tau-\frac{l-2}{2}\int_{\mathbb R^2_+}(\partial_1D^{\alpha}_{\ast}\hat{u}^+_{n})\varphi\partial_1D^{\gamma}\dot{q}^+d\mathbf{x}\Big|.
\end{split}
\end{equation}
We note that to estimate $\partial^2_1D^{\alpha}_{\ast}\hat{u}^+_n$, we need $\hat{W}$ with regularity $s+4.$
For the term $\mathcal{J}_0(t),$ using Moser-type calculus inequalities \eqref{moser4}, \eqref{moser6}, \eqref{noncha3} and \eqref{noncha32} we get
$$\mathcal{J}_0(t)\leq \varepsilon C(K)|||{\mathbf V}(t)|||^2_{s,\ast}+\frac{C(K)}{\varepsilon}\Big(\mathcal{M}(t)+||\varphi||^2_{W^{1,\infty}{(\Gamma_t)}}||\hat{W}||^2_{s+4,\ast,t}\Big).$$
Now, we start to estimate $\Sigma_4.$ It is noted that for $|\alpha'|\geq1,$ we can isolate one tangential derivative in the differential operator:
$$D^{\alpha'}_{\ast}\varphi=D^{\gamma'}(\partial_j\varphi),\quad |\gamma'|\leq s-1,\quad j=0 \text{ or } j=2.$$
Using \eqref{dtphi} and \eqref{d2phi}, we obtain that
\begin{equation}\label{Hc}
\nabla_{t,x_2}\varphi=\mathbf{H}(\hat{{\mathbf U}},\hat{\varphi}){\mathbf V}_n|_{x_1=0}+\mathbf{G}(\hat{{\mathbf U}},\hat{\varphi})\varphi.
\end{equation}
Here $\mathbf{H}(\hat{{\mathbf U}},\hat{\varphi}),\mathbf{G}(\hat{{\mathbf U}},\hat{\varphi})$ depend on $\hat{{\mathbf U}}|_{x_1=0},\partial_1\hat{{\mathbf U}}|_{x_1=0}$ and second order derivatives of $\hat{\varphi}.$ \\

Using \eqref{Hc}, we can write the derivatives of $\varphi$ by using the non-characteristic unknown ${\mathbf V}_n$.  We insert these derivatives $D^{\alpha'}_{\ast}\varphi=D^{\gamma'}(\cdots)$, with $|\gamma'|\leq s-1$, into $\Sigma_4$ to obtain
$$\Sigma_4(t)=\sum_{|\alpha'|+|\alpha''|=s,|\alpha'|\geq1}\Big|\int_{\Gamma_t}(D^{\alpha}_{\ast}\dot{q}^+D^{\gamma'}(\mathbf{H}{\mathbf V}_n+\mathbf{G}\varphi)D^{\alpha''}_{\ast}\partial_1\hat{u}^+_n)\Big|_{x_1=0}dx_2d\tau\Big|\,,$$
that can be controlled by the right-hand side of \eqref{nonweighted}. Indeed, by passing to the volume integral on $\Omega_t$, we note that the highest order of regularity for $D^{\alpha''}_{\ast}\partial_1^2\hat{u}^+_n$ is $s+3$ because $|\gamma'|+|\alpha''|=s-1$ (since $|\alpha'|= |\gamma'|+1$ and $|\alpha'|+|\alpha''|=s$). Similar estimates also hold for  $\mathcal{J}^-_3,\mathcal{J}^{\pm}_2,\mathcal{J}^{\pm}_4.$ Using \eqref{DV1}, \eqref{nV}, \eqref{decom}, \eqref{6.39}, \eqref{6.42}, we obtain the estimate \eqref{nonweighted}. Therefore, Lemma \ref{estimate3} is concluded.
\end{proof}

\subsection{Estimate of front}

Now, we are going to estimate the front $\varphi(t)$ in $H^{s-1}(\R)$ and its tangential derivatives $\nabla_{t,x_2}\varphi$ in $H^{s-1}(\Gamma_t).$
\begin{lemma}\label{front}
Given the solution $\varphi$ of \eqref{ho2}, for all $t\leq T$ and  positive integer $s,$ the following estimate holds:
\begin{equation}\label{varphit}
\begin{split}
&|||\varphi(t)|||^2_{H^{s-1}(\R)}+||\nabla_{t,x_2}\varphi||^2_{H^{s-1}(\Gamma_t)}
\\
&\quad\leq C(K)\Big(||{\mathbf V}||^2_{s,\ast,t}+||\varphi||^2_{H^{s-1}(\Gamma_t)}+(||\dot{{\mathbf U}}||^2_{W^{1,\infty}_{\ast}(\Omega_t)}+||\varphi||^2_{W^{1,\infty}_{\ast}(\Gamma_t)})||\hat{W}||^2_{s+2,\ast,t}\Big).
\end{split}
\end{equation}
%\begin{equation}\label{phiesitmate}
%\begin{split}
%&||\nabla_{t,x_2}\varphi||^2_{H^{s-1}(\Gamma_t)}\\
%&\quad\leq C(K)\Big(||{\mathbf V}||^2_{s,\ast,t}+||\varphi||^2_{H^{s-1}(\Gamma_t)}+(||\dot{{\mathbf U}}||^2_{W^{1,\infty}_{\ast}(\Omega_t)}+||\varphi||^2_{W^{1,\infty}_{\ast}(\Gamma_t)})||\hat{W}||^2_{s+2,\ast,t}\Big).\\
%\end{split}
%\end{equation}
\end{lemma}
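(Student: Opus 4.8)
The plan is to derive an evolution equation for $\varphi$ on the boundary $\Gamma_T$ and then apply tangential derivatives and a standard $H^{s-1}$ energy estimate. First I would recall that the boundary conditions in \eqref{Be1} give, along $\{x_1=0\}$, the two relations $\partial_t\varphi+\hat u_2^\pm\partial_2\varphi-\dot u_N^\pm\mp\varphi\partial_1\hat u_N^\pm=g_1^\pm$ (here with $g_1^\pm=0$ since we work with the homogeneous problem \eqref{ho2}). Taking, say, the ``$+$'' equation (or a convex combination of the two) yields a transport equation for $\varphi$ of the form
\begin{equation}\nonumber
\partial_t\varphi+\hat u_2^+\partial_2\varphi=\dot u_N^+\vert_{x_1=0}+\varphi\,\partial_1\hat u_N^+\vert_{x_1=0}\,,
\end{equation}
and more generally, combining with the second boundary condition $[\dot q]=-\varphi[\partial_1\hat q]$ and the constraint \eqref{Hn2}, one gets the representation \eqref{Hc} for $\nabla_{t,x_2}\varphi$ in terms of the trace of the non-characteristic unknown ${\mathbf V}_n$ and of $\varphi$ itself, namely $\nabla_{t,x_2}\varphi=\mathbf H(\hat{\mathbf U},\hat\varphi){\mathbf V}_n\vert_{x_1=0}+\mathbf G(\hat{\mathbf U},\hat\varphi)\varphi$, where $\mathbf H,\mathbf G$ are smooth functions of $\hat{\mathbf U}\vert_{x_1=0}$, $\partial_1\hat{\mathbf U}\vert_{x_1=0}$ and the second-order derivatives of $\hat\varphi$.

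Next I would apply $D^\alpha$ (tangential derivatives in $t,x_2$ only) with $|\alpha|\le s-1$ to the transport equation for $\varphi$, multiply by $D^\alpha\varphi$ and integrate over $\Gamma_t$. The $\hat u_2^+\partial_2$ term is handled by integration by parts, producing a term controlled by $\|\partial_2\hat u_2^+\|_{L^\infty}\,\||\varphi(t)|\|_{H^{s-1}}^2$, i.e. absorbed into $C(K)\|\varphi\|_{H^{s-1}(\Gamma_t)}^2$ after Gr\"onwall. The commutators $[D^\alpha,\hat u_2^+\partial_2]\varphi$ and the lower-order terms $D^\alpha(\varphi\partial_1\hat u_N^+)$ are estimated by the Moser-type calculus inequalities \eqref{moser3}--\eqref{moser4} together with the trace theorem Lemma \ref{tra}(i): these produce $\|{\mathbf V}\|_{s,\ast,t}$ (via $\|\dot u_N^+\vert_{x_1=0}\|_{H^{s-1}(\Gamma_t)}\lesssim\|\dot{\mathbf U}\|_{s,\ast,t}$), plus the ``loss'' term $\|\varphi\|^2_{W^{1,\infty}_\ast(\Gamma_t)}\|\hat W\|^2_{s+2,\ast,t}$ coming from the product of high-order derivatives of $\hat u_N^+$ with lower-order derivatives of $\varphi$ (and vice versa), plus the term $\|\dot{\mathbf U}\|^2_{W^{1,\infty}_\ast(\Omega_t)}\|\hat W\|^2_{s+2,\ast,t}$ from Moser applied to products with $\dot{\mathbf U}$. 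Applying Gr\"onwall's inequality in $t$ then gives the bound on $\||\varphi(t)|\|^2_{H^{s-1}(\R)}$ by the right-hand side of \eqref{varphit}. Finally, the estimate of $\|\nabla_{t,x_2}\varphi\|^2_{H^{s-1}(\Gamma_t)}$ follows by plugging the already-obtained control of $\varphi$ into the representation \eqref{Hc}, applying $D^\alpha$ with $|\alpha|\le s-1$, and using once more Moser \eqref{moser4} and the trace estimate $\|{\mathbf V}_n\vert_{x_1=0}\|_{H^{s-1}(\Gamma_t)}\lesssim\|{\mathbf V}\|_{s,\ast,t}$; the coefficients $\mathbf H,\mathbf G$ depending on second derivatives of $\hat\varphi$ and on $\partial_1\hat{\mathbf U}\vert_{x_1=0}$ account for the regularity level $s+2$ of $\hat W$ in the last factor (one loss from the trace, one from $\partial_1$).

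The main obstacle, as usual in these characteristic free-boundary problems, is bookkeeping the exact loss of derivatives: one must check that the top-order term, where all $s-1$ tangential derivatives hit $\varphi$, reproduces $\|\varphi\|^2_{H^{s-1}}$ (to be closed by Gr\"onwall) rather than something uncontrollable, and that whenever high-order derivatives fall on the coefficients $\hat{\mathbf U},\hat\varphi$ the companion factor $\varphi$ or $\dot{\mathbf U}$ is only differentiated at $W^{1,\infty}_\ast$ level, so that the Moser splitting \eqref{moser4} yields precisely the factor $\|\hat W\|_{s+2,\ast,t}$ and not $\|\hat W\|_{s+3,\ast,t}$ or higher. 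The careful use of the trace theorem Lemma \ref{tra}(i) — which costs exactly one anisotropic derivative — is what keeps the loss at $s+2$ here (the further degradation to $s+4$ occurs only later, in Lemma \ref{estimate3}, because of the $\partial_1^2\hat{\mathbf U}$ terms). No genuinely new idea is needed beyond these standard energy/commutator/trace manipulations.
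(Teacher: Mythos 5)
Your proposal is correct and follows essentially the same route as the paper: tangential derivatives applied to the first (homogeneous) boundary condition, treated as a transport equation for $\varphi$, with the commutator and the $\varphi\,\partial_1\hat u^+_N$ term handled by Moser-type calculus and the trace $\|\dot u^+_N|_{x_1=0}\|_{H^{s-1}(\Gamma_t)}\lesssim\|{\mathbf V}\|_{s,\ast,t}$, and then the representation \eqref{Hc} of $\nabla_{t,x_2}\varphi$ via ${\mathbf V}_n|_{x_1=0}$ and $\varphi$ for the second term. The only cosmetic difference is that no Gr\"onwall step is needed here, since the estimate \eqref{varphit} keeps $\|\varphi\|^2_{H^{s-1}(\Gamma_t)}$ on the right-hand side (Gr\"onwall is deferred to Lemma \ref{7.1}).
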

\begin{proof} Applying tangential derivatives on the first boundary conditions in \eqref{ho2}, we obtain that
\begin{equation}\label{vt}
\begin{split}
|||\varphi(t)|||^2_{H^{s-1}(\R)}&\leq C(K)\Big(||\dot{u}_n|_{x_1=0}||^2_{H^{s-1}(\Gamma_t)}+||\varphi||^2_{H^{s-1}(\Gamma_t)}+\sum_{|\alpha|\leq s-1}||\mathcal{G}_{\alpha}||^2_{L^2(\Gamma_t)}\Big),
\end{split}
\end{equation}
where $$\mathcal{G}_{\alpha}=D^{\alpha}_{\ast}(\varphi\partial_1\hat{u}^+_N)-([D^{\alpha}_{\ast},\hat{u}^+_2]\partial_2\varphi+\frac{1}{2}\partial_2\hat{u}^+_2D^{\alpha}_{\ast}\varphi).$$
For the first term on the right-hand side of \eqref{vt}, using trace Theorem in Lemma \ref{tra}, we obtain that
\begin{equation}\label{vt1}
||\dot{u}_n|_{x_1=0}||^2_{H^{s-1}(\Gamma_t)}\lesssim||{\mathbf V}||^2_{s,\ast,t}.
\end{equation}
For the third term on the right-hand side of \eqref{vt}, when $|\alpha|\leq s-1,$ we obtain that
\begin{equation}\label{vt2}
\begin{split}
||\mathcal{G}_{\alpha}||^2_{L^2(\Gamma_t)}&\lesssim ||\varphi\partial_1\hat{u}^+_N||^2_{H^{s-1}(\Gamma_t)}+||[D^{\alpha}_{\ast},\hat{u}^+_2]\partial_2\varphi||^2_{L^2(\Gamma_t)}+||\partial_2\hat{u}^+_2D^{\alpha}_{\ast}\varphi||^2_{L^2(\Gamma_t)}\\
&\leq C(K)\Big(||\varphi||^2_{H^{s-1}(\Gamma_t)}+||\varphi||^2_{W^{1,\infty}(\Gamma_t)}||\hat{W}||^2_{s+2,\ast,t}\Big).
\end{split}
\end{equation}
Hence, we obtain the estimate of the first term in the left-hand side of \eqref{varphit}.

For the estimate of the second term in the left-hand side of \eqref{varphit}, we use the relation \eqref{Hc}, $||\hat{\varphi}||_{H^s(\Gamma_t)}\lesssim||\hat{\Psi}||_{s,\ast,t}\leq||\hat{\varphi}||_{H^s(\Gamma_t)},$ and Moser-type calculus inequalities \eqref{moser4} and \eqref{moser6} to obtain that
\begin{equation}\nonumber
\begin{split}
&||\nabla_{t,x_2}\varphi||^2_{H^{s-1}(\Gamma_t)}\leq||\mathbf{H}(\hat{\mathbf U},\hat{\varphi}){\mathbf V}_n|_{x_1=0}||^2_{H^{s-1}(\Gamma_t)}+||\mathbf{G}(\hat{\mathbf U},\hat{\varphi})\varphi||^2_{H^{s-1}(\Gamma_t)}\\
&\leq C(K)\Big(||{\mathbf V}||^2_{s,\ast,t}+||\varphi||^2_{H^{s-1}(\Gamma_t)}+(||\dot{{\mathbf U}}||^2_{W^{1,\infty}_{\ast}(\Omega_t)}+||\varphi||^2_{W^{1,\infty}(\Gamma_t)})||\hat{W}||^2_{s+2,\ast,t}\Big).
\end{split}
\end{equation}
Therefore, Lemma \ref{front} is concluded.
\end{proof}
Collecting all the previously established higher order estimates, we can prove the following lemma.
\begin{lemma}\label{7.1}
The solution of the homogeneous problem \eqref{ho2} satisfies the following a priori estimate
\begin{equation}\label{tamesss2}
||\dot{{\mathbf U}}||^2_{s,\ast,T}+||\varphi||^2_{H^s(\Gamma_T)}\leq C(K)Te^{C(K)T}\mathcal{N}(T)
\end{equation}
for positive integer $s,$ where
\begin{equation}\nonumber
\begin{split}
\mathcal{N}(T)&=||\mathbf{F}||^2_{s,\ast,T}+(||\dot{{\mathbf U}}||^2_{W^{2,\infty}_{\ast}(\Omega_T)}+||\varphi||^2_{W^{2,\infty}(\Gamma_T)}+||\mathbf{F}||^2_{W^{1,\infty}_{\ast}(\Omega_T)})||\hat{W}||^2_{s+4,\ast,T}.
\end{split}
\end{equation}
\end{lemma}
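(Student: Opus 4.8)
The plan is to assemble the a priori estimate \eqref{tamesss2} by combining all the partial estimates proved in the preceding lemmata and then closing the argument with a Gronwall-type inequality. First I would recall the decomposition of the anisotropic norm: by definition of $H^s_\ast$, controlling $\|\dot{\mathbf U}\|_{s,\ast,t}$ amounts to controlling $\|D^\alpha_\ast\dot{\mathbf U}(t)\|_{L^2(\mathbb R^2_+)}$ for all multi-indices with $\langle\alpha\rangle\le s$, split into the four cases already treated: the weighted case $\alpha_1>0$ (Lemma \ref{estimate1}), the non-weighted normal case $\alpha_1=0,\alpha_3\ge1$ (Lemma \ref{estimate2}), the tangential case $\alpha_1=\alpha_3=0$ (Lemma \ref{estimate3}), together with the normal derivative of the non-characteristic unknown $\partial_1{\mathbf V}_n$ (Lemmata \ref{noncha} and the estimate \eqref{noncha32}) and the entropy estimate \eqref{tame-entropia}. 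Since $\dot{\mathbf U}=J{\mathbf V}$ with $J$ a smooth function of $\hat W$, passing between $\dot{\mathbf U}$ and ${\mathbf V}$ costs only Moser-type factors $\|\hat W\|_{s,\ast,t}$, already absorbed into $\mathcal M(t)$; so it suffices to bound $\|{\mathbf V}\|_{s,\ast,t}$ and $\|\varphi\|_{H^s(\Gamma_t)}$.

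The next step is to add up the pointwise-in-time estimates. Summing \eqref{DVV}, \eqref{DV}, \eqref{nonweighted} over the relevant multi-indices, together with the front estimate \eqref{varphit} of Lemma \ref{front} for $\varphi$ and $\nabla_{t,x_2}\varphi$, and using $\||{\mathbf V}(t)|\|^2_{s,\ast}\lesssim\|{\mathbf V}\|^2_{s,\ast,t}$ only on the right where needed, yields an inequality of the schematic form
\[
\||{\mathbf V}(t)|\|^2_{s,\ast}+\||\varphi(t)|\|^2_{H^{s-1}(\mathbb R)}\le \varepsilon C(K)\big(\||{\mathbf V}(t)|\|^2_{s,\ast}+\||\varphi(t)|\|^2_{H^{s-1}(\mathbb R)}\big)+\frac{C(K)}{\varepsilon}\big(\mathcal M(t)+\|\varphi\|^2_{H^{s-1}(\Gamma_t)}+(\cdots)\|\hat W\|^2_{s+4,\ast,t}\big).
\]
Choosing $\varepsilon$ small enough, the first term on the right is absorbed into the left. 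Recalling the definition of $\mathcal M(t)$ in \eqref{defM}, replacing the $W^{k,\infty}_\ast$ norms of $\dot{\mathbf U}$, $\varphi$, $\mathbf F$ over $\Omega_t$ (resp. $\Gamma_t$) by their counterparts over $\Omega_T$ (resp. $\Gamma_T$) — which only increases them — one sees the right-hand side is bounded by $C(K)\big(\|{\mathbf V}\|^2_{s,\ast,t}+\|\varphi\|^2_{H^{s-1}(\Gamma_t)}\big)+C(K)\mathcal N(T)$, where $\mathcal N(T)$ is exactly the quantity in the statement (here using that $\|\mathbf F\|_{s,\ast,t}\le\|\mathbf F\|_{s,\ast,T}$ and that the time-integrated $\|{\mathbf V}\|_{s,\ast,t}$ arising from $\mathcal M$ and from the front estimate feeds into the Gronwall loop). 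Integrating $\||{\mathbf V}(s)|\|^2_{s,\ast}$ and $\||\varphi(s)|\|^2_{H^{s-1}(\mathbb R)}$ over $s\in(-\infty,t)$ and using \eqref{norma-int}, together with $\|\varphi\|^2_{H^s(\Gamma_t)}\lesssim\|\varphi\|^2_{H^{s-1}(\Gamma_t)}+\|\nabla_{t,x_2}\varphi\|^2_{H^{s-1}(\Gamma_t)}$ from Lemma \ref{front}, gives
\[
\|{\mathbf V}\|^2_{s,\ast,t}+\|\varphi\|^2_{H^s(\Gamma_t)}\le C(K)\int_{-\infty}^t\big(\|{\mathbf V}\|^2_{s,\ast,\tau}+\|\varphi\|^2_{H^s(\Gamma_\tau)}\big)d\tau+C(K)T\,\mathcal N(T)\,.
\]
Gronwall's lemma then produces the factor $Te^{C(K)T}$, and translating back to $\dot{\mathbf U}$ via $\dot{\mathbf U}=J{\mathbf V}$ finishes \eqref{tamesss2}.

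The main obstacle is the bookkeeping of the Gronwall closure: several of the partial estimates (notably Lemma \ref{estimate3} and the front estimate) contain on their right-hand side terms like $\|{\mathbf V}\|^2_{s,\ast,t}$, $\|\varphi\|^2_{H^{s-1}(\Gamma_t)}$, and $\varepsilon\||{\mathbf V}(t)|\|^2_{s,\ast}$, so one must be careful to (i) absorb the $\varepsilon$-terms only after summation and only when they involve the instantaneous norm $\||\cdot(t)|\|$, (ii) keep the genuinely time-integrated norms on the right for the Gronwall argument, and (iii) verify that the factors multiplying $\|\hat W\|^2_{s+4,\ast,T}$ — which are $W^{2,\infty}_\ast$ or $W^{1,\infty}$ norms of the unknowns and the source — are exactly the three quantities appearing in $\mathcal N(T)$, with no stray dependence on higher norms of the unknown. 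A secondary point requiring care is ensuring the small-time hypothesis is used precisely where $Te^{C(K)T}\le 1$ (or a fixed constant) is later needed, but for the statement of Lemma \ref{7.1} itself it suffices to carry the factor $Te^{C(K)T}$ explicitly, as written.
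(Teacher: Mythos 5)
Your overall strategy coincides with the paper's: sum the partial estimates \eqref{DVV}, \eqref{DV}, \eqref{nonweighted}, \eqref{varphit}, absorb the $\varepsilon$-terms, run Gr\"onwall, and return from ${\mathbf V}$ to $\dot{\mathbf U}$ through $\dot{\mathbf U}=J{\mathbf V}$. However, as written your argument does not deliver the statement in the form \eqref{tamesss2}, where the factor $Te^{C(K)T}$ multiplies the \emph{entire} right-hand side. Gr\"onwall gives you $\|{\mathbf V}\|^2_{s,\ast,T}+\|\varphi\|^2_{H^{s-1}(\Gamma_T)}\le C(K)Te^{C(K)T}\mathcal N(T)$, but the steps you perform \emph{after} Gr\"onwall generate terms without any small factor: the Moser estimate for $\dot{\mathbf U}=J{\mathbf V}$ produces $C(K)\|\dot{\mathbf U}\|^2_{W^{1,\infty}_\ast(\Omega_T)}\|\hat W\|^2_{s,\ast,T}$, and upgrading $\|\varphi\|_{H^{s-1}(\Gamma_T)}$ to $\|\varphi\|_{H^s(\Gamma_T)}$ via \eqref{varphit} at $t=T$ produces $(\|\dot{\mathbf U}\|^2_{W^{1,\infty}_\ast}+\|\varphi\|^2_{W^{1,\infty}})\|\hat W\|^2_{s+2,\ast,T}$. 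These are bounded by $\mathcal N(T)$ but not by $Te^{C(K)T}\mathcal N(T)$. The missing ingredient is the inequality $\|\hat W\|^2_{s,\ast,T}\lesssim T\|\hat W\|^2_{s+1,\ast,T}$ (a Sobolev-in-time argument, affordable because $\mathcal N(T)$ carries $\|\hat W\|_{s+4,\ast,T}$), which is exactly how the paper attaches the factor $T$ to these leftover terms (see \eqref{Uestimate}, \eqref{Uestimate2.0}, \eqref{Vestimate2}). Without it you only get the weaker bound $C(K)Te^{C(K)T}\mathcal N(T)+C(K)\mathcal N(T)$, and the uniform small-$T$ absorption used later in the proof of Theorem \ref{tame} is what this factor is for.

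Two smaller points. First, the inequality you invoke, $|||{\mathbf V}(t)|||^2_{s,\ast}\lesssim\|{\mathbf V}\|^2_{s,\ast,t}$, is false at the same order $s$: an instantaneous norm is not controlled by its time integral, and the paper's \eqref{ele} loses one derivative ($s-1$ on the left). It is not actually needed, since the instantaneous $s$-norms on the right of \eqref{nonweighted} carry the factor $\varepsilon$ and are absorbed, while the Gr\"onwall integrand is exactly $\int_0^t\mathcal I(\tau)d\tau=\|{\mathbf V}\|^2_{s,\ast,t}+\|\varphi\|^2_{H^{s-1}(\Gamma_t)}$ by \eqref{norma-int}; but you should not rely on it. Second, your one-shot Gr\"onwall inequality with $\|\varphi\|^2_{H^s(\Gamma_\tau)}$ inside the time integral is not literally derivable from the cited lemmata: \eqref{varphit} bounds $\|\nabla_{t,x_2}\varphi\|^2_{H^{s-1}(\Gamma_t)}$ by quantities at level $t$ (namely $\|{\mathbf V}\|^2_{s,\ast,t}$ and $\|\varphi\|^2_{H^{s-1}(\Gamma_t)}$), not by $\int_{-\infty}^t G(\tau)\,d\tau$, so including the gradient part of $\varphi$ in the Gr\"onwall unknown makes the inequality circular. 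The clean fix is the paper's ordering: close Gr\"onwall for $|||{\mathbf V}(t)|||^2_{s,\ast}+|||\varphi(t)|||^2_{H^{s-1}(\mathbb R)}$ first, integrate in time, and only then add the estimate of $\|\nabla_{t,x_2}\varphi\|^2_{H^{s-1}(\Gamma_T)}$ (again with the $T$-gaining trick on $\|\hat W\|$).
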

\begin{proof}
Combining the estimates \eqref{DVV}, \eqref{DV}, \eqref{nonweighted} choosing $\varepsilon$ small enough and \eqref{varphit}, we obtain that
\begin{equation}\label{IT}
\mathcal{I}(t)\leq C(K)\Big(\mathcal{N}(T)+\int^t_0\mathcal{I}(\tau)d\tau\Big),
\end{equation}
where $$\mathcal{I}(t)=|||{\mathbf V}(t)|||^2_{s,\ast}+|||\varphi(t)|||^2_{H^{s-1}(\R)}.$$
Notice that $$\mathcal{I}(0)=0.$$
Then, using Gr\"onwall's lemma to \eqref{IT}, we obtain
$$\mathcal{I}(t)\leq C(K)e^{C(K)T}\mathcal{N}(T), \text{ for } 0\leq t\leq T.$$
Integrating \eqref{IT} with respect to $t\in(-\infty,T],$ we get
\begin{equation}\label{Vestimate}
||{\mathbf V}||^2_{s,\ast,T}+||\varphi||^2_{H^{s-1}(\Gamma_T)}\leq C(K)Te^{C(K)T}\mathcal{N}(T).
\end{equation}
Notice that $\dot{{\mathbf U}}=J{\mathbf V}.$ Then, using the decomposition $J=J(\hat{W})=I+J_0(\hat{W}),$ where $J_0$ satisfies  $J_0(0)=0,$
we apply the Moser-type calculus inequalities \eqref{moser4} and \eqref{moser6} and derive
\begin{equation}\label{Uestimate}
\begin{split}
||\dot{{\mathbf U}}||^2_{s,\ast,T}&=||{\mathbf V}+J_0{\mathbf V}||^2_{s,\ast,T}\\
&\leq C(K)\Big(||{\mathbf V}||^2_{s,\ast,T}+||\dot{{\mathbf U}}||^2_{W^{1,\infty}_ {\ast}(\Omega_T)}||\hat{W}||^2_{s,\ast,T}\Big)\\
&\leq C(K)||{\mathbf V}||^2_{s,\ast,T}+TC(K)||\dot{{\mathbf U}}||^2_{W^{1,\infty}_{\ast}(\Omega_T)}||\hat{W}||^2_{s+1,\ast,T},
\end{split}
\end{equation}
and, using \eqref{varphit} with $t=T$,
\begin{equation}\label{Uestimate2.0}
\begin{split}
&||\nabla_{t,x_2}\varphi||^2_{H^{s-1}(\Gamma_T)}\\
&\quad\lesssim\Big(||{\mathbf V}||^2_{s,\ast,T}+||\varphi||^2_{H^{s-1}(\Gamma_T)}+(||\dot{{\mathbf U}}||^2_{W^{1,\infty}_{\ast}(\Omega_T)}+||\varphi||^2_{W^{1,\infty}(\Gamma_T)})||\hat{W}||^2_{s+2,\ast,T}\Big)\\
&\quad\lesssim\Big(||{\mathbf V}||^2_{s,\ast,T}+||\varphi||^2_{H^{s-1}(\Gamma_T)}\\
&\qquad+TC(K)(||\dot{{\mathbf U}}||^2_{W^{1,\infty}_{\ast}(\Omega_T)}+||\varphi||^2_{W^{1,\infty}(\Gamma_T)})||\hat{W}||^2_{s+3,\ast,T}\Big)\,.
\end{split}
\end{equation}
In the proof of last inequalities in \eqref{Uestimate}, \eqref{Uestimate2.0}, we have used the following relation by applying Sobolev imbedding:
\begin{equation}\nonumber
\begin{split}
||\hat{W}||^2_{s,\ast,T}&=\sum^s_{j=0}\int^T_0||\partial^j_t\hat{W}(t)||^2_{s-j,\ast}dt
\leq\sum^s_{j=0}T\max_{t\in[0,T]}||\partial^j_t\hat{W}(t)||^2_{s-j,\ast}\\
&\lesssim T\sum^s_{j=0}\Big\{\int^T_0||\partial^j_t\hat{W}(t)||^2_{s-j,\ast}dt+\int^T_0||\partial^{j+1}_t\hat{W}(t)||^2_{s-j,\ast}dt\Big\}\\
&\lesssim T\int^T_0|||\partial^j_t\hat{W}(t)|||^2_{s+1-j,\ast}dt
\lesssim T||\hat{W}||^2_{s+1,\ast,T}.
\end{split}
\end{equation}
Using \eqref{Vestimate} and \eqref{Uestimate}, we obtain
\begin{equation}\label{Uestimate2}
||\dot{{\mathbf U}}||^2_{s,\ast,T}+||\varphi||^2_{H^{s-1}(\Gamma_T)}\leq C(K)Te^{C(K)T}\mathcal{N}(T).
\end{equation}
Similar to \eqref{Uestimate}, we can get
\begin{equation}\label{Vestimate2}
||{\mathbf V}||^2_{s,\ast,T}\leq C(K)||\dot{{\mathbf U}}||^2_{s,\ast,T}+TC(K)||\dot{{\mathbf U}}||^2_{W^{1,\infty}_{\ast}(\Omega_T)}||\hat{W}||^2_{s+1,\ast,T}.
\end{equation}
Adding \eqref{Uestimate2.0} and \eqref{Uestimate2}, and using \eqref{Vestimate2}, we conclude Lemma \ref{7.1}.
\end{proof}

Using \eqref{tamesss2}, we are ready to prove the tame estimate for the homogeneous problem \eqref{ho2}.

\medskip

\noindent
{\it Proof of Theorem \ref{tame}}: \;Using Lemma \ref{7.1}, the Sobolev inequalities \eqref{sobolev1}, \eqref{sobolev2} for $s\geq6,$ we obtain
\begin{equation}\label{7.9}
\begin{split}
||\dot{{\mathbf U}}||_{s,\ast,T}+||\varphi||_{H^s(\Gamma_T)}&\leq C(K)T^{\frac{1}{2}}e^{C(K)T}\Big(||\mathbf{F}||_{s,\ast,T}+(||\dot{{\mathbf U}}||_{6,\ast,T}\\
& \quad+||\varphi||_{H^6(\Gamma_T)}+||\mathbf{F}||_{4,\ast,T})||\hat{W}||_{s+4,\ast,T}\Big).
\end{split}
\end{equation}
Taking $T$ sufficiently small and $s=6,$ using \eqref{assumption}, we obtain that
\begin{equation}\label{7.10}
||\dot{{\mathbf U}}||_{6,\ast,T}+||\varphi||_{H^6(\Gamma_T)}\leq C(K_0)||\mathbf{F}||_{6,\ast,T}.
\end{equation}
Hence, \eqref{7.9} and \eqref{7.10} implies \eqref{tameess2}. The existence and uniqueness of the solution comes from Theorem \ref{th-wp-hom}. The proof of Theorem \ref{tame} is complete.

\section{Higher order energy estimate for problem \eqref{IVP2}}\label{tameestimate2}
%%%%%%%%%%%%%%%%%%%%%%%%%%%%%%%%%%%%%%%%%%%%%%%%%%%%%%%%%%%%%%%%%%%%%%
Now, we are ready to obtain an a \textit{priori} tame estimate in $H^s_{\ast}$ for the nonhomogeneous problem \eqref{IVP2}.
\begin{theorem}\label{keytame}
Let $T>0$ and $s$ be an integer, $s\geq6.$
Assume that the basic state $(\hat{{\mathbf U}},\hat{\varphi})$ satisfies \eqref{hy}-\eqref{c2}, and $(\breve{\mathbf{U}}^{\pm},\hat{\varphi})\in H^{s+4}_{\ast}(\Omega_T)\times H^{s+5}(\Gamma_T)$ satisfies \eqref{assumption}.
Assume that $\mathbf{f}\in H^{s+2}_{\ast}(\Omega_T)$, $\mathbf{g}\in H^{s+2}(\Gamma_T)$ vanish in the past. Then, there exists a positive constant $K_0,$ that does not depend on $s$ and $T,$ and there exists a constant $C(K_0)>0$ such that if $\hat{K}\leq K_0$, then there exists a unique solution $(\dot{{\mathbf U}},\varphi)\in H^{s}_{\ast}(\Omega_T)\times H^s({\Gamma_T})$ to the problem \eqref{IVP2} that allows the tame estimate
\begin{equation}\label{tamesss}
\begin{split}
&||\dot{{\mathbf U}}||_{s,\ast,T}+||\varphi||_{H^s(\Gamma_T)}\\
&\quad\leq C(K_0)\Big(||\mathbf{f}||_{s+2,\ast,T}+||\mathbf{g}||_{H^{s+2}(\Gamma_T)}+(||\mathbf{f}||_{8,\ast,T}+||\mathbf{g}||_{H^{8}(\Gamma_T)})||\hat{W}||_{s+4,\ast,T}\Big),
\end{split}
\end{equation}
for $T$ small enough, where $\hat{W}=(\breve{\mathbf{U}},\nabla_{t, \bf x}\hat{\Psi}).$
\end{theorem}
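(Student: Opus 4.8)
The plan is to deduce Theorem \ref{keytame} from the tame estimate for the homogeneous problem, Theorem \ref{tame}, following the reduction to homogeneous boundary conditions carried out in Section \ref{hom_Sect}. First I would construct the lifting $\tilde{\mathbf U}\in H^{s+2}_\ast(\Omega_T)$ of the boundary data $(\mathbf g,g_3^+,g_3^-)$ exactly as there: the functions $g_3^\pm$ solve the linear transport equations \eqref{G} on $\Gamma_T$; the non-characteristic components $\tilde q^\pm,\tilde u_n^\pm,\tilde H_n^\pm$ of $\tilde{\mathbf U}$ are obtained by applying the continuous lifting operator $\mathcal R_T$ of Lemma \ref{tra}; the characteristic component $\tilde H_2^\pm$ is defined as the solution of the transport equation \eqref{F} with right-hand side $\mathrm{div}\,\tilde{\mathbf h}^\pm/\partial_1\hat\Phi^\pm$, and the remaining components are set to zero. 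Then $\dot{\mathbf U}^\natural:=\dot{\mathbf U}-\tilde{\mathbf U}$ solves the homogeneous problem \eqref{ho2} with interior source $\mathbf F=\mathbf f-\mathbb L'_e(\hat{\mathbf U},\hat\Psi)\tilde{\mathbf U}$, which vanishes in the past.

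The second step is to upgrade the energy bounds of Section \ref{hom_Sect} to tame ones. Since $g_3^\pm$ and $\tilde H_2^\pm$ solve linear transport equations whose coefficients depend nonlinearly on the basic state, the standard energy method combined with the anisotropic Moser-type calculus inequalities \eqref{moser3}--\eqref{moser6} gives $\|g_3^\pm\|_{H^{s+1}(\Gamma_T)}\lesssim\|\mathbf f\|_{s+2,\ast,T}+\|\mathbf g\|_{H^{s+2}(\Gamma_T)}+(\|\mathbf f\|_{8,\ast,T}+\|\mathbf g\|_{H^8(\Gamma_T)})\|\hat W\|_{s+4,\ast,T}$, together with a similar tame bound for $\|\tilde H_2^\pm\|_{s+2,\ast,T}$; here one also uses the trace theorem to control $\|f_n|_{x_1=0}\|_{H^{s+1}(\Gamma_T)}\lesssim\|\mathbf f\|_{s+2,\ast,T}$ as in \eqref{stima_g3}. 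Combining these with the continuity of $\mathcal R_T$, with $\|\partial_1(\cdot)\|_{s,\ast,T}\le\|\cdot\|_{s+2,\ast,T}$, and with the fact that, under \eqref{assumption} and the smallness $\hat K\le K_0$, the low-order norm of $\hat W$ is bounded by $C(K_0)$, I obtain the tame analogue of \eqref{estimate}, namely $\|\mathbf F\|_{s,\ast,T}\lesssim\|\mathbf f\|_{s+2,\ast,T}+\|\mathbf g\|_{H^{s+2}(\Gamma_T)}+(\|\mathbf f\|_{8,\ast,T}+\|\mathbf g\|_{H^8(\Gamma_T)})\|\hat W\|_{s+4,\ast,T}$, together with a low-order bound of the form $\|\mathbf F\|_{6,\ast,T}\lesssim\|\mathbf f\|_{8,\ast,T}+\|\mathbf g\|_{H^{8}(\Gamma_T)}$. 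Applying Theorem \ref{tame} to $\dot{\mathbf U}^\natural$, then writing $\dot{\mathbf U}=\dot{\mathbf U}^\natural+\tilde{\mathbf U}$ and estimating $\|\tilde{\mathbf U}\|_{s,\ast,T}\le\|\tilde{\mathbf U}\|_{s+2,\ast,T}$ by the data as above, yields \eqref{tamesss}. Existence and uniqueness of $(\dot{\mathbf U},\varphi)$ in $H^s_\ast(\Omega_T)\times H^s(\Gamma_T)$ follow from Lemma \ref{lemma4.2} together with Theorem \ref{th-wp-hom} applied with regularity index $s$.

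The step I expect to be the main obstacle is the tame control of the lifting $\tilde{\mathbf U}$, and specifically of its characteristic component $\tilde H_2^\pm$: since $\tilde H_2^\pm$ solves the transport equation \eqref{F} whose coefficients $\hat{\mathbf w}^\pm$, $\hat{\mathbf v}^\pm$ depend nonlinearly on the basic state, one must commute the operators $D^\alpha_\ast$ through \eqref{F}, bound the resulting commutators by the anisotropic Moser-type inequalities, and carefully verify that the number of derivatives falling on $\hat W$ never exceeds $s+4$ while the number falling on $\mathbf f$ never exceeds $s+2$; one must likewise check that the chain $\mathbf g\mapsto g_3^\pm\mapsto\tilde{\mathbf h}^\pm\mapsto\tilde H_2^\pm\mapsto\mathbf F$ does not accumulate extra loss beyond what is already displayed in \eqref{stima_F}, \eqref{stima_g3} and \eqref{estimate}. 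Once these bookkeeping estimates are in place, the conclusion follows by combining them with the homogeneous tame estimate \eqref{tameess2}, the embeddings \eqref{sobolev1}, \eqref{sobolev2}, and the smallness of $T$, exactly in the spirit of the proof of Theorem \ref{tame}.
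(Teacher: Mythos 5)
Your proposal is correct and follows essentially the same route as the paper: reduce to the homogeneous problem via the lifting $\tilde{\mathbf U}$ of Section \ref{hom_Sect}, upgrade \eqref{2}--\eqref{estimate} to tame form with the Moser-type inequalities and embeddings so as to obtain the tame bound \eqref{Festimate} for $\mathbf F$ together with the low-order bound \eqref{F6} from \eqref{assumption}, and then conclude by the homogeneous tame estimate of Theorem \ref{tame} (i.e. \eqref{7.9}). You are in fact somewhat more explicit than the paper about the tame control of $g_3^\pm$, $\tilde H_2^\pm$ and about adding back $\tilde{\mathbf U}$, but these are exactly the steps the paper's proof invokes implicitly.
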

%%%%%%%%%%%%%%%%%%%%%%%%%%%%%%%%%%%%%%%%%%%%%%%%%%%%%%%%%%%%%%%%%%%%%%%%%%%
\begin{remark}
The lower regularity in \eqref{assumption} and low norms in \eqref{tameess2} and \eqref{tamesss}, for both the even and odd case, differ from Trakhinin \cite{Trakhinin2009}, see Theorem 3 and Theorem 4, due to finer Sobolev imbeddings \eqref{sobolev1}, \eqref{sobolev2}.
\end{remark}
\begin{proof}
Using the Moser-type calculus inequalities \eqref{moser4} and \eqref{moser6}, we obtain a refined version of estimate \eqref{2} in tame form:
$$||\mathbb{L}'_e(\hat{{\mathbf U}},\hat{\Psi})\tilde{{\mathbf U}}||_{s,\ast,T}\leq C(K)\Big(||\tilde{{\mathbf U}}||_{s+2,\ast,T}+||\tilde{{\mathbf U}}||_{W^{2,\infty}_{\ast}(\Omega_T)}||\hat{\mathbf U},\hat{\Psi}||_{s+2,\ast,T}\Big).$$
Then, using Sobolev embedding inequalities \eqref{sobolev2}, we get
$$||\mathbb{L}'_e(\hat{{\mathbf U}},\hat{\Psi})\tilde{{\mathbf U}}||_{s,\ast,T}\leq C(K)\Big(||\tilde{{\mathbf U}}||_{s+2,\ast,T}+||\tilde{{\mathbf U}}||_{6,\ast,T}||\hat{\mathbf U},\hat{\Psi}||_{s+2,\ast,T}\Big).$$
Using the above estimate, \eqref{estimate} and recalling the definition of $\tilde{\mathbf U}$, see Section \ref{hom_Sect}, it holds that
\begin{equation}\label{Festimate}
\begin{split}
||\mathbf{F}||_{s,\ast,T}\leq C(K)\Big(||\mathbf{f}||_{s+2,\ast,T}+||\mathbf{g}||_{H^{s+2}(\Gamma_T)}+(||\mathbf{f}||_{8,\ast,T}+||\mathbf{g}||_{H^8(\Gamma_T)})||\hat{\mathbf U},\hat{\Psi}||_{s+2,\ast,T}\Big).
\end{split}
\end{equation}
Using the assumption \eqref{assumption} and \eqref{Festimate} with $s=6$, we get
\begin{equation}\label{F6}
||\mathbf{F}||_{6,\ast,T}\leq C(K_0)\Big({||\mathbf{f}||_{8,\ast,T}+||\mathbf{g}||_{H^8(\Gamma_T)}}\Big).
\end{equation}
Combining the estimates \eqref{7.9}, \eqref{Festimate} and \eqref{F6}, we obtain the tame estimate \eqref{tamesss}.
\end{proof}

\section{Construction of Approximate Solutions}\label{compa}
Suppose the initial data
$$({\mathbf U}^{\pm}_0,\varphi_0)=(\bar{\mathbf U}^{\pm}+\tilde{\mathbf U}^{\pm}_0,\varphi_0)=(p^{\pm}_0, u^{\pm}_{1,0}, u^{\pm}_{2,0}, H^{\pm}_{1,0},H^{\pm}_{2,0}, S^{\pm}_0, \varphi_0)$$
satisfy the stability condition \eqref{stability1} and restriction \eqref{Hnn} at $x_1=0$ for $x_2\in \R$.
Since ${H}^+_{2,0}\neq0$ or ${H}^-_{2,0}\neq0$ at ${x_1=0}$, see also Remark \ref{H_2nonzero}, from \eqref{Hnn} we can solve $\partial_2\varphi$ as follows (we drop the sub-index 0 for simplicity):
\begin{equation}\label{8.1}
\partial_2\varphi=\mu({\mathbf U})|_{x_1=0}=\frac{H^+_1H^+_2+H^-_1H^-_2}{(H^{+}_2)^2+(H^-_2)^2}\Big|_{x_1=0}\,,
\end{equation}
where ${\mathbf U}:=({\mathbf U}^+,{\mathbf U}^-).$ Then, using the boundary condition \eqref{jump}, we have
\begin{equation}\label{ei}
\partial_t\varphi=\eta({\mathbf U})|_{x_1=0},
\end{equation}
with $$\eta({\mathbf U})=u^+_1-u^+_2\mu({\mathbf U}).$$
By using the hyperbolicity condition \eqref{hyper}, we can write the system in \eqref{IVP}:
\begin{equation}\label{hyperb}
\partial_t{\mathbf U}=-(A_0({\mathbf U}))^{-1}\Big(\tilde{A}_1({\mathbf U},\Psi)\partial_1{\mathbf U}+A_2({\mathbf U})\partial_2{\mathbf U}\Big),
\end{equation}
where $\Psi:=(\Psi^+,\Psi^-)$, and the matrices $A_0,A_2,\tilde{A}_1$ are defined by \eqref{Am} and \eqref{A11}.
The traces
$${\mathbf U}_j=(p^+_j,u^+_{1,j},u^+_{2,j},H^+_{1,j},H^+_{2,j},S^+_j,p^-_j,u^-_{1,j},u^-_{2,j},H^-_{1,j},H^-_{2,j},S^-_j)=\partial^j_t{\mathbf U}|_{t=0}$$ and $$\varphi_j=\partial^j_t\varphi|_{t=0},\quad j\geq1$$ can be defined step by step by applying operator $\partial^{j-1}_t$ to \eqref{ei} and \eqref{hyperb}, for $j\geq1$ and evaluating $\partial^j_t{\mathbf U}$ and $\partial^j_t\varphi$ at $t=0$ in terms of the initial data. Notice that
$$\Psi^{\pm}_j=\partial^j_t\Psi^{\pm}|_{t=0}=\chi(\pm x_1)\varphi_j.$$
Define the zero-th order compatibility condition:
\begin{equation}\label{com}
[u_{1,0}]-[u_{2,0}]\partial_2\varphi_0=0,\quad [p_0+\frac{|H_0|^2}{2}]=0.
\end{equation}
Taking \eqref{8.1}, \eqref{ei} evaluated at $t=0,$ and using \eqref{com}, we obtain that
\begin{equation}\label{ekonal}
\varphi_1=u^{\pm}_{1,0}-u^{\pm}_{2,0}\partial_2\varphi_0\Big|_{x_1=0}.
\end{equation}
Denote $(H^{\pm}_N)_j=\partial^j_tH^{\pm}_N\Big|_{t=0}.$ Using \eqref{hyperb} and \eqref{ekonal}, taking $t=0,$ we obtain that
$$(H^{\pm}_N)_1=-\Big(u^{\pm}_{2,0}\partial_2(H^{\pm}_N)_0+\partial_2u^{\pm}_{2,0}(H^{\pm}_N)_0\Big|_{x_1=0}\Big).$$
Therefore, $(H^{\pm}_N)_0|_{x_1=0}=0$ implies $(H^{\pm}_N)_1|_{x_1=0}=0.$
%Then,
%$$(H^{\pm}_N)_1=H^{\pm}_{1,1}-(H^{\pm}_{2,1}\partial_2\varphi_0+H^{\pm}_{2,0}\partial_2\varphi_1)\Big|_{x_1=0}.$$
Once we have defined ${\mathbf U}_1,\varphi_1,$ we can deduce ${\mathbf U}_2,\varphi_2$ and so on. Moreover, at each step, we can prove that
\begin{equation}\nonumber
(H^{\pm}_N)_j|_{x_1=0}=0,\quad j\geq2,
\end{equation}
provided that ${\mathbf U}_j$, $\varphi_j$ satisfy the compatibility condition (see Definition \ref{def}).
\newline
The following Lemma \ref{ccc} is necessary for the approximate solutions; we refer to \cite{Coulombel2008} and \cite[Lemma 4.2.1]{Metivier}. Differently from \cite{Trakhinin2009}, we take the initial data in the standard Sobolev spaces.
\begin{lemma}\label{ccc}
Let $\mu\in \mathbb{N},\mu\geq3,$ $\tilde{{\mathbf U}}_0:= \mathbf{U}_0-\bar{\mathbf{U}}\in H^{\mu+1.5}(\mathbb R^2_+)$ and $\varphi_0\in H^{\mu+1.5}(\R)$.
Then, we can determine $\tilde{{\mathbf U}}_j\in H^{\mu+1.5-j}(\mathbb R^2_+)$ and $\varphi_j\in H^{\mu+1.5-j}(\R)$ by induction and set $\mathbf{U}_j=\tilde{\mathbf{U}}_j +\bar{\mathbf{U}},$ for $j=1,\cdots,\mu$.
Besides we prove
\begin{equation}
\sum^{\mu}_{j=0}(||\tilde{{\mathbf U}}_j||_{H^{\mu+1.5-j}(\mathbb R^2_+)}+||\varphi_j||_{H^{\mu+1.5-j}(\R)})\leq C(M_0),
\end{equation}
where $C>0$ depends only on $\mu$, $||\tilde{{\mathbf U}}_0||_{W^{1,\infty}(\mathbb R^2_+)}$ and $||\varphi_0||_{W^{1,\infty}(\R)}$, and
\begin{equation}\label{M0}
M_0:=||\tilde{{\mathbf U}}_0||_{{H^{\mu+1.5}}(\mathbb R^2_+)}+||\varphi_0||_{H^{\mu+1.5}(\R)}\,.
\end{equation}
\end{lemma}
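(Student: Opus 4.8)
The plan is to proceed by induction on $j$, using the evolution equations \eqref{ei} and \eqref{hyperb} to express the time-derivative traces $\mathbf{U}_j$ and $\varphi_j$ in terms of lower-order data, and then to control their Sobolev norms by repeated application of the Moser-type calculus inequalities in standard Sobolev spaces (Lemma \ref{moser1}) together with the multiplicative trace inequality $H^{s}(\mathbb R^2_+)\hookrightarrow H^{s-1/2}(\mathbb R)$ on the boundary $\{x_1=0\}$. First I would set up the induction hypothesis: assume that $\tilde{\mathbf U}_0,\dots,\tilde{\mathbf U}_{j-1}$ have been constructed with $\tilde{\mathbf U}_i\in H^{\mu+1.5-i}(\mathbb R^2_+)$ and $\varphi_0,\dots,\varphi_{j-1}$ with $\varphi_i\in H^{\mu+1.5-i}(\mathbb R)$, together with the bound $\sum_{i=0}^{j-1}(\|\tilde{\mathbf U}_i\|_{H^{\mu+1.5-i}(\mathbb R^2_+)}+\|\varphi_i\|_{H^{\mu+1.5-i}(\mathbb R)})\le C(M_0)$.

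For the inductive step, I would differentiate \eqref{hyperb} in time $j-1$ times, apply the generalized Leibniz rule, and evaluate at $t=0$; this writes $\mathbf{U}_j=\partial_t^{j-1}\mathbf{U}|_{t=0}$ as a finite sum of products of the already-constructed traces $\mathbf{U}_0,\dots,\mathbf{U}_{j-1}$ (and their spatial derivatives $\partial_1,\partial_2$, which cost one derivative each), multiplied by smooth functions of $\mathbf{U}_0$ coming from $(A_0)^{-1}$, $\tilde A_1$, $A_2$ and the factors $\Psi_i^\pm=\chi(\pm x_1)\varphi_i$. The key bookkeeping point is that each term has total order (number of time derivatives plus one spatial derivative) equal to $j$, so the worst term requires $H^{\mu+1.5-j}$ regularity, exactly matching the claimed space; the Moser product estimate $\|uv\|_{H^m}\lesssim\|u\|_{H^m}\|v\|_{L^\infty}+\|u\|_{L^\infty}\|v\|_{H^m}$ and the composition estimate $\|F(u)\|_{H^m}\le C\|u\|_{H^m}$ then give the desired norm bound with a constant depending only on $\mu$ and the $W^{1,\infty}$ norms of $\tilde{\mathbf U}_0$ and $\varphi_0$. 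Similarly, differentiating \eqref{ei} in time $j-1$ times and restricting to $\{x_1=0\}$ expresses $\varphi_j$ as a smooth function of the boundary traces of $\mathbf{U}_0,\dots,\mathbf{U}_{j-1}$; here one uses the trace inequality to pass from $H^{\mu+1.5-i}(\mathbb R^2_+)$ interior regularity to $H^{\mu+1-i}(\mathbb R)$ boundary regularity, so that $\varphi_j\in H^{\mu+1.5-j}(\mathbb R)$ follows (in fact with a half-derivative to spare), using also that $\mu\ge 3$ ensures all the relevant $W^{1,\infty}$ embeddings hold.

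The main obstacle — more a matter of care than of depth — is the careful tracking of derivative counts in the Leibniz expansion of $\partial_t^{j-1}\big[(A_0(\mathbf{U}))^{-1}(\tilde A_1(\mathbf{U},\Psi)\partial_1\mathbf{U}+A_2(\mathbf{U})\partial_2\mathbf{U})\big]$, to verify that no term exceeds total order $j$ and that the time derivatives falling on $\tilde A_1$ also hit the $\Psi^\pm$ (hence $\varphi$) factors without raising the order beyond what the induction hypothesis controls; one must also check that the nonlinear coefficients remain smooth functions of their arguments on the relevant range, which follows from the hyperbolicity condition \eqref{hyper} keeping $A_0$ invertible. Once the term-by-term bound is in place, summing over $j=1,\dots,\mu$ and absorbing the finitely many constants yields the stated estimate, completing the induction and the proof.
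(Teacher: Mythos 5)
Your proposal is correct and follows essentially the same route as the paper, which for this lemma simply invokes the standard argument of Coulombel--Secchi and M\'etivier (Lemma~4.2.1 there): induction on $j$, a Leibniz expansion of $\partial_t^{j-1}$ applied to \eqref{hyperb} and \eqref{ei} at $t=0$, and Moser product/composition estimates together with the trace inequality to close the norm bound with a constant depending only on $\mu$ and the $W^{1,\infty}$ norms of the data. Two details worth making explicit in your write-up: within step $j$ one must determine $\varphi_j$ from \eqref{ei} first (it involves only $\mathbf{U}_0,\dots,\mathbf{U}_{j-1}$) and then $\mathbf{U}_j$, because $\partial_t^{j-1}\tilde A_1$ produces $\partial_t^{j}\Psi^\pm=\chi(\pm x_1)\varphi_j$, and the smoothness of the composition $\mu(\mathbf{U})$ in \eqref{8.1}--\eqref{ei} relies on the stability/boundary assumptions on the initial data (so that $(H^+_{2,0})^2+(H^-_{2,0})^2$ does not vanish at $x_1=0$), not merely on the hyperbolicity condition keeping $A_0$ invertible.
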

\begin{Definition}\label{def}
Let $\mu\in \mathbb{N},\mu\geq3$. The initial data $(\tilde{{\mathbf U}}_0,\varphi_0)\in H^{\mu+1.5}(\mathbb R^2_+)\times H^{\mu+1.5}(\R)$ are defined to be compatible up to order $\mu$ if $(\tilde{{\mathbf U}}_j,\varphi_j)$ satisfy \eqref{com} for $j=0$
and
\begin{equation}\nonumber
\sum^j_{l=0}([u_{1,j-l}]-[u_{2,j-l}]\partial_2\varphi_l)=0,\quad [p_j]+\sum^{j-1}_{l=0}C_{l,j-1}[(H_l,H_{j-l})]=0,\quad\mbox{on}\,\,\{x_1=0\}\,,
\end{equation}
for $j=1,\cdots,\mu$, where $C_{l,j-1}$ are suitable constants.
\end{Definition}
To use the tame estimate for the proof of convergence of the Nash--Moser iteration, we should reduce our nonlinear problem to that whose solution vanishes in the past. This is achieved by the construction of the so-called \lq\lq approximate solution" that allows to \lq\lq absorb" the initial data into the interior equation. The \lq\lq approximate solution" is in the sense of Taylor's series at $t=0$.
\newline
Below, we will use the notation
\begin{equation}\nonumber
\mathbb{L}({\mathbf U},\Psi):=
\left[\begin{array}{c}
   \mathbb{L}({\mathbf U}^+,\Psi^+)\\
   \mathbb{L}({\mathbf U}^-,\Psi^-)
  \end{array}\right].\quad
\end{equation}
\begin{lemma}\label{const}
Let $\mu\in \mathbb{N}, \mu\geq3$ and let $\delta>0$. Suppose the initial data  $(\tilde{{\mathbf U}}_0,\varphi_0)\in H^{\mu+1.5}(\mathbb R^2_+)\times H^{\mu+1.5}(\R)$ are compatible up to order $\mu$ and satisfy the assumptions \eqref{hyper}, \eqref{div}, \eqref{Hnn}, \eqref{stability1}. Then, there exist $T>0$ and $({\tilde{\mathbf U}}^a,\varphi^a)\in H^{\mu+2}(\Omega_T)\times H^{\mu+2}(\Gamma_T)$ such that
\begin{equation}\label{8.12}
\partial^j_t\mathbb{L}({\mathbf U}^a,\Psi^a)|_{t=0}=0 \text{ in } \Omega, \text{ for } j\in \{0,\cdots, \mu-1\},
\end{equation}
where
$$
\mathbf U^a:=\tilde{\mathbf U}^a+\overline{\mathbf U}\,,\quad \Psi^{a\,\pm}=\chi(\pm x_1)\varphi^a\,.
$$
We call $(\mathbf U^a, \varphi^a)$ the approximate solution to problem \eqref{IVP2}. Moreover the approximate solution satisfies the following estimate
\begin{equation}\label{app}
||\tilde{{\mathbf U}}^a||_{H^{\mu+2}(\Omega_T)}+||\varphi^a||_{H^{\mu+2}(\Gamma_T)}<\delta\,,
\end{equation}
the stability conditions \eqref{stability1} on $\Gamma_T$, the hyperbolicity condition \eqref{hy} on $\Omega_T$.
\end{lemma}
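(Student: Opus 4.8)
The plan is to build $(\mathbf{U}^a,\varphi^a)$ as a time‑Taylor polynomial at $t=0$ whose coefficients are the traces $\mathbf{U}_j=\partial_t^j\mathbf{U}|_{t=0}$ and $\varphi_j=\partial_t^j\varphi|_{t=0}$, $j=0,\dots,\mu$, obtained inductively from the initial data exactly as in Lemma \ref{ccc}: $\varphi_j$ from the boundary relations \eqref{8.1}, \eqref{ei} and $\mathbf{U}_j$ from the interior system \eqref{hyperb}. By Lemma \ref{ccc} one has $\tilde{\mathbf{U}}_j\in H^{\mu+1.5-j}(\mathbb{R}^2_+)$ and $\varphi_j\in H^{\mu+1.5-j}(\mathbb{R})$, with norms bounded in terms of $M_0$ (see \eqref{M0}); the compatibility conditions of Definition \ref{def} guarantee that this recursive definition is consistent with the boundary operator $\mathbb{B}$, which is what later permits the reduction to data vanishing in the past.

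Next I would apply a lifting operator in (isotropic) Sobolev spaces — the construction of M\'etivier \cite[Lemma 4.2.1]{Metivier}, see also \cite{Coulombel2008} — to produce $\tilde{\mathbf{U}}^a\in H^{\mu+2}((0,1)\times\mathbb{R}^2_+)$ and $\varphi^a\in H^{\mu+2}((0,1)\times\mathbb{R})$ with $\partial_t^j\tilde{\mathbf{U}}^a|_{t=0}=\tilde{\mathbf{U}}_j$ and $\partial_t^j\varphi^a|_{t=0}=\varphi_j$ for $j=0,\dots,\mu$, and $\|\tilde{\mathbf{U}}^a\|_{H^{\mu+2}}+\|\varphi^a\|_{H^{\mu+2}}\le C(M_0)$; the half derivative gained in the trace theorem is precisely what matches the $H^{\mu+1.5-j}$ regularity of the traces to the $H^{\mu+2}$ regularity of the lifting. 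Then I set $\mathbf{U}^a:=\bar{\mathbf{U}}+\tilde{\mathbf{U}}^a$ and $\Psi^{a\pm}:=\chi(\pm x_1)\varphi^a$, so that only $\varphi$ needs to be lifted and the structural relation between $\Psi^a$ and $\varphi^a$ holds by construction, the multiplication by $\chi(\pm x_1)$ being harmless.

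The identity \eqref{8.12} then follows by a direct Leibniz computation. Writing $\mathbb{L}(\mathbf{U}^a,\Psi^a)=A_0(\mathbf{U}^a)\partial_t\mathbf{U}^a+\tilde{A}_1(\mathbf{U}^a,\Psi^a)\partial_1\mathbf{U}^a+A_2(\mathbf{U}^a)\partial_2\mathbf{U}^a$, the quantity $\partial_t^j\mathbb{L}(\mathbf{U}^a,\Psi^a)|_{t=0}$ is a universal polynomial in the traces $\mathbf{U}_l,\varphi_l$ with $l\le j+1$ and their $\mathbf{x}$‑derivatives, so for $j\le\mu-1$ it involves only the traces constructed above; and since the $\mathbf{U}_l$ were defined exactly so that $\partial_t^{l-1}(\mathbb{L}(\mathbf{U},\Psi))|_{t=0}=0$, this polynomial is identically zero. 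By the same bookkeeping, using Definition \ref{def}, one also obtains $\partial_t^j(\mathbb{B}(\mathbf{U}^{a+},\mathbf{U}^{a-},\varphi^a))|_{t=0}=0$ for $j\le\mu$, which is what is needed for the subsequent reduction.

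For the smallness \eqref{app}, I would simply restrict the lifting to the slab $(0,T)\times\mathbb{R}^2_+$: since $\tilde{\mathbf{U}}^a$ and $\varphi^a$ are fixed functions in $H^{\mu+2}$ over $(0,1)\times\mathbb{R}^2_+$, absolute continuity of the Lebesgue integral gives $\|\tilde{\mathbf{U}}^a\|_{H^{\mu+2}((0,T)\times\mathbb{R}^2_+)}+\|\varphi^a\|_{H^{\mu+2}((0,T)\times\mathbb{R})}\to0$ as $T\to0^+$, hence $<\delta$ for $T$ small, given $\delta$ beforehand. Likewise, since $\mu\ge3$ gives $H^{\mu+2}\hookrightarrow C^0$ in both $t$ and $\mathbf{x}$, and $\tilde{\mathbf{U}}_0$ decays at spatial infinity, the hyperbolicity \eqref{hy}, the bound $\|\varphi^a\|_{L^\infty}<\tfrac12$ and the stability condition \eqref{stability1} — all of which hold at $t=0$ by hypothesis and are open — persist on $[0,T]\times\mathbb{R}^2_+$, resp.\ on $\Gamma_T$, for $T$ small enough. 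I do not expect a genuine obstacle here: this is the standard Taylor/lifting construction, the only slightly delicate points being the sharp regularity of the lifting (matching $H^{\mu+1.5-j}$ traces to an $H^{\mu+2}$ extension) and the trace bookkeeping in \eqref{8.12}; the substantive difficulties of the paper lie instead in the tame linear estimates of Sections \ref{tameestimate}–\ref{tameestimate2} and in the Nash–Moser scheme.
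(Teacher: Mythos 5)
Your construction is essentially the one in the paper: the time-Taylor traces $(\tilde{\mathbf U}_j,\varphi_j)$ of Lemma \ref{ccc}, a lifting with $\partial_t^j\tilde{\mathbf U}^a|_{t=0}=\tilde{\mathbf U}_j$, $\partial_t^j\varphi^a|_{t=0}=\varphi_j$ (the $H^{\mu+1.5-j}$ vs.\ $H^{\mu+2}$ bookkeeping you do is exactly the right one), \eqref{8.12} as a consequence of the fact that the traces were generated by the equation itself, and persistence of the open conditions \eqref{hy}, \eqref{stability1} plus smallness by shrinking $T$. Your explicit Leibniz verification of \eqref{8.12} (and of the boundary analogue via Definition \ref{def}) is more detailed than what the paper writes, and is correct.

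The one concrete mismatch is the time domain. In this paper $\Omega_T=(-\infty,T)\times\Omega$ and $\Gamma_T=(-\infty,T)\times\Gamma$ (Section \ref{notations}), and the lemma asserts $(\tilde{\mathbf U}^a,\varphi^a)\in H^{\mu+2}(\Omega_T)\times H^{\mu+2}(\Gamma_T)$ together with \eqref{hy} on $\Omega_T$ and \eqref{stability1} on $\Gamma_T$, i.e.\ also for all negative times. This is not cosmetic: the reduced problem \eqref{system} and the whole Nash--Moser iteration are posed on $\Omega_T$ with solutions vanishing in the past, so for $t<0$ the basic states of the linearized problems coincide with $({\mathbf U}^a,\varphi^a)$ itself, and the hyperbolicity/stability conditions and the norms of $\tilde{\mathbf U}^a$, $\varphi^a$ entering the tame estimates are taken over all of $\Omega_T$, $\Gamma_T$. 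Your lifting lives only on $(0,1)\times\mathbb R^2_+$ and your smallness and persistence arguments are confined to $[0,T]$, so as written you have not produced the object the lemma claims, nor addressed $t<0$ at all. The paper handles this by taking the lifting on $\mathbb R\times\mathbb R^2_+$ (resp.\ $\mathbb R^2$) and multiplying by a cut-off in time, so that outside the support the state reduces to the constant state $\bar{\mathbf U}$, which satisfies \eqref{hy} and \eqref{stability1} by Remark \ref{dato_iniziale}; this is possible precisely because, as the paper stresses after the lemma, the approximate solution is not required to satisfy any interior or boundary equation, only \eqref{8.12} at $t=0$ and the open pointwise conditions. You should extend your construction to negative times in this way; your absolute-continuity argument for \eqref{app} then applies to the part of the norm coming from $t>0$, while for $t\le 0$ one relies on the cut-off/constant-state mechanism rather than on shrinking the time interval.
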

\begin{proof} Let us first denote $\Phi^{a\,\pm}=\pm x_1+\Psi^{a\,\pm}$,
$\tilde{\mathbf{U}}^a=(\tilde{\mathbf{U}}^{a+},\tilde{\mathbf{U}}^{a-})^T$, $\tilde p^a=(\tilde p^{a+}, \tilde{p}^{a-})^T$, $\tilde{u}^a=(\tilde{u}^{a+},\tilde{u}^{a-})^T$, $\tilde{H}^a=(\tilde{H}^{a+},\tilde{H}^{a-})^T$,
$\tilde{S}^a=(\tilde{S}^{a+},\tilde{S}^{a-})^T$.
Consider $\tilde{\mathbf{U}}^a\in H^{\mu+2}(\R\times \mathbb R^2_+),\varphi^a\in H^{\mu+2}(\R^2)$, such that
$$\partial^j_t\tilde{\mathbf U}^a|_{t=0}=\tilde{\mathbf U}_j\in H^{\mu-j+2}(\mathbb R^2_+), \text{ for } j=0,\cdots,\mu,$$
$$\partial^j_t\varphi^a|_{t=0}=\varphi_j\in H^{\mu-j+2}(\R),\quad\text{ for } j=0,\cdots,\mu,$$
where $\tilde{\mathbf{U}}_j$ and $\varphi_j$ are given by Lemma \ref{ccc}. Since $(\tilde{\mathbf{U}}^a,\varphi^a)$ satisfies the hyperbolicity condition \eqref{hy} and the stability condition \eqref{stability1} at $t=0$, by continuity $(\tilde{\mathbf{U}}^a,\varphi^a)$ satisfy \eqref{stability1} at $x_1=0$ and \eqref{hy} for small times.  By multiplication of $(\tilde{\mathbf{U}}^a,\varphi^a)$ by a cut-off function in time supported on $[-T,T]$ we can assume that \eqref{hy}, \eqref{stability1} hold for all times (in this regard, recall Remark \ref{dato_iniziale}). Given any $\delta>0$, by taking $T>0$ sufficiently small, we can assume that $\tilde{\mathbf{U}}^a$, $\varphi^a$ are small in the sense of \eqref{app}.
\end{proof}
\begin{remark}\label{cutoff argument}
Let us remark that we do not require any constraint (that is interior equations or boundary condition) to be satisfied by the approximate solution constructed above. This allows us to the use of cut-off argument making the hyperbolicity condition \eqref{hy} and the stability condition \eqref{stability1} to be satisfied globally in time, without any trouble.
\end{remark}
\begin{remark}\label{mu e m}
In the sequel, in the proof of the main Theorem \ref{maintheorem}, estimate \eqref{app} will be used with $\mu=m+10$, being $m$ an integer as in the statement of that theorem.
\end{remark}

We assume that
\begin{equation}\nonumber
||\varphi_0||_{L^{\infty}(\R)}<\frac{1}{2},
\end{equation}
then we fix $T>0$ sufficiently small so that $||\varphi^a||_{L^{\infty}([0,T]\times\R)}\leq\frac{1}{2}.$
Hence, we get
\begin{equation}\nonumber
\partial_1\Phi^{a+}\geq\frac{1}{2},\quad \partial_1\Phi^{a-}\leq-\frac{1}{2}
\end{equation}
(recall that $||\chi^\prime||_{L^\infty(\R)}\le 1/2$, see Section \ref{straighten}).
\newline
The approximate solution $({\mathbf U}^a,\varphi^a)$ enables us to reformulate the original problem \eqref{IVP} as a nonlinear problem with zero initial data. Set
\begin{equation}\label{f}
\mathcal{F}^a:=
\begin{cases}
-\mathbb{L}({\mathbf U}^a,\Psi^a),\;&t>0,\\
~~~~0,\;&t<0.\\
\end{cases}
\end{equation}
From $\tilde{{\mathbf U}}^a\in H^{\mu+2}(\Omega_T)$ and $\varphi^a\in H^{\mu+2}(\Gamma_T),$ we have $\mathcal{F}^a\in H^{\mu+1}(\Omega_T)$.
\newline
Given the approximate solution $(\tilde{{\mathbf U}}^a,\varphi^a)$ of Lemma \ref{const}  and $\mathcal{F}^a$   defined in \eqref{f}, we see that $({\mathbf U},\varphi)=({\mathbf U}^a,\varphi^a)+({\mathbf V},\psi)$ is a solution of the original problem \eqref{IVP} if ${\mathbf V}=({\mathbf V}^+,{\mathbf V}^-)^T,\Psi=(\Psi^+,\Psi^-)^T,$ $\Psi|_{x_1=0}:=\psi$ satisfy the following problem:
\begin{equation}\label{system}
 \begin{cases}
 \mathcal{L}({\mathbf V},\Psi):=\mathbb{L}({\mathbf U}^a+{\mathbf V},\Psi^a+\Psi)-\mathbb{L}({\mathbf U}^a,\Psi^a)=\mathcal{F}^a, & \text{in } \Omega_T,\\
 \mathcal{B}({\mathbf V}|_{x_1=0},\psi):=\mathbb{B}({\mathbf U}^a|_{x_1=0}+{\mathbf V}|_{x_1=0},\varphi^a+\psi)=0,& \text{on } \Gamma_T,\\
 ({\mathbf V},\psi)=0, & \text{for } t<0.
\end{cases}
\end{equation}
The original nonlinear problem on $[0,T]\times\mathbb R^2_+$ is thus reformulated as a  problem on $\Omega_T$ whose solutions vanish in the past.
\bigskip
\section{Nash-Moser Iteration}\label{nash}
In this section, we recall the Nash-Moser iteration for reader's convenience. First, we introduce the smoothing operators $S_{\theta}$ and describe the iterative scheme for problem \eqref{system}. For more details refer to \cite{ChenG2008,Coulombel2008,Trakhinin2009}.
\begin{lemma}\label{smooth}
Let $\mu\in \mathbb{N},$ with $\mu\geq 4$. $\mathcal{F}^s_{\ast}(\Omega_T):=\{u\in H^s_{\ast}(\Omega_T): u=0 \text{ for }t<0\}.$ Define a family of smoothing operators $\{S_{\theta}\}_{{\theta\geq1}}$ on the anisotropic Sobolev space from $\mathcal{F}^{3}_{\ast}(\Omega_T)$ to $\bigcap_{s\geq3}\mathcal{F}^{s}_{\ast}(\Omega_T)$, such that
\begin{equation}\label{as1}
||S_{\theta}u||_{k,\ast,T}\leq C\theta^{(k-j)_+}||u||_{j,\ast,T}, \text{ for all } k,j\in \{1,\cdots,\mu\},
\end{equation}
\begin{equation}\label{as2}
||S_{\theta}u-u||_{k,\ast,T}\leq C\theta^{k-j}||u||_{j,\ast,T}, \text{ for all } 1\leq k\leq j\leq \mu,
\end{equation}
\begin{equation}\label{as3}
||\frac{d}{d\theta}S_{\theta}u||_{k,\ast,T}\leq C\theta^{k-j-1}||u||_{j,\ast,T}, \text{ for all } k,j\in \{1,\cdots,\mu\},
\end{equation}
where $C$ is positive constant and $k,j\in \mathbb{N},(k-j)_+:=\max\{0,k-j\}.$  In particular, if $u=v$ on $\Gamma_T,$ then $S_{\theta}u=S_{\theta}v$ on $\Gamma_T$. The definition of $\mathcal F^s(\Gamma_T)$ is entirely similar.
%Furthermore, there exists another family of smoothing operators (still denoted by $S_{\theta}$) acting on the functions defined on the boundary $\omega_T$ and satisfying the properties \eqref{s1}-\eqref{s4} with norms $||\cdot||_{H^{\alpha}_{\lambda}(\omega_T)}.$
\end{lemma}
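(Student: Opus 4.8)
The plan is to construct the family $\{S_\theta\}_{\theta\ge 1}$ by adapting to the anisotropic spaces $H^m_\ast$ the classical Friedrichs--mollifier construction of smoothing operators used in Nash--Moser schemes, following the lines of \cite{ChenG2008,Coulombel2008,Trakhinin2009,MTP2009}; I only sketch the argument here. The first step is to reduce to a problem on a full Euclidean space. Since the elements of $\mathcal F^s_\ast(\Omega_T)$ vanish for $t<0$, it suffices to extend past $\{t=T\}$ in the time variable and across the boundary $\{x_1=0\}$ in the normal variable: using a Sobolev extension in $t$ together with the anisotropic lifting/extension operator associated with the trace theorem in $H^m_\ast$ (see \cite{Ohno-Shizuta1994} and Appendix \ref{tracetheorem}), one obtains a bounded operator $E\colon H^s_\ast(\Omega_T)\to H^s_\ast(\R\times\R^2)$ for every $s\le\mu$, which preserves the vanishing for $t<0$; moreover $E$ can be chosen so that its trace on $\{x_1=0\}$ depends only on the trace of its argument, so that $Eu=Ev$ on $\{x_1=0\}$ whenever $u=v$ there.

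On the full space I would define $S_\theta$ as a Fourier multiplier $\widehat{S_\theta u}(\zeta)=\psi(\zeta/\theta)\,\widehat u(\zeta)$ in the tangential frequencies $\zeta$ dual to $(t,x_2)$, composed with a mollification in $x_1$ performed at the anisotropic scale $1/\theta$ dictated by the weight $\sigma$ (so that the unweighted normal derivative $\partial_1$ is gained at the full rate $\theta$, while $\sigma\partial_1$ is treated on the same footing as the tangential derivatives); here $\psi\in C^\infty_0$ is a fixed cut-off equal to $1$ near the origin. Restricting back to $\Omega_T$ yields $S_\theta$, mapping $\mathcal F^3_\ast(\Omega_T)$ into $\bigcap_{s\ge3}\mathcal F^s_\ast(\Omega_T)$; the trace--compatibility property ($S_\theta u=S_\theta v$ on $\Gamma_T$ when $u=v$ on $\Gamma_T$) follows because the tangential multiplier commutes with restriction to $\{x_1=0\}$ and, by the choice of the normal mollifier and of $E$, so does the whole construction, the induced operator on $\Gamma_T$ being the corresponding family on $\mathcal F^s(\Gamma_T)$.

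The estimates \eqref{as1}--\eqref{as3} are then verified by the usual Littlewood--Paley/Bernstein argument, now in the anisotropic setting: writing $u=\sum_\ell\Delta_\ell u$ for a dyadic decomposition adapted to the structure of $H^m_\ast$, so that $S_\theta u=\sum_{2^\ell\lesssim\theta}\Delta_\ell u+(\text{smooth tail})$, the bound \eqref{as1} follows from the anisotropic Bernstein inequality $\|D^\alpha_\ast\Delta_\ell v\|_{L^2}\lesssim 2^{\ell(\langle\alpha\rangle-s)}\|\Delta_\ell v\|_{s,\ast}$, while \eqref{as2} comes from summing the high-frequency tail $\sum_{2^\ell\gtrsim\theta}\Delta_\ell u$ and using $\theta\ge1$. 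For \eqref{as3} one uses that $\theta\,\tfrac{d}{d\theta}\psi(\zeta/\theta)=-\chi_1(\zeta/\theta)$ with $\chi_1(r)=r\,\psi'(r)$ again smooth, compactly supported and vanishing near the origin (and the analogous identity for the normal mollifier), so that $\theta\,\tfrac{d}{d\theta}S_\theta$ is a smoothing operator of the same type localized to a single corona $2^\ell\sim\theta$, which produces the extra factor $\theta^{-1}$.

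The hard part will be the construction and analysis of the normal component of $S_\theta$: it must be chosen so that, simultaneously, it is bounded on $H^m_\ast$ for all $m\le\mu$ uniformly in $\theta$, it gains $\partial_1$ at full rate $\theta$ while treating $\sigma\partial_1$ like a tangential derivative (which is exactly what makes the single exponent $(k-j)_+$ in \eqref{as1} correct, with $k,j$ measured in the index $\langle\cdot\rangle$ that counts $\partial_1$ twice), and it commutes with the trace on $\{x_1=0\}$ so that the boundary--preservation property holds. All three are achieved by performing the normal mollification at the anisotropic scale, for instance after a change of variable flattening the weight $\sigma$ near $x_1=0$; this is the step where the detailed estimates, already carried out in \cite{MTP2009,Coulombel2008}, are needed, and apart from it the proof is entirely routine.
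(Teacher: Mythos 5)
First, a point of comparison: the paper does not actually prove Lemma \ref{smooth}; it recalls it and refers to \cite{ChenG2008,Coulombel2008,Trakhinin2009}, so your sketch has to stand on its own (or reduce cleanly to those references). Your tangential part (extension in $t$, Fourier multiplier in $(t,x_2)$) is standard and fine, but the normal-variable part — which you yourself identify as "the hard part" — is where the proposal is not just incomplete but, as literally stated, wrong. You prescribe a mollification in $x_1$ at the single scale $1/\theta$, "so that $\partial_1$ is gained at the full rate $\theta$". With that choice \eqref{as1} holds, but \eqref{as2} fails: since an unweighted $\partial_1$ costs two units of the index $\langle\alpha\rangle$, take $k=0$, $j=2$ and a wave packet $u=e^{i\omega x_1}\phi(x_1)$ (tensored with a fixed tangential profile) with normal frequency $\omega=\theta^{3/2}$ supported at distance $a=\theta^{-3/4}$ from $\{x_1=0\}$; then $\|u\|_{2,\ast}\sim(\omega+a\omega+a^2\omega^2)\|\phi\|\sim\theta^{3/2}\|\phi\|$, while a scale-$\theta^{-1}$ mollifier annihilates the oscillation, so $\|S_\theta u-u\|_{L^2}\sim\|\phi\|\gg\theta^{-2}\|u\|_{2,\ast}$. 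Nor can you fix this by simply taking the scale $\theta^{-2}$: away from the boundary $\sigma\equiv1$, so there $H^m_\ast$ coincides with the usual $H^m$ and a scale-$\theta^{-2}$ mollifier gains one derivative at rate $\theta^2$, violating \eqref{as1} with $k=j+1$. In other words, no uniform-in-$x_1$ mollification scale is compatible with \eqref{as1}--\eqref{as3}; the normal smoothing must be boundary-degenerate, with an $x_1$-dependent scale of order $\max\{x_1/\theta,\theta^{-2}\}$ (conormal near the boundary, parabolic at the boundary, isotropic in the interior). Relatedly, the "anisotropic Littlewood--Paley decomposition adapted to $H^m_\ast$" with the Bernstein inequality you quote is not an off-the-shelf tool: building such a decomposition with exactly those properties is essentially equivalent to proving the lemma, so it cannot be invoked as routine, and \cite{MTP2009} (Moser estimates, embeddings, traces) does not supply it.

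The second genuine gap is the boundary-compatibility property, which you assert rather than prove. Since $S_\theta$ must map $\mathcal F^3_\ast$ into $\bigcap_{s\ge3}\mathcal F^s_\ast$, it must genuinely smooth in $x_1$ up to the boundary, and then the value of a convolution-type operator at $x_1=0$ depends on interior values of $u$, not only on $u\vert_{x_1=0}$; so "$u=v$ on $\Gamma_T\Rightarrow S_\theta u=S_\theta v$ on $\Gamma_T$" is a nontrivial design constraint, not a consequence of the tangential multiplier commuting with restriction. The known constructions secure it explicitly (for instance by smoothing separately a lifting $\mathcal R_T(u\vert_{x_1=0})$ and the part vanishing on the boundary, or by using a normal smoothing whose boundary value reproduces the trace, and then checking this is consistent with \eqref{as1}--\eqref{as3} and with the induced family on $\mathcal F^s(\Gamma_T)$). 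Since both the correct normal construction and this compatibility check are exactly what you defer to the references while labelling the rest "routine", the proposal as written does not contain the proof's core; if you want a self-contained argument you need to build the variable-scale normal smoothing, verify \eqref{as1}--\eqref{as3} for it in the $\ast$-scale (with $\partial_1$ counted twice), and exhibit the trace-commutation by construction.
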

%The proof of \eqref{s4} is based on the following lifting operator, refer to \cite{Coulombel2008}.
%\begin{lemma}\label{lift}
%Let $T>0$ and $\lambda>1,$ and let $\mu\geq1$ be an integer. Then, there exists an operator $\mathcal{R}_T$, which is continuous from $\mathcal{F}^{s}_{\lambda}(\omega_T)$ to $\mathcal{F}^{s+\frac{1}{2}}_{\lambda}(\Omega_T)$ for all $s\in[1,\mu],$ such that, if $s\geq1$ and $u\in \mathcal{F}^s_{\lambda}(\omega_T)$, then $(\mathcal{R}_Tu)|_{x_2=0}=u.$
%\end{lemma}
%Here, we note that in the following section, we will take $m:=\mu+9.$
Now, we begin to formulate the Nash-Moser iteration scheme.

The iteration scheme starts from $({\mathbf V}_0,\Psi_0,\psi_0)=(0,0,0),$  and $({\mathbf V}_i,\Psi_i,\psi_i)$ is given such that
\begin{equation}\label{i1}
({\mathbf V}_i,\Psi_i,\psi_i)|_{t<0}=0,\quad \Psi^+_{i}|_{x_1=0}=\Psi^-_{i}|_{x_1=0}=\psi_i.
\end{equation}
Let us consider
\begin{equation}\label{i2}
{\mathbf V}_{i+1}={\mathbf V}_i+\delta {\mathbf V}_i, \; \Psi_{i+1}=\Psi_i+\delta\Psi_i,\; \psi_{i+1}=\psi_i+\delta\psi_i,
\end{equation}
where the  differences $(\delta {{\mathbf V}}_i,\delta\psi_i)$ will be determined below.
First, we can obtain $(\delta \dot{{\mathbf V}}_i,\delta\psi_i)$ by solving the effective linear problem:
\begin{equation}\label{effective3}
 \begin{cases}
\displaystyle \mathbb{L}_e'({\mathbf U}^a+{\mathbf V}_{i+\frac{1}{2}},\Psi^a+\Psi_{i+\frac{1}{2}})\delta \dot{{\mathbf V}}_i=f_i &\text{ in } \Omega_T,\\
\displaystyle \mathbb{B}_e'({\mathbf U}^a+{\mathbf V}_{i+\frac{1}{2}},\Psi^a+\Psi_{i+\frac{1}{2}})(\delta\dot{ {\mathbf V}}_i,\delta\psi_i)=g_i &\text{ on }\Gamma_T    ,\\
\displaystyle (\delta \dot{{\mathbf V}}_i,\delta\psi_i)=0 &\text{ for } t<0,
\end{cases}
\end{equation}
where operators $\mathbb{L}_e', \mathbb{B}_e'$ are defined in \eqref{Le1} and \eqref{Be1},
\begin{equation}\label{goodunkown}
\delta \dot{{\mathbf V}}_i:=\delta {\mathbf V}_i-\frac{\partial_1({\mathbf U}^a+{\mathbf V}_{i+\frac{1}{2}})}{\partial_1(\Phi^a+\Psi_{i+\frac{1}{2}})}\delta\Psi_i
\end{equation}
is the Alinhac ``good unknown" and $({\mathbf V}_{i+\frac{1}{2}},\Psi_{i+\frac{1}{2}})$ is a smooth modified state such that $({\mathbf U}^a+{\mathbf V}_{i+\frac{1}{2}},\Psi^a+\Psi_{i+\frac{1}{2}})$ satisfies \eqref{hy}--\eqref{c} and \eqref{c2}. The source terms $(f_i,g_i)$ will be defined through the accumulated errors at step $i$.
 $S_{\theta_i}$ is the smoothing operator with   ${\theta_i}$ defined by
\begin{equation}\label{theta}
\theta_0\geq1, \; \theta_i=\sqrt{\theta^2_0+i}\,.
\end{equation}
The errors at step $i$ can be defined from the following decompositions:
\begin{equation}\label{l}
\begin{split}
&\mathcal{L}({\mathbf V}_{i+1},\Psi_{i+1})-\mathcal{L}({\mathbf V}_i,\Psi_i)\\
&=\mathbb{L}'({\mathbf U}^a+{\mathbf V}_i,\Psi^a+\Psi_i)(\delta {\mathbf V}_i,\delta\Psi_i)+e'_i\\
&=\mathbb{L}'({\mathbf U}^a+S_{\theta_i}{\mathbf V}_{i},\Psi^a+S_{\theta_i}\Psi_i)(\delta {\mathbf V}_i,\delta\Psi_i)+e'_i+e''_i\\
&=\mathbb{L}'({\mathbf U}^a+{\mathbf V}_{i+\frac{1}{2}},\Psi^a+\Psi_{i+\frac{1}{2}})(\delta {\mathbf V}_i,\delta\Psi_i)+e'_i+e''_i+e'''_i\\
&=\mathbb{L}'_e({\mathbf U}^a+{\mathbf V}_{i+\frac{1}{2}},\Psi^a+\Psi_{i+\frac{1}{2}})\delta \dot{{\mathbf V}}_i+e'_i+e''_i+e'''_i+D_{i+\frac{1}{2}}\delta\Psi_i\\
\end{split}
\end{equation}
and
\begin{equation}\label{B}
\begin{split}
&\mathcal{B}({\mathbf V}_{i+1}|_{x_1=0},\psi_{i+1})-\mathcal{B}({\mathbf V}_i|_{x_1=0},\psi_i)\\
&=\mathbb{B}'(({\mathbf U}^a+{\mathbf V}_i)|_{x_1=0},\varphi^a+\psi_i)(\delta {\mathbf V}_i|_{x_1=0},\delta\psi_i)+\tilde{e}'_i\\
&=\mathbb{B}'(({\mathbf U}^a+S_{\theta_i}{\mathbf V}_{i})|_{x_1=0},\varphi^a+S_{\theta_i}\Psi_i|_{x_1=0})(\delta {\mathbf V}_i|_{x_1=0},\delta\psi_i)+\tilde{e}'_i+\tilde{e}''_i\\
&=\mathbb{B}'_e(({\mathbf U}^a+{\mathbf V}_{i+\frac{1}{2}})|_{x_1=0},\varphi^a+\psi_{i+\frac{1}{2}})(\delta \dot{{\mathbf V}}_i|_{x_1=0},\delta\psi_i)+\tilde{e}'_i+\tilde{e}''_i+\tilde{e}'''_i,\\
\end{split}
\end{equation}
where we write
\begin{equation}\label{D}
D_{i+\frac{1}{2}}:=\frac{1}{\partial_1(\Phi^a+\Psi_{i+\frac{1}{2}})}\partial_1\mathbb{L}({\mathbf U}^a+{\mathbf V}_{i+\frac{1}{2}},\Psi^a+\Psi_{i+\frac{1}{2}}),
\end{equation}
and have used \eqref{linearized22} to get the last identity in \eqref{l}.
Denote
\begin{equation}\label{e}
e_i:=e'_i+e''_i+e'''_i+D_{i+\frac{1}{2}}\delta\Psi_i, \quad \tilde{e}_i:=\tilde{e}'_i+\tilde{e}''_i+\tilde{e}'''_i.
\end{equation}
We assume $f_0:=S_{\theta_0}\mathcal F^a,(E_0,\tilde{E}_0,g_0):=(0,0,0)$ and $(f_k,g_k,e_k,\tilde{e}_k)$ are already given and vanish in the past for $k\in\{0,\cdots,i-1\}.$
We can calculate the accumulated errors at step $i,i\geq1$, by
\begin{equation}\label{ae}
E_i:=\sum^{i-1}_{k=0}e_k,\quad \tilde{E}_i:=\sum^{i-1}_{k=0}\tilde{e}_k.
\end{equation}
Then, we obtain $f_i$ and $g_i$ for $i\geq1$ from the equations:
\begin{equation}\label{fg}
\sum^{i}_{k=0}f_k+S_{\theta_i}E_i=S_{\theta_i}\mathcal{F}^a,\quad \sum^{i}_{k=0}g_k+S_{\theta_i}\tilde{E}_i=0.
\end{equation}
Then, given suitable $({\mathbf V}_{i+\frac12},\Psi_{i+\frac12})$, we can obtain $(\delta \dot{{\mathbf V}}_i,\delta\psi_i)$ as the solutions of the linear problem \eqref{effective3}, $\delta{\mathbf V}_i$ from \eqref{goodunkown}, $({\mathbf V}_{i+1},\Psi_{i+1}, \psi_{i+1})$ from \eqref{i2}. Since $S_{\theta_i}\rightarrow I $ as $i\rightarrow\infty,$ we can formally obtain the solution to problem \eqref{system} from $\mathcal{L}({\mathbf V}_{i},\Psi_{i})\rightarrow \mathcal{F}^a,\mathcal{B}({\mathbf V}_{i}|_{x_1=0},\psi_{i})\rightarrow0,$ as error terms $(e_i,\tilde{e}_i)\rightarrow0.$

\section{Proof of the Main Result}\label{proof}
 Now, we prove the local existence of solutions to \eqref{system} by a modified iteration scheme of Nash-Moser type.
From the sequence $\{\theta_i\}$ defined in \eqref{theta},  we set $\Delta_i:=\theta_{i+1}-\theta_i.$ Then, the sequence $\{\Delta_i\}$ is decreasing and tends to $0$ as $i$ goes to infinity. Moreover, we have
$$\frac{1}{3\theta_i}\leq\Delta_i=\sqrt{\theta^2_i+1}-\theta_i\leq\frac{1}{2\theta_i}, \; \forall \, i\in \mathbb{N}.$$

\subsection{Inductive analysis}\label{inductive}
Given a small fixed $\delta>0$, and an integer $\tilde\alpha$ that will be chosen later on, we assume that the following estimate holds:
\begin{equation}\label{small}
||\tilde{{\mathbf U}}^a||_{\tilde\alpha+6,\ast,T}+||\varphi^a||_{H^{\tilde\alpha+6}(\Gamma_T)}+||\mathcal{F}^a||_{\tilde\alpha+4,\ast,T}\leq\delta.
\end{equation}
We may assume that \eqref{small} holds, by taking $T>0$ sufficiently small.

\smallskip
\noindent
Given the integer $\alpha$, our inductive assumptions read
\begin{eqnarray}\label{Hn-1}
(H_{i-1}) \left\{ \begin{split}
&\displaystyle (a)\quad||(\delta {\mathbf V}_k,\delta\Psi_k)||_{s,\ast,T}+||\delta\psi_k||_{H^{s}(\Gamma_T)}\leq \delta\theta^{s-\alpha-1}_{k}\Delta_k, \\
&\qquad \forall k\in\{0,\cdots, i-1\},\quad \forall s\in \{6,\cdots,\tilde{\alpha}\}.\\
&\displaystyle (b)\quad||\mathcal{L}({\mathbf V}_k,\Psi_k)-\mathcal{F}^a||_{s,\ast,T}\leq2\delta\theta^{s-\alpha-1}_k,\\
&\qquad \forall k\in\{0,\cdots,i-1\},\quad \forall s\in \{6,\cdots,\tilde{\alpha}-2\}.\\
&\displaystyle (c)\quad||\mathcal{B}({\mathbf V}_k|_{x_1=0},\psi_k)||_{H^{s}(\Gamma_T)}\leq\delta\theta^{s-\alpha-1}_k,\\
&\qquad \forall k\in\{0,\cdots,i-1\},\quad \forall s\in \{6,\cdots,\tilde{\alpha}-2\}.\\
\end{split}
\right.
\end{eqnarray}
Our goal is to show that   $(H_0)$ holds   and  $(H_{i-1})$ implies  $(H_i)$,  for a suitable choice of the parameters $\alpha$, $\tilde\alpha$, for $\delta>0$ and $T>0$ sufficiently small, for $\theta_0\geq1$ sufficiently large. Then, we conclude that $(H_i)$ holds for all $i\in \mathbb{N}.$
\begin{lemma}\label{H0}
If $T>0$ is sufficiently small, then  $(H_0)$ holds.
\end{lemma}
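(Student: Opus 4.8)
The plan is to verify the three inductive statements $(a)$, $(b)$, $(c)$ of $(H_{i-1})$ at the base level $i=0$, i.e. for the trivial iterate $({\mathbf V}_0,\Psi_0,\psi_0)=(0,0,0)$. Since all the increments $(\delta{\mathbf V}_k,\delta\Psi_k,\delta\psi_k)$ for $k\le i-1=-1$ range over an empty index set, statement $(a)$ of $(H_0)$ is vacuously true and nothing needs to be proved there. Thus the only content of $(H_0)$ is to check $(b)$ and $(c)$ for $k=0$, that is, to estimate $\|\mathcal L({\mathbf V}_0,\Psi_0)-\mathcal F^a\|_{s,\ast,T}$ and $\|\mathcal B({\mathbf V}_0|_{x_1=0},\psi_0)\|_{H^s(\Gamma_T)}$ for all $s\in\{6,\dots,\tilde\alpha-2\}$.

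First I would compute these two quantities explicitly at $({\mathbf V}_0,\Psi_0,\psi_0)=(0,0,0)$. By the definitions in \eqref{system}, we have $\mathcal L(0,0)=\mathbb L({\mathbf U}^a,\Psi^a)-\mathbb L({\mathbf U}^a,\Psi^a)=0$, hence $\mathcal L({\mathbf V}_0,\Psi_0)-\mathcal F^a=-\mathcal F^a$; and $\mathcal B(0,0)=\mathbb B({\mathbf U}^a|_{x_1=0},\varphi^a)$. For the boundary term, recall from Lemma \ref{const} that the approximate solution is constructed so that $\partial^j_t\mathbb L({\mathbf U}^a,\Psi^a)|_{t=0}=0$ for $j\le\mu-1$, while the compatibility conditions up to order $\mu$ (Definition \ref{def}) together with the choice of $\varphi^a$ via \eqref{8.1}--\eqref{ei} guarantee that $\partial^j_t\mathbb B({\mathbf U}^a|_{x_1=0},\varphi^a)|_{t=0}=0$ for all relevant $j$; since $({\mathbf V}_0,\Psi_0,\psi_0)$ vanishes in the past, a Taylor/Moser argument gives that $\mathcal B(0,0)=\mathbb B({\mathbf U}^a|_{x_1=0},\varphi^a)$ is $O(T)$-small in $H^s(\Gamma_T)$ for $s\le\mu-2$ (or in fact vanishes identically in the past and is controlled by the size of the approximate solution). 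So both quantities are controlled by $\|\mathcal F^a\|_{s,\ast,T}$, $\|\tilde{\mathbf U}^a\|_{s+\text{(const)},\ast,T}$ and $\|\varphi^a\|_{H^{s+\text{(const)}}(\Gamma_T)}$ through the Moser-type inequality \eqref{moser6}.

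Next I would invoke the smallness estimate \eqref{small}, namely $\|\tilde{\mathbf U}^a\|_{\tilde\alpha+6,\ast,T}+\|\varphi^a\|_{H^{\tilde\alpha+6}(\Gamma_T)}+\|\mathcal F^a\|_{\tilde\alpha+4,\ast,T}\le\delta$, which holds by Lemma \ref{const} (or Lemma \ref{smooth}) upon taking $T>0$ sufficiently small. For $s\le\tilde\alpha-2$ this yields $\|\mathcal F^a\|_{s,\ast,T}\le\|\mathcal F^a\|_{\tilde\alpha+4,\ast,T}\le\delta$, and likewise for the boundary term after using the trace theorem Lemma \ref{tra} and \eqref{moser6}. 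Since $\theta_0\ge1$ forces $\theta_0^{s-\alpha-1}\ge1$ whenever $s\ge\alpha+1$ — and one should note that for the base case this monotonicity is in our favour because $2\delta\theta_0^{s-\alpha-1}\ge 2\delta$ and $\delta\theta_0^{s-\alpha-1}\ge\delta$ — the bounds $\|\mathcal L({\mathbf V}_0,\Psi_0)-\mathcal F^a\|_{s,\ast,T}=\|\mathcal F^a\|_{s,\ast,T}\le\delta\le2\delta\theta_0^{s-\alpha-1}$ and $\|\mathcal B({\mathbf V}_0|_{x_1=0},\psi_0)\|_{H^s(\Gamma_T)}\le\delta\le\delta\theta_0^{s-\alpha-1}$ follow, provided we impose the mild constraint $\alpha+1\le 6$, i.e. $\alpha\le 5$, so that $s-\alpha-1\ge0$ on the whole range $s\in\{6,\dots,\tilde\alpha-2\}$. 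This establishes $(b)$ and $(c)$ of $(H_0)$.

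The main (and essentially only) obstacle is bookkeeping: one must be careful that the loss of derivatives incurred by the Moser estimates and the trace theorem when passing from $\mathcal F^a$, $\tilde{\mathbf U}^a$, $\varphi^a$ to $\mathcal L({\mathbf V}_0,\Psi_0)-\mathcal F^a$ and $\mathcal B(0,0)$ is absorbed by the gap between the index $\tilde\alpha+6$ (resp.\ $\tilde\alpha+4$) appearing in \eqref{small} and the index $\tilde\alpha-2$ appearing in $(b)$, $(c)$; a routine check shows the fixed finite loss is comfortably covered. One also needs $\theta_0$ chosen so that $\theta_0^{s-\alpha-1}$ does not work against us, but as noted this direction is the easy one for the base case. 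Hence $(H_0)$ holds for $T>0$ sufficiently small, which is exactly the claim of Lemma \ref{H0}.
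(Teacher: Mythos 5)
There is a genuine gap, and it is the main content of the lemma. You read the base case as vacuous, but $(H_0)$ is $(H_{i-1})$ with $i-1=0$, so its conditions run over $k\in\{0,\dots,0\}=\{0\}$: in particular part $(a)$ of $(H_0)$ is the estimate $\|(\delta{\mathbf V}_0,\delta\Psi_0)\|_{s,\ast,T}+\|\delta\psi_0\|_{H^s(\Gamma_T)}\le\delta\theta_0^{s-\alpha-1}\Delta_0$ for the \emph{first increment}, which is anything but trivial. Proving it is the heart of the lemma (this is exactly what the cited proof, \cite[Lemma 17]{Trakhinin2009}, does): $(\delta\dot{\mathbf V}_0,\delta\psi_0)$ solves the effective linear problem \eqref{effective3} with $i=0$, coefficients given by ${\mathbf U}^a+{\mathbf V}_{\frac12}$, $\Psi^a+\Psi_{\frac12}$ and data $f_0=S_{\theta_0}\mathcal F^a$, $g_0=0$; one applies the tame estimate \eqref{tamesss} of Theorem \ref{keytame}, uses \eqref{as1} and \eqref{small} to bound the right-hand side by norms of $\mathcal F^a$, recovers $\delta{\mathbf V}_0$ from the good unknown via \eqref{goodunkown}, and then uses that $\|\mathcal F^a\|_{s,\ast,T}\to0$ as $T\to0$ (since $\mathcal F^a$ vanishes for $t<0$) to beat the fixed constant $\delta\theta_0^{s-\alpha-1}\Delta_0$. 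None of this appears in your argument; if $(H_0)$ really were vacuous there would be nothing to prove and no reason for the hypothesis ``$T$ sufficiently small''.

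A second, related flaw is the parameter bookkeeping at the end. Your conclusion $\delta\le\delta\theta_0^{s-\alpha-1}$ needs $s-\alpha-1\ge0$, and you impose $\alpha\le5$; but the scheme requires $\alpha\ge7$, then $\alpha\ge10$, and finally $\alpha\ge16$ (Lemmata \ref{eestimate}, \ref{mo}, \ref{Eerror}--\ref{Hn}, and $\alpha=m+1\ge16$ in the proof of Theorem \ref{maintheorem}), so on the range $s\in\{6,\dots,\tilde\alpha-2\}$ the exponent $s-\alpha-1$ is typically negative and $\delta\theta_0^{s-\alpha-1}$ is far smaller than $\delta$ for large $\theta_0$. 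The bound cannot come from ``the norms are $\le\delta$''; it comes from the fact that, with $\delta$ and $\theta_0$ fixed, $\|\mathcal F^a\|_{s,\ast,T}$ (for $(b)$, since $\mathcal L({\mathbf V}_0,\Psi_0)-\mathcal F^a=-\mathcal F^a$, which you computed correctly) and $\|\mathbb B({\mathbf U}^a|_{x_1=0},\varphi^a)\|_{H^s(\Gamma_T)}$ (for $(c)$) tend to zero as $T\to0$, so $T$ can be chosen small depending on $\delta$, $\theta_0$, $\alpha$, $\tilde\alpha$. Your identification of the two quantities to be estimated in $(b)$ and $(c)$ is correct and the smallness-in-$T$ idea is the right mechanism there, but the final inequality as you wrote it is false in the actual parameter regime, and part $(a)$ — the real work — is missing.
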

\begin{proof}
The proof follows as in \cite[Lemma 17]{Trakhinin2009}.
\end{proof}

 Now we prove that $(H_{i-1})$ implies  $(H_i)$. The hypothesis $(H_{i-1})$ yields the following lemma.
\begin{lemma}\cite[Lemma 7]{Trakhinin2009}, \cite[Lemma 7]{Coulombel2008}\label{estimate5}
If $\theta_0$ is large enough, then, for each $k\in\{0,\cdots,i\}$,  and each integer $s\in\{6,\cdots,\tilde{\alpha}\},$
\begin{equation}\label{estimate6}
||({\mathbf V}_k,\Psi_k)||_{s,\ast,T}+||\psi_k||_{H^{s}(\Gamma_T)}\lesssim
 \begin{cases}
\delta \theta^{(s-\alpha)_+}_k, &\text{if }s\neq\alpha,\\
\delta \log\theta_k, &\text{if } s=\alpha,\\
\end{cases}
\end{equation}
\begin{equation}\label{estimate7}
||(I-S_{\theta_k})({\mathbf V}_k,\Psi_k)||_{s,\ast,T}+||(I-S_{\theta_k})\psi_k||_{H^s(\Gamma_T)}\lesssim \delta\theta^{s-\alpha}_k.
\end{equation}
Furthermore, for each $k\in\{0,\cdots ,i\}$,  and each integer $s\in\{6,\cdots,\tilde{\alpha}+8\},$
\begin{equation}\label{estimate8}
||(S_{\theta_k}{\mathbf V}_k,S_{\theta_k}\Psi_k)||_{s,\ast,T}+||S_{\theta_k}\psi_k||_{H^s(\Gamma_T)}\lesssim
\begin{cases}
\delta \theta^{(s-\alpha)_+}_k, &\text {if } s\neq\alpha,\\
\delta \log\theta_k, &\text {if } s=\alpha.\\
\end{cases}
\end{equation}
\end{lemma}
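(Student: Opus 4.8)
The plan is to deduce all three estimates from the telescoping identities
\[
{\mathbf V}_k=\sum_{l=0}^{k-1}\delta{\mathbf V}_l,\qquad \Psi_k=\sum_{l=0}^{k-1}\delta\Psi_l,\qquad \psi_k=\sum_{l=0}^{k-1}\delta\psi_l,
\]
valid since $({\mathbf V}_0,\Psi_0,\psi_0)=(0,0,0)$ and the increments are given by \eqref{i2}. By the triangle inequality together with the inductive bound $(a)$ in \eqref{Hn-1} --- which applies to every $\delta{\mathbf V}_l$, $\delta\Psi_l$, $\delta\psi_l$ with $l\le i-1$, hence to every term above when $k\le i$ --- one gets, for $6\le s\le\tilde\alpha$,
\[
||({\mathbf V}_k,\Psi_k)||_{s,\ast,T}+||\psi_k||_{H^s(\Gamma_T)}\le \delta\sum_{l=0}^{k-1}\theta_l^{\,s-\alpha-1}\Delta_l.
\]
So everything reduces to the elementary estimate of the ``discrete integral'' $\Sigma_{p,k}:=\sum_{l=0}^{k-1}\theta_l^{\,p}\Delta_l$ with $p=s-\alpha-1$, plus the smoothing properties \eqref{as1}--\eqref{as2}.

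First I would prove, with constants depending only on $p$ and $\theta_0$ (in particular \emph{uniformly in} $k$ and in the iteration index $i$), that
\[
\Sigma_{p,k}\lesssim\begin{cases}\theta_k^{\,p+1}, & p>-1,\\ \log\theta_k, & p=-1,\\ 1, & p<-1.\end{cases}
\]
This follows from $\tfrac1{3\theta_l}\le\Delta_l\le\tfrac1{2\theta_l}$, so that $\theta_l^{\,p}\Delta_l\sim\theta_l^{\,p-1}$, together with the monotonicity of $t\mapsto t^{p}$, by comparing $\Sigma_{p,k}$ with $\int_{\theta_0}^{\theta_k}t^{\,p}\,dt$; in the case $p<-1$ one uses that $\sum_l\theta_l^{\,p-1}$ converges because $\theta_l\sim\sqrt l$ by \eqref{theta}. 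Taking $p=s-\alpha-1$ and recalling that $(s-\alpha)_+=s-\alpha$ when $s>\alpha$ while $(s-\alpha)_+=0$ when $s<\alpha$, this gives exactly \eqref{estimate6}, the borderline case $s=\alpha$ producing the logarithmic bound (here one takes $\theta_0$ large enough that $\log\theta_k\ge1$, which lets the $O(1)$ remainder be absorbed).

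For \eqref{estimate7} I would invoke \eqref{as2} (and its analogue on $\Gamma_T$) with the pair of indices $s\le\tilde\alpha$: since $\tilde\alpha>\alpha$,
\[
||(I-S_{\theta_k})({\mathbf V}_k,\Psi_k)||_{s,\ast,T}+||(I-S_{\theta_k})\psi_k||_{H^s(\Gamma_T)}\le C\theta_k^{\,s-\tilde\alpha}\Big(||({\mathbf V}_k,\Psi_k)||_{\tilde\alpha,\ast,T}+||\psi_k||_{H^{\tilde\alpha}(\Gamma_T)}\Big)\lesssim\delta\,\theta_k^{\,s-\alpha},
\]
where we used \eqref{estimate6} with $s=\tilde\alpha$. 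For \eqref{estimate8} I would use \eqref{as1}: when $6\le s\le\tilde\alpha$, choosing the index $j=s$ gives $||S_{\theta_k}(\cdot)||_{s,\ast,T}\le C||(\cdot)||_{s,\ast,T}$ and \eqref{estimate6} finishes it; when $\tilde\alpha<s\le\tilde\alpha+8$, choosing $j=\tilde\alpha$ gives $||S_{\theta_k}(\cdot)||_{s,\ast,T}\le C\theta_k^{\,s-\tilde\alpha}||(\cdot)||_{\tilde\alpha,\ast,T}\lesssim\delta\theta_k^{\,s-\alpha}$, which is the claimed bound since $(s-\alpha)_+=s-\alpha$ there. The argument is routine; the only delicate point is the uniformity in $i$ of the constant in the bound for $\Sigma_{p,k}$ --- in particular, in the convergent regime $p<-1$, that the tail $\sum_{l\ge0}\theta_l^{\,p-1}$ is summable with a bound independent of how far the scheme has run --- and this is precisely what the asymptotics $\theta_l\sim\sqrt l$, $\Delta_l\sim\tfrac12\theta_l^{-1}$ coming from \eqref{theta} provide.
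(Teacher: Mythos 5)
Your argument is correct and is essentially the proof the paper relies on through its citations to \cite[Lemma 7]{Coulombel2008} and \cite[Lemma 7]{Trakhinin2009}: telescoping ${\mathbf V}_k=\sum_{l<k}\delta{\mathbf V}_l$ with hypothesis $(H_{i-1})(a)$, comparing $\sum_{l<k}\theta_l^{\,s-\alpha-1}\Delta_l$ with $\int\theta^{\,s-\alpha-1}d\theta$ (with the logarithm in the borderline case $s=\alpha$), and then applying \eqref{as2} and \eqref{as1} with the high index $\tilde\alpha$ together with \eqref{estimate6}. Note only that your use of \eqref{estimate6} at $s=\tilde\alpha$ without a logarithmic factor tacitly uses $\tilde\alpha>\alpha$, which is indeed the paper's setting (eventually $\tilde\alpha=\alpha+5$) and is likewise assumed in the cited proofs.
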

\subsection{Estimate of the error terms}
To derive $(H_{i})$ from  $(H_{i-1})$, we need to estimate the quadratic error terms $e'_k$ and  $\tilde{e}'_k,$ the first substitution error terms $e''_k$ and $\tilde{e}''_k,$ the second substitution error terms $e'''_k$ and $\tilde{e}'''_k$ and the last error term $D_{k+\frac{1}{2}}\delta\Psi_k$ (\textit{cfr.} \eqref{l}-\eqref{D}).

First, we denote the quadratic error terms by
\begin{equation}\label{e1}
e'_k:=\mathcal{L}({\mathbf V}_{k+1},\Psi_{k+1})-\mathcal{L}({\mathbf V}_{k},\Psi_{k})-\mathcal{L}'({\mathbf V}_{k},\Psi_{k})(\delta {\mathbf V}_k,\delta\Psi_k),
\end{equation}
\begin{equation}\label{e3}
\tilde{e}'_k:=\mathcal{B}({\mathbf V}_{k+1}|_{x_1=0},\psi_{k+1})-\mathcal{B}({\mathbf V}_{k}|_{x_1=0},\psi_{k})-\mathcal{B}'({\mathbf V}_{k}|_{x_1=0},\psi_{k})(\delta {\mathbf V}_k|_{x_1=0},\delta\psi_k).
\end{equation}

Then, we get
\begin{equation}\nonumber
e'_k=\int^1_0\mathbb{L}''({\mathbf U}^a+{\mathbf V}_k+\tau\delta {\mathbf V}_k,\Psi^a+\Psi_k+\tau\delta \Psi_k)((\delta {\mathbf V}_k,\delta\Psi_k),(\delta {\mathbf V}_k,\delta\Psi_k))(1-\tau)d\tau,
\end{equation}
\begin{equation}\nonumber
\tilde{e}'_k=\frac{1}{2}\mathbb{B}''((\delta {\mathbf V}_k,\delta\psi_k),(\delta {\mathbf V}_k,\delta\psi_k)),
\end{equation}
where $\mathbb{L}'',\mathbb{B}''$ denote the second order derivatives of the operators $\mathbb{L}$ and $\mathbb{B}.$
To be more precise, we define
\begin{equation}\nonumber
\mathbb{L}''(\hat{\mathbf U},\hat{\Psi})(({\mathbf V},\Psi),(\tilde{{\mathbf V}},\tilde{\Psi})):=\frac{d}{d\varepsilon}\mathbb{L}'(\hat{\mathbf U}+\varepsilon \tilde{{\mathbf V}},\hat{\Psi}+\varepsilon\tilde{\Psi})({\mathbf V},\Psi)\Big|_{\varepsilon=0},
\end{equation}
\begin{equation}\nonumber
\mathbb{B}''(({\mathbf V},\psi),(\tilde{{\mathbf V}},\tilde{\psi})):=\frac{d}{d\varepsilon}\mathbb{B}'(\hat{\mathbf U}+\varepsilon \tilde{{\mathbf V}},\hat{\varphi}+\varepsilon\tilde{\psi})({\mathbf V},\psi)\Big|_{\varepsilon=0},
\end{equation}
where $\mathbb{L}'$ and $\mathbb{B}'$ are defined in \eqref{Lop} and \eqref{Bop}. Simple calculations yield that
\begin{equation}
\mathbb{B}''(({\mathbf V},\psi),(\tilde{{\mathbf V}},\tilde{\psi})):=\left[
\begin{array}{c}
\tilde{u}^+_2\partial_2\psi+\partial_2\tilde{\psi}u^+_2\\
\tilde{u}^-_2\partial_2\psi+\partial_2\tilde{\psi}u^-_2\\
\mathbf{H}^+\cdot\tilde{\mathbf{H}}^+-\mathbf{H}^-\cdot\tilde{\mathbf{H}}^-
\end{array}\right]\,.
\end{equation}
To estimate the error terms, we need to estimate the operators $\mathbb{L}''$ and $\mathbb{B}''.$ Applying the Moser-type calculus inequalities in Lemma \ref{moser1} and Lemma \ref{moser2} and the explicit forms of $\mathbb{L}''$ and $\mathbb{B}''$ , we can obtain the necessary estimates. Omitting the detailed calculation, we have the following Lemma \ref{lb}:
\begin{lemma}\label{lb}
Let $T>0,$ and $s\in \mathbb{N}$ with $s\geq6.$ Assume that $(\breve{\mathbf{U}},\hat{\Psi})\in H^{s+2}_{\ast}(\Omega_T)$ satisfies
$$||(\hat{{\mathbf U}},\hat{\Psi})||_{W^{2,\infty}_{\ast}(\Omega_T)}\leq\tilde{K},$$
(recall that $\hat{{\mathbf U}}=\bar{{\mathbf U}}+\breve{{\mathbf U}}$) for some constant $\tilde{K}>0.$ Then, there exists a positive $C$ depending on $\tilde{K},$ but not on $T,$ such that if $({\mathbf V}_i,\Psi_i)\in H^{s+2}_{\ast}(\Omega_T)$ and $(W_i,\psi_i)\in H^{s}(\Gamma_T)\times H^{s+1}(\Gamma_T),$ for $i=1,2,$
then
\begin{equation}\nonumber
\begin{split}
&||\mathbb{L}''(\hat{\mathbf{U}},\hat{\Psi})(({\mathbf V}_1,\Psi_1),({\mathbf V}_2,\Psi_2))||_{s,\ast,T}\\
&\quad\leq C||(\breve{{\mathbf U}},\hat{\Psi})||_{s+2,\ast,T}||({\mathbf V}_1,\Psi_1)||_{W^{2,\infty}_{\ast}(\Omega_T)}||({\mathbf V}_2,\Psi_2)||_{W^{2,\infty}_{\ast}(\Omega_T)}\\
&\quad\quad +C\sum_{i\neq j}||({\mathbf V}_i,\Psi_i)||_{s+2,\ast,T}||({\mathbf V}_j,\Psi_j)||_{W^{2,\infty}_{\ast}(\Omega_T)},
\end{split}
\end{equation}
and
\begin{equation}\nonumber
\begin{split}
&||\mathbb{B}''((W_1,\psi_1),(W_2,\psi_2))||_{H^s(\Gamma_T)}\\
&\quad \leq C\sum_{i\neq j}\Big(||W_i||_{H^s(\Gamma_T)}||\psi_j||_{W^{1,\infty}(\Gamma_T)} +||W_i||_{L^{\infty}(\Gamma_T)}||\psi_j||_{H^{s+1}(\Gamma_T)}\\
&\quad\quad+ ||W_i||_{H^s(\Gamma_T)}||W_j||_{L^{\infty}(\Gamma_T)}\Big).
\end{split}
\end{equation}
\end{lemma}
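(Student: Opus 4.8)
The plan is to estimate the two multilinear operators $\mathbb{L}''$ and $\mathbb{B}''$ separately, exploiting their explicit structure. The boundary operator $\mathbb{B}''$ is the easy case: the explicit formula for $\mathbb{B}''((\mathbf{V},\psi),(\tilde{\mathbf{V}},\tilde{\psi}))$ displayed just above shows it is a sum of terms of the form $\tilde u_2^\pm\partial_2\psi$, $\partial_2\tilde\psi\, u_2^\pm$, and $\mathbf{H}^\pm\cdot\tilde{\mathbf{H}}^\pm$, i.e.\ products of a component of one argument with a first-order tangential derivative of (a component of) the other, or of two components. Since $\Gamma_T\subset\mathbb R^2$ is a standard (non-characteristic) Euclidean domain, I would apply the classical Moser product estimate from Lemma \ref{moser1} in the form $\|\partial^\alpha u\,\partial^\beta v\|_{L^2}\lesssim\|u\|_{H^s}\|v\|_{L^\infty}+\|u\|_{L^\infty}\|v\|_{H^s}$, with $|\alpha|,|\beta|\le s$, to each such product. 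This yields directly the asserted bound for $\|\mathbb{B}''((W_1,\psi_1),(W_2,\psi_2))\|_{H^s(\Gamma_T)}$, where the term $\|\psi_j\|_{H^{s+1}(\Gamma_T)}$ comes from the $\partial_2\tilde\psi$ factor (one extra derivative on the front), the term $\|W_i\|_{L^\infty}\|\psi_j\|_{H^{s+1}}$ from the same products, and $\|W_i\|_{H^s}\|W_j\|_{L^\infty}$ from the quadratic-in-$\mathbf H$ term.

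For $\mathbb{L}''$ the argument is analogous but carried out in the anisotropic spaces $H^s_\ast(\Omega_T)$. First I would compute $\mathbb{L}''(\hat{\mathbf U},\hat\Psi)((\mathbf V_1,\Psi_1),(\mathbf V_2,\Psi_2))$ explicitly by differentiating the expression \eqref{Lop} for $\mathbb{L}'$ once more in $\varepsilon$: recalling $\mathbb{L}'=L(\hat{\mathbf U},\hat\Psi)\mathbf U+C(\hat{\mathbf U},\hat\Psi)\mathbf U-\{L(\hat{\mathbf U},\hat\Psi)\Psi\}\frac{\partial_1\hat{\mathbf U}}{\partial_1\hat\Phi}$, the second variation produces a finite sum of terms, each of which is (a smooth matrix-valued function of the basic state $\hat W=(\breve{\mathbf U},\nabla_{t,\mathbf x}\hat\Psi)$, evaluated near a fixed point) times a product of two factors drawn from $\{\mathbf V_1,\Psi_1,\nabla_{t,\mathbf x}\Psi_1,\partial_1\mathbf V_1\}$ and $\{\mathbf V_2,\Psi_2,\nabla_{t,\mathbf x}\Psi_2,\partial_1\mathbf V_2\}$ respectively (the coefficient matrices depend on the basic state because $\nabla_y^2 A_\alpha$ is evaluated there, and the factors $1/\partial_1\hat\Phi$ are smooth by \eqref{phi-piccola}). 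Each of the derivative factors appearing involves at most one spatial/time derivative, hence is controlled in $H^{s}_\ast$ (respectively $W^{1,\infty}_\ast$, which is why the $W^{2,\infty}_\ast$ norm of $(\mathbf V_i,\Psi_i)$ appears — to allow for two derivatives hitting the same argument through the product rule). I would then apply the anisotropic Moser product inequality \eqref{moser4} together with \eqref{moser6} to each such term: a product of a smooth function of $\hat W$ with $G_1\cdot G_2$, where $G_j$ is a first-order derivative of $(\mathbf V_j,\Psi_j)$, is estimated by $\|G_1\|_{s,\ast,T}\|G_2\|_{W^{1,\infty}_\ast}+\|G_2\|_{s,\ast,T}\|G_1\|_{W^{1,\infty}_\ast}$ times $C(\tilde K)$, and then $\|G_j\|_{s,\ast,T}\lesssim\|(\mathbf V_j,\Psi_j)\|_{s+1,\ast,T}\le\|(\mathbf V_j,\Psi_j)\|_{s+2,\ast,T}$, while $\|G_j\|_{W^{1,\infty}_\ast}\lesssim\|(\mathbf V_j,\Psi_j)\|_{W^{2,\infty}_\ast}$. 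Summing over the finitely many terms, and separately collecting the terms where both derivative factors are hit (yielding the $\|(\breve{\mathbf U},\hat\Psi)\|_{s+2,\ast,T}\|(\mathbf V_1,\Psi_1)\|_{W^{2,\infty}_\ast}\|(\mathbf V_2,\Psi_2)\|_{W^{2,\infty}_\ast}$ contribution, coming from derivatives landing on the $\hat W$-coefficient) gives exactly the claimed estimate.

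The main obstacle — which is really a bookkeeping issue rather than a genuine analytic difficulty — is the careful second differentiation of the operator $\mathbb{L}'$ and the correct allocation of derivatives: one must verify that each term produced has the stated structure (smooth function of $\hat W$ times product of two factors, each carrying at most one derivative), so that exactly two derivatives can be distributed by the product rule, matching the $W^{2,\infty}_\ast$ norms in the statement. The loss of two derivatives from $(\mathbf V_i,\Psi_i)$ in $H^{s+2}_\ast$ (rather than $H^s_\ast$) is produced precisely by the normal derivative $\partial_1\mathbf V_j$ appearing in the good-unknown correction and by the passage $\|\partial_1(\cdot)\|_{s,\ast,T}\le\|\cdot\|_{s+2,\ast,T}$ together with one more derivative absorbed from the product rule; I would check this degree count explicitly term-by-term. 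The boundary part carries no such subtlety. Since the detailed term-by-term computation is routine, one states the result and refers to the Moser inequalities of Lemmata \ref{moser1}–\ref{moser2}, as indicated in the statement preceding the lemma.
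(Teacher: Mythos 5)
Your proposal is correct and coincides with the paper's (largely omitted) argument: the paper obtains Lemma \ref{lb} in exactly this way, by writing out the explicit second variations $\mathbb{L}''$, $\mathbb{B}''$ and applying the Moser-type calculus inequalities of Lemmata \ref{moser1} and \ref{moser2}, with the anisotropic count $\|\partial_1(\cdot)\|_{s,\ast,T}\le\|\cdot\|_{s+2,\ast,T}$ accounting for the $H^{s+2}_{\ast}$ norms. Your term-by-term bookkeeping (high norms landing on the $\hat{W}$-dependent coefficients, which contain $\partial_1\hat{{\mathbf U}}$ and thus give the $\|(\breve{{\mathbf U}},\hat{\Psi})\|_{s+2,\ast,T}$ contribution, while first-order factors of $({\mathbf V}_j,\Psi_j)$ are measured in $W^{1,\infty}_{\ast}$ and hence by $\|({\mathbf V}_j,\Psi_j)\|_{W^{2,\infty}_{\ast}}$) supplies precisely the calculation the paper declares routine and omits.
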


\subsubsection{Estimate of the quadratic errors}\label{quadratic}

We now apply Lemma \ref{lb} to prove the following estimate for the quadratic error terms.

\begin{lemma}\label{eestimate}
Let $\alpha\geq7.$ There exist $\delta>0$ sufficiently small and $\theta_0\geq1$ sufficiently large such that, for all $k\in\{0,\cdots,i-1\},$ and all integers $s\in\{6,\cdots,\tilde{\alpha}-2\},$ we have
\begin{equation}\label{ee1}
||e'_k||_{s,\ast,T}\lesssim\delta^2\theta^{L_1(s)-1}_k\Delta_k,
\end{equation}
\begin{equation}\label{ee3}
||\tilde{e}'_k||_{H^{s}(\Gamma_T)}\lesssim\delta^2\theta^{L_1(s)-1}_k\Delta_k,
\end{equation}
where $L_1(s):=\max\{(s+2-\alpha)_++10-2\alpha;s+6-2\alpha\}.$
\end{lemma}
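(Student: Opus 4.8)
The plan is to start from the exact Taylor representations of the quadratic errors. From \eqref{e1} one has
\[
e'_k=\int_0^1\mathbb L''\big(\mathbf U^a+\mathbf V_k+\tau\delta\mathbf V_k,\,\Psi^a+\Psi_k+\tau\delta\Psi_k\big)\big((\delta\mathbf V_k,\delta\Psi_k),(\delta\mathbf V_k,\delta\Psi_k)\big)(1-\tau)\,d\tau,
\]
and from \eqref{e3}, $\tilde e'_k=\tfrac12\,\mathbb B''\big((\delta\mathbf V_k|_{x_1=0},\delta\psi_k),(\delta\mathbf V_k|_{x_1=0},\delta\psi_k)\big)$, so the whole estimate reduces to bounding these right-hand sides by Lemma \ref{lb}. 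To invoke the first inequality of Lemma \ref{lb} for $e'_k$, with coefficient state $\hat{\mathbf U}=\mathbf U^a+\mathbf V_k+\tau\delta\mathbf V_k$ and $\hat\Psi=\Psi^a+\Psi_k+\tau\delta\Psi_k$ (uniformly in $\tau\in[0,1]$), one first checks its standing hypothesis $\|(\hat{\mathbf U},\hat\Psi)\|_{W^{2,\infty}_\ast(\Omega_T)}\le\tilde K$, which follows from \eqref{small}, from Lemma \ref{estimate5} and $(H_{i-1})(a)$ at $s=6$, and from the embedding \eqref{sobolev2}. Lemma \ref{lb} then controls $\|e'_k\|_{s,\ast,T}$ by a sum of products of two shapes: (i) the high-order coefficient norm $\|(\breve{\mathbf U},\hat\Psi)\|_{s+2,\ast,T}$ times two factors $\|(\delta\mathbf V_k,\delta\Psi_k)\|_{W^{2,\infty}_\ast(\Omega_T)}$, and (ii) $\|(\delta\mathbf V_k,\delta\Psi_k)\|_{s+2,\ast,T}$ times one factor $\|(\delta\mathbf V_k,\delta\Psi_k)\|_{W^{2,\infty}_\ast(\Omega_T)}$.

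Next I would estimate each factor through the inductive assumptions. Splitting $\breve{\mathbf U}=\tilde{\mathbf U}^a+\mathbf V_k+\tau\delta\mathbf V_k$ and using \eqref{small} for $\tilde{\mathbf U}^a$, Lemma \ref{estimate5} for $(\mathbf V_k,\Psi_k)$, and $(H_{i-1})(a)$ at level $s+2$ (admissible since $s\le\tilde\alpha-2$) gives $\|(\breve{\mathbf U},\hat\Psi)\|_{s+2,\ast,T}\lesssim\delta\,\theta_k^{(s+2-\alpha)_+}$, with only a harmless $\log\theta_k$ in the borderline case $s+2=\alpha$. Each $W^{2,\infty}_\ast$ factor is bounded, via \eqref{sobolev2} and $(H_{i-1})(a)$ at $s=6$, by $\delta\,\theta_k^{5-\alpha}\Delta_k$; and the factor $\|(\delta\mathbf V_k,\delta\Psi_k)\|_{s+2,\ast,T}$ by $(H_{i-1})(a)$ by $\delta\,\theta_k^{s+1-\alpha}\Delta_k$. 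Multiplying out, shape (i) contributes $\lesssim\delta^3\,\theta_k^{(s+2-\alpha)_++10-2\alpha}\Delta_k^2$ (up to a $\log\theta_k$) and shape (ii) contributes $\lesssim\delta^2\,\theta_k^{s+6-2\alpha}\Delta_k^2$. Using $\delta\le1$, the quadratic gain $\Delta_k^2\le\tfrac12\theta_k^{-1}\Delta_k$, and absorbing the $\log\theta_k$ and the universal constants by choosing $\delta$ small and $\theta_0$ large, both shapes are $\lesssim\delta^2\,\theta_k^{L_1(s)-1}\Delta_k$ with $L_1(s)=\max\{(s+2-\alpha)_++10-2\alpha;\,s+6-2\alpha\}$, which is \eqref{ee1}.

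The boundary error $\tilde e'_k$ is treated in the same way through the second inequality of Lemma \ref{lb} with $W_i=\delta\mathbf V_k|_{x_1=0}$ and $\psi_i=\delta\psi_k$: the trace theorem Lemma \ref{tra}(i) converts $\|\delta\mathbf V_k|_{x_1=0}\|_{H^s(\Gamma_T)}$ into $\|\delta\mathbf V_k\|_{s+1,\ast,T}$, controlled by $(H_{i-1})(a)$; the $L^\infty(\Gamma_T)$ factor is dominated by the trace of \eqref{sobolev1} and $(H_{i-1})(a)$ at $s=6$; the $W^{1,\infty}(\Gamma_T)$ factor of $\delta\psi_k$ by Sobolev embedding on $\Gamma_T$ and $(H_{i-1})(a)$ at $s=6$; and $\|\delta\psi_k\|_{H^{s+1}(\Gamma_T)}$ by $(H_{i-1})(a)$. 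Collecting exponents, every term is $\lesssim\delta^2\,\theta_k^{s+5-2\alpha}\Delta_k^2\lesssim\delta^2\,\theta_k^{s+4-2\alpha}\Delta_k\le\delta^2\,\theta_k^{L_1(s)-1}\Delta_k$, which is \eqref{ee3}.

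The crux here is not analytic but lies in the exponent bookkeeping: one must verify that every high-order Sobolev norm appearing stays in the range $\{6,\dots,\tilde\alpha\}$ where $(H_{i-1})(a)$ applies — the highest one used being $\|(\delta\mathbf V_k,\delta\Psi_k)\|_{s+2,\ast,T}$, which is exactly why the statement restricts to $s\le\tilde\alpha-2$ — and one must check that the surplus power of $\delta$, the $O(1)$ constants from Lemmata \ref{lb} and \ref{estimate5} and from the embedding and trace theorems, and the borderline $\log\theta_k$ factors are all absorbed. The hypothesis $\alpha\ge7$ is precisely what makes this possible: it renders $5-\alpha<0$, so that the quadratic gain $\Delta_k^2\lesssim\theta_k^{-1}\Delta_k$ genuinely lowers the exponent, and it makes $\theta_k^{6-\alpha}\log\theta_k$ bounded, which neutralizes the $\log\theta_k$ arising when $s+2=\alpha$.
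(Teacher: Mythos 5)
Your proposal is correct and follows essentially the same route as the paper: the Taylor representation of $e'_k,\tilde e'_k$, verification of the $W^{2,\infty}_\ast$ bound for the coefficient state, application of Lemma \ref{lb} together with \eqref{small}, \eqref{estimate6}, $(H_{i-1})(a)$, the embeddings \eqref{sobolev1}--\eqref{sobolev2} and the trace theorem, and finally absorption via $\theta_k\Delta_k\le 1/2$ with the borderline case $s+2=\alpha$ handled by $\alpha\ge 7$. The exponent bookkeeping matches the paper's intermediate bound $\delta^2\theta_k^{10-2\alpha}\Delta_k^2(1+\|({\mathbf V}_k,\Psi_k)\|_{s+2,\ast,T})+\delta^2\theta_k^{s+6-2\alpha}\Delta_k^2$ and the subsequent case distinction.
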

\begin{proof}
Using \eqref{small}, the hypothesis $(H_{i-1})$ and the estimate \eqref{estimate6}, we use the Sobolev inequalities \eqref{sobolev2} to get
$$||(\mathbf{U}^a,{\mathbf V}_k,\delta {\mathbf V}_k,\Psi^a,\Psi_k,\delta\Psi_k)||_{W^{2,\infty}_{\ast}(\Omega_T)}\lesssim 1.$$
Then, we apply Lemma \ref{lb} and use Sobolev inequalities \eqref{sobolev2}, the assumption \eqref{small} and the  hypothesis $(H_{i-1})$ to give
\begin{equation}\nonumber
\begin{split}
||e'_k||_{s,\ast,T}&\lesssim \delta^2\theta^{10-2\alpha}_k\Delta^2_k(1+||({\mathbf V}_k,\Psi_k)||_{s+2,\ast,T}+\delta\theta^{s+1-\alpha}_k\Delta_k)\\
&\quad\quad+||(\delta {\mathbf V}_k,\delta\Psi_k)||_{s+2,\ast,T}||(\delta {\mathbf V}_k,\delta\Psi_k)||_{6,\ast,T}\\
&\lesssim \delta^2\theta^{10-2\alpha}_k\Delta^2_k(1+||({\mathbf V}_k,\Psi_k)||_{s+2,\ast,T})+\delta^2\theta^{s+6-2\alpha}_k\Delta^2_k,
\end{split}
\end{equation}
for all $s\in\{6,\cdots,\tilde{\alpha}-2\}.$
If $s+2\neq\alpha,$ then it follows from \eqref{estimate6} and $2\theta_k\Delta_k\leq1,$ that
\begin{equation}\nonumber
\begin{split}
||e'_k||_{s,\ast,T}&\lesssim \delta^2\Delta^2_k(\theta^{(s+2-\alpha)_++10-2\alpha}_k+\theta^{s+6-2\alpha}_k)\\
&\lesssim \delta^2\theta^{L_1(s)-1}_k\Delta_k.
\end{split}
\end{equation}
If $s+2=\alpha,$ then it follows from \eqref{estimate6} and $\alpha\geq7,$ that
\begin{equation}\nonumber
\begin{split}
||e'_k||_{s,\ast,T}&\lesssim \delta^2\Delta^2_k(\theta^{11-2\alpha}_k+\theta^{4-\alpha}_k)\\
&\lesssim \delta^2\theta^{L_1(\alpha-2)-1}_k\Delta_k.
\end{split}
\end{equation}
Therefore, we obtain \eqref{ee1}.
Now, we prove \eqref{ee3}. Using Lemma \ref{lb} and trace Theorem \ref{teoA1}, we obtain
\begin{equation}\nonumber
\begin{split}
||\tilde{e}'_k||_{H^s(\Gamma_T)}&\lesssim ||\delta {\mathbf V}_k||_{H^s(\Gamma_T)}||\delta \psi_k||_{W^{1,\infty}(\Gamma_T)}+||\delta {\mathbf V}_k||_{L^{\infty}(\Gamma_T)}||\delta \psi_k||_{H^{s+1}(\Gamma_T)}\\
&\quad + ||\delta {\mathbf V}_k||_{H^s(\Gamma_T)}||\delta {\mathbf V}_k||_{L^{\infty}(\Gamma_T)}\\
&\lesssim \delta^2\theta^{L_1(s)-1}_k\Delta_k.
\end{split}
\end{equation}
This completes the proof of Lemma \ref{eestimate}.
\end{proof}

\subsubsection{Estimate of the first substitution errors}\label{first}
We can estimate the first substitution errors $e''_k, \tilde{e}''_k$ of the iteration scheme, defined in \eqref{l} and \eqref{B}. We rewrite
\begin{equation}\label{e4}
e''_k:=\mathcal{L}'({\mathbf V}_k,\Psi_k)(\delta {\mathbf V}_k,\delta\Psi_k)-\mathcal{L}'(S_{\theta_k}{\mathbf V}_k,S_{\theta_k}\Psi_k)(\delta {\mathbf V}_k,\delta\Psi_k),
\end{equation}
\begin{equation}\label{e6}
\tilde{e}''_k:=\mathcal{B}'({\mathbf V}_k|_{x_1=0},\psi_k)(\delta {\mathbf V}_k|_{x_1=0},\delta\psi_k)-\mathcal{B}'(S_{\theta_k}{\mathbf V}_k|_{x_1=0},S_{\theta_k}\psi_k)(\delta {\mathbf V}_k|_{x_1=0},\delta\psi_k).
\end{equation}
\begin{lemma}\label{se}
Let $\alpha\geq7.$ There exist $\delta>0$ sufficiently small and $\theta_0\geq1$ sufficiently large, such that for all $k\in\{0,\cdots,i-1\}$ and for all integer $s\in\{6,\cdots,\tilde{\alpha}-2\},$ we have
\begin{equation}\label{e6666}
||e''_k||_{s,\ast,T}\lesssim\delta^2\theta^{L_2(s)-1}_k\Delta_k,
\end{equation}
\begin{equation}\label{e82}
||\tilde{e}''_k||_{H^{s}(\Gamma_T)}\lesssim\delta^2\theta^{L_2(s)-1}_k\Delta_k,
\end{equation}
where $L_2(s):=\max\{(s+2-\alpha)_++12-2\alpha;s+8-2\alpha\}.$
\end{lemma}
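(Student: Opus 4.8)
The plan is to identify each first substitution error with the bilinear second derivative of the relevant operator, evaluated on the pair formed by the \emph{unsmoothed remainder} $(I-S_{\theta_k})({\mathbf V}_k,\Psi_k)$ and the increment $(\delta{\mathbf V}_k,\delta\Psi_k)$, and then to invoke Lemma \ref{lb} together with the smoothing and induction estimates collected in Lemma \ref{estimate5}. Throughout, $\delta$ is kept small (as already required by \eqref{small} and Lemma \ref{estimate5}) and $\theta_0$ large.

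For $e''_k$ in \eqref{e4}, recall that $\mathcal L'({\mathbf V},\Psi)(\cdot)=\mathbb L'({\mathbf U}^a+{\mathbf V},\Psi^a+\Psi)(\cdot)$ and that $\mathbb{L}''$ is the derivative of $\mathbb L'$ in its state argument. Writing the difference of $\mathbb L'$ at the two states $S_{\theta_k}({\mathbf V}_k,\Psi_k)$ and $({\mathbf V}_k,\Psi_k)$ by the fundamental theorem of calculus along the segment joining them gives
\begin{equation}\nonumber
e''_k=\int_0^1\mathbb{L}''\big({\mathbf U}^a+{\mathbf V}_k^\tau,\Psi^a+\Psi_k^\tau\big)\big((I-S_{\theta_k})({\mathbf V}_k,\Psi_k),(\delta{\mathbf V}_k,\delta\Psi_k)\big)\,d\tau,
\end{equation}
where $({\mathbf V}_k^\tau,\Psi_k^\tau):=S_{\theta_k}({\mathbf V}_k,\Psi_k)+\tau(I-S_{\theta_k})({\mathbf V}_k,\Psi_k)$, which equals $S_{\theta_k}({\mathbf V}_k,\Psi_k)$ at $\tau=0$ and $({\mathbf V}_k,\Psi_k)$ at $\tau=1$. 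By \eqref{small}, the induction hypothesis $(H_{i-1})$, the bounds \eqref{estimate6}, \eqref{estimate8} and the embedding \eqref{sobolev2}, the state $({\mathbf U}^a+{\mathbf V}_k^\tau,\Psi^a+\Psi_k^\tau)$ is bounded in $W^{2,\infty}_\ast(\Omega_T)$ uniformly in $\tau\in[0,1]$ and $k$, so Lemma \ref{lb} applies with a constant independent of $k$ and $T$. Feeding the bilinear estimate of Lemma \ref{lb} into the integral and then inserting: the norms of $(I-S_{\theta_k})({\mathbf V}_k,\Psi_k)$ and $(\delta{\mathbf V}_k,\delta\Psi_k)$ in $H^{s+2}_\ast(\Omega_T)$, estimated via \eqref{estimate7} and $(H_{i-1})(a)$ (legitimate since $s+2\le\tilde\alpha$); their norms in $W^{2,\infty}_\ast(\Omega_T)$, estimated via \eqref{sobolev2} together with \eqref{estimate7} and $(H_{i-1})(a)$ at the index $6$; and the coefficient norm $\|(\tilde{\mathbf U}^a+{\mathbf V}_k^\tau,\Psi^a+\Psi_k^\tau)\|_{s+2,\ast,T}\lesssim\delta+\delta\theta_k^{(s+2-\alpha)_+}$ from \eqref{small}, \eqref{estimate6}, \eqref{estimate8}, one obtains three contributions, each of the form $\delta^2\theta_k^{m}\Delta_k$. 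Bookkeeping of the powers of $\theta_k$ — trading one factor $\Delta_k$ for $\theta_k^{-1}$ via $2\theta_k\Delta_k\le1$, and absorbing a cubic $\delta^3$ into $\delta^2$ since $\delta\le1$ — shows that the largest exponent is $m=L_2(s)-1$ with $L_2(s)=\max\{(s+2-\alpha)_++12-2\alpha;\,s+8-2\alpha\}$, which is \eqref{e6666}. The borderline index $s+2=\alpha$, where \eqref{estimate6}, \eqref{estimate8} produce a $\log\theta_k$ in place of $\theta_k^0$, is absorbed by taking $\theta_0$ large, using $\log\theta_k\lesssim\theta_k$ (this is where $\alpha\ge7$ is needed).

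For the boundary error $\tilde e''_k$ in \eqref{e6}, one uses that $\mathbb{B}''$ has constant coefficients — see the explicit formula for $\mathbb{B}''$ displayed after \eqref{e6} — so that no integration is needed and simply $\tilde e''_k=\mathbb{B}''\big((I-S_{\theta_k})({\mathbf V}_k,\psi_k)|_{x_1=0},(\delta{\mathbf V}_k,\delta\psi_k)|_{x_1=0}\big)$. Applying the bilinear estimate for $\mathbb{B}''$ in Lemma \ref{lb}, then the trace theorem (Lemma \ref{tra}) to convert the $\Gamma_T$-norms into anisotropic Sobolev norms on $\Omega_T$, and then the same input bounds and the same $\theta_k$-bookkeeping as above, yields \eqref{e82}. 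The real work in this lemma is combinatorial rather than conceptual: one must check that every application of \eqref{estimate6}, \eqref{estimate7}, \eqref{estimate8} and of $(H_{i-1})(a)$ is made at an index inside its admissible range — in particular $s+2\le\tilde\alpha$ for the shifted applications (true because $s\le\tilde\alpha-2$) and $s+2\le\tilde\alpha+8$ for \eqref{estimate8} — and then track the powers of $\theta_k$ precisely enough to recover the exponent $L_2(s)$, handling the case $s+2=\alpha$ separately.
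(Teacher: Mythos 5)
Your proposal is correct and follows essentially the same route as the paper: the integral representation of $e''_k$ via $\mathbb{L}''$ along the segment joining $S_{\theta_k}({\mathbf V}_k,\Psi_k)$ to $({\mathbf V}_k,\Psi_k)$, the direct formula $\tilde e''_k=\mathbb{B}''\big((\delta{\mathbf V}_k|_{x_1=0},\delta\psi_k),((I-S_{\theta_k}){\mathbf V}_k|_{x_1=0},(I-S_{\theta_k})\psi_k)\big)$, the uniform $W^{2,\infty}_\ast$ bound on the intermediate states, and then Lemma \ref{lb} combined with \eqref{small}, $(H_{i-1})$, \eqref{estimate6}--\eqref{estimate8}, the embeddings \eqref{sobolev2} and the trace theorem, with the borderline case $s+2=\alpha$ treated separately. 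The exponent bookkeeping you describe reproduces the paper's intermediate bound $\delta^2\theta_k^{11-2\alpha}\Delta_k\big(1+\|(S_{\theta_k}{\mathbf V}_k,S_{\theta_k}\Psi_k)\|_{s+2,\ast,T}\big)+\delta^2\theta_k^{s+7-2\alpha}\Delta_k$ and hence \eqref{e6666}, \eqref{e82}.
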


\begin{proof}
In view of \eqref{e4} and \eqref{e6} we have
\begin{equation}\nonumber
\begin{split}
e''_k&=\int^1_0\mathbb{L}''(\mathbf{U}^a+S_{\theta_k}{\mathbf V}_k+\tau(I-S_{\theta_k}){\mathbf V}_k,\Psi^a+S_{\theta_k}\Psi_k+\tau(I-S_{\theta_k})\Psi_k)\\
&\quad\quad((\delta {\mathbf V}_k,\delta\Psi_k),((I-S_{\theta_k}){\mathbf V}_k,(I-S_{\theta_k})\Psi_k))d\tau,\\
\tilde{e}''_k&=\mathbb{B}''((\delta {\mathbf V}_k|_{x_1=0},\delta\psi_k),((I-S_{\theta_k}){\mathbf V}_k|_{x_1=0},(I-S_{\theta_k})\psi_k)).
\end{split}
\end{equation}
Using \eqref{estimate7} and \eqref{estimate8}, we have
$$||(S_{\theta_k}{\mathbf V}_k,{\mathbf V}_k,S_{\theta_k}\Psi_k,\Psi_k)||_{W^{2,\infty}_{\ast}(\Omega_T)}\lesssim ||(S_{\theta_k}{\mathbf V}_k,{\mathbf V}_k,S_{\theta_k}\Psi_k,\Psi_k)||_{6,\ast,T}\lesssim 1.$$
Next, we apply Lemma \ref{lb}, use Sobolev inequalities \eqref{sobolev2}, \eqref{small}, the Hypothesis $(H_{i-1})$ and \eqref{estimate7} to get that
\begin{equation}\nonumber
||e''_k||_{s,\ast,T}\lesssim \delta^2\theta^{11-2\alpha}_k\Delta_k(1+||(S_{\theta_k}{\mathbf V}_k,S_{\theta_k}\Psi_k)||_{s+2,\ast,T})+\delta^2\theta^{s+7-2\alpha}_k\Delta_k,
\end{equation}
for all $s\in\{6,\cdots,\tilde{\alpha}-2\}.$
Similar to the proof of Lemma \ref{eestimate}, we can discuss $s+2\neq \alpha$ and $s+2=\alpha$ separately. Hence, using \eqref{estimate8}, we can obtain \eqref{e6666} and \eqref{e82}. The proof of Lemma \ref{se} is completed.
\end{proof}

\subsubsection{Estimate of the modified state}\label{modifye}

We need to construct a smooth modified state $({\mathbf V}_{i+\frac{1}{2}},\psi_{i+\frac{1}{2}})$ such that $({\mathbf U}^a+{\mathbf V}_{i+\frac12},\varphi^a+\psi_{i+\frac12})$ satisfies the nonlinear constraints \eqref{hy}--\eqref{c}, \eqref{c2} and \eqref{assumption}. In this regard, we remark it is crucial that the sum of the approximate solution $({\mathbf U}^a,\varphi^a)$ and the modified state $({\mathbf V}_{i+\frac12},\psi_{i+\frac12})$, instead of the latter two separately, satisfies the aforementioned nonlinear constraints; indeed it is just this sum which plays the role of basic state around which we need to linearize problem \eqref{IVP} in the iteration scheme leading to its solution, see problem \eqref{effective3}.
In the construction of some components of the modified state we follow the similar approach of \cite{Alinhac,Coulombel2008,S-T plasma vuoto}, while for the magnetic field we are inspired by \cite{S-T plasma vuoto,Trakhinin2009}.

\begin{lemma}\label{mo}
Let $\alpha\geq10$. There exist some functions ${\mathbf V}_{i+\frac{1}{2}},\Psi_{i+\frac{1}{2}},\psi_{i+\frac{1}{2}}$  vanishing in the past, such that
$({\mathbf U}^a+{\mathbf V}_{i+\frac{1}{2}}, \Psi^a+\Psi_{i+\frac{1}{2}}, \varphi^a+\psi_{i+\frac{1}{2}})$ satisfy the constraints \eqref{hy}-- \eqref{c}, \eqref{c2} and \eqref{assumption}; moreover,
\begin{equation}\label{mo1}
\Psi_{i+\frac{1}{2}}^\pm=S_{\theta_i}\Psi^{\pm}_i\,,\quad \psi_{i+\frac{1}{2}}=(S_{\theta_i}\Psi^{\pm}_i)|_{x_1=0,},
\end{equation}
\begin{equation}\label{mo2}
p^{\pm}_{i+\frac12}=S_{\theta_i}p^\pm_i\,,\quad u^{\pm}_{2,i+\frac{1}{2}}=S_{\theta_i}u^{\pm}_{2,i}\,,\quad S^{\pm}_{i+\frac{1}{2}}=S_{\theta_i}S^{\pm}_i,
\end{equation}
\begin{equation}\label{mo32}
||{\mathbf V}_{i+\frac{1}{2}}-S_{\theta_i}{\mathbf V}_i||_{s,\ast,T}\lesssim\delta \theta^{s+4-\alpha}_i, \text{ for } s\in\{6,\cdots,\tilde{\alpha}+4\},
\end{equation}
for sufficiently small $\delta>0$ and $T>0,$ and sufficiently large $\theta_0\geq1.$
\end{lemma}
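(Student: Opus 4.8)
The plan is to construct the modified state component-by-component, exploiting the fact that the nonlinear constraints \eqref{hy}--\eqref{c}, \eqref{c2} split naturally into three groups: pointwise inequalities (hyperbolicity \eqref{hy}, stability \eqref{stability1}, the smallness \eqref{assumption}), the Rankine--Hugoniot relation \eqref{jump}, and the evolution equation \eqref{c} for the magnetic field together with the divergence and trace constraints \eqref{c2}. The easy components are handled first: I would set $\Psi^\pm_{i+\frac12}=S_{\theta_i}\Psi^\pm_i$ and $\psi_{i+\frac12}=(S_{\theta_i}\Psi^\pm_i)|_{x_1=0}$, which is consistent because the smoothing operator $S_{\theta_i}$ preserves equality on $\Gamma_T$ (Lemma \ref{smooth}); similarly $p^\pm_{i+\frac12}=S_{\theta_i}p^\pm_i$, $u^\pm_{2,i+\frac12}=S_{\theta_i}u^\pm_{2,i}$, $S^\pm_{i+\frac12}=S_{\theta_i}S^\pm_i$, giving \eqref{mo1}--\eqref{mo2}. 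Since $S^\pm$ enters \eqref{c} only through coefficients and $p^\pm$ only algebraically, these choices do not conflict with the magnetic-field constraint.

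Next I would enforce the Rankine--Hugoniot condition \eqref{jump}. Since $\partial_t\hat\varphi=\hat u^\pm_N|_{x_1=0}$ has to hold for both signs, and $\psi_{i+\frac12}$ is already fixed, I would define $u^\pm_{1,i+\frac12}$ on the boundary so that $u^\pm_1-u^\pm_2\partial_2\hat\varphi=\partial_t\psi_{i+\frac12}$ there (with $u^\pm_{2,i+\frac12}$ and $\partial_2\hat\varphi$ known), then lift to the interior using the anisotropic lifting operator $\mathcal R_T$ of Lemma \ref{tra}, adding back $S_{\theta_i}u^\pm_{1,i}$ away from the boundary in the standard way (i.e.\ $u^\pm_{1,i+\frac12}=S_{\theta_i}u^\pm_{1,i}+\mathcal R_T(\text{correction})$ with the correction vanishing in the past). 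The $H_1^\pm$ and $H_2^\pm$ components are then chosen to satisfy the boundary conditions $\tilde H^\pm_N|_{x_1=0}$ consistent with \eqref{c2}, again via a correction of $S_{\theta_i}H^\pm_i$ by a lifting term.

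The main work, and the main obstacle, is the magnetic-field component: one must produce $H_2^\pm$ (or equivalently the characteristic part of $\mathbf h^\pm$) solving the transport-type equation \eqref{c} for the sum $\hat{\mathbf U}^\pm={\mathbf U}^a+{\mathbf V}_{i+\frac12}$, $\hat\Psi^\pm=\Psi^a+\Psi^\pm_{i+\frac12}$, since \eqref{c} is exactly the constraint guaranteeing that \eqref{c2} propagates from the initial data. The strategy, following the ``new construction of the modified magnetic field'' advertised in the introduction and inspired by \cite{Trakhinin2009,S-T plasma vuoto}, is: first fix all other components as above so that the coefficients $\hat{\mathbf w}^\pm$, $\hat{\mathbf v}^\pm$, $\operatorname{div}\hat{\mathbf v}^\pm$ in \eqref{c} are determined; then solve \eqref{c} as a linear ODE along the characteristics of the vector field $\partial_t+\frac{1}{\partial_1\hat\Phi^\pm}\hat{\mathbf w}^\pm\cdot\nabla$ with zero data in the past, which yields $\hat{\mathbf H}^\pm$, hence $H^\pm_{2,i+\frac12}$, in $H^{\mu}$ for a suitable $\mu$. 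The delicate point is the quantitative estimate \eqref{mo32}: the difference ${\mathbf V}_{i+\frac12}-S_{\theta_i}{\mathbf V}_i$ in the magnetic components is governed by how far $S_{\theta_i}{\mathbf V}_i$ fails to solve \eqref{c}, i.e.\ by $\mathcal L$-type residuals of the smoothed iterate; using $(H_{i-1})(b)$, the smoothing estimates \eqref{as1}--\eqref{as2}, Lemma \ref{estimate5}, and the Moser/imbedding inequalities \eqref{moser4}, \eqref{moser6}, \eqref{sobolev1}, \eqref{sobolev2}, one bounds this residual by $\delta\theta^{s+4-\alpha}_i$ in $H^s_\ast$, which after solving the linear transport equation (an operation bounded on anisotropic Sobolev spaces, with a fixed loss absorbed in the ``$+4$'') propagates to the claimed bound. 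I would also verify, using \eqref{small}, Lemma \ref{estimate5} and taking $\delta,T$ small and $\theta_0$ large, that the sum ${\mathbf U}^a+{\mathbf V}_{i+\frac12}$ stays in a small neighborhood of $\bar{\mathbf U}^\pm$ so that \eqref{hy}, \eqref{stability1}, \eqref{assumption} hold by continuity — this is where $\alpha\geq 10$ is used, to have enough room in the exponent $s+4-\alpha$ for the low-norm control.
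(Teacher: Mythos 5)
Your construction is essentially the paper's: front, pressure, entropy and tangential velocity by smoothing; normal velocity corrected as $u_{1,i+\frac12}=S_{\theta_i}u_{1,i}+\mathcal R_T(\text{boundary residual})$ so that the Rankine--Hugoniot condition holds exactly; the magnetic field obtained by solving the linear transport equation \eqref{c} with the already-fixed coefficients and zero data in the past, with \eqref{c2} propagating automatically; and \eqref{mo32} obtained by estimating the residual of the smoothed iterate via $(H_{i-1})(b)$, the smoothing and Moser/imbedding estimates, and an energy bound for the transport system. One step should be dropped, though: your intermediate proposal to prescribe $\tilde H^\pm_N|_{x_1=0}$ by a lifting correction of $S_{\theta_i}H^\pm_i$ would overdetermine the problem, since \eqref{c} needs (and admits) no boundary condition and already determines the \emph{whole} field $\mathbf H_{i+\frac12}$; the key point to make explicit is that the coefficient of $\partial_1$ in \eqref{c} vanishes on $\{x_1=0\}$ precisely because $u_{1,i+\frac12}$ was built to satisfy the boundary condition exactly, which is what legitimizes solving \eqref{c} without boundary data and yields the boundary constraint in \eqref{c2} as a consequence rather than an imposition. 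Also, the ``$+4$'' loss in \eqref{mo32} comes from the commutation/substitution errors in the source $F_H^{i+\frac12}$ (which involve $s+2$ norms of the coefficients), not from the transport solve itself, which is loss-free.
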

\begin{proof}
To shortcut notation, in the proof the $\pm$ indices are omitted. Let us define $\Psi_{i+1/2}$, $\psi_{i+1/2}$, $p_{i+1/2}$, $S_{i+1/2}$ and the tangential component $u_{2,i+1/2}$ of the velocity as in \eqref{mo1}, \eqref{mo2}; this is similar as in \cite{Alinhac}, \cite[Proposition 7]{Coulombel2008}, \cite[Proposition 28]{S-T plasma vuoto}. It is easily checked that all these functions vanish in the past.

\medskip
\noindent
{\em Construction of the modified normal velocity.}

\smallskip
\noindent
In order to construct the normal component $u_{1,i+1/2}$ of the velocity, we follow the idea of \cite{Alinhac,Coulombel2008}. We first introduce the following function $\mathcal G$:
\begin{equation*}\label{GG}
\begin{split}
\mathcal G:=&\partial_t(\varphi^a+\psi_{i+1/2})-(u^a_1+S_{\theta_i}u_{1,i})\vert_{x_1=0}+u^a_2\vert_{x_1=0}\partial_2\varphi^a\\
&+(u^a_2+u_{2,i+1/2})\vert_{x_1=0}\partial_2\psi_{i+1/2}+u_{2,i+1/2}\vert_{x_1=0}\partial_2\varphi^a\,.
\end{split}
\end{equation*}
The normal component of the velocity $u_{1,i+1/2}$ is defined by
\begin{equation}\label{mod-nor}
u_{1,i+1/2}:=S_{\theta_i}u_{1,i}+\mathcal R_T\mathcal G\,,
\end{equation}
where $\mathcal R_T$ is the lifting operator $H^{s-1}(\Gamma_T)\rightarrow H^s_\ast(\Omega_T)$, $s>1$, see \cite{Ohno-Shizuta1994}. It is easily checked that $u_{1,i+1/2}$ vanishes in the past.
\newline
Let us note that, by construction, $u_{1,i+1/2}$ satisfies the following equation on the boundary
\begin{equation}\label{boundary a}
\partial_t(\varphi^a+\psi_{i+1/2})-(u_1^a+u_{1,i+1/2})+(u_2^a+u_{2,i+1/2})\partial_2(\varphi^a+\psi_{i+1/2})=0\,,\quad\mbox{on}\,\,\,\{x_1=0\}\,.
\end{equation}
We prove the estimate \eqref{mo32} for the part regarding ${\mathbf u}_{i+1/2}$. We have
\begin{equation*}
||\mathbf{u}_{i+1/2}-S_{\theta_i}{\mathbf u}_i||_{s,\ast,T}=||\mathcal R_T\mathcal G||_{s,\ast,T}\lesssim||\mathcal G||_{H^{s-1}(\Gamma_T)}\,.
\end{equation*}
Now we rewrite $\mathcal G$ in a more convenient form, by using the error $\varepsilon^i$ defined by
\begin{equation}\label{epsilon i}
\begin{split}
\varepsilon^i:=&\partial_t\psi_i-u_{1,i}\vert_{x_1=0}+(\partial_t\varphi^a-u_1^a\vert_{x_1=0}+u_2^a\vert_{x_1=0}\partial_2\varphi^a)+(u^a_2+u_{2,i})\vert_{x_1=0}\partial_2\psi_i\\
&+u_{2,i}\vert_{x_1=0}\partial_2\varphi^a=\mathcal B({\mathbf V}_i\vert_{x_1=0},\psi_i)_1\,.
\end{split}
\end{equation}
In view of \eqref{mo2}, by means of $\varepsilon^i$ we may rewrite $\mathcal G$ as:
\begin{equation}\label{G'}
\begin{split}
\mathcal G&= S_{\theta_i}\varepsilon^i-[S_{\theta_i},\partial_t]\psi_i+(I-S_{\theta_i})(\partial_t\varphi^a-u^a_1+u^a_2\partial_2\varphi^a)-S_{\theta_i}((u^a_2+u_{2,i})\partial_2\psi_i)\\
&\quad-S_{\theta_i}(u_{2,i}\partial_2\varphi^a)+(u^a_2+S_{\theta_i}u_{2,i})\partial_2\psi_{i+1/2}+S_{\theta_i}u_{2,i}\partial_2\varphi^a\\
&=S_{\theta_i}\varepsilon^i-[S_{\theta_i},\partial_t]\psi_i+(I-S_{\theta_i})(\partial_t\varphi^a-u^a_1+u^a_2\partial_2\varphi^a))\\
&\quad-\left(S_{\theta_i}(u^a_2\partial_2\psi_i)-u_2^a\partial_2 S_{\theta_i}\psi_i\right)-\left(S_{\theta_i}u_{2,i}\partial_2 S_{\theta_i}\psi_i-S_{\theta_i}(u_{2,i}\partial_2\psi_i)\right)\\
&\quad-\left(S_{\theta_i}u_{2,i}\partial_2\varphi^a-S_{\theta_i}(u_{2,i}\partial_2\varphi^a)\right).
\end{split}
\end{equation}
To estimate the first term $S_{\theta_i}\varepsilon^i$ on the right-hand side, we use the decomposition:
\[
\begin{split}
\varepsilon^i&=\left(\mathcal B({\mathbf V}_i,\psi_i)_1-\mathcal B({\mathbf V}_{i-1},\psi_{i-1})_1\right)+\mathcal B({\mathbf V}_{i-1},\psi_{i-1})_1\\
&=\partial_t\delta\psi_{i-1}-\delta u_{1,i-1}+(u_2^a+u_{2,i-1})\partial_2\delta\psi_{i-1}+\delta u_{2,i-1}\partial_2(\varphi^a+\delta\psi_{i-1}+\psi_{i-1})\\
&\quad+\mathcal B({\mathbf V}_{i-1},\psi_{i-1})_1\,.
\end{split}
\]
Then we exploit point {\em (c)} of ($H_{i-1}$) and the properties of smoothing operators, to get
\begin{equation}\label{stima-Stheta epsilon}
||S_{\theta_i}\varepsilon^i||_{H^{s-1}(\Gamma_T)}\lesssim ||\varepsilon^i||_{H^{s-1}(\Gamma_T)}\lesssim\delta\theta_i^{s-\alpha-1}\,.
\end{equation}
In order to make an estimate of the commutator term $[S_{\theta_i},\partial_t]\psi_i$, we use different arguments for large and small orders $s$.
\newline
For all $s\in\{6,\dots,\alpha\}$ we write the commutator as
\[
[\partial_t,S_{\theta_i}]\psi_i=\partial_t(S_{\theta_i}-I)\psi_i+(I-S_{\theta_i})\partial_t\psi_i\,,
\]
then we use estimates \eqref{as2}, \eqref{estimate6}, \eqref{estimate7} to get
\begin{equation}\label{stima comm}
||[\partial_t,S_{\theta_i}]\psi_i||_{s,\ast,T}\leq C\delta\theta_i^{s-\alpha}\,,\quad s\in\{6,\dots,\alpha\}\,.
\end{equation}
In order to get the similar estimate as \eqref{stima comm} for $s\in\{\alpha+1,\dots,\tilde\alpha+6\}$, we directly estimate the two terms of the commutator $[\partial_t,S_{\theta_i}]\psi_i=\partial_tS_{\theta_i}\psi_i-S_{\theta_i}\partial_t\psi_i$, using \eqref{as1} and \eqref{estimate8}.
\newline
To estimate the third term in the right-hand side of \eqref{G'}, we proceed as above by applying different arguments to small and large orders $s$.
\newline
For integers $s\in\{6,\dots,\alpha\}$, we apply \eqref{as2} to get
\[
\begin{split}
||(I-S_{\theta_i})&(\partial_t\varphi^a-u^a_1+u_2^a\partial_2\varphi^a)||_{H^{s-1}(\Gamma_T)}\le C\theta_i^{s-\alpha}||\partial_t\varphi^a-u^a_1+u_2^a\partial_2\varphi^a||_{H^{\alpha-1}(\Gamma_T)}\\
&\le C\theta_i^{s-\alpha}(||\varphi^a||_{H^{\alpha-1}(\Gamma_T)}+||u^a||_{\alpha,\ast,T})\le  C\delta\theta_i^{s-\alpha}\,,
\end{split}
\]
in view of \eqref{small}.
\newline
For integers $s\in\{\alpha+1,\dots.\tilde\alpha+6\}$, we use \eqref{as1} and \eqref{small} to estimate directly
\[
\begin{split}
||(I-S_{\theta_i})&(\partial_t\varphi^a-u^a_1+u_2^a\partial_2\varphi^a)||_{H^{s-1}(\Gamma_T)}\le ||E^a||_{H^{s-1}(\Gamma_T)}+||S_{\theta_i}E^a||_{H^{s-1}(\Gamma_T)}\\
&\le||E^a||_{H^{s-1}(\Gamma_T)}+C\theta_i^{(s-1-(\alpha-1))_+}||E^a||_{H^{\alpha-1}(\Gamma_T)}\le C\theta_i^{s-\alpha}||E^a||_{H^{s-1}(\Gamma_T)}\\
&\le C\theta_i^{s-\alpha}(||\varphi^a||_{H^{s}(\Gamma_T)}+||u^a||_{s,\ast,T})\le  C\delta\theta_i^{s-\alpha}\,,
\end{split}
\]
where $E^a:=\partial_t\varphi^a-u^a_1+u_2^a\partial_2\varphi^a$ has been set.
\newline
Let us now estimate the fourth term $-\left(S_{\theta_i}(u^a_2\partial_2\psi_i)-u_2^a\partial_2 S_{\theta_i}\psi_i\right)$; once again we need to argue separately on different values of $s$.
\newline
For small integers $s\in\{6,\dots,\tilde\alpha\}$ we rewrite the above in the form
\begin{equation}\label{split}
u_2^a\partial_2 S_{\theta_i}\psi_i-S_{\theta_i}(u^a_2\partial_2\psi_i)=u_2^a\partial_2(S_{\theta_i}-I)\psi_i+(I-S_{\theta_i})(u_2^a\partial_2\psi_i)\,,
\end{equation}
which takes advantage of the appearing of the difference $I-S_{\theta_i}$; thus estimate \eqref{estimate7} and Moser-type calculus inequalities of Lemma \ref{moser1} yield
\begin{equation}\label{est-split1}
\begin{split}
||u_2^a\partial_2(S_{\theta_i}-I)&\psi_i||_{H^{s-1}(\Gamma_T)}\lesssim ||u^a_2||_{L^\infty(\Gamma_T)}||(I-S_{\theta_i})\psi_i||_{H^s(\Gamma_T)}\\
&+||u^a||_{s,\ast,T}||(I-S_{\theta_i})\psi_i||_{H^2(\Gamma_T)}\leq C\delta\theta_i^{s-\alpha}\,,\quad\mbox{for}\,\,\,s\in\{6,\dots,\tilde\alpha\}\,.
\end{split}
\end{equation}
As for the second term in the right-hand side of \eqref{split}, a further splitting of the range of $s$ covered by estimate \eqref{est-split1} is required. For integers $s$ such that $6\le s\le\alpha+1$, we apply \eqref{as2}, Moser-type calculus inequalities of Lemma \ref{moser1} and \eqref{estimate6} to obtain
\begin{equation}\label{est-split2.1}
\begin{split}
||(I-S_{\theta_i})&(u_2^a\partial_2\psi_i)||_{H^{s-1}(\Gamma_T)}\le C\theta_i^{s-1-(\alpha+1)}||u_2^a\partial_2\psi_i||_{H^{\alpha+1}(\Gamma_T)}\\
&\le C\theta_i^{s-\alpha-2}\left\{||u^a||_{L^\infty(\Gamma_T)}||\psi_i||_{H^{\alpha+1}(\Gamma_T)}+||u^a||_{\alpha+2,\ast,T}||\psi_i||_{H^3(\Gamma_T)}\right\}\\
&\le C\theta_i^{s-\alpha-2}(\delta\theta_i+\delta)\le C\delta\theta_i^{s-\alpha-1}\,,\quad\mbox{for}\,\,\,6\le s\le\alpha+1\,.
\end{split}
\end{equation}
On the other hand for integers $s$ such that $\alpha+1<s\le\tilde\alpha$, we use \eqref{as1}, Moser-type calculus inequalities and \eqref{estimate6} to get
\begin{equation}\label{est-split2.2}
\begin{split}
||(I-S_{\theta_i})&(u_2^a\partial_2\psi_i)||_{H^{s-1}(\Gamma_T)}\le C||u_2^a\partial_2\psi_i||_{H^{s-1}(\Gamma_T)}\\
&\le C\left\{||u^a||_{L^\infty(\Gamma_T)}||\psi_i||_{H^{s}(\Gamma_T)}+||u^a||_{s,\ast,T}||\psi_i||_{H^3(\Gamma_T)}\right\}\\
&\le C\theta_i^{(s-\alpha)_+}+C\delta\le C\delta \theta_i^{s-\alpha} \,,\quad\mbox{for}\,\,\,\alpha+1<s\le\tilde\alpha\,.
\end{split}
\end{equation}
Gathering \eqref{est-split1}--\eqref{est-split2.2} we end up with
\begin{equation}\label{est1}
||u_2^a\partial_2 S_{\theta_i}\psi_i-S_{\theta_i}(u^a_2\partial_2\psi_i)||_{H^{s-1}(\Gamma_T)}\le C\delta\theta_i^{s-\alpha}\,,\quad\mbox{for}\,\,\,s\in\{6,\dots,\tilde\alpha\}\,.
\end{equation}
For higher integers $s\in\{\tilde\alpha+1,\dots,\tilde\alpha+6\}$, we estimate separately the two terms of the difference $u_2^a\partial_2 S_{\theta_i}\psi_i-S_{\theta_i}(u^a_2\partial_2\psi_i)$; using again estimates \eqref{as1}, \eqref{estimate6}, \eqref{estimate8} and Moser-type calculus inequalities, we obtain
\begin{equation*}
\begin{split}
||u^a_2\partial_2 S_{\theta_i}\psi_i||_{H^{s-1}(\Gamma_T)}\lesssim ||u^a_2||_{L^\infty(\Gamma_T)}||S_{\theta_i}\psi_i||_{H^s(\Gamma_T)}+||u^a_2||_{s,\ast,T}||S_{\theta_i}\psi_i||_{H^3(\Gamma_T)}\le C\delta\theta_i^{s-\alpha}\,;
\end{split}
\end{equation*}
\begin{equation*}
\begin{split}
||S_{\theta_i}&(u^a_2\partial_2\psi_i)||_{H^{s-1}(\Gamma_T)}\leq C\theta_i^{s-1-\alpha}||u^a_2\partial_2\psi_i||_{H^{\alpha}(\Gamma_T)}\\
&\le C\theta_i^{s-1-\alpha}\left\{ ||u^a_2||_{L^\infty(\Gamma_T)}||\psi_i||_{H^{\alpha+1}(\Gamma_T)}+||u^a_2||_{\alpha+1,\ast,T}||\psi_i||_{H^3(\Gamma_T)}\right\}\\
&\le C\theta_i^{s-1-\alpha}(\delta\theta_i+C\delta)\le C\delta\theta_i^{s-\alpha} \,,\quad\mbox{for}\,\,\,\tilde\alpha+1\le s\le\tilde\alpha+6\,.
\end{split}
\end{equation*}
Adding the last two inequalities we end up with
\begin{equation}\label{est2}
||u_2^a\partial_2 S_{\theta_i}\psi_i-S_{\theta_i}(u^a_2\partial_2\psi_i)||_{H^{s-1}(\Gamma_T)}\le C\delta\theta_i^{s-\alpha}\,,\quad\mbox{for}\,\,\,s\in\{\tilde\alpha+1,\dots,\tilde\alpha+6\}\,.
\end{equation}
Gathering estimates \eqref{est1} and \eqref{est2} provide the estimate for all integers $s\in\{6,\dots,\tilde\alpha+6\}$.
\newline
For the last two terms in the right-hand side of \eqref{G'} we use the same arguments as above, where we still separate small and large orders $s$; in the case of small $s$ we manage to rewrite the expression in order to make advantage from the boundedness properties \eqref{as2}, \eqref{estimate7} of $I-S_{\theta_i}$; for large $s$ we estimate separately each term of the difference using Moser-type calculus inequalities and \eqref{as1}, \eqref{estimate6}, \eqref{estimate8}. Doing so, we derive:
\[
||S_{\theta_i}u_{2,i}\partial_2S_{\theta_i}\psi_i-S_{\theta_i}(u_{2,i}\partial_2\psi_i)||_{H^{s-1}(\Gamma_T)}\le C\delta\theta_i^{s-\alpha+1}\,,\quad\mbox{for}\,\,\,s\in\{6,\dots,\tilde\alpha+6\}\,;
\]
\[
||S_{\theta_i}u_{2,i}\partial_2\varphi^a-S_{\theta_i}(u_{2,i}\partial_2\varphi^a)||_{H^{s-1}(\Gamma_T)}\le C\delta\theta_i^{s-\alpha}\,,\quad\mbox{for}\,\,\,s\in\{6,\dots,\tilde\alpha+6\}\,.
\]
Gathering all the previously found estimates we end up with
\begin{equation}\label{stima_errore_u_i+1/2}
||\mathbf{u}_{i+1/2}-S_{\theta_i}{\mathbf u}_i||_{s,\ast,T}\lesssim||\mathcal G||_{H^{s-1}(\Gamma_T)}\le C\delta\theta_i^{s-\alpha+1}\,,\quad\mbox{for}\,\,\,s\in\{6,\dots,\tilde\alpha+6\}\,.
\end{equation}
In the above estimate it is fundamental that $s\le\tilde{\alpha}+6$ in order to prove the following \eqref{10.46}. Let us recall that the above estimate holds under the smallness assumption \eqref{small}.

\medskip
\noindent
{\em Construction of the modified magnetic field.}

\smallskip
\noindent
Let us see now how to define the modified magnetic field $\mathbf {H}_{i+1/2}$, following somehow \cite[Proposition 28]{S-T plasma vuoto}, \cite[Proposition 12]{Trakhinin2009}.
\newline
Let us denote the nonlinear equation satisfied by the magnetic field in \eqref{IVP} by
\begin{equation*}
\mathbb L_H({\mathbf u},\mathbf{H},\Psi)=0\,,\quad\mbox{in}\,\,\,\Omega_T\,.
\end{equation*}
The field $\mathbf{H}_{i+1/2}$ should be such that $\mathbf {H}^a+\mathbf {H}_{i+1/2}$ satisfies \eqref{c}, that is
\begin{equation}\label{nonl_H}
\mathbb L_H({\mathbf u}^a+\mathbf{u}_{i+1/2},\mathbf{H}^a+\mathbf{H}_{i+1/2},\Psi^a+\Psi_{i+1/2})=0\,,\quad\mbox{in}\,\,\,\Omega_T\,.
\end{equation}
We note that equation \eqref{nonl_H} is linear in $\mathbf{H}^a+\mathbf{H}_{i+1/2}$ and does not need to be supplemented with any boundary condition; in fact, the coefficient of $\partial_1(\mathbf{H}^a+\mathbf{H}_{i+1/2})$ is zero along the boundary because of \eqref{boundary a} (the left-hand side of \eqref{boundary a} is nothing but $w_1\vert_{x_1=0}$, computed for ${\mathbf u}^a+{\mathbf u}_{i+1/2}$ and $\Psi^a+\Psi_{i+1/2}$ instead of $\mathbf{u}$ and $\Psi$ respectively).
\newline
Therefore, for given ${\mathbf u}_{i+1/2}$, $\Psi_{i+1/2}$, ${\mathbf U}^a$ and $\Psi^a$, \eqref{nonl_H} has a unique solution $\mathbf H'$, from which we derive the existence of a unique $\mathbf H_{i+1/2}=\mathbf H'-\mathbf H^a$, vanishing in the past.
\newline
In order to estimate ${\mathbf H}_{i+1/2}-S_{\theta_i}{\mathbf H}_i$, we first observe that \eqref{nonl_H} yields
\[
\begin{split}
\mathbb L_H&({\mathbf u}^a+\mathbf{u}_{i+1/2},{\mathbf H}_{i+1/2}-S_{\theta_i}{\mathbf H}_i,\Psi^a+\Psi_{i+1/2})\\
&=\mathbb L_H({\mathbf u}^a+\mathbf{u}_{i+1/2},{\mathbf H}^a+{\mathbf H}_{i+1/2}-S_{\theta_i}{\mathbf H}_i,\Psi^a+\Psi_{i+1/2})\\
&\quad\quad -\mathbb L_H({\mathbf u}^a+\mathbf{u}_{i+1/2},{\mathbf H}^a,\Psi^a+\Psi_{i+1/2})\\
&=-\mathbb L_H({\mathbf u}^a+\mathbf{u}_{i+1/2},{\mathbf H}^a+S_{\theta_i}{\mathbf H}_i,\Psi^a+S_{\theta_i}\Psi_{i})\,.
\end{split}
\]
Then $\mathbf H_{i+1/2}-S_{\theta_i}\mathbf H_i$ solves the equation
\begin{equation}\label{eq Hi+1/2-Stheta}
\mathbb L_H({\mathbf u}^a+\mathbf{u}_{i+1/2},{\mathbf H}_{i+1/2}-S_{\theta_i}{\mathbf H}_i,\Psi^a+\Psi_{i+1/2})=F^{i+1/2}_H\,,
\end{equation}
where
\begin{equation}\label{FH}
F_H^{i+1/2}:=\Delta_1+\Delta_2-S_{\theta_i}\mathbb L_H({\mathbf u}^a+\mathbf{u}_{i},{\mathbf H}^a+{\mathbf H}_i,\Psi^a+\Psi_{i})\,;
\end{equation}
\begin{equation*}\label{delta1}
\Delta_1:=S_{\theta_i}\mathbb L_H({\mathbf u}^a+\mathbf{u}_{i},{\mathbf H}^a+{\mathbf H}_i,\Psi^a+\Psi_{i})-\mathbb L_H({\mathbf u}^a+S_{\theta_i}\mathbf{u}_{i},{\mathbf H}^a+S_{\theta_i}{\mathbf H}_i,\Psi^a+S_{\theta_i}\Psi_{i})\,;
\end{equation*}
\begin{equation*}\label{delta2}
\begin{split}
\Delta_2&:=\mathbb L_H({\mathbf u}^a+S_{\theta_i}\mathbf{u}_{i},{\mathbf H}^a+S_{\theta_i}{\mathbf H}_i,\Psi^a+S_{\theta_i}\Psi_{i})\\
&\qquad\qquad\qquad -\mathbb L_H({\mathbf u}^a+\mathbf{u}_{i+1/2},{\mathbf H}^a+S_{\theta_i}{\mathbf H}_i,\Psi^a+S_{\theta_i}\Psi_{i})\,.
\end{split}
\end{equation*}

Let us first write the explicit form of $\Delta_1$. Using the definition of the nonlinear operator $\mathbb L_H({\bf u},{\bf H},\Psi)$ we have
\begin{equation*}\label{delta1_exp}
\begin{split}
\Delta_1=&S_{\theta_i}\Big\{\partial_t({\bf H}^a+{\bf H}_i)+\frac1{\partial_1(\Phi^a+\Psi_i)}\big(({\bf w}[{\bf u}^a+{\bf u}_i]\cdot\nabla)({\bf H}^a+{\bf H}_i)\\
& -({\bf h}[{\bf H}^a+{\bf H}_i]\cdot\nabla)({\bf u}^a+{\bf u}_i)+({\bf H}^a+{\bf H}_i){\rm div}({\bf v}[{\bf u}^a+{\bf u}_i]))\big)\Big\}\\
&-\Big\{\partial_t({\bf H}^a+S_{\theta_i}{\bf H}_i)+\frac1{\partial_1(\Phi^a+S_{\theta_i}\Psi_{i})}\left({\bf w}[{\bf u}^a+S_{\theta_i}{\bf u}_i]\cdot\nabla\right)({\bf H}^a+S_{\theta_i}{\bf H}_i)\\
& -({\bf h}[{\bf H}^a+S_{\theta_i}{\bf H}_i]\cdot\nabla)({\bf u}^a+S_{\theta_i}{\bf u}_i)+({\bf H}^a+S_{\theta_i}{\bf H}_i){\rm div}({\bf v}[{\bf u}^a+S_{\theta_i}{\bf u}_i]))\Big\}\,,
\end{split}
\end{equation*}
where
\begin{equation*}\label{v[]}
\begin{split}
&{\bf v}[{\bf u}^a+{\bf u}_i]:=\Bigg(({\bf u}^a+{\bf u_i})\cdot\Big(1,-\partial_2(\Psi^a+\Psi_i)\Big),({\bf u}^a+{\bf u}_i)_2\partial_1(\Phi^a+\Psi_i)\Bigg)\,,\\
&{\bf w}[{\bf u}^a+{\bf u}_i]:={\bf v}[{\bf u}^a+{\bf u}_i]-\Big(\partial_t(\Psi^a+\Psi_i),0\Big)\,,\\
&{\bf h}[{\bf H}^a+{\bf H}_i]:=\Bigg(({\bf H}^a+{\bf H_i})\cdot\Big(1,-\partial_2(\Psi^a+\Psi_i)\Big),({\bf H}^a+{\bf H}_i)_2\partial_1(\Phi^a+\Psi_i)\Bigg)\,.\\
\end{split}
\end{equation*}
The vectors ${\bf v}[{\bf u}^a+S_{\theta_i}{\bf u}_i]$, ${\bf w}[{\bf u}^a+S_{\theta_i}{\bf u}_i]$, ${\bf h}[{\bf H}^a+S_{\theta_i}{\bf H}_i]$ are defined by completely similar expressions with $S_{\theta_i}{\bf u}_i$, $S_{\theta_i}{\bf H}_i$, $S_{\theta_i}\Psi_i$ instead of ${\bf u}_i$, ${\bf H}_i$, $\Psi_i$.

We split the range of $s$ into small and large values (in order to take advantage of the continuity estimates of $I-S_{\theta_i}$, see \eqref{as2} and \eqref{estimate7}).
For small values of $s$, by using \eqref{estimate6}, \eqref{estimate7}, the smallness assumption \eqref{small} and Moser-type calculus inequalities we get
\begin{equation}\label{10.41}
	||{\bf v}[{\bf u}^a+{\bf u}_{i}]||_{s,\ast,T}\le C\delta^2
	\begin{cases}
\theta_i^{(s+2-\alpha)_+}\quad &s+2\not=\alpha,\\
\theta_i \quad &s+2=\alpha,
	\end{cases}\qquad s\in\{6,\dots,\tilde{\alpha}-2\},
\end{equation}
\begin{equation}\label{10.42}
	||{\bf v}[{\bf u}^a+S_{\theta_i}{\bf u}_{i}]-{\bf v}[{\bf u}^a+{\bf u}_{i}]||_{s,\ast,T}\le C\delta\theta_i^{s+2-\alpha},\qquad s\in\{6,\dots,\tilde{\alpha}-2\},\,\, \alpha\ge 9.
\end{equation}
Adding \eqref{10.41}, \eqref{10.42} gives an estimate of ${\bf v}[{\bf u}^a+S_{\theta_i}{\bf u}_{i}]$. We obtain similar estimates for ${\bf w}[{\bf u}^a+{\bf u}_{i}],{\bf h}[{\bf H}^a+{\bf H}_{i}], {\bf w}[{\bf u}^a+S_{\theta_i}{\bf u}_{i}]$ and ${\bf h}[{\bf H}^a+S_{\theta_i}{\bf H}_{i}]$.

We decompose $\Delta_1$ as sum of terms with differences $I-S_{\theta_i}$ put in evidence. Making repeated use of estimates \eqref{as1}, \eqref{as2},  \eqref{estimate6}--\eqref{estimate8}, the smallness assumption \eqref{small}, Moser-type calculus inequalities of Lemmata \ref{moser1} , \ref{moser2}, we get for $\alpha\ge 9$
\begin{equation}\label{est-delta1-bassi}
	||\Delta_1||_{s,\ast,T}\le C\delta\theta_i^{s+4-\alpha}\,,\quad\mbox{for}\,\,\,s\in\{6,\dots,\alpha-3\}\,.
\end{equation}
For large values $s\ge{\alpha}-3$ we use \eqref{as1}, \eqref{estimate8} to obtain
\begin{equation}\label{altrav}
	||{\bf v}[{\bf u}^a+S_{\theta_i}{\bf u}_{i}]||_{s,\ast,T}\le C\delta^2
	\begin{cases}
		\theta_i^{(s+2-\alpha)_+}\quad &s+2\not=\alpha,\\
		\theta_i \quad &s+2=\alpha
	\end{cases}\qquad s\in\{6,\dots,\tilde{\alpha}+6\},
\end{equation}
 with similar estimates for ${\bf w}[{\bf u}^a+S_{\theta_i}{\bf u}_{i}]$ and ${\bf h}[{\bf H}^a+S_{\theta_i}{\bf H}_{i}]$.
 With a direct estimate of the terms in $\Delta_1$ we extend \eqref{est-delta1-bassi} to the cases $s\ge{\alpha}-2$ and finally obtain
\begin{equation}\label{est-delta1}
	||\Delta_1||_{s,\ast,T}\le C\delta\theta_i^{s+4-\alpha}\,,\quad\mbox{for}\,\,\,s\in\{6,\dots,\tilde{\alpha}+4\}\,.
\end{equation}
In the above estimate \eqref{altrav} it is fundamental that $s\le\tilde{\alpha}+6$. This is used for the estimate \eqref{est-delta1} of $\Delta_1$, where the estimate of the normal derivative of $\mathbf v[\cdot]$  in the anisotropic space $H^s_\ast$, for $s\le \tilde{\alpha}+4$, requires an estimate of  $\mathbf v[\cdot]$ in $H^{s+2}_\ast$.
\newline
As for $\Delta_2$, we have the explicit expression
\begin{equation*}\label{delta2_exp}
\begin{split}
\Delta_2=&\frac1{\partial_1(\Phi^a+\Psi_{i+1/2})}\left\{({\bf w}^{i+1/2}\cdot\nabla)({\bf H}^a+S_{\theta_i}{\bf H}_i)-({\bf h}[{\bf H}^a+S_{\theta_i}{\bf H}_i]\cdot\nabla)(S_{\theta_i}{\bf u}_i-{\bf u}_{i+1/2})\right.\\
&\left.\qquad +({\bf H}^a+S_{\theta_i}{\bf H}_i){\rm div}({\bf v}[S_{\theta_i}{\bf u}_i-{\bf u}_{i+1/2}])\right\}\,,
\end{split}
\end{equation*}
where
\begin{equation}\label{v[]i+1/2}
\begin{split}
&{\bf v}[S_{\theta_i}{\bf u}_i-{\bf u}_{i+1/2}]:={\bf v}[{\bf u}^a+S_{\theta_i}{\bf u}_i]-{\bf v}[{\bf u}^a+{\bf u}_{i+1/2}]=\\
&=\Bigg((S_{\theta_i}{\bf u_i}-{\bf u}_{i+1/2})\cdot\Big(1,-\partial_2(\Psi^a+\Psi_{i+1/2})\Big),(S_{\theta_i}{\bf u}_i-{\bf u}_{i+1/2})_2\partial_1(\Phi^a+\Psi_{i+1/2})\Bigg)\,,\\
&{\bf w}^{i+1/2}:={\bf w}[{\bf u}^a+S_{\theta_i}{\bf u}_i]-{\bf w}[{\bf u}^a+{\bf u}_{i+1/2}]={\bf v}[S_{\theta_i}{\bf u}_i-{\bf u}_{i+1/2}]\,.
\end{split}
\end{equation}
From the definition \eqref{v[]i+1/2}, using Moser-type calculus inequalities, \eqref{small}, \eqref{estimate8}, \eqref{mo1}, and the estimate \eqref{stima_errore_u_i+1/2}, we obtain
\begin{equation}\label{10.46}
||{\bf v}[S_{\theta_i}{\bf u}_i-{\bf u}_{i+1/2}]||_{s,\ast,T}\le C\delta\theta_i^{s+1-\alpha}, \qquad s\in\{6,\dots,\tilde{\alpha}+6\},\,\, \alpha\ge 9.
\end{equation}
Again in \eqref{10.46} we need $s\le\tilde{\alpha}+6$ in order to get the following estimate \eqref{est-delta2} of $\Delta_2$, where the estimate of the normal derivative of $\mathbf v[\cdot]$  in the anisotropic space $H^s_\ast$, for $s\le \tilde{\alpha}+4$, requires an estimate of  $\mathbf v[\cdot]$ in $H^{s+2}_\ast$.

Making repeated use of Moser-type calculus inequalities, \eqref{small}, \eqref{estimate8}, \eqref{mo1}, the estimates \eqref{stima_errore_u_i+1/2} and \eqref{10.46}, we get for $\alpha\ge 9$
\begin{equation}\label{est-delta2}
	||\Delta_2||_{s,\ast,T}\le C\delta\theta_i^{s+3-\alpha}\,,\quad\mbox{for}\,\,\,s\in\{6,\dots,\tilde\alpha+4\}\,.
\end{equation}
To estimate the last term of $F_H^{i+1/2}$ we write
\begin{equation}\label{10.48}
	\begin{split}
&S_{\theta_i}\mathbb L_H({\mathbf u}^a+\mathbf{u}_{i},{\mathbf H}^a+{\mathbf H}_i,\Psi^a+\Psi_{i})
=S_{\theta_i}\big(\mathcal L_H(\mathbf{u}_{i},{\mathbf H}_i,\Psi_{i})+\mathbb L_H({\mathbf u}^a,{\mathbf H}^a,\Psi^a)\big)\\
&=S_{\theta_i}\big(\mathcal L_H(\mathbf{u}_{i},{\mathbf H}_i,\Psi_{i})-\mathcal F^a_H\big)\\
&=S_{\theta_i}\big(\mathcal L_H(\mathbf{u}_{i-1}+\delta\mathbf{u}_{i-1},{\mathbf H}_{i-1}+\delta{\mathbf H}_{i-1},\Psi_{i-1}+\delta\Psi_{i-1})-\mathcal L_H(\mathbf{u}_{i-1},{\mathbf H}_{i-1},\Psi_{i-1})\big)\\
&\quad+S_{\theta_i}\big(\mathcal L_H(\mathbf{u}_{i-1},{\mathbf H}_{i-1},\Psi_{i-1})-\mathcal F^a_H\big),
	\end{split}
\end{equation}
where $\mathcal L_H,\mathcal F^a_H$ are the $\mathbf{H}$-component in \eqref{system}, \eqref{f}, respectively. From \eqref{as1},  $(H_{i-1})(b)$ we readily obtain
\begin{equation}\label{10.43}
||S_{\theta_i}\big(\mathcal L_H(\mathbf{u}_{i-1},{\mathbf H}_{i-1},\Psi_{i-1})-\mathcal F^a_H\big)||_{s,\ast,T}\le C\delta\theta_i^{s-\alpha-1}
\quad\mbox{for}\,\,\,s\in\{6,\dots,\tilde\alpha-2\}\,.
\end{equation}
For the first difference in \eqref{10.48} let us denote
\begin{equation*}
	\begin{split}
\Delta_3:=&\mathcal L_H(\mathbf{u}_{i-1}+\delta\mathbf{u}_{i-1},{\mathbf H}_{i-1}+\delta{\mathbf H}_{i-1},\Psi_{i-1}+\delta\Psi_{i-1})\\
&-\mathcal L_H(\mathbf{u}_{i-1},{\mathbf H}_{i-1},\Psi_{i-1}).
	\end{split}
\end{equation*}
Hence we have
\begin{equation}\label{10.44}
	\begin{split}
||S_{\theta_i}\Delta_3||_{s,\ast,T} \le C\theta_i^{s-6}||\Delta_3||_{6,\ast,T} \le C\theta_i^{s-6}\cdot\delta\theta_i^{8-\alpha}=C\delta\theta_i^{s+2-\alpha}, \quad\mbox{for}\,\,\,s\ge 6,
	\end{split}
\end{equation}
where in particular for the estimate of $||\Delta_3||_{6,\ast,T}$ we have used $(H_{i-1})(a)$, \eqref{estimate6}, \eqref{10.41}  and
\begin{equation*}
	||{\bf v}[{\bf u}^a+\mathbf{u}_{i-1}+\delta\mathbf{u}_{i-1}]-{\bf v}[{\bf u}^a+{\bf u}_{i-1}]||_{s,\ast,T}\le C\delta\theta_i^{s-\alpha},\qquad\,\,\,s\in\{6,\dots,\tilde\alpha-2\}\,,
\end{equation*}
used in the cases $s=6$, $s=8$ and a similar estimate for ${\bf w}[{\bf u}^a+\mathbf{u}_{i-1}+\delta\mathbf{u}_{i-1}]-{\bf w}[{\bf u}^a+{\bf u}_{i-1}]$ in case $s=6$. Collecting \eqref{est-delta1}, \eqref{est-delta2}, \eqref{10.43}, \eqref{10.44} we get
\begin{equation}\label{stima-FH}
||F_H^{i+1/2}||_{s,\ast,T}\le
C\delta\theta_i^{s+4-\alpha}\,,\quad\mbox{for}\,\,\,s\in\{6,\dots,\tilde{\alpha}+4\}\,.
\end{equation}
Equation \eqref{eq Hi+1/2-Stheta} solved by $\mathbf H_{i+1/2}-S_{\theta_i}\mathbf H_i$ has the form
\begin{equation}\label{system-Y}
\partial_t Y+\sum_{j=1}^2\mathcal D_j(b)\partial_j Y +\mathcal Q(b)Y=F_H^{i+1/2},
\end{equation}
for $Y=\mathbf H_{i+1/2}-S_{\theta_i}\mathbf H_i$, $b=({\mathbf u}^a+\mathbf{u}_{i+1/2},\Psi^a+\Psi_{i+1/2})$, and where $\mathcal D_j$ and $\mathcal Q$ are some matrices. The matrices $\mathcal D_j$ are diagonal and, more important, $\mathcal D_1$ vanishes at the boundary. This yelds that system \eqref{system-Y} does not need any boundary condition. A standard energy argument applied to \eqref{system-Y} gives in view of \eqref{stima-FH}
\begin{equation}\label{stima-H-mod}
||\mathbf H_{i+1/2}-S_{\theta_i}\mathbf H_i||_{s,\ast,T} \le C||F_H^{i+1/2}||_{s,\ast,T}\le
C\delta\theta_i^{s+4-\alpha}\,,\quad\mbox{for}\,\,\,s\in\{6,\dots,\tilde{\alpha}+4\}\,.
\end{equation}
Collecting \eqref{stima_errore_u_i+1/2}, \eqref{stima-H-mod} gives \eqref{mo32}.

Using \eqref{app}, \eqref{estimate8},  \eqref{mo32} for $s=6$, and taking $\delta>0$ sufficiently small we have that $\varphi^a+\psi_{i+\frac{1}{2}}$ and $\tilde{{\mathbf U}}^a+\mathbf{V}_{i+\frac12}$ are sufficiently small. Then, $\varphi^a+\psi_{i+\frac{1}{2}}$ satisfies \eqref{phi-piccola} and, recalling Remark \ref{dato_iniziale}, ${{\mathbf U}}^a+\mathbf{V}_{i+\frac12}$ satisfies \eqref{hy}, \eqref{stability1}, \eqref{space}, \eqref{assumption}. ${{\mathbf H}}^a+\mathbf{H}_{i+\frac12}$ satifies \eqref{c} by construction, see \eqref{nonl_H}; the initial value at $t=0$ of ${{\mathbf H}}^a+\mathbf{H}_{i+\frac12}$ satisfies \eqref{c2} since ${{\mathbf H}}^a_{|t=0}={\mathbf H}_0$ satisfies \eqref{c2} by assumption, and ${{\mathbf H}}_{i+\frac12}=0$ for $t\leq 0$ by continuity. In conclusion, ${{\mathbf U}}^a+\mathbf{V}_{i+\frac12}$ satisfies all the constraints \eqref{hy}--\eqref{c}, \eqref{c2} and \eqref{assumption} for the background state.
\end{proof}

\subsubsection{Estimate of the second substitution errors}\label{second3}
In the following Lemma, we can estimate the second substitution errors $e'''_k,\tilde{e}'''_k$ of the iterative scheme. We define
\begin{equation}\label{e9}
e'''_k:=\mathcal{L}'(S_{\theta_k}{\mathbf V}_k,S_{\theta_k}\Psi_k)(\delta {\mathbf V}_k,\delta\Psi_k)-\mathcal{L}'({\mathbf V}_{k+\frac{1}{2}},\Psi_{k+\frac{1}{2}})(\delta {\mathbf V}_k,\delta\Psi_k),
\end{equation}
\begin{equation}\label{e11}
\begin{split}
\tilde{e}'''_k:=&\mathcal{B}'(S_{\theta_k}{\mathbf V}_k|_{x_1=0},S_{\theta_k}\psi_k)((\delta {\mathbf V}_k)|_{x_1=0},\delta\psi_k)\\
&-\mathcal{B}'({\mathbf V}_{k+\frac{1}{2}}|_{x_1=0},\psi_{k+\frac{1}{2}})((\delta {\mathbf V}_k)|_{x_1=0},\delta\psi_k).\\
\end{split}
\end{equation}
We can write \eqref{e9} and \eqref{e11} as follows:
\begin{equation}\nonumber
\begin{split}
e'''_k&=\int^1_0\mathbb{L}''(\mathbf{U}^a+{\mathbf V}_{k+\frac{1}{2}}+\tau(S_{\theta_k}{\mathbf V}_k-{\mathbf V}_{k+\frac{1}{2}}),\Psi^a+S_{\theta_k}\Psi_k)\\
&\quad((\delta {\mathbf V}_k,\delta\Psi_k),(S_{\theta_k}{\mathbf V}_k-{\mathbf V}_{k+\frac{1}{2}},0))d\tau,\\
&\tilde{e}'''_k=\mathbb{B}''((\delta {\mathbf V}_k|_{x_1=0},\delta\psi_k),((S_{\theta_k}{\mathbf V}_k-{\mathbf V}_{k+\frac{1}{2}})|_{x_1=0},0)).
\end{split}
\end{equation}

\begin{lemma}\label{seconde}
Let $\alpha\geq10$. There exist $\delta>0$ sufficiently small and $\theta_0\geq1$ sufficiently large such that, for all $k\in\{0,\cdots,i-1\}$ and for all integers $s\in\{6,\cdots,\tilde{\alpha}-2\},$ we have
\begin{equation}\label{eeee}
||e'''_k||_{s,\ast,T}\lesssim\delta^2\theta^{L_3(s)-1}_k\Delta_k,
\end{equation}
\begin{equation}\label{eeeee}
||\tilde{e}'''_k||_{H^s(\Gamma_T)}\lesssim\delta^2\theta^{L_3(s)-1}_k\Delta_k,
\end{equation}
where $L_3(s):=\max\{(s+2-\alpha)_++16-2\alpha;s+12-2\alpha\}.$
\end{lemma}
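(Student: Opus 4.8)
The plan is to follow closely the scheme used for the quadratic errors in Lemma~\ref{eestimate} and for the first substitution errors in Lemma~\ref{se}; the only genuinely new ingredient is the modified state estimate \eqref{mo32} of Lemma~\ref{mo}. Starting from the integral representations of $e'''_k$ and $\tilde e'''_k$ displayed just before the statement, I would apply the bilinear bounds of Lemma~\ref{lb}: for $e'''_k$, with coefficient $(\mathbf U^a+{\mathbf V}_{k+1/2}+\tau(S_{\theta_k}{\mathbf V}_k-{\mathbf V}_{k+1/2}),\Psi^a+S_{\theta_k}\Psi_k)$, first argument $(\delta{\mathbf V}_k,\delta\Psi_k)$, and second argument $(S_{\theta_k}{\mathbf V}_k-{\mathbf V}_{k+1/2},0)$; for $\tilde e'''_k$, with the $\mathbb{B}''$-estimate of Lemma~\ref{lb} applied to $(\delta{\mathbf V}_k|_{x_1=0},\delta\psi_k)$ and $((S_{\theta_k}{\mathbf V}_k-{\mathbf V}_{k+1/2})|_{x_1=0},0)$.

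First I would record the auxiliary bounds. From \eqref{sobolev2} and $(H_{i-1})(a)$ with $s=6$ one gets $\|(\delta{\mathbf V}_k,\delta\Psi_k)\|_{W^{2,\infty}_\ast(\Omega_T)}\lesssim\delta\theta_k^{5-\alpha}\Delta_k$; from \eqref{sobolev2} and \eqref{mo32} with $s=6$, $\|S_{\theta_k}{\mathbf V}_k-{\mathbf V}_{k+1/2}\|_{W^{2,\infty}_\ast(\Omega_T)}\lesssim\delta\theta_k^{10-\alpha}$; and, for $s\in\{6,\dots,\tilde\alpha-2\}$, \eqref{mo32}, \eqref{estimate8} and \eqref{small} give $\|S_{\theta_k}{\mathbf V}_k-{\mathbf V}_{k+1/2}\|_{s+2,\ast,T}\lesssim\delta\theta_k^{s+6-\alpha}$, while the coefficient stays $\lesssim1$ in $W^{2,\infty}_\ast(\Omega_T)$ and is $\lesssim\delta\theta_k^{(s+6-\alpha)_+}$ (with a harmless logarithm at the borderline index) in $H^{s+2}_\ast(\Omega_T)$, provided $\delta$ is small. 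Plugging these into Lemma~\ref{lb} and using $2\theta_k\Delta_k\le1$ repeatedly to trade a power of $\theta_k$ for $\Delta_k^{-1}$ where needed, each of the finitely many resulting terms is bounded, up to a spare factor $\delta$, by $\delta^2\theta_k^{L_3(s)-1}\Delta_k$; as in Lemma~\ref{eestimate}, the borderline case $s+2=\alpha$ is treated separately, the hypothesis $\alpha\ge10$ providing exactly the margin needed to absorb the logarithms and the extra powers. This yields \eqref{eeee}. Estimate \eqref{eeeee} follows in the same manner from the $\mathbb{B}''$-estimate in Lemma~\ref{lb}, the trace theorem (Lemma~\ref{tra}) to pass from $\Omega_T$ to $\Gamma_T$, and the traces on $\{x_1=0\}$ of the same quantities, controlled through \eqref{mo32} and \eqref{estimate8}.

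I do not expect a genuine analytic obstacle: the statement is a bookkeeping exercise on powers of $\theta_k$, entirely parallel to Lemmata~\ref{eestimate} and~\ref{se}. The one point to watch is that, through \eqref{mo32}, the second argument $S_{\theta_k}{\mathbf V}_k-{\mathbf V}_{k+1/2}$ carries an extra loss of $\theta_k^4$ compared with the terms $(I-S_{\theta_k}){\mathbf V}_k$ appearing in the first substitution error, so one must make sure that this norm, and that of the coefficient, is never required beyond order $\tilde\alpha+4$; this is precisely what restricts $s$ to the range $\{6,\dots,\tilde\alpha-2\}$ and, together with the hypotheses of Lemma~\ref{mo}, forces $\alpha\ge10$.
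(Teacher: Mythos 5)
Your proposal is correct and follows essentially the same route as the paper: the paper's proof of this lemma consists precisely of invoking the modified-state estimate \eqref{mo32} together with Lemma \ref{lb} and repeating the power-counting arguments of Lemmata \ref{eestimate} and \ref{se} (recording also the explicit form of $\tilde e'''_k$, which involves only the magnetic-field components). Your bookkeeping — the extra factor $\theta_k^4$ from \eqref{mo32} compared with $(I-S_{\theta_k}){\mathbf V}_k$, the resulting shift $L_3=L_2+4$, and the role of $\alpha\ge10$ in absorbing the coefficient term of size $\delta\theta_k^{s+6-\alpha}$ — matches what the paper's argument requires.
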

\begin{proof}
Using \eqref{mo32} and Lemma \ref{lb}, similar to the proof of Lemmata \ref{eestimate} and \ref{se}, we obtain \eqref{eeee} and \eqref{eeeee}.
Here, we can calculate the explicit form of $\tilde{e}'''_k$ as
\begin{equation}
\tilde{e}'''_k:=\left[
\begin{array}{c}
0\\
0\\
\delta H^+_k\cdot (S_{\theta_k}H^+_k-H^+_{k+\frac{1}{2}})-\delta H^-_k\cdot (S_{\theta_k}H^-_k-H^-_{k+\frac{1}{2}})
\end{array}\right]\,.
\end{equation}
\end{proof}

\subsubsection{Estimate of the last error term}\label{last}
We now estimate the last error term \eqref{D}:
\begin{equation}\label{D2}
D_{k+\frac{1}{2}}\delta\Psi_k=\frac{\delta\Psi_k}{\partial_1(\Phi^a+\Psi_{k+\frac{1}{2}})}R_k,
\end{equation}
where $R_k:=\partial_1[\mathbb{L}({\mathbf U}^a+{\mathbf V}_{k+\frac{1}{2}},\Psi^a+\Psi_{k+\frac{1}{2}})].$
It is noted that $$|\partial_1(\Phi^a+\Psi_{k+\frac{1}{2}})|=|\pm1+\partial_1(\Psi^a+\Psi_{k+\frac{1}{2}})|\geq\frac{1}{2},$$
for $\delta>0$ sufficiently small.
\newline
The following Lemma \ref{DD} can be proved by direct calculations.
\begin{lemma}\label{DD}
 Let $\alpha\geq10,\tilde{\alpha}\geq\alpha-4.$ There exist $\delta>0$ sufficiently small and $\theta_0\geq1$ sufficiently large,  such that,  for all $k\in\{0,\cdots,i-1\}$ and for all integers $s\in \{6,\cdots,\tilde{\alpha}-2\}$, we have
\begin{equation}\label{D_k}
||D_{k+\frac{1}{2}}\delta\Psi_k||_{s,\ast,T}\lesssim\delta^2\theta^{L_4(s)-1}_k\Delta_k,
\end{equation}
where $L_4(s):=s+14-2\alpha$.
\end{lemma}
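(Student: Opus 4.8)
The plan is to use the explicit form $D_{k+\frac12}\delta\Psi_k=\dfrac{\delta\Psi_k}{\partial_1(\Phi^a+\Psi_{k+\frac12})}\,R_k$, $R_k=\partial_1[\mathbb{L}({\mathbf U}^a+{\mathbf V}_{k+\frac12},\Psi^a+\Psi_{k+\frac12})]$, see \eqref{D2}. First I would record that, for $\delta$ small, $|\partial_1(\Phi^a+\Psi_{k+\frac12})|\ge\frac12$, so its reciprocal is a smooth bounded nonlinear function of $\partial_1\Psi^a+S_{\theta_k}\partial_1\Psi_k$ (recall $\Psi_{k+\frac12}=S_{\theta_k}\Psi_k$ by \eqref{mo1}) whose $W^{1,\infty}_\ast$ and $H^s_\ast$ norms are controlled via \eqref{moser6}, the smallness \eqref{small} and the smoothing bounds \eqref{estimate8}. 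Then I would apply the Moser product inequality \eqref{moser4},
\[
\|D_{k+\frac12}\delta\Psi_k\|_{s,\ast,T}\lesssim\|\delta\Psi_k\|_{s,\ast,T}\Big\|\tfrac{R_k}{\partial_1(\Phi^a+\Psi_{k+\frac12})}\Big\|_{W^{1,\infty}_\ast(\Omega_T)}+\|\delta\Psi_k\|_{W^{1,\infty}_\ast(\Omega_T)}\Big\|\tfrac{R_k}{\partial_1(\Phi^a+\Psi_{k+\frac12})}\Big\|_{s,\ast,T},
\]
and bound the front factors by $(H_{i-1})(a)$ together with \eqref{sobolev1}: $\|\delta\Psi_k\|_{s,\ast,T}\lesssim\|\delta\psi_k\|_{H^s(\Gamma_T)}\lesssim\delta\theta_k^{s-\alpha-1}\Delta_k$ for $s\in\{6,\dots,\tilde\alpha\}$, and $\|\delta\Psi_k\|_{W^{1,\infty}_\ast(\Omega_T)}\lesssim\|\delta\Psi_k\|_{4,\ast,T}\le\|\delta\Psi_k\|_{6,\ast,T}\lesssim\delta\theta_k^{5-\alpha}\Delta_k$.

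The heart of the proof is the control of $R_k$. Using $\mathbb{L}({\mathbf U}^a+{\mathbf V}_{k+\frac12},\Psi^a+\Psi_{k+\frac12})=\mathcal L({\mathbf V}_{k+\frac12},\Psi_{k+\frac12})-\mathcal F^a$ (by \eqref{system}, \eqref{f}) and $\Psi_{k+\frac12}=S_{\theta_k}\Psi_k$, I would decompose
\[
\mathcal L({\mathbf V}_{k+\frac12},\Psi_{k+\frac12})-\mathcal F^a=\underbrace{\int_0^1\mathbb{L}'(\,\cdot\,)({\mathbf V}_{k+\frac12}-S_{\theta_k}{\mathbf V}_k,0)\,d\tau}_{(\mathrm I)}+\underbrace{\int_0^1\mathbb{L}'(\,\cdot\,)((S_{\theta_k}-I){\mathbf V}_k,(S_{\theta_k}-I)\Psi_k)\,d\tau}_{(\mathrm{II})}+\underbrace{\mathcal L({\mathbf V}_k,\Psi_k)-\mathcal F^a}_{(\mathrm{III})}.
\]
Piece $(\mathrm{III})$ is estimated by $(H_{i-1})(b)$, and at the top two orders by a direct Moser estimate of $\mathcal L({\mathbf V}_k,\Psi_k)=\mathbb{L}({\mathbf U}^a+{\mathbf V}_k,\Psi^a+\Psi_k)-\mathbb{L}({\mathbf U}^a,\Psi^a)$ with \eqref{estimate6}, \eqref{estimate8}; piece $(\mathrm{II})$ by Lemma \ref{moser2}, the imbeddings \eqref{sobolev2} and the smoothing bound \eqref{estimate7} (splitting $I-S_{\theta_k}$ into $I$ and $S_{\theta_k}$ at the highest orders); piece $(\mathrm I)$ by Lemma \ref{moser2}, \eqref{sobolev2} and the modified-state bound \eqref{mo32}. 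Since $\mathbb{L}'(\,\cdot\,)$ is a first order operator and $\|\partial_1u\|_{m,\ast,T}\le\|u\|_{m+2,\ast,T}$, every term is evaluated at order at most $(s+2)+2=s+4$, which remains within the admissible ranges of \eqref{mo32} and \eqref{estimate8}; the dominant contribution is that of $(\mathrm I)$, giving $\|R_k\|_{s,\ast,T}\lesssim\|{\mathbf V}_{k+\frac12}-S_{\theta_k}{\mathbf V}_k\|_{s+4,\ast,T}\lesssim\delta\theta_k^{s+8-\alpha}$ and, at low order, $\|R_k\|_{4,\ast,T}\le\|\mathcal L({\mathbf V}_{k+\frac12},\Psi_{k+\frac12})-\mathcal F^a\|_{6,\ast,T}\lesssim\delta\theta_k^{12-\alpha}$, the same bounds persisting for $R_k/\partial_1(\Phi^a+\Psi_{k+\frac12})$ after \eqref{sobolev1}.

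Feeding these into the Moser split yields
\[
\|D_{k+\frac12}\delta\Psi_k\|_{s,\ast,T}\lesssim\delta\theta_k^{s-\alpha-1}\Delta_k\cdot\delta\theta_k^{12-\alpha}+\delta\theta_k^{5-\alpha}\Delta_k\cdot\delta\theta_k^{s+8-\alpha}\lesssim\delta^2\theta_k^{s+13-2\alpha}\Delta_k,
\]
and since $s+13-2\alpha=L_4(s)-1$ this is precisely \eqref{D_k}; the hypotheses $\alpha\ge10$ (so that $\theta_k^{5-\alpha}\le1$ and the various exponents land on the right side of the needed inequalities) and $\tilde\alpha\ge\alpha-4$ (so that no auxiliary bound is ever used beyond its stated range) are exactly what closes the bookkeeping. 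I expect the main obstacle to be this top-of-the-range bookkeeping for $s\in\{\tilde\alpha-3,\tilde\alpha-2\}$, where $(H_{i-1})(b)$ and \eqref{estimate7} are no longer directly available and one has to fall back on direct Moser estimates of the relevant differences together with \eqref{estimate8}, which holds up to order $\tilde\alpha+8$; this is the reason the statement is restricted to $s\le\tilde\alpha-2$. Everything else reduces to routine use of the Moser calculus of Lemmata \ref{moser1}--\ref{moser2} and the imbeddings \eqref{sobolev1}--\eqref{sobolev2}, which is why the estimate is obtained "by direct calculations".
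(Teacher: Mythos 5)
Your proposal follows essentially the same route as the paper's proof: the Moser splitting of \eqref{D2} is exactly the paper's estimate \eqref{DDD} (your coefficient bound for $1/\partial_1(\Phi^a+\Psi_{k+\frac12})$ accounts for its third, three-factor term), your pieces $(\mathrm I)+(\mathrm{II})$ are just the paper's single Taylor remainder $\mathcal I$ in \eqref{RK} split in two, piece $(\mathrm{III})$ is handled by $(H_{i-1})(b)$ as in \eqref{111}, and your bounds $\|R_k\|_{s,\ast,T}\lesssim\delta\theta_k^{s+8-\alpha}$, $\|R_k\|_{4,\ast,T}\lesssim\delta\theta_k^{12-\alpha}$ reproduce \eqref{1122} and land on the exponent $L_4(s)-1=s+13-2\alpha$. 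One precision: for $s\in\{\tilde\alpha-3,\tilde\alpha-2\}$ the fallback cannot be a direct Moser estimate of $\mathcal L({\mathbf V}_k,\Psi_k)$ using \eqref{estimate6} (that estimate stops at order $\tilde\alpha$, while the $\partial_1$ in $\mathbb L$ costs two orders, so order $s+4\le\tilde\alpha+2$ of the raw iterate would be needed); as your closing sentence suggests and as the paper does, one must estimate $R_k$ directly at the half-step state, using \eqref{estimate8} for $S_{\theta_k}{\mathbf V}_k$ (valid up to $\tilde\alpha+8$) and \eqref{mo32} for ${\mathbf V}_{k+\frac12}-S_{\theta_k}{\mathbf V}_k$ (valid up to $\tilde\alpha+4$).
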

\begin{proof}
Using \eqref{moser4} and \eqref{sobolev1}, we obtain that
\begin{equation}\label{DDD}
\begin{split}
||D_{k+\frac{1}{2}}\delta\Psi_k||_{s,\ast,T}&\lesssim||\delta\Psi_k||_{4,\ast,T}||R_k||_{s,\ast,T}+||\delta\Psi_k||_{s,\ast,T}||R_k||_{4,\ast,T}\\
&\quad +||\delta\Psi_k||_{4,\ast,T}||R_k||_{4,\ast,T}||\Psi^a+\Psi_{k+\frac{1}{2}}||_{s+2,\ast,T}.
\end{split}
\end{equation}
Using \eqref{f} and \eqref{system}, we can write
\begin{equation}\label{RK}
R_k=\partial_1(\mathcal{L}({\mathbf V}_k,\Psi_k)-\mathcal{F}^a+\mathcal{I}), \text{ for }t>0,
\end{equation}
where
\begin{equation}\nonumber
\mathcal{I}:=\int^1_0\mathbb{L}'(\mathbf{U}^a+{\mathbf V}_k+\tau({\mathbf V}_{k+\frac{1}{2}}-{\mathbf V}_k),\Psi^a+\Psi_k+\tau(\Psi_{k+\frac{1}{2}}-\Psi_k))({\mathbf V}_{k+\frac{1}{2}}-{\mathbf V}_k,\Psi_{k+\frac{1}{2}}-\Psi_k)d\tau.
\end{equation}
If $s\in\{4,\cdots,\tilde{\alpha}-4\},$ then using Hypothesis $(H_{i-1}),$ we obtain that
\begin{equation}\label{111}
||\mathcal{L}({\mathbf V}_k,\Psi_k)-\mathcal{F}^a||_{s+2,\ast,T}\leq 2\delta \theta^{s+1-\alpha}_k.
\end{equation}
Using \eqref{estimate6},\eqref{estimate7},\eqref{mo1}-\eqref{mo32}, \eqref{RK},\eqref{111}, Sobolev inequalities \eqref{sobolev2} and Moser-type calculus inequalities in Lemma \ref{moser2},
we have
\begin{equation}\label{1122}
||R_k||_{s,\ast,T}\lesssim \delta(\theta^{s+8-\alpha}_k+\theta^{(s+4-\alpha)_++10-\alpha}_k), \text{ for } s\in \{4,\cdots, \tilde{\alpha}-4\}.
\end{equation}
If $s\in\{\tilde{\alpha}-3,\tilde{\alpha}-2\}$, then, for $\tilde\alpha\ge\alpha-4$, using \eqref{estimate6}, \eqref{estimate8} and \eqref{mo1}-\eqref{mo32}, we can deduce directly from \eqref{RK} that
$$||R_k||_{s,\ast,T}\lesssim \delta \theta^{s+8-\alpha}_k.$$
Therefore, \eqref{1122} holds for all $s\in\{4,\cdots,\tilde{\alpha}-2\}.$ Using Hypothesis $(H_{i-1}),$ \eqref{1122}, \eqref{estimate8}, \eqref{estimate6} and \eqref{mo1}-\eqref{mo32} into \eqref{DDD}, we can obtain \eqref{D_k}.
\end{proof}

Using Lemmata \ref{eestimate} - \ref{DD}, we can conclude the following estimates of the error terms $e_k, \tilde{e}_k,$ defined by \eqref{e}.
\begin{lemma}\label{error5}
Let $\alpha\geq 10,\tilde{\alpha}\geq\alpha-4.$ There exist $\delta>0$ sufficiently small and $\theta_0\geq1$ sufficiently large, such that for all $k\in\{0,\cdots,i-1\}$ and for all integers $s\in \{6,\cdots,\tilde{\alpha}-2\}$, we have
\begin{equation}\label{ek}
||e_k||_{s,\ast,T}+||\tilde{e}_k||_{H^s(\Gamma_T)}\lesssim\delta^2\theta^{L_4(s)-1}_k\Delta_k,
\end{equation}
where $L_4(s)$ is defined in Lemma \ref{DD}.
\end{lemma}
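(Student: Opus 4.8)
The plan is to collect the four preceding lemmata on the error contributions and simply add them together, tracking only which exponent ends up dominating. Recall from \eqref{e} that
\[
e_k=e'_k+e''_k+e'''_k+D_{k+\frac12}\delta\Psi_k,\qquad \tilde e_k=\tilde e'_k+\tilde e''_k+\tilde e'''_k,
\]
so that the estimate for $e_k$ is obtained by combining Lemma \ref{eestimate} (quadratic errors), Lemma \ref{se} (first substitution errors), Lemma \ref{seconde} (second substitution errors) and Lemma \ref{DD} (last error term), while $\tilde e_k$ uses only the first three of those (there is no boundary counterpart of $D_{k+\frac12}\delta\Psi_k$). Each of those lemmata gives a bound of the form $\delta^2\theta_k^{L_j(s)-1}\Delta_k$ with $j=1,2,3,4$ respectively, valid for $\alpha\ge 10$, $\tilde\alpha\ge\alpha-4$, $\delta$ small and $\theta_0$ large, on the same range $s\in\{6,\dots,\tilde\alpha-2\}$, so the sum is controlled by $\delta^2\Delta_k$ times $\theta_k$ raised to the largest of the four exponents.

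First I would recall the four exponent functions:
\[
L_1(s)=\max\{(s+2-\alpha)_++10-2\alpha;\ s+6-2\alpha\},\quad
L_2(s)=\max\{(s+2-\alpha)_++12-2\alpha;\ s+8-2\alpha\},
\]
\[
L_3(s)=\max\{(s+2-\alpha)_++16-2\alpha;\ s+12-2\alpha\},\quad
L_4(s)=s+14-2\alpha .
\]
Then I would check termwise that $L_4(s)$ dominates the other three on the whole range. For the $(s+2-\alpha)_+$-branch: if $s+2\le\alpha$ that branch equals $16-2\alpha\le s+14-2\alpha$ precisely when $s\ge 2$, which holds; if $s+2>\alpha$ it equals $s+2-\alpha+16-2\alpha=s+18-3\alpha$, and $s+18-3\alpha\le s+14-2\alpha\iff \alpha\ge 4$, which holds since $\alpha\ge 10$. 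For the other branch, $s+12-2\alpha\le s+14-2\alpha$ trivially, and likewise $s+6-2\alpha$, $s+8-2\alpha$ are even smaller; the $10-2\alpha,12-2\alpha$ constants are also dominated by $s+14-2\alpha$ for $s\ge 6$. Hence $\max\{L_1(s),L_2(s),L_3(s),L_4(s)\}=L_4(s)$ for all $s\in\{6,\dots,\tilde\alpha-2\}$, and adding the four (resp.\ three) estimates yields
\[
\|e_k\|_{s,\ast,T}+\|\tilde e_k\|_{H^s(\Gamma_T)}\lesssim \delta^2\theta_k^{L_4(s)-1}\Delta_k,
\]
which is \eqref{ek}.

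This argument is almost entirely bookkeeping: the only real point of care is to verify that the domains of validity of Lemmata \ref{eestimate}, \ref{se}, \ref{seconde}, \ref{DD} (namely $\alpha\ge 7$ for the first two, $\alpha\ge 10$ for the third, $\alpha\ge 10$ and $\tilde\alpha\ge\alpha-4$ for the fourth, all on $s\in\{6,\dots,\tilde\alpha-2\}$) are compatible, which they are under the blanket hypothesis $\alpha\ge 10$, $\tilde\alpha\ge\alpha-4$ stated in Lemma \ref{error5}, and to make sure the smallness of $\delta$ and the largeness of $\theta_0$ can be chosen uniformly (just take the worst of the finitely many thresholds produced by those lemmata). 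The main (and only mild) obstacle is therefore the exponent comparison above, in particular the subcase $s+2>\alpha$ where one must not forget the extra $+2-\alpha$ coming from $(s+2-\alpha)_+$; apart from that there is nothing to prove.
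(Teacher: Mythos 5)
Your proposal is correct and coincides with the paper's own (implicit) argument: Lemma \ref{error5} is obtained exactly by summing the bounds of Lemmata \ref{eestimate}, \ref{se}, \ref{seconde}, \ref{DD} for the components of $e_k$ and $\tilde e_k$ from \eqref{e}, and your exponent comparison showing $\max_j L_j(s)=L_4(s)$ for $s\ge 6$, $\alpha\ge 10$ is the only computation needed, done correctly including the case $s+2>\alpha$.
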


From Lemma \ref{error5}, we obtain the estimate of the accumulated errors $E_i,\tilde{E}_i,\hat{E}_i,$ which are defined in \eqref{ae}.
\begin{lemma}\label{Eerror}
Let $\alpha\geq16,\tilde{\alpha}=\alpha+5.$ There exist $\delta>0$ sufficiently small and $\theta_0\geq1$ sufficiently large, such that
\begin{equation}\label{Eerrore}
||E_i||_{\alpha+3,\ast,T}+||\tilde{E}_i||_{H^{\alpha+3}(\Gamma_T)}\lesssim\delta^2\theta_i.
\end{equation}
\end{lemma}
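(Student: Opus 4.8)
The plan is to apply the per-step error estimate of Lemma \ref{error5} at the single index $s=\alpha+3$ and then sum over $k$. First I would check that this value of $s$ is admissible: since $\tilde\alpha=\alpha+5$ we have $\tilde\alpha-2=\alpha+3$, so $s=\alpha+3$ is exactly the largest index allowed in Lemma \ref{error5}, while $\alpha+3\ge 6$ holds because $\alpha\ge 16$; the remaining hypotheses there, $\alpha\ge 10$ and $\tilde\alpha\ge\alpha-4$, are also satisfied. (This is precisely why $\tilde\alpha$ is taken equal to $\alpha+5$.) Evaluating the exponent $L_4(s)=s+14-2\alpha$ from Lemma \ref{DD} at $s=\alpha+3$ gives $L_4(\alpha+3)=17-\alpha$, so Lemma \ref{error5} yields
\begin{equation}\nonumber
||e_k||_{\alpha+3,\ast,T}+||\tilde e_k||_{H^{\alpha+3}(\Gamma_T)}\lesssim \delta^2\,\theta_k^{16-\alpha}\,\Delta_k\,,\qquad k\in\{0,\dots,i-1\}\,.
\end{equation}

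Next, since $E_i=\sum_{k=0}^{i-1}e_k$ and $\tilde E_i=\sum_{k=0}^{i-1}\tilde e_k$, summing the above bound reduces the claim to the estimate $\sum_{k=0}^{i-1}\theta_k^{16-\alpha}\Delta_k\lesssim\theta_i$. I would prove this by the usual comparison with the integral $\int_{\theta_0}^{\theta_i}\theta^{16-\alpha}\,d\theta$: since $\theta_k$ is increasing and $\Delta_k=\theta_{k+1}-\theta_k\le\frac{1}{2\theta_k}$, each term $\theta_k^{16-\alpha}\Delta_k$ is, up to a fixed constant, dominated by $\int_{\theta_k}^{\theta_{k+1}}\theta^{16-\alpha}\,d\theta$. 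When $\alpha=16$ this integral is $\theta_i-\theta_0\le\theta_i$; when $\alpha=17$ it is $\log(\theta_i/\theta_0)\lesssim\theta_i$; and when $\alpha>17$ it is bounded independently of $i$, hence still $\lesssim\theta_i$. In every case $\sum_{k=0}^{i-1}\theta_k^{16-\alpha}\Delta_k\lesssim\theta_i$, and therefore $||E_i||_{\alpha+3,\ast,T}+||\tilde E_i||_{H^{\alpha+3}(\Gamma_T)}\lesssim\delta^2\theta_i$; the smallness of $\delta$ and the largeness of $\theta_0$ are inherited from Lemma \ref{error5}.

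The main point — rather than a genuine obstacle — is the index bookkeeping: one must make sure that the chain of error estimates of Lemmata \ref{eestimate}--\ref{DD}, collected in Lemma \ref{error5}, remains valid up to $s=\tilde\alpha-2=\alpha+3$, which fixes the relation $\tilde\alpha=\alpha+5$, and that the exponent $L_4(\alpha+3)-1=16-\alpha$ is small enough that the summed series does not outgrow $\theta_i$. The latter is exactly the requirement $\alpha\ge 16$. Everything else is the elementary Riemann-sum comparison described above.
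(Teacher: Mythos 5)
Your proof is correct and follows essentially the same route as the paper: apply the per-step bound of Lemma \ref{error5} at $s=\alpha+3$ (noting $\alpha+3\le\tilde\alpha-2$ forces $\tilde\alpha=\alpha+5$), then sum over $k$. The only cosmetic difference is in the summation step: since $L_4(\alpha+3)-1=16-\alpha\le 0$ and $\theta_k\ge 1$, the paper simply bounds $\theta_k^{16-\alpha}\le 1$ and uses $\sum_k\Delta_k\le\theta_i$, whereas your integral comparison reaches the same conclusion with a bit more work.
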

\begin{proof}
Using $L_4(\alpha+3)\leq1$ if $\alpha\geq16$, it follows from \eqref{ek} that
$$||E_i||_{\alpha+3,\ast,T}\lesssim\sum^{i-1}_{k=0}||e_k||_{\alpha+3,\ast,T}\lesssim\sum^{i-1}_{k=0}\delta^2\Delta_k\lesssim\delta^2\theta_i\,,$$ if $\alpha+3\leq\tilde{\alpha}-2$. Similar arguments also hold for $||\tilde{E}_i||_{H^{\alpha+3}(\Gamma_T)}$. The minimal possible $\tilde{\alpha}$ is $\alpha+5$.
\end{proof}
\subsection{Convergence of the iteration scheme }\label{convergence}
We still need to estimate the source terms $f_{i},g_{i}.$
\begin{lemma}\label{serror}
Let $\alpha\geq16$ and $\tilde{\alpha}=\alpha+5$. There exist $\delta>0$ sufficiently small and $\theta_0\geq1$ sufficiently large, such that for all integers $s\in\{6,\cdots,\tilde{\alpha}+2\},$
\begin{equation}\label{ferrore}
||f_i||_{s,\ast,T}\lesssim\Delta_i\Big(\theta^{s-\alpha-3}_i(||\mathcal{F}^a||_{\alpha+2,\ast,T}+\delta^2)+\delta^2\theta^{L_4(s)-1}_i\Big),
\end{equation}
\begin{equation}\label{gerrore}
||g_i||_{H^{s}(\Gamma_T)}\lesssim\delta^2\Delta_i\Big(\theta^{s-\alpha-3}_i+\theta^{L_4(s)-1}_i\Big)\,.
\end{equation}
\end{lemma}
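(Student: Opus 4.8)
The strategy is to convert the defining relations \eqref{fg} into an explicit telescoping formula for $f_i$, $g_i$ and then to estimate each resulting term with the smoothing inequalities \eqref{as1}--\eqref{as3} and the already established error bounds of Lemmata \ref{error5} and \ref{Eerror}. Subtracting \eqref{fg} written at step $i-1$ from the same identity at step $i$, and using $E_i=E_{i-1}+e_{i-1}$, $\tilde E_i=\tilde E_{i-1}+\tilde e_{i-1}$ (recall \eqref{ae}), one obtains for $i\ge1$
\begin{equation*}
f_i=(S_{\theta_i}-S_{\theta_{i-1}})(\mathcal{F}^a-E_{i-1})-S_{\theta_i}e_{i-1}\,,\qquad g_i=-(S_{\theta_i}-S_{\theta_{i-1}})\tilde E_{i-1}-S_{\theta_i}\tilde e_{i-1}\,,
\end{equation*}
while the case $i=0$, where $f_0=S_{\theta_0}\mathcal{F}^a$ and $g_0=0$, is estimated directly by \eqref{as1}. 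It thus remains to bound the four terms above.

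For the differences $S_{\theta_i}-S_{\theta_{i-1}}$ I would write $S_{\theta_i}-S_{\theta_{i-1}}=\int_{\theta_{i-1}}^{\theta_i}\frac{d}{d\theta}S_\theta\,d\theta$ and invoke \eqref{as3}; since $\theta_{i-1}\simeq\theta_i$ and $\theta_i-\theta_{i-1}=\Delta_{i-1}\simeq\Delta_i\simeq\theta_i^{-1}$, this yields $\|(S_{\theta_i}-S_{\theta_{i-1}})u\|_{s,\ast,T}\lesssim\Delta_i\theta_i^{s-j-1}\|u\|_{j,\ast,T}$ for every admissible order $j$ (and likewise on $\Gamma_T$). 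Applying this with $j=\alpha+2$ to $u=\mathcal{F}^a$ gives the term $\Delta_i\theta_i^{s-\alpha-3}\|\mathcal{F}^a\|_{\alpha+2,\ast,T}$ of \eqref{ferrore}; applying it with $j=\alpha+3$ to $u=E_{i-1}$ (resp. $\tilde E_{i-1}$) and inserting the accumulated-error bound of Lemma \ref{Eerror}, namely $\|E_{i-1}\|_{\alpha+3,\ast,T}+\|\tilde E_{i-1}\|_{H^{\alpha+3}(\Gamma_T)}\lesssim\delta^2\theta_i$, produces $\delta^2\Delta_i\theta_i^{s-\alpha-4}\cdot\theta_i=\delta^2\Delta_i\theta_i^{s-\alpha-3}$, again of the form admitted by \eqref{ferrore}, \eqref{gerrore}.

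The two terms $S_{\theta_i}e_{i-1}$, $S_{\theta_i}\tilde e_{i-1}$ are split according to the size of $s$. For $s\in\{6,\dots,\tilde\alpha-2\}=\{6,\dots,\alpha+3\}$ the smoothing operator loses only a constant ($\|S_{\theta_i}\cdot\|_{s,\ast,T}\lesssim\|\cdot\|_{s,\ast,T}$ by \eqref{as1}), and Lemma \ref{error5} gives directly $\lesssim\delta^2\theta_i^{L_4(s)-1}\Delta_i$, the remaining term of \eqref{ferrore}/\eqref{gerrore}. For the higher orders $s\in\{\tilde\alpha-1,\dots,\tilde\alpha+2\}$ one uses instead \eqref{as1} with $j=\alpha+3$ together with Lemma \ref{error5} at order $\alpha+3$: this costs a factor $\theta_i^{s-\alpha-3}$, but since $L_4(\alpha+3)-1=16-\alpha\le0$ for $\alpha\ge16$ the residual power does not exceed $\theta_i^{s-\alpha-3}$, so this contribution is absorbed into the first group. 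Collecting the four estimates yields \eqref{ferrore} and \eqref{gerrore}.

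The proof is essentially a bookkeeping of the exponents of $\theta_i$: for each $s$ one must verify that every power produced is either $\le s-\alpha-3$ or equals $L_4(s)-1$, and it is exactly at this point that the choices $\alpha\ge16$ and $\tilde\alpha=\alpha+5$ enter — they guarantee that Lemma \ref{error5} is available up to the order $\tilde\alpha-2=\alpha+3$ at which the high-order loss of $S_{\theta_i}$ must be compensated, and they force $L_4(\alpha+3)-1\le0$. No new analytic ingredient beyond Lemmata \ref{error5} and \ref{Eerror} is required; the only mild obstacle is to keep the case distinctions in $s$ and the accounting of powers of $\theta_i$ under control.
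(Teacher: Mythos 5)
Your proof is correct and follows essentially the same route as the paper: the telescoped identity from \eqref{fg} giving $f_i=(S_{\theta_i}-S_{\theta_{i-1}})(\mathcal{F}^a-E_{i-1})-S_{\theta_i}e_{i-1}$ (and its analogue for $g_i$), the estimate of $S_{\theta_i}-S_{\theta_{i-1}}$ via \eqref{as3} with $\Delta_{i-1}\simeq\Delta_i\simeq\theta_i^{-1}$, Lemma \ref{Eerror} applied to $E_{i-1}$, $\tilde E_{i-1}$ at order $\alpha+3$, and Lemma \ref{error5} together with \eqref{as1} for $S_{\theta_i}e_{i-1}$, $S_{\theta_i}\tilde e_{i-1}$, with the observation $L_4(\alpha+3)-1\le 0$ for $\alpha\ge 16$ handling the orders $s>\tilde\alpha-2$. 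Only your aside on $i=0$ is superfluous (the lemma is invoked for the induction index $i\ge 1$, the base case being Lemma \ref{H0}), and as phrased the bound with the factor $\Delta_0\theta_0^{s-\alpha-3}$ would not follow from \eqref{as1} alone uniformly in $\theta_0$; this does not affect the argument.
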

In the above inequalities we need the exponent $s-\alpha-3$ of $\theta_i$ to compensate the loss of 2 derivatives for the data in \eqref{tamesss1}, in order to recover the exponent $s-\alpha-1$ in the corresponding terms of \eqref{tamesss4}.
\begin{proof}
As in \cite{Coulombel2008, S-T plasma vuoto, Trakhinin2009}, using \eqref{as1}-\eqref{as3}, \eqref{fg}, \eqref{ek}, \eqref{Eerrore}, we obtain that
\begin{equation}\nonumber
\begin{split}
||f_i||_{s,\ast,T}&\leq||(S_{\theta_i}-S_{\theta_{i-1}})\mathcal{F}^a-(S_{\theta_i}-S_{\theta_{i-1}})E_{i-1}-S_{\theta_i}e_{i-1}||_{s,\ast,T}\\
&\lesssim \Delta_i\theta^{s-\alpha-3}_i(||\mathcal{F}^a||_{\alpha+2,\ast,T}+\theta^{-1}_i||E_{i-1}||_{\alpha+3,\ast,T})+||S_{\theta_i}e_{i-1}||_{s,\ast,T}\\
&\lesssim \Delta_i\{\theta^{s-\alpha-3}_i(||\mathcal{F}^a||_{\alpha+2,\ast,T}+\delta^2)+\delta^2\theta^{L_4(s)-1}_i\}.
\end{split}
\end{equation}
Using \eqref{ek} and \eqref{Eerrore} we can obtain \eqref{gerrore}.
\end{proof}

Similar to the proof of \cite{Coulombel2008, S-T plasma vuoto, Trakhinin2009}, we can obtain the estimate of $(\delta{\mathbf V}_i,\delta\Psi_i)$ by \eqref{mo32} and the tame estimate \eqref{tamesss} applied to problem \eqref{effective3}.
\begin{lemma}\label{delta}
Let $\alpha\geq16$ and $\tilde{\alpha}=\alpha+5$. If $\delta>0$ and $||\mathcal{F}^a||_{\alpha+2,\ast,T}/\delta$ are sufficiently small and $\theta_0\geq1$ is sufficiently large, then for all integers $s\in\{6,\cdots,\tilde{\alpha}\}$,
 \begin{equation}\label{d}
||(\delta {\mathbf V}_i,\delta\Psi_i)||_{s,\ast,T}+||\delta\psi_i||_{H^{s}(\Gamma_T)}\leq \delta\theta^{s-\alpha-1}_i\Delta_i.
\end{equation}
\end{lemma}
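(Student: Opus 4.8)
The plan is to feed the effective linear problem \eqref{effective3} into the tame a priori estimate of Theorem \ref{keytame}, with basic state $({\mathbf U}^a+{\mathbf V}_{i+\frac12},\Psi^a+\Psi_{i+\frac12})$ and data $(f_i,g_i)$, and then to combine the resulting bound with the estimates for the modified state from Lemma \ref{mo} and for the source terms from Lemma \ref{serror}, tracking powers of $\theta_i$.

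First I would check that Theorem \ref{keytame} applies. By Lemma \ref{mo} the modified state $({\mathbf U}^a+{\mathbf V}_{i+\frac12},\varphi^a+\psi_{i+\frac12})$ satisfies all the structural constraints \eqref{hy}--\eqref{c2}, and its low norm $\|\tilde{\mathbf U}^a+{\mathbf V}_{i+\frac12}\|_{9,\ast,T}+\|\varphi^a+\psi_{i+\frac12}\|_{H^{10}(\Gamma_T)}$ is $\le K_0$ for $\delta$ small, by \eqref{small}, \eqref{estimate8} at $s=9$ and \eqref{mo32}; hence \eqref{assumption} holds. Theorem \ref{keytame} then gives, for $s\in\{6,\dots,\tilde\alpha\}$,
\[
\|(\delta\dot{\mathbf V}_i,\delta\psi_i)\|_{s,\ast,T}\lesssim \|f_i\|_{s+2,\ast,T}+\|g_i\|_{H^{s+2}(\Gamma_T)}+\bigl(\|f_i\|_{8,\ast,T}+\|g_i\|_{H^8(\Gamma_T)}\bigr)\,\|\hat W_{i+\frac12}\|_{s+4,\ast,T},
\]
where $\hat W_{i+\frac12}=(\tilde{\mathbf U}^a+{\mathbf V}_{i+\frac12},\nabla_{t,{\bf x}}(\Psi^a+\Psi_{i+\frac12}))$. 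Next I would estimate the coefficient norm by splitting ${\mathbf V}_{i+\frac12}=S_{\theta_i}{\mathbf V}_i+({\mathbf V}_{i+\frac12}-S_{\theta_i}{\mathbf V}_i)$ and using $\Psi_{i+\frac12}=S_{\theta_i}\Psi_i$ from \eqref{mo1}, so that \eqref{small}, \eqref{estimate8} and \eqref{mo32} yield $\|\hat W_{i+\frac12}\|_{s+4,\ast,T}\lesssim \delta\,\theta_i^{s+8-\alpha}$, with a $\log\theta_i$ at the borderline $s+8=\alpha$ and a bounded exponent since $\tilde\alpha=\alpha+5$. For the data I would apply Lemma \ref{serror} with $s$ replaced by $s+2$ (admissible because \eqref{ferrore} and \eqref{gerrore} hold up to $\tilde\alpha+2$), obtaining
\[
\|f_i\|_{s+2,\ast,T}+\|g_i\|_{H^{s+2}(\Gamma_T)}\lesssim \Delta_i\Bigl(\theta_i^{s-\alpha-1}\bigl(\|\mathcal F^a\|_{\alpha+2,\ast,T}+\delta^2\bigr)+\delta^2\theta_i^{L_4(s+2)-1}\Bigr)
\]
and $\|f_i\|_{8,\ast,T}+\|g_i\|_{H^8(\Gamma_T)}\lesssim\Delta_i\bigl(\theta_i^{5-\alpha}(\|\mathcal F^a\|_{\alpha+2,\ast,T}+\delta^2)+\delta^2\theta_i^{L_4(8)-1}\bigr)$. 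Since $L_4(s+2)-1=s+15-2\alpha\le s-\alpha-1$ and $L_4(8)-1=21-2\alpha\le-1$ exactly when $\alpha\ge16$, the "bad" exponents are absorbed, and multiplying the low-norm factor by $\theta_i^{s+8-\alpha}$ still leaves a strictly negative surplus power of $\theta_i$ for $\alpha\ge16$. Collecting everything I expect a bound of the shape $\|(\delta\dot{\mathbf V}_i,\delta\psi_i)\|_{s,\ast,T}\lesssim \delta\,\theta_i^{s-\alpha-1}\Delta_i\bigl(\|\mathcal F^a\|_{\alpha+2,\ast,T}/\delta+\delta+\theta_0^{-1}\bigr)$.

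To pass from the Alinhac good unknown $\delta\dot{\mathbf V}_i$ to the genuine increment $\delta{\mathbf V}_i$, I would use \eqref{goodunkown}, namely $\delta{\mathbf V}_i=\delta\dot{\mathbf V}_i+\frac{\partial_1({\mathbf U}^a+{\mathbf V}_{i+\frac12})}{\partial_1(\Phi^a+\Psi_{i+\frac12})}\delta\Psi_i$, with $\delta\Psi_i^\pm=\chi(\pm x_1)\delta\psi_i$ so that $\|\delta\Psi_i\|_{s,\ast,T}\lesssim\|\delta\psi_i\|_{H^s(\Gamma_T)}$; the Moser calculus of Lemma \ref{moser2}, together with the bound on the coefficient norm and the already-obtained smallness of the low norm $\|\delta\psi_i\|_{W^{1,\infty}(\Gamma_T)}$ (from the case $s=6$), transfers the estimate to $(\delta{\mathbf V}_i,\delta\Psi_i,\delta\psi_i)$ without increasing the power of $\theta_i$. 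Finally, taking $\delta>0$ small, $\|\mathcal F^a\|_{\alpha+2,\ast,T}/\delta$ small and $\theta_0\ge1$ large makes the parenthesis strictly less than $1$ and upgrades "$\lesssim$" to the clean constant $\delta$, which is precisely \eqref{d}.

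The main obstacle is the exponent bookkeeping: one must verify that the two-derivative loss in the data norm of \eqref{tamesss}, the $\theta_i^{s+8-\alpha}$ coefficient loss stemming from the derivative loss in the construction of the modified magnetic field in Lemma \ref{mo}, and the error exponents $L_4$ all conspire to stay $\le s-\alpha-1$ exactly under the calibration $\alpha\ge16$, $\tilde\alpha=\alpha+5$. The logarithmic borderline cases ($s=\alpha$, $s+8=\alpha$) and the interplay with the ratio $\|\mathcal F^a\|_{\alpha+2,\ast,T}/\delta$ must be handled with some care, and it is exactly here that the largeness of $\theta_0$ and the smallness of $\delta$ are used.
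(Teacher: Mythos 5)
Your proposal is correct and follows essentially the same route as the paper: apply the tame estimate \eqref{tamesss} to \eqref{effective3} with the modified state of Lemma \ref{mo} as basic state, bound $f_i,g_i$ via Lemma \ref{serror} with $s$ shifted to $s+2$, control the coefficient norm by \eqref{estimate8} and \eqref{mo32}, return from the good unknown to $\delta\mathbf V_i$ through \eqref{goodunkown} and Moser calculus, and close the exponent bookkeeping under $\alpha\ge16$, $\tilde\alpha=\alpha+5$ with $\delta$, $\|\mathcal F^a\|_{\alpha+2,\ast,T}/\delta$ small and $\theta_0$ large. The only (harmless) imprecision is the coefficient bound, which should read $\|\hat W_{i+\frac12}\|_{s+4,\ast,T}\lesssim\delta\bigl(\theta_i^{s+8-\alpha}+\theta_i^{(s+6-\alpha)_+}\bigr)$ — it is not $O(\delta\theta_i^{s+8-\alpha})$ when $s+8<\alpha$ — and the relevant inequalities (e.g. $5-\alpha+(s+6-\alpha)_+\le s-\alpha-1$, $21-2\alpha+(s+6-\alpha)_+\le s-\alpha-1$, $L_4(s+2)\le s-\alpha$) still hold for $\alpha\ge16$, exactly as in \eqref{tamesss4}.
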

\begin{proof}
Let us consider problem \eqref{effective3} that will be solved by applying Theorem \ref{keytame}. We first notice that $({\mathbf U}^a+{\mathbf V}_{i+\frac{1}{2}}, \Psi^a+\Psi_{i+\frac{1}{2}}, \varphi^a+\psi_{i+\frac{1}{2}})$ satisfy the constraints \eqref{hy}--\eqref{c}, \eqref{c2}, \eqref{assumption}. Thus we may apply our tame estimate \eqref{tamesss} and obtain
	\begin{equation}\label{tamesss1}
		\begin{split}
			&||\delta\dot{{\mathbf V}_i}||_{s,\ast,T}+||\delta\psi_i||_{H^s(\Gamma_T)}\\
			&\leq C\Big(||{f}_i||_{s+2,\ast,T}+||{g}_i||_{H^{s+2}(\Gamma_T)}\\
			&\quad+(||{f}_i||_{8,\ast,T}+||{g}_i||_{H^{8}(\Gamma_T)})||(\tilde{\mathbf U}^a+{\mathbf V}_{i+\frac{1}{2}}, \nabla(\Psi^a+\Psi_{i+\frac{1}{2}}))||_{s+4,\ast,T}\Big).
		\end{split}
	\end{equation}
	On the other hand, from \eqref{goodunkown} it follows
	\begin{equation}\label{good2}
		\begin{split}
			&||\delta{{\mathbf V}_i}||_{s,\ast,T}
			\leq ||\delta\dot{{\mathbf V}_i}||_{s,\ast,T}+
			C||\delta{{\mathbf \Psi}_i}||_{s,\ast,T}||(\tilde{\mathbf U}^a+{\mathbf V}_{i+\frac{1}{2}}, \Psi^a+\Psi_{i+\frac{1}{2}})||_{6,\ast,T}\\
			&\quad\qquad+
		C||\delta{{\mathbf \Psi}_i}||_{4,\ast,T}||(\tilde{\mathbf U}^a+{\mathbf V}_{i+\frac{1}{2}}, \Psi^a+\Psi_{i+\frac{1}{2}})||_{s+2,\ast,T}.
		\end{split}
	\end{equation}
	From
	\begin{equation}\label{Psipsi}
		||\delta{{\mathbf \Psi}_i}||_{s,\ast,T}\le ||\delta\psi_i||_{H^s(\Gamma_T)},
	\end{equation}
\eqref{tamesss1} for $s=4$ and \eqref{assumption} we have
\begin{equation*}
	\begin{split}
		&||\delta{{\mathbf \Psi}_i}||_{4,\ast,T}\le ||\delta\psi_i||_{H^4(\Gamma_T)}
		\leq C\Big(||{f}_i||_{6,\ast,T}+||{g}_i||_{H^{6}(\Gamma_T)}\\
		&\quad+(||{f}_i||_{8,\ast,T}+||{g}_i||_{H^{8}(\Gamma_T)})||(\tilde{\mathbf U}^a+{\mathbf V}_{i+\frac{1}{2}}, \nabla(\Psi^a+\Psi_{i+\frac{1}{2}}))||_{8,\ast,T}\Big)\\
		&	\leq C
		(||{f}_i||_{8,\ast,T}+||{g}_i||_{H^{8}(\Gamma_T)}).
	\end{split}
\end{equation*}
Then, from \eqref{tamesss1} and \eqref{good2} we obtain
	\begin{equation*}
	\begin{split}
		&||\delta{{\mathbf V}_i}||_{s,\ast,T}+||\delta\psi_i||_{H^s(\Gamma_T)}\\
		&\leq C\Big(||{f}_i||_{s+2,\ast,T}+||{g}_i||_{H^{s+2}(\Gamma_T)}\\
		&\quad+(||{f}_i||_{8,\ast,T}+||{g}_i||_{H^{8}(\Gamma_T)})||(\tilde{\mathbf U}^a+{\mathbf V}_{i+\frac{1}{2}}, \nabla(\Psi^a+\Psi_{i+\frac{1}{2}}))||_{s+4,\ast,T}\Big)\\
		&\quad+||\delta \psi_i||_{H^s(\Gamma_T)}||(\tilde{\mathbf U}^a+{\mathbf V}_{i+\frac{1}{2}}, \Psi^a+\Psi_{i+\frac{1}{2}})||_{6,\ast,T}.
	\end{split}
\end{equation*}
Using \eqref{app}, \eqref{estimate8} and  \eqref{mo32} for $s=6$, taking $\delta>0$ sufficiently small, we can absorb the last term in the right-hand side above into the left-hand side to get
	\begin{equation}\label{tamesss3}
	\begin{split}
		&||\delta{{\mathbf V}_i}||_{s,\ast,T}+||\delta\psi_i||_{H^s(\Gamma_T)}\\
		&\leq  C\Big(||{f}_i||_{s+2,\ast,T}+||{g}_i||_{H^{s+2}(\Gamma_T)}\\
		&\quad+(||{f}_i||_{8,\ast,T}+||{g}_i||_{H^{8}(\Gamma_T)})||(\tilde{\mathbf U}^a+{\mathbf V}_{i+\frac{1}{2}}, \nabla(\Psi^a+\Psi_{i+\frac{1}{2}}))||_{s+4,\ast,T}\Big).
	\end{split}
\end{equation}
The remaining part of the work is to estimate the right-hand side of \eqref{tamesss3}.
\newline
Using Lemma \ref{serror}, \eqref{estimate8}, Lemma \ref{mo} and \eqref{Psipsi}, \eqref{tamesss3} becomes
	\begin{equation}\label{tamesss4}
	\begin{split}
		&||(\delta{{\mathbf V}_i},\delta{{\mathbf \Psi}_i})||_{s,\ast,T}+||\delta\psi_i||_{H^s(\Gamma_T)}\\
		&\leq  C\Delta_i\Big(\theta_i^{s-\alpha-1}(||\mathcal{F}^a||_{\alpha+2,\ast,T}+\delta^2)+\delta^2\theta_i^{L_4(s+2)-1}\Big)\\
		&\quad+C\Delta_i\Big(\theta_i^{5-\alpha}(||\mathcal{F}^a||_{\alpha+2,\ast,T}+\delta^2)+\delta^2\theta_i^{21-2\alpha}\Big)\Big(\delta\theta_i^{s+8-\alpha}+\delta\theta_i^{(s+6-\alpha)_+}\Big).
	\end{split}
\end{equation}
One checks that, for $\alpha\ge16$ and $6\le s\le\tilde{\alpha}$, the following inequalities hold true:
\begin{equation*}
	\begin{split}
	&L_4(s+2)\le s-\alpha,\qquad 5-\alpha+(s+6-\alpha)_+\le s-\alpha-1,
\\
&
s+13-2\alpha\le s-\alpha-1,\qquad
21-2\alpha+s+8-\alpha\le s-\alpha-1,\\
&21-2\alpha+(s+6-\alpha)_+\le s-\alpha-1.
	\end{split}
\end{equation*}
From \eqref{tamesss4} we thus obtain \eqref{d}, provided $\delta>0$ and $||\mathcal{F}^a||_{\alpha+2,\ast,T}/\delta$ are sufficiently small and $\theta_0\geq1$ is sufficiently large.	
\end{proof}

Finally, similar to the proof of \cite{Coulombel2008, S-T plasma vuoto, Trakhinin2009}, we can obtain the remaining inequalities in $(H_i).$
\begin{lemma}\label{Hn}
Let $\alpha\geq16$. If $\delta>0$ and $||\mathcal{F}^a||_{\alpha+2,\ast,T}/\delta$ are sufficiently small and if $\theta_0\geq1$ is sufficiently large, then for all integers $s\in\{6,\cdots,\tilde{\alpha}-2\}$
 \begin{equation}\label{Le}
||\mathcal{L}({\mathbf V}_i,\Psi_i)-\mathcal{F}^a||_{s,\ast,T}\leq 2\delta\theta^{s-\alpha-1}_i.
\end{equation}
Moreover, for all integers $s\in\{6,\cdots,\tilde{\alpha}-2\}$
 \begin{equation}\label{B22}
||\mathcal{B}({\mathbf V}_i|_{x_1=0},\Psi_i)||_{H^{s}(\Gamma_T)}\leq \delta\theta^{s-\alpha-1}_i.
\end{equation}
\end{lemma}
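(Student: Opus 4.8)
The plan is to run the concluding step of the Nash--Moser convergence argument, exactly along the lines of \cite{Coulombel2008,Trakhinin2009,S-T plasma vuoto}: both \eqref{Le} and \eqref{B22} will follow from an algebraic identity for the defects $\mathcal L({\mathbf V}_i,\Psi_i)-\mathcal F^a$ and $\mathcal B({\mathbf V}_i|_{x_1=0},\psi_i)$, combined with the error bounds of Lemmata \ref{error5}--\ref{serror} and the smoothing estimates \eqref{as1}--\eqref{as3}.

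First I would establish the identity. Since $({\mathbf V}_0,\Psi_0,\psi_0)=(0,0,0)$ and $\mathcal L(0,0)=0$, $\mathcal B(0,0)=0$, telescoping the decompositions \eqref{l} and \eqref{B} and using that $\mathbb L'_e(\dots)\delta\dot{{\mathbf V}}_k=f_k$ and $\mathbb B'_e(\dots)(\delta\dot{{\mathbf V}}_k|_{x_1=0},\delta\psi_k)=g_k$, one gets $\mathcal L({\mathbf V}_i,\Psi_i)=\sum_{k=0}^{i-1}(f_k+e_k)$ and $\mathcal B({\mathbf V}_i|_{x_1=0},\psi_i)=\sum_{k=0}^{i-1}(g_k+\tilde e_k)$. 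Inserting the definitions \eqref{ae} of $E_i,\tilde E_i$ and \eqref{fg} of $f_k,g_k$, and using $E_i=E_{i-1}+e_{i-1}$, $\tilde E_i=\tilde E_{i-1}+\tilde e_{i-1}$, these collapse for $i\ge1$ to
\[
\mathcal L({\mathbf V}_i,\Psi_i)-\mathcal F^a=(S_{\theta_{i-1}}-I)\mathcal F^a+(I-S_{\theta_{i-1}})E_{i-1}+e_{i-1},\qquad \mathcal B({\mathbf V}_i|_{x_1=0},\psi_i)=(I-S_{\theta_{i-1}})\tilde E_{i-1}+\tilde e_{i-1}.
\]

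Then I would estimate each summand in $H^s_{\ast}(\Omega_T)$, resp. $H^s(\Gamma_T)$, for $6\le s\le\tilde\alpha-2=\alpha+3$. Splitting the range of $s$ at $s=\alpha+2$ and using \eqref{as1}, \eqref{as2} together with the smallness \eqref{small}, the first summand satisfies $||(S_{\theta_{i-1}}-I)\mathcal F^a||_{s,\ast,T}\le\delta\theta_i^{s-\alpha-1}$ provided $||\mathcal F^a||_{\alpha+2,\ast,T}/\delta$ is small and $\theta_0$ large (the norm $||\mathcal F^a||_{\alpha+2}$ enters precisely because of the two-derivative loss in the tame estimate \eqref{tamesss}). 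For the second summand, \eqref{as2} and the accumulated-error bound \eqref{Eerrore} give $||(I-S_{\theta_{i-1}})E_{i-1}||_{s,\ast,T}\lesssim\theta_{i-1}^{s-\alpha-3}||E_{i-1}||_{\alpha+3,\ast,T}\lesssim\delta^2\theta_i^{s-\alpha-2}\le C\delta^2\theta_i^{s-\alpha-1}$, and the same for $\tilde E_{i-1}$. For the third summand, Lemma \ref{error5} with $L_4(s)=s+14-2\alpha$ and $\Delta_{i-1}\le(2\theta_{i-1})^{-1}$ yields $||e_{i-1}||_{s,\ast,T}+||\tilde e_{i-1}||_{H^s(\Gamma_T)}\lesssim\delta^2\theta_{i-1}^{s+13-2\alpha}\le C\delta^2\theta_i^{s-\alpha-1}$ whenever $\alpha\ge16$ (since then $s+13-2\alpha\le s-\alpha-1$). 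Adding up, the interior defect is $\le(1+C\delta)\delta\theta_i^{s-\alpha-1}\le2\delta\theta_i^{s-\alpha-1}$ for $\delta$ small, which is \eqref{Le}, while the boundary defect is $\le C\delta^2\theta_i^{s-\alpha-1}\le\delta\theta_i^{s-\alpha-1}$ for $\delta$ small, which is \eqref{B22}.

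I expect no genuinely hard analytic point here: the lemma is quantitative bookkeeping. The one thing to verify carefully is that, throughout $6\le s\le\alpha+3$, none of the $\theta_i$-exponents produced by the smoothing and Moser estimates exceeds $s-\alpha-1$, and that this is exactly what forces the choices $\alpha\ge16$ and $\tilde\alpha=\alpha+5$ (the latter being the smallest $\tilde\alpha$ compatible with Lemma \ref{Eerror}, so that $||E_{i-1}||_{\alpha+3,\ast,T}$ and $||\tilde E_{i-1}||_{H^{\alpha+3}(\Gamma_T)}$ are controlled while still $\alpha+3\le\tilde\alpha-2$). The second, purely mechanical, obstacle is tracking how the three parameters — $\delta$ small, $||\mathcal F^a||_{\alpha+2,\ast,T}/\delta$ small, $\theta_0$ large — are used to convert the generic constants from \eqref{as1}--\eqref{as3} and the Moser inequalities into the explicit constants $2$ and $1$ appearing in \eqref{Le} and \eqref{B22}.
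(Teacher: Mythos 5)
Your proposal is correct and follows essentially the same route as the paper: summing the decompositions \eqref{l}, \eqref{B} to obtain the identity $\mathcal{L}({\mathbf V}_i,\Psi_i)-\mathcal{F}^a=(S_{\theta_{i-1}}-I)\mathcal{F}^a+(I-S_{\theta_{i-1}})E_{i-1}+e_{i-1}$ (and its boundary analogue), then concluding via \eqref{as2}, \eqref{ek}, \eqref{Eerrore} and the smallness of $\delta$, $||\mathcal{F}^a||_{\alpha+2,\ast,T}/\delta$, with $\theta_0$ large. The only quibble is a harmless exponent slip for $e_{i-1}$ (using $\Delta_{i-1}\le(2\theta_{i-1})^{-1}$ gives $s+12-2\alpha$ rather than $s+13-2\alpha$), which does not affect the conclusion.
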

\begin{proof}
Recall that, by summing the relations \eqref{l}, we have
\begin{equation}
\mathcal{L}({\mathbf V}_i,\Psi_i)-\mathcal{F}^a=(S_{\theta_{i-1}}-I)\mathcal{F}^a+(I-S_{\theta_{i-1}})E_{i-1}+e_{i-1}.
\end{equation}
The proof of \eqref{Le} then follows by applying \eqref{as2}, \eqref{ek}, \eqref{Eerrore}, provided that $\delta>0$ and $||\mathcal{F}^a||_{\alpha+2,\ast,T}/\delta$ are sufficiently small  and $\theta_0\geq1$ is sufficiently large. The proof of \eqref{B22} is similar.
\end{proof}
%\smallskip

We are now in the position to prove the main theorem for the existence of the solution to the nonlinear problem \eqref{IVP}.
\medskip

\noindent
{\it Proof of Theorem \ref{maintheorem}}: \;
Let the initial data $({\mathbf U}^{\pm}_0,\varphi_0)$ satisfy all the assumptions of Theorem \ref{maintheorem}. Let $\alpha=m+1\ge 16$, $\tilde{\alpha}=\alpha+5$, $\mu=m+10$. Then the initial data ${\mathbf U}^{\pm}_0\in H^{\mu+1.5}(\mathbb R^2_+)$ and $\varphi_0\in H^{\mu+1.5}(\mathbb R)$ are compatible up to order $\mu$ and there exists an approximate solution  $(\tilde{\mathbf U}^a,\varphi^a)\in H^{\mu+2}(\Omega_T)\times H^{\mu+2}(\Gamma_T)$ to problem \eqref{IVP}. Observe that $\mu+2=\tilde\alpha+6$ as required in \eqref{small}. We choose $\delta>0$, $T>0$ sufficiently small, $\theta_0\ge 1$ sufficiently large as in the previous lemmata. We also assume $T>0$ small enough so that $||\mathcal{F}^a||_{\alpha+2,\ast,T}/\delta$ is sufficiently small. Then in view of Lemmata \ref{H0}, \ref{delta}, \ref{Hn}, property ($H_i$) holds for all integers $i$. In particular, we have
$$\sum^{\infty}_{i=0}(||(\delta {\mathbf V}_i,\delta\Psi_i)||_{s,\ast,T}+||\delta\psi_i||_{H^{s}(\Gamma_T)})\lesssim\sum^{\infty}_{i=0}\theta^{s-\alpha-2}_i<\infty,\quad\text{for}\,\,\,6\leq s\leq\alpha-1.$$
Thus, the sequence $({\mathbf V}_i,\Psi_i)$ converges to some limit $({\mathbf V},\Psi)$ in $H^{\alpha-1}_{\ast}(\Omega_T),$ and sequence $\psi_i$ converges to some limit $\psi$ in $H^{\alpha-1}(\Gamma_T).$ Passing to the limit in \eqref{Le} and \eqref{B22} for $s=m=\alpha-1$, we obtain \eqref{system}. Therefore, $({\mathbf U},\Phi)=({\mathbf U}^a+{\mathbf V},\Phi^a+\Psi)$ is a solution on $\Omega_T$ of nonlinear system \eqref{IVP}.

\bigskip

\appendix
\section{Trace theorem in anisotropic space}\label{tracetheorem}
In this Appendix let us recall the following trace theorem in the anisotropic space by Ohno, Shizuta, Yanagisawa \cite{Ohno-Shizuta1994}.
\begin{theorem}[\cite{Ohno-Shizuta1994}]\label{teoA1}
Let $s\ge 2$ be an integer. Then the mapping
$$
C_0^\infty(\overline{\mathbb{R}^2_+})\ni u\mapsto\{\partial^j_1u|_{x_1=0}\,,\,\,j=0,\dots[s/2]-1\}\in \underbrace{C^\infty_0(\mathbb R)\times\dots C^\infty_0(\mathbb R)}_{[s/2]\,\,\,\mbox{times}}
$$
extends by continuity to a continuous linear mapping of
$$
H^s_\ast(\mathbb R^2_+)\rightarrow\prod\limits_{j=0}^{[s/2]-1}H^{s-2j-1}(\mathbb R)\,.
$$
This mapping is surjective and there exists a continuous linear right-inverse
$$
(v_0,\dots,v_{[s/2]-1})\mapsto\mathcal R(v_0,\dots,v_{[s/2]-1})
$$
of
$$
\prod\limits_{j=0}^{[s/2]-1}H^{s-2j-1}(\mathbb R)\rightarrow H^s_\ast(\mathbb R^2_+)\,
$$
such that
$$
\partial^j_1(\mathcal R(v_0,\dots,v_{[s/2]-1}))|_{x_1=0}=v_j\,,\quad j=0,\dots,[s/2]-1\,.
$$
\end{theorem}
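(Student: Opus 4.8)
The plan is to treat the statement as a purely local one near the boundary $\{x_1=0\}$ and reduce it, via a partial Fourier transform in the tangential variable $x_2$, to a one‑parameter family of elementary one‑dimensional trace estimates in the normal variable $x_1$. First I would fix $\chi\in C_0^\infty([0,1/2))$ with $\chi\equiv 1$ near $x_1=0$ and write $u=\chi u+(1-\chi)u$: the second summand vanishes near the boundary and has zero trace of every order, while on $\operatorname{supp}\chi u$ one has $\sigma(x_1)=x_1$, so that $D^\alpha_\ast(\chi u)$ reduces (modulo lower order terms) to $\partial_2^{\alpha_2}(x_1\partial_1)^{\alpha_1}\partial_1^{\alpha_3}(\chi u)$ and $\|\chi u\|_{H^s_\ast(\mathbb{R}^2_+)}\lesssim\|u\|_{H^s_\ast(\mathbb{R}^2_+)}$ by the Leibniz rule. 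Passing to $\widehat{\chi u}(x_1,\eta)$ (Fourier transform in $x_2$, dual variable $\eta$) and using Plancherel, the anisotropic norm is equivalent, near the boundary, to $\sum_{\langle\alpha\rangle\le s}\int_\mathbb{R}\int_0^\infty\langle\eta\rangle^{2\alpha_2}\,|(x_1\partial_1)^{\alpha_1}\partial_1^{\alpha_3}\widehat{\chi u}(x_1,\eta)|^2\,dx_1\,d\eta$. The structural point, which governs both directions of the theorem, is that the vector field $x_1\partial_1$ is invariant and $\partial_1$ scales by $\langle\eta\rangle^2$ under the dilation $x_1\mapsto\langle\eta\rangle^2 x_1$; this anisotropic (parabolic‑type) scaling is what produces the loss of a full derivative in the trace.

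For the continuity of the trace map I would argue first for $u$ in the Schwartz class (extending afterwards by density, using that each $\partial_1^j u$ lies in $H^1_{\mathrm{loc}}$ in $x_1$, so the pointwise restriction is meaningful) and apply, for fixed $\eta$, the elementary inequality $|f(0)|^2\le C\bigl(\mu\!\int_0^\infty|f|^2\,dx_1+\mu^{-1}\!\int_0^\infty|f'|^2\,dx_1\bigr)$, valid for compactly supported $f$ and $\mu\ge 1$, to $f=\partial_1^j\widehat{\chi u}(\cdot,\eta)$ with $\mu=\langle\eta\rangle^2$. Multiplying by $\langle\eta\rangle^{2(s-2j-1)}$ and integrating in $\eta$, the two terms produced are exactly the Fourier–Plancherel expressions of $\|D^\alpha_\ast u\|_{L^2}^2$ for $\alpha=(0,s-2j,j)$ and $\alpha=(0,s-2j-2,j+1)$, both of weight $\langle\alpha\rangle=s$; the range $0\le j\le[s/2]-1$ is precisely what guarantees $2(j+1)\le s$, making the second multi‑index admissible. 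This gives $\|\partial_1^j u|_{x_1=0}\|_{H^{s-2j-1}(\mathbb{R})}\lesssim\|u\|_{H^s_\ast(\mathbb{R}^2_+)}$ for $j=0,\dots,[s/2]-1$.

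For the right inverse, by superposition it suffices to lift a single datum $v_j\in H^{s-2j-1}(\mathbb{R})$ to a function whose $\partial_1^j$‑trace is $v_j$ and whose normal traces of all other orders $\le[s/2]-1$ vanish. I would pick a fixed Schwartz profile $\Psi_j$ with $\Psi_j^{(k)}(0)=\delta_{jk}$ for $0\le k\le[s/2]-1$ and define, on the Fourier side, $\widehat{\mathcal R_j v_j}(x_1,\eta)=\langle\eta\rangle^{-2j}\hat v_j(\eta)\,\Psi_j(\langle\eta\rangle^2 x_1)$, composed with the cut‑off $\chi$. Then $\partial_1^k\widehat{\mathcal R_j v_j}(0,\eta)=\langle\eta\rangle^{2(k-j)}\hat v_j(\eta)\Psi_j^{(k)}(0)$ equals $\hat v_j(\eta)$ for $k=j$ and vanishes for the other admissible $k$; and the substitution $t=\langle\eta\rangle^2 x_1$ yields $\|D^\alpha_\ast(\mathcal R_j v_j)\|_{L^2}^2\lesssim\int_\mathbb{R}\langle\eta\rangle^{2\alpha_2+4\alpha_3-4j-2}|\hat v_j(\eta)|^2\,d\eta$, where $\langle\alpha\rangle=\alpha_1+\alpha_2+2\alpha_3\le s$ forces the exponent to be $\le 2(s-2j-1)$. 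Hence $\|\mathcal R_j v_j\|_{H^s_\ast(\mathbb{R}^2_+)}\lesssim\|v_j\|_{H^{s-2j-1}(\mathbb{R})}$, and $\mathcal R(v_0,\dots,v_{[s/2]-1}):=\sum_j\mathcal R_j v_j$ is a bounded linear right inverse.

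The routine part, which is also the bulk of the work, is the bookkeeping: establishing rigorously the equivalence between $\|\cdot\|_{H^s_\ast}$ and its Fourier–Plancherel form near the boundary (including the interchange of $(\sigma\partial_1)^{\alpha_1}$ with $\sigma^{\alpha_1}\partial_1^{\alpha_1}$ and the non‑constant part of $\sigma$ on $1/2\le x_1\le 1$), handling the low tangential frequencies $|\eta|\lesssim 1$ by using an $\eta$‑independent profile there, and carrying out the density/approximation argument so that all the normal traces are well defined. None of this is deep, and for the complete details one may refer to Ohno–Shizuta–Yanagisawa \cite{Ohno-Shizuta1994}; the only genuinely structural ingredient is the $\langle\eta\rangle^2$ normal scaling, which simultaneously fixes the regularity exponents in both the direct and the inverse parts of the statement.
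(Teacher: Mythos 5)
The paper itself offers no proof to compare against: Theorem \ref{teoA1} is quoted from Ohno--Shizuta--Yanagisawa \cite{Ohno-Shizuta1994}, and Appendix \ref{tracetheorem} only restates it. Your proposal is a correct, essentially self-contained proof of the cited result, and it is the natural one: tangential Fourier transform plus the parabolic scaling $x_1\mapsto\langle\eta\rangle^{2}x_1$, which is exactly what encodes the weight count $\langle\alpha\rangle=\alpha_1+\alpha_2+2\alpha_3\le s$. Your exponent bookkeeping checks out in both directions: in the trace estimate, applying $|f(0)|^2\le \mu\|f\|^2_{L^2(0,\infty)}+\mu^{-1}\|f'\|^2_{L^2(0,\infty)}$ with $\mu=\langle\eta\rangle^2$ to $f=\partial_1^j\widehat{\chi u}(\cdot,\eta)$ and weighting by $\langle\eta\rangle^{2(s-2j-1)}$ produces the two multi-indices $(\alpha_1,\alpha_2,\alpha_3)=(0,s-2j,j)$ and $(0,s-2j-2,j+1)$, both of weight $s$ and both admissible precisely because $j\le[s/2]-1$ gives $2(j+1)\le s$; in the lifting, the invariance of $x_1\partial_1$ under dilation and the Jacobian factor $\langle\eta\rangle^{-2}$ from $t=\langle\eta\rangle^2x_1$ yield the exponent $2\alpha_2+4\alpha_3-4j-2\le 2(s-2j-1)$, which is equivalent to $\alpha_2+2\alpha_3\le s$, and the normalization $\Psi_j^{(k)}(0)=\delta_{jk}$ together with superposition gives a bounded right inverse of the full product map, hence surjectivity. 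The items you defer as routine (replacing $(\sigma\partial_1)^{\alpha_1}$ by $\sigma^{\alpha_1}\partial_1^{\alpha_1}$ modulo lower-order terms, the density of smooth functions in $H^s_\ast$ needed to make the pointwise restrictions meaningful, the behavior of $\sigma$ away from the boundary after cut-off) are indeed standard and are handled in the cited reference, so I see no gap; the only thing your one-sided argument does not reproduce is whatever alternative machinery \cite{Ohno-Shizuta1994} uses, but for the purposes of this paper the statement is only used as a black box.
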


\section{Proof of Theorem \ref{th-wp-hom}}\label{proof-Thm-5-1}
Here we focus only on the proof of the energy estimate \eqref{est-wp-hom}; the existence and uniqueness of the solution can be shown by standard methods.

Let ${\mathbf V}={\mathbf V}(t,{\bf x})=({\mathbf V}^+,{\mathbf V}^-)$, $\varphi=\varphi(t,x_2)$, where ${\mathbf V}^\pm:=(\dot q^\pm, \dot u^\pm_n, \dot u_2^\pm, \dot H^\pm_n, \dot H^\pm_2, \dot S^\pm)$, be sufficiently smooth vector fields respectively on $\Omega_T$ and $\Gamma_T$, satisfying the linear system \eqref{A}, or its equivalent form \eqref{BBB}, together with the boundary and the ``initial" conditions from \eqref{ho2}, that is
\begin{equation}\label{BC}
\partial_t\varphi+\hat{u}^\pm_2\partial_2\varphi-\dot{u}^\pm_N\mp\varphi\partial_1\hat{u}^\pm_N=0\,,\quad \dot{q}^+-\dot{q}^-+\varphi[\partial_1\hat{q}]=0 \quad\mbox{on}\,\,\,\Gamma_T\,,
\end{equation}
\begin{equation}\label{IC}
({\mathbf V}, \varphi)=(0,0)\quad\mbox{for}\,\,\,t<0\,.
\end{equation}
Recall from Section \ref{linearized} that $\hat\Psi^\pm=\hat\Psi^\pm(t,{\bf x})$ are defined through the basic front function $\hat\varphi=\hat\varphi(t,x_2)$ by
\[
\hat\Psi^\pm(t,{\bf x}):=\chi(\pm x_1)\hat\varphi(t,x_2)\,,\quad\forall\,{\bf x}\in\mathbb R^2_+\,\,\,t\in(-\infty,T]\,,
\]
where $\chi\in C^{\infty}_0(\R)$ satisfies $\chi\equiv 1$ on $[-1,1]$, so that $\partial_1\hat\Psi^\pm\vert_{x_1=0}=0$. We set
\[
\dot u^\pm_n:=\dot u_1^\pm-\partial_2\hat\Psi^\pm\dot u^\pm_2\,,\quad\dot H^\pm_n:=\dot H_1^\pm-\partial_2\hat\Psi^\pm\dot H^\pm_2\,;
\]
notice in particular
\[
\dot u^\pm_n\equiv \dot u^\pm_N:=\dot u_1^\pm-\partial_2\hat\varphi\dot u^\pm_2\,,\quad\dot H^\pm_n\equiv \dot H^\pm_N:=\dot H_1^\pm-\partial_2\hat\varphi\dot H^\pm_2\,,\quad\mbox{on}\,\,\,\Gamma_T\,.
\]
Recall that $({\mathbf V},\varphi)$ as above must also satisfy the interior and boundary constraints
\begin{equation}\label{I&Bconst}
\mathrm{div}\dot{\mathbf h}^{\pm}=0 \text{ in }\Omega_T\quad\mbox{and}\quad\hat{H}^{\pm}_2\partial_2\varphi-\dot{H}^{\pm}_{N}\mp\varphi\partial_1\hat{H}^{\pm}_{N}=0 \text{ on }\Gamma_T\,,
\end{equation}
where
$$
\dot {\mathbf h}^\pm:=(\dot H^\pm_n,\partial_1\hat\Phi^\pm\dot H^\pm_2)\quad\mbox{and}\quad\hat\Phi^\pm(t,{\bf x}):=\pm x_1+\hat\Psi^\pm(t,{\bf x})\,.
$$
In the forthcoming calculations we will make use of the following shortcut notation
\begin{eqnarray}
I(t):=\Vert{\mathbf V}^+(t)\Vert_{L^2(\mathbb R^2_+)}^2+\Vert{\mathbf V}^-(t)\Vert_{L^2(\mathbb R^2_+)}^2\,,\label{I0}\\
I_{0}(t):=\Vert\partial_t{\mathbf V}^+(t)\Vert_{L^2(\mathbb R^2_+)}^2+\Vert\partial_t{\mathbf V}^-(t)\Vert_{L^2(\mathbb R^2_+)}^2\,,\label{It}
\\
\begin{split}
I_{1,n}(t):=\Vert\partial_1{\mathbf V}^+_n(t)\Vert_{L^2(\mathbb R^2_+)}^2+\Vert\partial_1{\mathbf V}^-_n(t)\Vert_{L^2(\mathbb R^2_+)}^2,
\end{split}\label{I1n}\\
I_{\sigma}(t):=\Vert\sigma\partial_1{\mathbf V}^+(t)\Vert_{L^2(\mathbb R^2_+)}^2+\Vert\sigma\partial_1{\mathbf V}^-(t)\Vert_{L^2(\mathbb R^2_+)}^2\,,\label{Isigma}\\
I_{2}(t):=\Vert\partial_2{\mathbf V}^+(t)\Vert_{L^2(\mathbb R^2_+)}^2+\Vert\partial_2{\mathbf V}^-(t)\Vert_{L^2(\mathbb R^2_+)}^2\,,\label{I2}
\end{eqnarray}
and we set
\begin{equation}\label{I}
I_{1,\ast}(t):=I(t)+I_0(t)+I_{\sigma}(t)+I_{2}(t)\,.
\end{equation}
In \eqref{I1n}, ${\mathbf V}^\pm_n$ denote the noncharacteristic part of the solution ${\mathbf V}^\pm$, that is
\[
{\mathbf V}^\pm_n=(\dot q^\pm,\dot u^\pm_n,\dot H^\pm_n)\,.
\]
For all integers $m\ge 0$, we will also write $C_m$ to denote a generic positive constant depending (nonlinearly) on $\Vert\hat{\mathbf U}^\pm\Vert_{W^{m,\infty}(\Omega_T)}$ and $\Vert\nabla_{t,x_2}\hat\varphi\Vert_{W^{m,\infty}(\Gamma_T)}$  and the positive number $k$ from \eqref{hy} and \eqref{stability1}, that is
\[
C_m=C_m(\Vert\hat{\mathbf U}^\pm\Vert_{W^{m,\infty}(\Omega_T)},\Vert\nabla_{t,x_2}\hat\varphi\Vert_{W^{m,\infty}(\Gamma_T)},k)\,,
\]
that might possibly be different from an occurrence to another even within the same sequence of inequalities.

\smallskip
\noindent
From the notation above and in view of \eqref{IC}, it straightforwardly turns out that
\begin{equation}\label{equiv}
\begin{split}
& \frac1{C_1}\Vert\dot{\mathbf U}(t)\Vert_{H^1_\ast(\mathbb R^2_+)}^2\le I(t)+I_\sigma(t)+I_2(t)\le C_1\Vert\dot{\mathbf U}(t)\Vert_{H^1_\ast(\mathbb R^2_+)}^2\,,\\
& \frac1{C_1}|||\dot{\mathbf U}(t)|||_{1,\ast}^2\le I_{1,\ast}(t)\le C_1|||\dot{\mathbf U}(t)|||_{1,\ast}^2\,,\\
& \frac1{C_1}\Vert\dot{\mathbf U}\Vert_{1,\ast,t}^2\le\int_0^t I_{1,\ast}(s)ds\le C_1\Vert\dot{\mathbf U}\Vert_{1,\ast,t}^2\,,
\end{split}
\end{equation}
for all $t\in(0,T]$, see \eqref{norma-equiv}, \eqref{norma-int} and \eqref{V}.

\smallskip
\noindent
The estimate of Theorem \ref{th-wp-hom} will be obtained by applying classical arguments from the energy method in order to get a control of the $L^2$-norm of the solution ${\mathbf V}^\pm$ and the front $\varphi$, as well as its tangential space time derivatives, corresponding to the different expressions listed in \eqref{I0}--\eqref{I2}.
%\begin{remark}\label{entropia}
%The linearized equation for the entropy $\dot S^\pm$ is an evolution-like equation because the coefficient of the normal derivative of the entropy vanishes on the boundary; this yields that no boundary conditions are needed to be coupled to the equation in order to derive an apriori energy estimate for it. So to estimate $\dot S^\pm$, we just handle the equation of $\dot S^\pm$ alone by the standard energy method tools.
%\end{remark}
%%%%%%%%%%%%%%%%%%%%%%%%%%%%%%%%%%%%%%%%%%%%%%%%%%%%%%%%%%%%%%%%%%%%%%%
\subsection{Estimate of $I(t)$}\label{I0_sct}
We scalarly multiply both sides of system \eqref{BBB} and integrate by parts in $\Omega_t$ to get the energy identity
\begin{equation}\label{en-id-I0}
\begin{split}
\int_{\mathbb R^2_+}(\mathcal B_0{\mathbf V}\cdot{\mathbf V})(t)d{\bf x}&-\int_{\Gamma_t}(\mathcal B_1{\mathbf V}\cdot{\mathbf V})\vert_{x_1=0}\,dx_2ds=2\int_{\Omega_t}\tilde{\mathcal F}\cdot{\mathbf V}d{\bf x}ds\\
&+\int_{\Omega_t}(\partial_t\mathcal B_0+\partial_1\mathcal B_1+\partial_2\mathcal B_2-\mathcal B_3){\mathbf V}\cdot{\mathbf V}d{\bf x}ds\,,
\end{split}
\end{equation}
where, to shorcut notation, we write hereafter $\mathcal{B}_k$ instead of $\mathcal{B}_k(\hat{{\mathbf U}},\hat{\Psi})$ for $k=0,1,2,3$.
\newline
From Cauchy-Schwarz and Young inequality, the right-hand side above is estimated by
\begin{equation}\label{source_0}
\begin{split}
&2\int_{\Omega_t}\tilde{\mathcal F}\cdot{\mathbf V}d{\bf x}ds
+\int_{\Omega_t}(\partial_t\mathcal B_0+\partial_1\mathcal B_1+\partial_2\mathcal B_2-\mathcal B_3){\mathbf V}\cdot{\mathbf V}d{\bf x}ds\\
&\le 2\Vert\tilde{\mathcal F}\Vert_{L^2(\Omega_t)}\Vert{\mathbf V}\Vert_{L^2(\Omega_t)}+\Vert\partial_t\mathcal B_0+\partial_1\mathcal B_1+\partial_2\mathcal B_2-\mathcal B_3\Vert_{L^\infty(\Omega_t)}\Vert{\mathbf V}\Vert_{L^2(\Omega_t)}^2\\
&\le\Vert\tilde{\mathcal F}\Vert_{L^2(\Omega_t)}^2+\Vert{\mathbf V}\Vert_{L^2(\Omega_t)}^2+\Vert\partial_t\mathcal B_0+\partial_1\mathcal B_1+\partial_2\mathcal B_2-\mathcal B_3\Vert_{L^\infty(\Omega_t)}\Vert{\mathbf V}\Vert_{L^2(\Omega_t)}^2\\
&\le\Vert\tilde{\mathcal F}\Vert_{L^2(\Omega_t)}^2+C_1\Vert{\mathbf V}\Vert_{L^2(\Omega_t)}^2\le C_0\Vert\tilde{\mathbf F}\Vert_{L^2(\Omega_t)}^2+C_1\int_0^t I(s)ds\\
&\le C_0\Vert{\mathbf F}\Vert_{L^2(\Omega_t)}^2+C_1\int_0^t I(s)ds\,,
\end{split}
\end{equation}
where
\begin{equation}\label{source}
\tilde{\mathcal F}=J^T\tilde{\mathbf F}\quad\mbox{and}\quad\tilde{\mathbf F}=\mathcal S(\hat{\mathbf U}){\mathbf F}
\end{equation}
are used, together with \eqref{I0}, to make the last inequality above.
\newline
Now we need to get an estimate of the quadratic form $(\mathcal B_1{\mathbf V}\cdot{\mathbf V})$ under the second boundary integral in the left-hand side of \eqref{en-id-I0}.

From \eqref{boundary-q-form-lambda}, a direct calculation gives \eqref{boundary-q-form_lambda_1}, \eqref{lot_1},
%\begin{equation}\label{boundary-q-form_lambda_1}
%(\mathcal B_1{\mathbf V}\cdot{\mathbf V})=2[\hat u_2-\hat\lambda\hat H_2]\dot q^+\partial_2\varphi+\mbox{l.o.t}\,,\quad\mbox{on}\,\,\,\{x_1=0\}\,,
%\end{equation}
%with
%\begin{equation}\label{lot_1}
%\begin{split}
%\mbox{l.o.t.}:=&-2[\partial_1\hat u_N-\hat\lambda\partial_1\hat H_N]
%\dot q^+\varphi-2[\partial_1\hat q]\varphi\partial_t\varphi-2[\partial_1\hat q](\hat u^-_2-\hat\lambda\hat H^-_2)\varphi\partial_2\varphi\\
%&-2[\partial_1\hat q](\partial_1\hat u^-_N-\hat\lambda^-\partial_1\hat H^-_N)\varphi^2\,,
%\end{split}
%\end{equation}
where we have made use of the boundary conditions in \eqref{BC} and \eqref{I&Bconst}. In \eqref{boundary-q-form_lambda_1} the initials ``l.o.t'' are used to mean ``lower order terms'' with respect to the {\em leading part} $[\hat u_2-\hat\lambda\hat H_2]\dot q^+\partial_2\varphi$ of the boundary quadratic form \eqref{boundary-q-form_lambda_1}. All terms appearing in \eqref{lot_1} are products of the form $\varphi\,\partial_i\varphi$, with $i=0,2$, or $v_{j}^{\pm}\vert_{x_1=0}\,\varphi$, with $j=1,\dots,6$, up to some coefficients. To make uniform notation, here we have set $\partial_0\equiv\partial_t$ and ${\mathbf V}^\pm:=(v_1^\pm,v^\pm_2,v^\pm_3,v^\pm_4,v^\pm_5,v^\pm_6)$.
\newline
As already shown in Sect. \ref{wellposedness}, the major advantage to settle the functions $\hat\lambda^\pm=\lambda(\hat{\mathbf U}^\pm)$ as prescribed in Lemma \ref{lemma-Paolo-Ale-Paola} is making the leading term in the boundary quadratic form $(\mathcal B_1{\mathbf V}\cdot{\mathbf V})\vert_{x_1=0}$  to be identically zero. Under this choice, the latter reduces indeed to
\begin{equation}\label{boundary-q-form_lambda_3}
\begin{split}
\frac{1}{2}(\mathcal B_1{\mathbf V}\cdot{\mathbf V})\vert_{x_1=0}&=\mbox{l.o.t.}=-[\partial_1\hat u_N-\hat\lambda\partial_1\hat H_N]
\dot q^+\varphi-[\partial_1\hat q]\varphi\partial_t\varphi\\
&\quad-[\partial_1\hat q](\hat u^-_2-\hat\lambda\hat H^-_2)\varphi\partial_2\varphi-[\partial_1\hat q](\partial_1\hat u^-_N-\hat\lambda^-\partial_1\hat H^-_N)\varphi^2\,.
\end{split}
\end{equation}
We now focus on the estimate of the boundary integral of the different quadratic terms in \eqref{boundary-q-form_lambda_3}. Because the explicit form of the coefficients involved in the different quadratic terms appearing in \eqref{boundary-q-form_lambda_3} is useless, hereafter we adopt the custom to denote as $\hat{c}=\hat{c}(t,x_2)$ a generic function on $\Gamma_T$ represented by some nonlinear smooth function of $\hat{\mathbf U}^\pm\vert_{x_1=0}$ and $\nabla_{t,x_2}\hat\varphi$ only, that may be possibly different from a line to another within the same formula. %We follow here the method of \cite{Trakhinin2005}
From \eqref{boundary-q-form_lambda_3}
\begin{equation*}
\begin{split}
\int_{\Gamma_t}(\mathcal B_1{\mathbf V}\cdot{\mathbf V})\vert_{x_1=0}\,dx_2ds&=\int_{0}^t\!\!\!\int_{\mathbb R}\!\!\hat c
\dot q^+\varphi\,dx_2ds+\int_{0}^t\!\!\!\int_{\mathbb R}\!\!\hat c\varphi\partial_t\varphi
\,dx_2ds\\
&\quad+\int_{0}^t\!\!\!\int_{\mathbb R}\!\!\hat c\varphi\partial_2\varphi\,dx_2ds+\int_{0}^t\!\!\!\int_{\mathbb R}\!\!\hat c\varphi^2\,dx_2ds
\end{split}
\end{equation*}
follows at once.
\newline
Hence Cauchy--Schwarz and Young's inequalities together with Leibniz's rule and integration by parts yields
\[
\begin{split}
&\left\vert\int_{0}^t\!\!\!\int_{\mathbb R}\!\!\hat c
\dot q^+\varphi\,dx_2ds\right\vert\le\Vert\hat c\Vert_{L^\infty(\Gamma_t)}\left\{\Vert\dot q^+\vert_{x_1=0}\Vert_{L^2(\Gamma_t)}^2+\Vert\varphi\Vert_{L^2(\Gamma_t)}^2\right\}\,;\\
\\
&\left\vert\int_{0}^t\!\!\!\int_{\mathbb R}\!\!\hat c\varphi\partial_t\varphi
\,dx_2ds\right\vert=\left\vert\frac12\int_{0}^t\!\!\!\int_{\mathbb R}\!\!\hat c\partial_t(\varphi^2)\,dx_2ds\right\vert=\left\vert\frac12\int_{0}^t\!\!\!\int_{\mathbb R}\!\!\partial_t(\hat c\varphi^2)\,dx_2ds-\frac12\int_{0}^t\!\!\!\int_{\mathbb R}\!\!\partial_t\hat c\varphi^2\,dx_2ds\right\vert\\
&\qquad=\left\vert\frac12\int_{\mathbb R}\!\!\hat c(t)\varphi^2(t)\,dx_2-\frac12\int_{0}^t\!\!\!\int_{\mathbb R}\!\!\partial_t\hat c\varphi^2\,dx_2ds\right\vert\\
&\qquad\le\frac12\Vert\hat c(t)\Vert_{L^\infty(\mathbb R)}\Vert\varphi(t)\Vert^2_{L^2(\mathbb R)}+\frac12\Vert\partial_t\hat c\Vert_{L^\infty(\Gamma_t)}\Vert\varphi\Vert^2_{L^2(\Gamma_t)}\,;\\
\\
&\left\vert\int_{0}^t\!\!\!\int_{\mathbb R}\!\!\hat c\varphi\partial_2\varphi\,dx_2ds\right\vert=\left\vert\frac12\int_{0}^t\!\!\!\int_{\mathbb R}\!\!\hat c\partial_2(\varphi^2)\,dx_2ds\right\vert=\left\vert-\frac12\int_{0}^t\!\!\!\int_{\mathbb R}\!\!\partial_2\hat c\varphi^2\,dx_2ds\right\vert\\
&\qquad\quad\le\frac12\Vert\partial_2\hat c\Vert_{L^\infty(\Gamma_t)}\Vert\varphi\Vert^2_{L^2(\Gamma_t)}\,;\\
\\
&\left\vert\int_{0}^t\!\!\!\int_{\mathbb R}\!\!\hat c\varphi^2\,dx_2ds\right\vert\le\Vert\hat c\Vert_{L^\infty(\Gamma_t)}\Vert\varphi\Vert^2_{L^2(\Gamma_t)}\,.
\end{split}
\]
Then we sum up the preceding estimates to get
\begin{equation}\label{stima_bdry_0}
\begin{split}
\left\vert\int_{\Gamma_t}(\mathcal B_1{\mathbf V}\cdot{\mathbf V})\vert_{x_1=0}\,dx_2ds\right\vert\le C_1\left\{\Vert \dot q^+\vert_{x_1=0}\Vert_{L^2(\Gamma_t)}^2+\Vert\varphi(t)\Vert_{L^2(\mathbb R)}^2\right\}+C_2\Vert\varphi\Vert^2_{L^2(\Gamma_t)}\,.
\end{split}
\end{equation}
Because functions $\hat\lambda^\pm$ chosen in Lemma \ref{lemma-Paolo-Ale-Paola} make $\mathcal{B}_0$ to be positive definite, we also have
\begin{equation}\label{est_f_b}
\int_{\mathbb R^2_+}(\mathcal B_0{\mathbf V}\cdot{\mathbf V})(t)d{\bf x}\ge c_0\Vert{\mathbf V}(t)\Vert^2_{L^2(\mathbb R^2_+)}=c_0I(t)\,,
\end{equation}
with some constant $c_0>0$ (depending on the number $k$ in \eqref{hy} and \eqref{stability1}).
\newline
Combining the latter with \eqref{en-id-I0}, \eqref{source_0} and \eqref{stima_bdry_0} then leads to
\begin{equation}\label{en-ineq-I0-prel}
\begin{split}
I(t)\le\frac1{c_0}\left\vert\int_{\Gamma_t}(\mathcal B_1{\mathbf V}\cdot{\mathbf V})\vert_{x_1=0}\,dx_2ds\right\vert+C_0\Vert{\mathbf F}\Vert_{L^2(\Omega_t)}^2+C_1\int_0^t I(s)ds\\
\le C_0\Vert{\mathbf F}\Vert_{L^2(\Omega_t)}^2+C_1\left\{\Vert \dot q^+\vert_{x_1=0}\Vert_{L^2(\Gamma_t)}^2+\Vert\varphi(t)\Vert_{L^2(\mathbb R)}^2+\int_0^t I(s)ds\right\}+C_2\Vert\varphi\Vert^2_{L^2(\Gamma_t)}\,.
\end{split}
\end{equation}
Notice that in the right-hand side of \eqref{en-ineq-I0-prel} the $L^2$-norm of the trace of $\dot q^+$ on the boundary $\Gamma_t$ needs for a control of the normal derivative of $\dot q^+$ in the interior of the domain $\Omega_t$: indeed, we apply to $\dot q^+$ the following {\em trace type inequality}
\begin{equation}\label{trace-in}
\Vert f\vert_{x_1=0}\Vert^2_{L^2(\Gamma_t)}\le\Vert f\Vert^2_{L^2(\Omega_t)}+\Vert\partial_1 f\Vert^2_{L^2(\Omega_t)}
\end{equation}
which holds true for an arbitrary sufficiently smooth scalar/vector-valued function $f=f(t,{\bf x})$ over $\Omega_t$. The above estimate \eqref{trace-in} follows by passing from a boundary integral on $\mathbb R$ to a spatial volume one on $\mathbb R^2_+$, at fixed $t$, as follows:
\begin{equation}\label{trace_t}
\begin{split}
&\int_{\mathbb R}\vert f(t)\vert_{x_1=0}\vert^2dx_2=-\int_{\mathbb R}\int_0^{+\infty}\partial_1\left(\vert f(t)\vert_{x_1=0}\vert^2\right)dx_1dx_2\\
&=-\int_{\mathbb R^2_+}2 f\partial_1 f(t)\,dx\le\Vert f(t)\Vert^2_{L^2(\mathbb R^2_+)}+\Vert\partial_1 f(t)\Vert^2_{L^2(\mathbb R^2_+)}\,,
\end{split}
\end{equation}
from which \eqref{trace-in} follows at once from integrating over $(-\infty,t)$.

This clearly shows that inequality \eqref{en-ineq-I0-prel} cannot provide a closed $L^2$-estimate of the vector unknown ${\mathbf V}$, since a control of the normal derivative of $\dot q^+$ on $\Omega_t$ is needed in the right-hand side. Fortunately, the component $\dot q^+$ of ${\mathbf V}$ belongs to the {\em noncharacteristic components} ${\bf V}^\pm_n=(\dot q^\pm,\dot u_n^\pm, \dot H^\pm_n)$, whose normal derivative can be expressed directly from linear system \eqref{A} as a function of tangential derivatives of ${\bf V}$ only and the source term $\mathcal F$. Thus estimating the $L^2-$norm of $\partial_1{\mathbf V}^\pm_n$ reduces to estimate the $L^2-$norm of tangential derivatives of ${\bf V}$ and the $L^2-$norm of $\mathcal F$, as it will be detailed in the next section.
\newline
Applying \eqref{trace-in} to $f=\dot q^+$, from \eqref{en-ineq-I0-prel}
\begin{equation}\label{en-ineq-I0}
\begin{split}
I(t)
\le C_0\Vert{\mathbf F}\Vert_{L^2(\Omega_t)}^2+C_1\left\{\Vert\varphi(t)\Vert_{L^2(\mathbb R)}^2+\int_0^t (I+I_{1,n})(s)ds\right\}+C_2\int_0^t\Vert\varphi(s)\Vert^2_{L^2(\mathbb R)}ds
\end{split}
\end{equation}
follows at once, see \eqref{I1n}.
%(and by $\hat{\mathbf c}=\hat{\mathbf c}(t,{\bf x})$ a suitable lifting of $\hat c$ over $\Omega_T$)

\subsection{Estimate of $I_{1,n}(t)$}\label{I1n_sct}
In this section we manage to find an ``explicit" expression of the normal derivative of the noncharacteristic component ${\mathbf V}^\pm_n$ of the unknowns directly from linear system \eqref{A}. The key step is taking advantage of the form of the normal derivative coefficient $\mathcal A_1$ in \eqref{AA} and noticing that from $\mathcal A_{(0)}\vert_{x_1=0}=0$ the identity
\begin{equation}\label{conorVSnor}
\mathcal A_{(0)}\partial_1{\mathbf V}=\mathcal H_{(0)}\sigma\partial_1{\mathbf V}
\end{equation}
easily follows, where the matrix coefficient $\mathcal H_{(0)}=\mathcal H_{(0)}(t,{\bf x}):=\displaystyle\frac{\mathcal A_{(0)}(t,{\bf x})}{\sigma(x_1)}\in L^\infty(\Omega_T)$ provided that $\mathcal A_{(0)}, \partial_1\mathcal A_{(0)}\in L^\infty(\Omega_T)$ and obeys the estimate
\[
\Vert\mathcal H_{(0)}\Vert_{L^\infty(\Omega_t)}\le \Vert\partial_1\mathcal A_{(0)}\Vert_{L^\infty(\Omega_t)}\,,
\]
see \cite[Lemma B.9]{MTP2009} for details.
\newline
After \eqref{conorVSnor}, from \eqref{A} we derive
\begin{equation}\label{Asigmad1}
\mathcal{A}\partial_1{\mathbf V}=\mathcal{F}-\mathcal{A}_0\partial_t{\mathbf V}-\mathcal H_{(0)}\sigma\partial_1{\mathbf V}-\mathcal{A}_2\partial_2{\mathbf V}-\mathcal{A}_3{\mathbf V}\quad\mbox{in}\,\,\,\Omega_T\,,
\end{equation}
where the matrix $\mathcal A$ involved in the left-hand side above only applies to components $\dot{q}^\pm$, $\dot{u}^\pm_n$ of the noncharacteristic part ${\mathbf V}_n$ of ${\mathbf V}$, in view of \eqref{AA}; indeed an explicit calculation gives
\begin{equation}\label{Ad1}
\begin{split}
&\mathcal A\partial_1{\mathbf V}=\left(\frac{1}{\partial_1\hat{\Phi}^+}\mathcal{E}_{12}\partial_1{\mathbf V}^+, \frac{1}{\partial_1\hat{\Phi}^-}\mathcal{E}_{12}\partial_1{\mathbf V}^-\right)\\
&\frac{1}{\partial_1\hat{\Phi}^\pm}\mathcal{E}_{12}\partial_1{\mathbf V}^\pm=\left(\frac{1}{\partial_1\hat{\Phi}^\pm}\partial_1\dot u^\pm_n, \frac{1}{\partial_1\hat{\Phi}^\pm}\partial_1\dot q^\pm, 0, 0,0,0\right)\,.
\end{split}
\end{equation}
By gathering \eqref{Asigmad1}, \eqref{Ad1} we derive the announced explicit form of normal derivatives of the noncharacteristic components $(\dot q^\pm, \dot u_n^\pm)$ of ${\mathbf V}^\pm$ as a function of space time tangential derivatives of ${\mathbf V}^\pm$ and $\mathcal F^\pm$ alone, namely
\begin{equation}\label{d1qun}
\begin{split}
&\partial_1\dot q^\pm=\partial_1\hat\Phi^\pm\left(\mathcal{F}^\pm-\mathcal{A}^\pm_0\partial_t{\mathbf V}^\pm-\mathcal H_{(0)}^\pm\sigma\partial_1{\mathbf V}^\pm-\mathcal{A}_2^\pm\partial_2{\mathbf V}^\pm-\mathcal{A}_3^\pm{\mathbf V}^\pm\right)_1\\
&\partial_1\dot u^\pm_n=\partial_1\hat\Phi^\pm\left(\mathcal{F}^\pm-\mathcal{A}^\pm_0\partial_t{\mathbf V}^\pm-\mathcal H_{(0)}^\pm\sigma\partial_1{\mathbf V}^\pm-\mathcal{A}_2^\pm\partial_2{\mathbf V}^\pm-\mathcal{A}_3^\pm{\mathbf V}^\pm\right)_2\quad\mbox{in}\,\,\,\Omega_T\,,
\end{split}
\end{equation}
where the subindices $1,2$ appearing above are referred to the first and the second components of the vectors.
\newline
As regards to the normal derivative of the noncharacteristic components $\dot H^\pm_n$, they can be still derived from the first condition in \eqref{I&Bconst} as a known function of tangential derivatives of ${\mathbf V}^\pm$
\begin{equation}\label{d1Hn}
\partial_1\dot H^\pm_n=-\partial_2(\dot H^\pm_2\partial_1\hat\Phi^\pm)=-\partial_2\dot H^\pm_2\partial_1\hat\Phi^\pm-\dot H^\pm_2\partial_2\partial_1\hat\Phi^\pm\quad\mbox{in}\,\,\,\Omega_T\,.
\end{equation}
The estimate of $I_{1,n}(t)$ then follows at once from \eqref{d1qun} and \eqref{d1Hn}; we get
\begin{equation}\label{en-ineq-I1n}
\begin{split}
I_{1,n}(t)&\le C_0\Vert{\mathbf F}(t)\Vert_{L^2(\mathbb R^2_+)}^2+C_1\left\{I(t)+I_0(t)+I_{\sigma}(t)+I_2(t)\right\}\\
&=C_0\Vert{\mathbf F}(t)\Vert_{L^2(\mathbb R^2_+)}^2+C_1 I_{1,\ast}(t)\,.
\end{split}
\end{equation}
After estimate \eqref{en-ineq-I1n}, it appears that the $L^2-$estimate of the normal derivative of the noncharacteristic part ${\mathbf V}_n^\pm$ of the solution is reducted to control the $L^2-$norm of the tangential space time derivatives of ${\mathbf V}$, that is $I_\sigma(t)$, $I_2(t)$ and $I_0(t)$, which naturally leads to establish an $H^1_\ast-$estimate for ${\mathbf V}$. The subsequent sections will be devoted to obtain the $H^1_\ast-$estimate.
%%%%%%%%%%%%%%%%%%%%%%%%%%%%%%%%%%%%%%%%%%%%%%%%%%%%%%%%%%%%%%%%%%%%%%%%%
\subsection{Estimate of $I_\sigma(t)$}\label{Isigma_sct}
We set for shortness ${\mathbf V}_\sigma:=\sigma\partial_1{\mathbf V}$. Applying the conormal derivative $\sigma\partial_1$ to both sides of system \eqref{BBB} we get a similar linear system satisfied by ${\mathbf V}_\sigma$. We compute
\begin{equation}\label{syst_Vs_prel}
\begin{split}
\mathcal{B}_0\partial_t{\mathbf V}_\sigma+\mathcal{B}_1\sigma\partial_1\partial_1{\mathbf V}+\sigma\partial_1\mathcal B_1\partial_1{\mathbf V}+\mathcal{B}_2\partial_2{\mathbf V}_\sigma+\mathcal{B}_3{\mathbf V}_\sigma\\
=\sigma\partial_1\tilde{\mathcal{F}}-\sigma\partial_1\mathcal B_0\partial_t{\mathbf V}-\sigma\partial_1\mathcal B_2\partial_2{\mathbf V}-\sigma\partial_1\mathcal B_3{\mathbf V}.
\end{split}
\end{equation}
From Leibniz's rule
\[
\partial_1{\mathbf V}_\sigma=\partial_1(\sigma\partial_1{\mathbf V})=\sigma\partial_1\partial_1{\mathbf V}+\sigma^\prime\partial_1{\mathbf V}\,;
\]
plugging the latter into \eqref{syst_Vs_prel} and rewriting $\sigma\partial_1\mathcal B_1\partial_1{\mathbf V}=\partial_1\mathcal B_1{\mathbf V}_\sigma$
gives
\begin{equation}\label{syst_Vs}
\mathcal{B}_0\partial_t{\mathbf V}_\sigma+\mathcal{B}_1\partial_1{\mathbf V}_\sigma+\mathcal{B}_2\partial_2{\mathbf V}_\sigma+(\mathcal{B}_3+\partial_1\mathcal B_1){\mathbf V}_\sigma=\tilde{\mathcal F}_{\sigma}\,,
\end{equation}
where
\begin{equation}\label{Fs}
\tilde{\mathcal F}_\sigma:=\sigma\partial_1\tilde{\mathcal{F}}-\sigma\partial_1\mathcal B_0\partial_t{\mathbf V}+\sigma^\prime\mathcal B_1\partial_1{\mathbf V}-\sigma\partial_1\mathcal B_2\partial_2{\mathbf V}-\sigma\partial_1\mathcal B_3{\mathbf V}\,.
\end{equation}
Performing on system \eqref{syst_Vs} the same standard energy arguments as done for system \eqref{BBB} in Sect. \ref{I0_sct}, leads to the following energy identity for ${\mathbf V}_\sigma$
\begin{equation*}
\begin{split}
\int_{\mathbb R^2_+}(\mathcal B_0{\mathbf V}_\sigma\cdot{\mathbf V}_\sigma)(t)d{\bf x}&-\int_{\Gamma_t}(\mathcal B_1{\mathbf V}_\sigma\cdot{\mathbf V}_\sigma)\vert_{x_1=0}\,dx_2ds=2\int_{\Omega_t}\tilde{\mathcal F}_\sigma\cdot{\mathbf V}_\sigma d{\bf x}ds\\
&+\int_{\Omega_t}(\partial_t\mathcal B_0+\partial_2\mathcal B_2-\partial_1\mathcal B_1-\mathcal B_3){\mathbf V_{\sigma}}\cdot{\mathbf V_{\sigma}}d{\bf x}ds\,;
\end{split}
\end{equation*}
however because $\sigma\vert_{x_1=0}=0$, the quadratic form $(\mathcal B_1{\mathbf V}_\sigma\cdot{\mathbf V}_\sigma)$ under the boundary integral in the left-hand side above vanishes and the energy identity reduces to
\begin{equation}\label{en_id_Is}
\int_{\mathbb R^2_+}(\mathcal B_0{\mathbf V}_\sigma\cdot{\mathbf V}_\sigma)(t)d{\bf x}=2\int_{\Omega_t}\tilde{\mathcal F}_\sigma\cdot{\mathbf V}_\sigma d{\bf x}ds+\int_{\Omega_t}(\partial_t\mathcal B_0+\partial_2\mathcal B_2-\partial_1\mathcal B_1-\mathcal B_3){\mathbf V_{\sigma}}\cdot{\mathbf V_{\sigma}}d{\bf x}ds\,.
\end{equation}
The second integral in the right-hand side of the identity above is trivially estimated as in \eqref{source_0}
\begin{equation*}
\int_{\Omega_t}(\partial_t\mathcal B_0+\partial_2\mathcal B_2-\partial_1\mathcal B_1-\mathcal B_3){\mathbf V_{\sigma}}\cdot{\mathbf V_{\sigma}}d{\bf x}ds\le C_1\Vert{\mathbf V_{\sigma}}\Vert_{L^2(\Omega_t)}^2\,.
\end{equation*}
Now we focus on the estimate of the first integral
\begin{equation}\label{int_Fs}
2\int_{\Omega_t}\tilde{\mathcal F}_\sigma\cdot{\mathbf V}_\sigma d{\bf x}ds
\end{equation}
in the right-hand side of \eqref{en_id_Is}. Substituting the explicit form \eqref{Fs} of $\tilde{F}_\sigma$, using that, similarly to $A_1$ (see \eqref{AA})
\begin{equation}\label{matrice_B1}
\mathcal B_1=\mathcal B^{(0)}_1+\mathcal B\,,\quad\mbox{where}\,\,\,\mathcal B^{(0)}_1\vert_{x_1=0}=0\,,
\end{equation}
while $\mathcal B$ only ``applies" to the noncharacteristic components ${\bf V}^\pm_n$ of ${\mathbf V}^\pm$, and denoting $\mathcal K^{(0)}_1=\frac{\mathcal B^{(0)}_1}{\sigma},$ we have from Cauchy--Schwarz and Young's inequalities
\[
\begin{split}
&\left\vert2\int_{\Omega_t}\tilde{\mathcal F}_\sigma\cdot{\mathbf V}_\sigma d{\bf x}ds\right\vert\\
&=\left\vert2\int_{\Omega_t}\left(\sigma\partial_1\tilde{\mathcal{F}}-\sigma\partial_1\mathcal B_0\partial_t{\mathbf V}+\sigma^\prime\mathcal K^{(0)}_1\sigma\partial_1{\mathbf V}+\sigma^\prime\mathcal B\partial_1{\mathbf V}-\sigma\partial_1\mathcal B_2\partial_2{\mathbf V}-\sigma\partial_1\mathcal B_3{\mathbf V}\right)\cdot{\mathbf V}_\sigma d{\bf x}ds\right\vert\\
&\le 2\left\{\Vert\sigma\partial_1\tilde{\mathcal F}\Vert_{L^2(\Omega_t)}+C_1\left(\Vert\partial_t{\mathbf V}\Vert_{L^2(\Omega_t)}+\Vert{\mathbf V}_\sigma\Vert_{L^2(\Omega_t)}+\Vert\partial_1{\mathbf V}_n\Vert_{L^2(\Omega_t)}\right.\right.\\
&\left.\left.\qquad\qquad+\Vert\partial_2{\mathbf V}\Vert_{L^2(\Omega_t)}+\Vert{\mathbf V}\Vert_{L^2(\Omega_t)}\right)\right\}\Vert{\mathbf V}_\sigma\Vert_{L^2(\Omega_t)}\\
&\le \Vert\sigma\partial_1\tilde{\mathcal F}\Vert_{L^2(\Omega_t)}^2+C_1\int_0^t(I+I_0+I_{\sigma}+I_{1,n}+I_2)(s)ds\\
&=C_1\left\{\Vert{\mathbf F}\Vert^2_{L^2(\Omega_t)}+\Vert\sigma\partial_1{\mathbf F}\Vert_{L^2(\Omega_t)}^2+\int_0^t(I_{1,\ast}+I_{1,n})(s)ds\right\}\,.
\end{split}
\]
At last, estimating from below the space integral in the left-hand side of \eqref{en_id_Is} as in \eqref{est_f_b} (with ${\mathbf V}_\sigma$ instead of ${\mathbf V}$) we end up with
\begin{equation}\label{en-ineq-Is}
I_\sigma(t)\le C_1\left\{\Vert{\mathbf F}\Vert^2_{L^2(\Omega_t)}+\Vert\sigma\partial_1{\mathbf F}\Vert_{L^2(\Omega_t)}^2+\int_0^t(I_{1,\ast}+I_{1,n})(s)ds\right\}\,.
\end{equation}
%%%%%%%%%%%%%%%%%%%%%%%%%%%%%%%%%%%%%%%%%%%%%%%%%%%%%%%%%%%%%%%%%%%%%%%%%%
\subsection{Estimate of $I_2(t)$}\label{I2_sct}
We set for shortness ${\mathbf V}_{x_2}:=\partial_2{\mathbf V}$. Applying $\partial_2$ to both sides of system \eqref{BBB} we get a similar linear system satisfied by ${\mathbf V}_{x_2}$, that is
\begin{equation}\label{syst_Vx2_prel}
\mathcal{B}_0\partial_t{\mathbf V}_{x_2}+\mathcal{B}_1\partial_1{\mathbf V}_{x_2}+\mathcal{B}_2\partial_2{\mathbf V}_{x_2}+(\partial_2\mathcal B_2+\mathcal{B}_3){\mathbf V}_{x_2}=\tilde{\mathcal F}_2\,,
\end{equation}
where
\[
\tilde{\mathcal F}_2:=\partial_2\tilde{\mathcal{F}}-\partial_2\mathcal B_0\partial_t{\mathbf V}-\partial_2\mathcal B_1\partial_1{\mathbf V}-\partial_2\mathcal B_3{\mathbf V}\,.
\]
As usual, from the above linear system we derive, by scalar multiplication by ${\mathbf V}_{x_2}$ and integration by parts in $\Omega_t$, the energy identity
\begin{equation}\label{en-id-I2}
\begin{split}
\int_{\mathbb R^2_+}(\mathcal B_0{\mathbf V}_{x_2}\cdot{\mathbf V}_{x_2})(t)d{\bf x}&-\int_{\Gamma_t}(\mathcal B_1{\mathbf V}_{x_2}\cdot{\mathbf V}_{x_2})\vert_{x_1=0}\,dx_2ds=2\int_{\Omega_t}\tilde{\mathcal F}_2\cdot{\mathbf V}_{x_2}d{\bf x}ds\\
&+\int_{\Omega_t}(\partial_t\mathcal B_0+\partial_1\mathcal B_1-2\partial_2\mathcal B_2-\mathcal B_3){\mathbf V}_{x_2}\cdot{\mathbf V}_{x_2}d{\bf x}ds\,.
\end{split}
\end{equation}
The second integral in the right-hand side of the above identity is estimated as usual as
\begin{equation}\label{est_0_term}
\int_{\Omega_t}(\partial_t\mathcal B_0+\partial_1\mathcal B_1-2\partial_2\mathcal B_2-\mathcal B_3){\mathbf V}_{x_2}\cdot{\mathbf V}_{x_2}d{\bf x}ds\le C_1\Vert{\mathbf V}_{x_2}\Vert_{L^2(\Omega_t)}^2=C_1\int_0^t I_2(s)ds\,,
\end{equation}
whereas to estimate the first integral in the right-hand side we still use the decomposition of $\mathcal B_1$ in \eqref{matrice_B1} and repeat the same arguments used in the estimate of \eqref{int_Fs} and use \eqref{source}, to get
\begin{equation}\label{est_F2}
\begin{split}
&2\int_{\Omega_t}\tilde{\mathcal F}_2\cdot{\mathbf V}_{x_2} d{\bf x}ds\\
&\le 2\left\{\Vert\partial_2\tilde{\mathcal F}\Vert_{L^2(\Omega_t)}+C_1\left(\Vert\partial_t{\mathbf V}\Vert_{L^2(\Omega_t)}+\Vert\sigma\partial_1{\mathbf V}\Vert_{L^2(\Omega_t)}+\Vert\partial_1{\mathbf V}_n\Vert_{L^2(\Omega_t)}\right.\right.\\
&\left.\left.\qquad\qquad+\Vert{\mathbf V}\Vert_{L^2(\Omega_t)}\right)\right\}\Vert{\mathbf V}_{x_2}\Vert_{L^2(\Omega_t)}\\
&\le C_1\left\{\Vert{\mathbf F}\Vert^2_{L^2(\Omega_t)}+\Vert\partial_2{\mathbf F}\Vert_{L^2(\Omega_t)}^2+\int_0^t(I_{1,\ast}+I_{1,n})(s)ds\right\}\,.
\end{split}
\end{equation}
Now we need to get an estimate of the quadratic form $(\mathcal B_1{\mathbf V}_{x_2}\cdot{\mathbf V}_{x_2})$ under the second boundary integral in the left-hand side of \eqref{en-id-I2}. The explicit expression of this quadratic form is in principle the same as the one for ${\mathbf V}$ in \eqref{boundary-q-form-lambda}, that is
\begin{equation}\label{boundary-q-form-d2}
(\mathcal{B}_1{\mathbf V}_{x_2},{\mathbf V}_{x_2})|_{x_1=0}=2[\partial_2\dot{q}(\partial_2\dot{u}_{N}-\hat{\lambda}\partial_2\dot{H}_{N})],
\end{equation}
with $\hat\lambda:= \lambda(\hat{\mathbf U})$.
\newline
As done to treat the quadratic form \eqref{boundary-q-form-lambda} for ${\mathbf V}$, now we make use of the boundary conditions in \eqref{BC} and the boundary constraint in \eqref{I&Bconst}, differentiated with respect to $x_2$, to rewrite \eqref{boundary-q-form-d2} as the sum of the same leading part as in \eqref{boundary-q-form_lambda_1}, vanishing as a consequence of the choice of $\hat\lambda^\pm$, and lower order terms. We compute
\begin{equation}\label{boundary-q-form-d2_1}
\begin{split}
(\mathcal{B}_1{\mathbf V}_{x_2},{\mathbf V}_{x_2})|_{x_1=0}=2[\hat{u}_2-\hat\lambda\hat{H}_2]\partial_2\dot{q}^+\partial_{2}\partial_2\varphi + \mbox{l.o.t}\,,
\end{split}
\end{equation}
where
\begin{equation}\label{lot_d2}
\begin{split}
\mbox{l.o.t}:=&-2[\partial_{1}\hat{u}_N-\hat{\lambda}\partial_{1}\hat{H}_N-\partial_2\hat u_2+\hat\lambda\partial_2\hat H_2]\partial_2\dot{q}^+\partial_2\varphi-2[\partial_2\partial_1\hat u_N-\hat\lambda\partial_2\partial_1\hat H_N]\partial_2\dot{q}^+\varphi\\
&-2[\partial_{1}\hat{q}]\partial_2\varphi(\partial_2\dot u^-_{N}-\hat\lambda^-\partial_2\dot H^-_{N})-2\partial_2([\partial_{1}\hat{q}])\varphi(\partial_2\dot u^-_{N}-\hat\lambda^-\partial_2\dot H^-_{N})\,.
\end{split}
\end{equation}
As already announced the leading quadratic term $2[\hat{u}_2-\hat\lambda\hat{H}_2]\dot{q}^+_{x_2}\partial_{2}\varphi$ in \eqref{boundary-q-form-d2_1} vanishes because of the chosen $\hat\lambda^\pm$. We now focus on the estimate of the boundary integral of the different lower order quadratic terms in \eqref{lot_d2}. Here below we denote again by $\hat{c}=\hat{c}(t,x_2)$ the different coefficients of those boundary lower order terms, which are all smooth functions of $\hat{\mathbf U}^\pm\vert_{x_1=0}$ and $\nabla_{t,x_2}\hat\varphi$ and their derivatives and whose explicit form is useless.
\newline
From \eqref{lot_d2}, we get
\begin{equation}\label{boundary-q-form-d2_2}
\begin{split}
\int_{\Gamma_t}(\mathcal B_1{\mathbf V}_{x_2}\cdot{\mathbf V}_{x_2})\vert_{x_1=0}\,dx_2ds&=\int_{0}^t\!\!\!\int_{\mathbb R}\!\!\hat c\,\partial_2\dot{q}^+\partial_2\varphi dx_2ds+\int_{0}^t\!\!\!\int_{\mathbb R}\!\!\hat c\,\partial_2\dot{q}^+\varphi\,dx_2ds\\
&\quad+\int_{0}^t\!\!\!\int_{\mathbb R}\!\!\hat c\,\partial_2\dot u^-_{N}\partial_2\varphi\,dx_2ds+\int_{0}^t\!\!\!\int_{\mathbb R}\!\!\hat c\,\partial_2\dot H^-_{N}\partial_2\varphi\,dx_2ds\\
&\quad+\int_{0}^t\!\!\!\int_{\mathbb R}\!\!\hat c\,\partial_2\dot u^-_{N}\varphi\,dx_2ds+\int_{0}^t\!\!\!\int_{\mathbb R}\!\!\hat c\,\partial_2\dot H^-_{N}\varphi\,dx_2ds\,.
\end{split}
\end{equation}
To estimate the boundary integrals above, we follow similar arguments to those of \cite{Trakhinin2005}.
\newline
The first step is to write $\partial_2\varphi$ as a linear combination of $\dot H^\pm_{N}\vert_{x_1=0}$ and $\varphi$; this can be done by making use of boundary constraints \eqref{I&Bconst} and exploiting that $\hat H_2^\pm$ are never simultaneously zero on the boundary as a consequence of the stability condition \eqref{stability1}, see Remark \ref{H_2nonzero}. From the boundary conditions \eqref{I&Bconst} we have
\[
\hat{H}^{+}_2\partial_2\varphi=\dot{H}^{+}_{N}+\varphi\partial_1\hat{H}^{+}_{N}\quad\mbox{and}\quad \hat{H}^{-}_2\partial_2\varphi=\dot{H}^{-}_{N}-\varphi\partial_1\hat{H}^{-}_{N}\qquad\mbox{on}\,\,\,\Gamma_T\,.
\]
Then multiplying the first one by $\hat{H}^{+}_2\vert_{x_1=0}$ and the second one by  $\hat{H}^{-}_2\vert_{x_1=0}$, then adding the results we get
\begin{equation}\label{d2phi}
\begin{split}
\partial_2\varphi&=\frac{\hat{H}^{+}_2\dot{H}^{+}_{N}+\hat{H}^{-}_2\dot{H}^{-}_{N}+(\hat{H}^{+}_2\partial_1\hat{H}^{+}_{N}-\hat{H}^{-}_2\partial_1\hat{H}^{-}_{N})\varphi}{(\hat{H}^{+}_2)^2+(\hat{H}^{-}_2)^2}\\
&=\hat d_1\dot H^+_N+\hat d_2\dot H^-_N+\hat d_3\varphi\,,\quad\mbox{on}\,\,\,\Gamma_T\,,
\end{split}
\end{equation}
where $\hat d_i=\hat d_i(t,x_2)$ are suitable functions depending only on the boundary values of $\hat H^\pm$, $\partial_1\hat H^\pm$ and second order derivatives of $\hat\varphi$, whose esplicit form could be easily deduced from above.

\smallskip
Let us start to estimate the first term in the right-hand side of \eqref{boundary-q-form-d2_2}. Inserting the expression of $\partial_2\varphi$ provided by \eqref{d2phi}, we find
\begin{equation*}
\begin{split}
\int_{0}^t\!\!\!\int_{\mathbb R}\!\!\hat c\,\partial_2\dot{q}^+\partial_2\varphi dx_2ds&=\int_{0}^t\!\!\!\int_{\mathbb R}\!\!\hat c\,\partial_2\dot{q}^+\dot{H}^+_N dx_2ds+\int_{0}^t\!\!\!\int_{\mathbb R}\!\!\hat c\,\partial_2\dot{q}^+\dot{H}^-_N dx_2ds+\int_{0}^t\!\!\!\int_{\mathbb R}\!\!\hat c\,\partial_2\dot{q}^+\varphi\,dx_2ds\\
&=\mathcal I_1+\mathcal I_2+\mathcal I_3\,.
\end{split}
\end{equation*}
Let us estimate $\mathcal I_1$. Here the trick of passing from a boundary integral over $\mathbb R$ to a volume integral over $\mathbb R^2_+$ is used as already done to get the trace inequality \eqref{trace-in}.
In the following we will adopt the notation $\hat{\mathbf c}=\hat{\mathbf c}(t,{\bf x})$ to mean a suitable lifting from $\Gamma_t$ to $\Omega_t$ of a boundary coefficient $\hat c=\hat c(t,x_2)$. Then we have
\[
\begin{split}
\mathcal I_1&=\int_{0}^t\!\!\!\int_{\mathbb R}\!\!\hat c\,\partial_2\dot{q}^+\dot{H}^+_N dx_2ds=-\int_{0}^t\!\!\!\int_{\mathbb R^2_+}\!\!\partial_1(\hat {\mathbf c}\,\partial_2\dot{q}^+\dot{H}^+_n) d{\bf x}ds\\
&=-\int_{0}^t\!\!\!\int_{\mathbb R^2_+}\!\!\partial_1\hat {\mathbf c}\,\partial_2\dot{q}^+\dot{H}^+_n d{\bf x}ds-\int_{0}^t\!\!\!\int_{\mathbb R^2_+}\!\!\hat {\mathbf c}\,\partial_1\partial_2\dot{q}^+\dot{H}^+_n d{\bf x}ds
-\int_{0}^t\!\!\!\int_{\mathbb R^2_+}\!\!\hat {\mathbf c}\,\partial_2\dot{q}^+\partial_1\dot{H}^+_n d{\bf x}ds\\
&=\mathcal I_{1,1}+\mathcal I_{1,2}+\mathcal I_{1,3}\,.
\end{split}
\]
$\mathcal I_{1,1}$ and $\mathcal I_{1,3}$ can be easily estimated by Cauchy-Schwarz and Young's inequalities by
\begin{equation}\label{I11+13}
\begin{split}
\vert\mathcal I_{1,1}+\mathcal I_{1,3}\vert&\le C_2\Vert\partial_2\dot{q}^+\Vert_{L^2(\Omega_t)}\Vert\dot{H}^+_n\Vert_{L^2(\Omega_t)}+C_1\Vert\partial_2\dot{q}^+\Vert_{L^2(\Omega_t)}\Vert\partial_1\dot{H}^+_n\Vert_{L^2(\Omega_t)}\\
&\le C_2\left\{\int_0^t(I_{1,\ast}+I_{1,n})(s)ds\right\}
\end{split}
\end{equation}
recall that ${\mathbf V}_n^\pm=(\dot q^\pm, \dot u^\pm_n, \dot H^\pm_n)$ are the noncharacteristic components of the solution ${\mathbf V}^\pm$, see also \eqref{I1n}.
\newline
As regards to $\mathcal I_{1,2}$, differently from above, we cannot immediately end up by Cauchy-Schwarz and Young's inequality, because this should require a control of the $L^2-$norm of the second order derivative of $\dot q^+$, preventing to close the $H^1_\ast-$estimate. Instead, here integration by parts with respect to the tangential space variable $x_2$ and Leibiniz's rule are used to further rewrite $\mathcal I_{1,2}$ as
\begin{equation}\label{I12}
\begin{split}
\mathcal I_{1,2}&=-\int_{0}^t\!\!\!\int_{\mathbb R^2_+}\!\!\hat {\mathbf c}\,\partial_1\partial_2\dot{q}^+\dot{H}^+_n d{\bf x}ds=\int_{0}^t\!\!\!\int_{\mathbb R^2_+}\!\!\partial_2(\hat {\mathbf c}\dot{H}^+_n)\partial_1\dot{q}^+ d{\bf x}ds\\
&=\int_{0}^t\!\!\!\int_{\mathbb R^2_+}\!\!\partial_2\hat {\mathbf c}\dot{H}^+_n\partial_1\dot{q}^+ d{\bf x}ds+\int_{0}^t\!\!\!\int_{\mathbb R^2_+}\!\!\hat {\mathbf c}\partial_2\dot{H}^+_n\partial_1\dot{q}^+ d{\bf x}ds\,;
\end{split}
\end{equation}
then we observe that the last two integral above are similar to $\mathcal I_{1,1}$ and $\mathcal I_{1,3}$ and therefore can be estimated in the same way by
\[
\begin{split}
\vert\mathcal I_{1,2}\vert&\le C_2\Vert\partial_1\dot{q}^+\Vert_{L^2(\Omega_t)} \left(\Vert\dot{H}^+_n\Vert_{L^2(\Omega_t)}+\Vert\partial_2\dot{H}^+_n\Vert_{L^2(\Omega_t)}\right)\\
&\le C_2\left\{\int_0^t(I_{1,\ast}+I_{1,n})(s)ds\right\}\,.
\end{split}
\]
Adding \eqref{I11+13} and \eqref{I12} gives the estimate of $\mathcal I_1$
\[
\vert\mathcal I_1\vert\le C_2\left\{\int_0^t(I_{1,\ast}+I_{1,n})(s)ds\right\}\,.
\]
It is clear that $\mathcal I_2:=\displaystyle\int_{0}^t\!\!\!\int_{\mathbb R}\!\!\hat c\,\partial_2\dot{q}^+\dot{H}^-_N dx_2ds$ can estimated by repeating exactly the same arguments applied to $\mathcal I_1$.
\newline
Concerning $\mathcal I_3=\displaystyle\int_{0}^t\!\!\!\int_{\mathbb R}\!\!\hat c\,\partial_2\dot{q}^+\varphi\,dx_2ds$, we are reduced to apply the same arguments as above by first integrating by parts and then using Leibniz's rule and replacing once again $\partial_2\varphi$ by the expression in the right-hand side of \eqref{d2phi}:
\[
\begin{split}
\mathcal I_3&=\int_{0}^t\!\!\!\int_{\mathbb R}\!\!\hat c\,\partial_2\dot{q}^+\varphi\,dx_2ds=-\int_{0}^t\!\!\!\int_{\mathbb R}\!\!\partial_2(\hat c\varphi)\dot{q}^+dx_2ds\\
&=-\int_{0}^t\!\!\!\int_{\mathbb R}\!\!\partial_2\hat c\,\varphi\dot{q}^+dx_2ds-\int_{0}^t\!\!\!\int_{\mathbb R}\!\!\hat c\,\partial_2\varphi\dot{q}^+dx_2ds\\
&=\int_{0}^t\!\!\!\int_{\mathbb R}\!\!\hat c\,\varphi\dot{q}^+dx_2ds+\int_{0}^t\!\!\!\int_{\mathbb R}\!\!\hat c\,\dot{H}^+_N\dot{q}^+dx_2ds+\int_{0}^t\!\!\!\int_{\mathbb R}\!\!\hat c\,\dot{H}^-_N\dot{q}^+dx_2ds\,.
\end{split}
\]
Since all the boundary traces under the integral are of noncharacteristic components of ${\mathbf V}$, we end up by Cauchy-Schwarz, Young's inequalities and trace type inequality \eqref{trace-in} to get
\begin{equation}
\vert\mathcal I_3\vert\le C_2\left\{\int_0^t(I+I_{1,n})(s)ds+\int_0^t\Vert\varphi(s)\Vert_{L^2(\mathbb R)}^2 ds\right\}\,.
\end{equation}
To complete the estimate of the remaining boundary integrals involved in the right-hand side of \eqref{boundary-q-form-d2_2}, it is then sufficient to notice that the second boundary integral is exactly the same as $\mathcal I_3$, while the other boundary integrals are the same as the first and second ones, where $\partial_2\dot q^+\vert_{x_1=0}$ is replaced by $\partial_2\dot u^-_N\vert_{x_1=0}$ or by $\partial_2\dot H^-_N\vert_{x_1=0}$ (but $\dot u^-_n$ and $\dot H^-_n$ are still noncharacteristic components of the vector solution ${\mathbf V}$, so that they are treated along the same arguments as to $\dot q^+$). Therefore we end up with
\begin{equation}\label{stima_bdry_2}
\left\vert\int_{\Gamma_t}(\mathcal B_1{\mathbf V}_{x_2}\cdot{\mathbf V}_{x_2})\vert_{x_1=0}\,dx_2ds\right\vert\le C_2\left\{\int_0^t(I_{1,\ast}+I_{1,n})(s)ds+\int_0^t\Vert\varphi(s)\Vert_{L^2(\mathbb R)}^2 ds\right\}\,.
\end{equation}
Using \eqref{est_0_term}, \eqref{est_F2} and \eqref{stima_bdry_2} together with the counterpart of \eqref{est_f_b} with ${\mathbf V}_{x_2}$ instead of ${\mathbf V}$, from \eqref{en-id-I2} we obtain
\begin{equation}\label{en-ineq-I2}
I_2(t)\le C_1\left\{\Vert{\mathbf F}\Vert^2_{L^2(\Omega_t)}+\Vert\partial_2{\mathbf F}\Vert_{L^2(\Omega_t)}^2\right\}+ C_2\left\{\int_0^t(I_{1,\ast}+I_{1,n})(s)ds+\int_0^t\Vert\varphi(s)\Vert_{L^2(\mathbb R)}^2 ds\right\}\,.
\end{equation}
%%%%%%%%%%%%%%%%%%%%%%%%%%%%%%%%%%%%%%%%%%%%%%%%%%%%%%%%%%%%%%%%%%%%%%%
\subsection{Estimate of $I_0(t)$}\label{It_sct}
We set for shortness ${\mathbf V}_{t}:=\partial_t{\mathbf V}$. Applying $\partial_t$ to both sides of system \eqref{BBB} we get a similar linear system satisfied by ${\mathbf V}_{t}$, that is
\begin{equation}\label{syst_Vt_prel}
\mathcal{B}_0\partial_t{\mathbf V}_{t}+\mathcal{B}_1\partial_1{\mathbf V}_{t}+\mathcal{B}_2\partial_2{\mathbf V}_{t}+(\partial_t\mathcal B_0+\mathcal{B}_3){\mathbf V}_{t}=\tilde{\mathcal F}_t\,,
\end{equation}
where
\[
\tilde{\mathcal F}_t:=\partial_t\tilde{\mathcal{F}}-\partial_t\mathcal B_1\partial_1{\mathbf V}-\partial_t\mathcal B_2\partial_2{\mathbf V}-\partial_t\mathcal B_3{\mathbf V}\,.
\]
As usual, from the above linear system we derive, by scalar multiplication by ${\mathbf V}_{t}$ and integration by parts in $\Omega_t$, the energy identity
\begin{equation}\label{en-id-It}
\begin{split}
\int_{\mathbb R^2_+}(\mathcal B_0{\mathbf V}_{t}\cdot{\mathbf V}_{t})(t)d{\bf x}&-\int_{\Gamma_t}(\mathcal B_1{\mathbf V}_{s}\cdot{\mathbf V}_{s})\vert_{x_1=0}\,dx_2ds=2\int_{\Omega_t}\tilde{\mathcal F}_s\cdot{\mathbf V}_{s}d{\bf x}ds\\
&+\int_{\Omega_t}(\partial_1\mathcal B_1+\partial_2\mathcal B_2-\partial_t\mathcal B_0-\mathcal B_3){\mathbf V}_{s}\cdot{\mathbf V}_{s}d{\bf x}ds\,.
\end{split}
\end{equation}
The second integral in the right-hand side of the above identity is estimated as usual as
\begin{equation}\label{est_0_term_t}
\int_{\Omega_t}(\partial_1\mathcal B_1+\partial_2\mathcal B_2-\partial_t\mathcal B_0-\mathcal B_3){\mathbf V}_{s}\cdot{\mathbf V}_{s}d{\bf x}ds\le C_1\Vert{\mathbf V}_{s}\Vert_{L^2(\Omega_t)}^2=C_1\int_0^t I_0(s)ds\,,
\end{equation}
whereas to estimate the first integral in the right-hand side we still use the decomposition of $\mathcal B_1$ in \eqref{matrice_B1} and repeat the same arguments used in the estimates \eqref{int_Fs}, \eqref{est_F2} and use \eqref{source}, to get
\begin{equation}\label{est_Ft}
\begin{split}
&2\int_{\Omega_t}\tilde{\mathcal F}_s\cdot{\mathbf V}_{s} d{\bf x}ds\\
&\le 2\left\{\Vert\partial_s\tilde{\mathcal F}\Vert_{L^2(\Omega_t)}+C_1\left(\Vert\sigma\partial_1{\mathbf V}\Vert_{L^2(\Omega_t)}+\Vert\partial_1{\mathbf V}_n\Vert_{L^2(\Omega_t)}\right.\right.\\
&\left.\left.\qquad\qquad+\Vert\partial_2{\mathbf V}\Vert_{L^2(\Omega_t)}+\Vert{\mathbf V}\Vert_{L^2(\Omega_t)}\right)\right\}\Vert{\mathbf V}_{t}\Vert_{L^2(\Omega_t)}\\
&\le C_1\left\{\Vert{\mathbf F}\Vert^2_{L^2(\Omega_t)}+\Vert\partial_s{\mathbf F}\Vert_{L^2(\Omega_t)}^2+\int_0^t(I_{1,\ast}+I_{1,n})(s)ds\right\}\,.
\end{split}
\end{equation}
Now we need to get an estimate of the quadratic form $(\mathcal B_1{\mathbf V}_{t}\cdot{\mathbf V}_{t})$ under the second boundary integral in the left-hand side of \eqref{en-id-It}, whose explicit expression is
\begin{equation}\label{boundary-q-form-dt}
(\mathcal{B}_1{\mathbf V}_{t},{\mathbf V}_{t})|_{x_1=0}=2[\partial_t\dot{q}(\partial_t\dot{u}_{N}-\hat{\lambda}\partial_t\dot{H}_{N})],
\end{equation}
with $\hat\lambda:= \lambda(\hat{\mathbf U})$.
\newline
As in the case of the quadratic form \eqref{boundary-q-form-d2}, we make use of the boundary conditions in \eqref{BC}, differentiated with respect to $t$, to rewrite \eqref{boundary-q-form-dt} as the sum of the same leading part as in \eqref{boundary-q-form-d2_1}, vanishing as a consequence of the choice of $\hat\lambda^\pm$, and lower order terms. We compute
\begin{equation}\label{boundary-q-form-dt_1}
\begin{split}
(\mathcal{B}_1{\mathbf V}_{t},{\mathbf V}_{t})|_{x_1=0}=2[\hat{u}_2-\hat\lambda\hat{H}_2]\partial_t\dot{q}^+\partial_{t}\partial_2\varphi + \mbox{l.o.t}\,,
\end{split}
\end{equation}
where
\begin{equation}\label{lot_dt}
\begin{split}
\mbox{l.o.t}:=&-2[\partial_{1}\hat{u}_N-\hat{\lambda}\partial_{1}\hat{H}_N]\partial_t\dot{q}^+\partial_t\varphi+2[\partial_t\hat u_2-\hat\lambda\partial_t\hat H_2]\partial_2\varphi\partial_t\dot q^+\\
&-2[\partial_t\partial_1\hat u_N-\hat\lambda\partial_t\partial_1\hat H_N]\partial_t\dot{q}^+\varphi-2[\partial_{1}\hat{q}]\partial_t\varphi(\partial_t\dot u^-_{N}-\hat\lambda^-\partial_t\dot H^-_{N})\\
&-2\partial_t([\partial_{1}\hat{q}])\varphi(\partial_t\dot u^-_{N}-\hat\lambda^-\partial_t\dot H^-_{N})\\
&=\hat c\,\partial_t\dot{q}^+\partial_t\varphi+\hat c\,\partial_2\varphi\partial_t\dot q^++\hat c\,\partial_t\dot{q}^+\varphi+\hat c\,\partial_t\varphi\partial_t\dot u^-_{N}+\hat c\,\partial_t\varphi\partial_t\dot H^-_{N}\\
&+\hat c\,\varphi\partial_t\dot u^-_{N}+\hat c\,\varphi\partial_t\dot H^-_{N}\,.
\end{split}
\end{equation}
We solve the first boundary condition in \eqref{BC} (we choose the $+$ side) with respect to $\partial_t\varphi$ and replace $\partial_2\varphi$ by \eqref{d2phi} to get
\begin{equation}\label{dtphi}
\partial_t\varphi=\dot u^+_N+\hat D_1\dot H^+_{N}+\hat D_2\dot H^-_N+\hat D_3\varphi\,,
\end{equation}
with suitable coefficients $\hat D_i=\hat D_i(t,x_2)$, $i=1,2,3$, smoothly depending
on the boundary traces of $\hat u^+$, $\hat H^\pm$, $\partial_1\hat H^\pm$, $\partial_1\hat u^+$ and second order derivatives of $\hat\varphi$, whose explicit form is useless for the subsequent calculations.
\newline
Now we insert \eqref{dtphi} and \eqref{d2phi} in the expression \eqref{lot_dt} to rewrite the latter as
\begin{equation}\label{lot_dt_1}
\begin{split}
\mbox{l.o.t}:=&\hat c\,\partial_t\dot{q}^+(\dot u^+_N+\hat D_1\dot H^+_{N}+\hat D_2\dot H^-_N+\hat D_3\varphi)+\hat c\,(\hat d_1\dot H^+_N+\hat d_2\dot H^-_N+\hat d_3\varphi)\partial_t\dot q^+\\
&+\hat c\,\partial_t\dot{q}^+\varphi+\hat c\,(\dot u^+_N+\hat D_1\dot H^+_{N}+\hat D_2\dot H^-_N+\hat D_3\varphi)\partial_t\dot u^-_{N}\\
&+\hat c\,(\dot u^+_N+\hat D_1\dot H^+_{N}+\hat D_2\dot H^-_N+\hat D_3\varphi)\partial_t\dot H^-_{N}+\hat c\,\varphi\partial_t\dot u^-_{N}+\hat c\,\varphi\partial_t\dot H^-_{N}\,.
\end{split}
\end{equation}
From the resulting expression above, it appears that the lower order terms above are reduced to a sum of two types of quadratic terms, namely
\[
\mbox{l.o.t}\cong\hat c\,\partial_t\alpha\,\beta+\hat c\,\partial_t\alpha\,\varphi\,,\quad\mbox{on}\,\,\,\Gamma_t\,,
\]
where $\alpha=\alpha(t,{\bf x})$ and $\beta=\beta(t,{\bf x})$ are used to mean any component of the noncharacteristic part ${\mathbf V}^\pm_n=(\dot q^\pm,\dot u^\pm_n, \dot H^\pm_n)$ of the solution, while $\hat c=\hat c(t,x_2)$ denotes, as usual, suitable functions on $\Gamma_t$, smoothly depending on space-time derivatives of $\hat{\mathbf U}^\pm$ and $\nabla_{t,x_2}\hat\varphi$ up to second order.

\smallskip
\noindent
In view of the preceding manipulations, to get an estimate of the boundary integral of the quadratic form \eqref{boundary-q-form-dt} we only need to estimate the following types of boundary integrals:
\[
\mathcal J_1:=\int_0^t\!\!\!\int_{\mathbb R}\hat c\,\partial_s\alpha\,\beta\, dx_2 ds\quad\mbox{and}\quad \mathcal J_2:=\int_0^t\!\!\!\int_{\mathbb R}\hat c\,\partial_s\alpha\,\varphi\, dx_2 ds\,.
\]
As it was done in Sect. \ref{I2_sct}, in $\mathcal J_1$ we pass to a volume integral over $\mathbb R^2_+$ and use Leibniz's rule to get
\[
\begin{split}
\mathcal J_1=&-\int_0^t\!\!\!\int_{\mathbb R^2_+}\partial_1(\hat {\mathbf c}\,\partial_s\alpha\,\beta) d{\bf x} ds=-\int_0^t\!\!\!\int_{\mathbb R^2_+}\partial_1\hat {\mathbf c}\,\partial_s\alpha\,\beta\,d{\bf x} ds-\int_0^t\!\!\!\int_{\mathbb R^2_+}\hat {\mathbf c}\,\partial_1\partial_s\alpha\,\beta\,d{\bf x} ds\\
&-\int_0^t\!\!\!\int_{\mathbb R^2_+}\hat {\mathbf c}\,\partial_s\alpha\,\partial_1\beta\,d{\bf x} ds=\mathcal J_{1,1}+\mathcal J_{1,2}+\mathcal J_{1,3}\,.
\end{split}
\]
By Cauchy--Schwarz and Young's inequalities, $\mathcal J_{1,1}$ and $\mathcal J_{1,3}$ are estimated by
\begin{equation}\label{est_J11-13}
\begin{split}
\vert \mathcal J_{1,1}+\mathcal J_{1,3}\vert &\le C_3\left\{\Vert{\mathbf V}\Vert^2_{L^2(\Omega_t)}+\Vert\partial_s{\mathbf V}\Vert^2_{L^2(\Omega_t)}+\Vert\partial_1{\mathbf V}_n\Vert^2_{L^2(\Omega_t)}\right\}\\
&\le C_3\int_0^t(I_{1,\ast}+I_{1,n})(s)ds\,.
\end{split}
\end{equation}
The middle integral $\mathcal J_{1,2}$ is the most involved one, because the second order derivative $\partial_1\partial_t\alpha$ prevents from directly estimating $\mathcal J_{1,2}$ similarly to \eqref{est_J11-13}.
\newline
From Leibniz's rule with respect to time (notice that $\alpha|_{t=0}\equiv 0$ and $\beta|_{t=0}\equiv 0$), we rewrite $\mathcal J_{1,2}$ as
\begin{equation}\label{J12}
\begin{split}
\mathcal J_{1,2}&=-\int_0^t\!\!\!\int_{\mathbb R^2_+}\partial_s(\hat {\mathbf c}\,\partial_1\alpha\,\beta)\,d{\bf x} ds+\int_0^t\!\!\!\int_{\mathbb R^2_+}\partial_s\hat {\mathbf c}\,\partial_1\alpha\,\beta\,d{\bf x} ds+\int_0^t\!\!\!\int_{\mathbb R^2_+}\hat {\mathbf c}\,\partial_1\alpha\,\partial_s\beta\,d{\bf x} ds\\
&=-\int_{\mathbb R^2_+}\hat {\mathbf c}(t)\,\partial_1\alpha(t)\,\beta(t)\,d{\bf x}+\int_0^t\!\!\!\int_{\mathbb R^2_+}\partial_s\hat {\mathbf c}\,\partial_1\alpha\,\beta\,d{\bf x} ds+\int_0^t\!\!\!\int_{\mathbb R^2_+}\hat {\mathbf c}\,\partial_1\alpha\,\partial_s\beta\,d{\bf x} ds\,.
\end{split}
\end{equation}
The last two integrals above are estimated exactly by the same right-hand side of \eqref{est_J11-13}.
\newline
Concerning, instead, the first spatial integral
$$
-\displaystyle\int_{\mathbb R^2_+}\hat {\mathbf c}(t)\,\partial_1\alpha(t)\,\beta(t)\,d{\bf x}\,,
$$
in view of estimates \eqref{en-ineq-I0}, \eqref{en-ineq-I1n} (recall that $\alpha$ is a noncharacteristic component of ${\mathbf V}$), the use of Cauchy--Schwarz and weighted Young's inequalities gives
\begin{equation}\label{est_alpha_beta(t)}
\begin{split}
&\left\vert-\int_{\mathbb R^2_+}\hat {\mathbf c}(t)\,\partial_1\alpha(t)\,\beta(t)\,d{\bf x}\right\vert\le C_2\Vert\partial_1\alpha(t)\Vert_{L^2(\mathbb R^2_+)}\Vert\beta(t)\Vert_{L^2(\mathbb R^2_+)}\\
&\le\varepsilon\Vert\partial_1\alpha(t)\Vert_{L^2(\mathbb R^2_+)}^2+\frac{C_2}{\varepsilon}\Vert\beta(t)\Vert_{L^2(\mathbb R^2_+)}^2\le\varepsilon I_{1,n}(t)+\frac{C_2}{\varepsilon}\left\{\Vert{\mathbf F}\Vert_{L^2(\Omega_t)}^2+\Vert\varphi(t)\Vert_{L^2(\mathbb R)}^2\right.\\
&\quad\left.+\int_0^t (I+I_{1,n})(s)ds+\int_0^t\Vert\varphi(s)\Vert^2_{L^2(\mathbb R)}ds\right\}\,,
\end{split}
\end{equation}
where $\varepsilon>0$ will be chosen to be small enough.
\newline
Adding \eqref{est_alpha_beta(t)} and the analogous of \eqref{est_J11-13} for the second and third integral in the right-hand side of \eqref{J12} finally yields
\begin{equation}\label{est_J12}
\begin{split}
\vert\mathcal J_{1,2}\vert\le &\varepsilon I_{1,n}(t)+\frac{C_2}{\varepsilon}\left\{\Vert{\mathbf F}\Vert_{L^2(\Omega_t)}^2+\Vert\varphi(t)\Vert_{L^2(\mathbb R)}^2+\int_0^t\Vert\varphi(s)\Vert^2_{L^2(\mathbb R)}ds\right\}\\
&+\frac{C_3}{\varepsilon}\int_0^t (I_{1,\ast}+I_{1,n})(s)ds
\end{split}
\end{equation}
and adding the latter and \eqref{est_J11-13} we get
\begin{equation}\label{est_J1}
\begin{split}
\vert\mathcal J_{1}\vert\le &\varepsilon I_{1,n}(t)+\frac{C_2}{\varepsilon}\left\{\Vert{\mathbf F}\Vert_{L^2(\Omega_t)}^2+\Vert\varphi(t)\Vert_{L^2(\mathbb R)}^2+\int_0^t\Vert\varphi(s)\Vert^2_{L^2(\mathbb R)}ds\right\}\\
&+\frac{C_3}{\varepsilon}\int_0^t (I_{1,\ast}+I_{1,n})(s)ds\,.
\end{split}
\end{equation}
The boundary integral $\mathcal J_2$ is treated along the same lines as $\mathcal J_{1,2}$. After Leibniz's rule with respect to time, we first rewrite
\[
\begin{split}
\mathcal J_2&=\int_0^t\!\!\!\int_{\mathbb R}\partial_s(\hat c\,\alpha\,\varphi\,) dx_2 ds-\int_0^t\!\!\!\int_{\mathbb R}\partial_s\hat c\,\alpha\,\varphi\, dx_2 ds-\int_0^t\!\!\!\int_{\mathbb R}\hat c\,\alpha\,\partial_s\varphi\, dx_2 ds\\
&=\int_{\mathbb R}\hat c(t)\,\alpha(t)\,\varphi(t) dx_2-\int_0^t\!\!\!\int_{\mathbb R}\partial_s\hat c\,\alpha\,\varphi\, dx_2 ds-\int_0^t\!\!\!\int_{\mathbb R}\hat c\,\alpha\,\partial_s\varphi\, dx_2 ds\\
&=\mathcal J_{2,1}+\mathcal J_{2,2}+\mathcal J_{2,3}\,.
\end{split}
\]
The middle integral $\mathcal J_{2,2}$ is estimated at once by Cauchy--Schwarz, Young's inqualities and the trace type inequality \eqref{trace-in}
\begin{equation}\label{est_J22}
\begin{split}
\vert\mathcal J_{2,2}\vert=\left\vert-\int_0^t\!\!\!\int_{\mathbb R}\partial_s\hat c\,\alpha\,\varphi\, dx_2 ds\right\vert&\le C_3\left\{\Vert{\mathbf V}\Vert_{L^2(\Omega_t)}^2+\Vert\partial_1{\mathbf V}_n\Vert_{L^2(\Omega_t)}^2+\Vert\varphi\Vert^2_{L^2(\Gamma_t)}\right\}\\
&= C_3\left\{\int_0^t(I(s)+I_{1,n}(s))ds+\int_0^t\Vert\varphi(s)\Vert^2_{L^2(\mathbb R)}ds\right\}\,.
\end{split}
\end{equation}
Concerning $\mathcal J_{2,3}$, we substitute $\partial_t\varphi$ with the right-hand side of \eqref{dtphi} to rewrite it as
\[
\mathcal J_{2,3}=-\int_0^t\!\!\!\int_{\mathbb R}\hat c\,\alpha\,(\dot u^+_N+\hat D_1\dot H^+_{N}+\hat D_2\dot H^-_N+\hat D_3\varphi)\, dx_2 ds\cong \int_0^t\!\!\!\int_{\mathbb R}\hat c\,\alpha\,\beta\,dx_2\,ds+\int_0^t\!\!\!\int_{\mathbb R}\hat c\,\alpha\,\varphi\,dx_2\,ds
\]
where the second type of boundary integral above is exactly the same as $\mathcal J_{2,2}$ and the first one is trivially estimated by using once again the trace type inequality, hence we get the same estimate \eqref{est_J22}
\begin{equation}\label{est_J23}
\begin{split}
\vert\mathcal J_{2,3}\vert\le C_3\left\{\int_0^t(I(s)+I_{1,n}(s))ds+\int_0^t\Vert\varphi(s)\Vert^2_{L^2(\mathbb R)}ds\right\}\,.
\end{split}
\end{equation}
Concerning, instead, the first spatial integral $\mathcal J_{2,1}$, the use of Cauchy--Schwarz and weighted Young's inequalities and the trace type inequality \eqref{trace_t} gives
\begin{equation}\label{est_J21}
\begin{split}
\vert\mathcal J_{2,1}\vert&\le \varepsilon\Vert\alpha(t)|_{x_1=0}\Vert_{L^2(\mathbb R)}^2+\frac{C_2}{\varepsilon}\Vert\varphi(t)\Vert^2_{L^2(\mathbb R)}\\
&\le\varepsilon\left\{\Vert\alpha(t)\Vert^2_{L^2(\mathbb R^2_+)}+\Vert\partial_1\alpha(t)\Vert^2_{L^2(\mathbb R^2_+)}\right\}+\frac{C_2}{\varepsilon}\Vert\varphi(t)\Vert^2_{L^2(\mathbb R)}\\
&\le\varepsilon\left\{I(t)+I_{1,n}(t)\right\}+\frac{C_2}{\varepsilon}\Vert\varphi(t)\Vert^2_{L^2(\mathbb R)}\,.
\end{split}
\end{equation}
Gathering \eqref{est_J21}, \eqref{est_J22} and \eqref{est_J23} we get
\begin{equation}\label{est_J2}
\begin{split}
\vert\mathcal J_{2}\vert\le &\varepsilon\left\{I(t)+I_{1,n}(t)\right\}+\frac{C_2}{\varepsilon}\Vert\varphi(t)\Vert^2_{L^2(\mathbb R)}\\
&+C_3\left\{\int_0^t(I(s)+I_{1,n}(s))ds+\int_0^t\Vert\varphi(s)\Vert^2_{L^2(\mathbb R)}ds\right\}\,.
\end{split}
\end{equation}
Summing the estimate of $\mathcal J_1$ and $\mathcal J_2$ above, we end up with the following estimate of the boundary integral of the quadratic form \eqref{boundary-q-form-dt}
\begin{equation}\label{stima_bdry_t}
\begin{split}
&\Big|\int_0^t\int_{\mathbb R}(\mathcal{B}_1{\mathbf V}_{s},{\mathbf V}_{s})|_{x_1=0}dx_2\,ds\Big|\cong\vert\mathcal J_1+\mathcal J_2\vert\\
&\quad\le\varepsilon\left\{I(t)+I_{1,n}(t)\right\}+\frac{C_2}{\varepsilon}\left\{\Vert{\mathbf F}\Vert_{L^2(\Omega_t)}^2+\Vert\varphi(t)\Vert_{L^2(\mathbb R)}^2\right\}\\
&\qquad+\frac{C_3}{\varepsilon}\left\{\int_0^t (I_{1,\ast}+I_{1,n})(s)ds+\int_0^t\Vert\varphi(s)\Vert^2_{L^2(\mathbb R)}ds\right\}\,.
\end{split}
\end{equation}
Using \eqref{est_0_term_t}, \eqref{est_Ft} and \eqref{stima_bdry_t} together with the counterpart of \eqref{est_f_b} with ${\mathbf V}_{t}$ instead of ${\mathbf V}$, from \eqref{en-id-It} we obtain
\begin{equation}\label{en-ineq-It}
\begin{split}
I_0(t)\le &\frac{C_3}{\varepsilon}\left\{\Vert{\mathbf F}\Vert^2_{L^2(\Omega_t)}+\Vert\partial_s{\mathbf F}\Vert_{L^2(\Omega_t)}^2+\int_0^t(I_{1,\ast}+I_{1,n})(s)ds+\int_0^t\Vert\varphi(s)\Vert_{L^2(\mathbb R)}^2 ds\right\}\\
&+\varepsilon\left\{I(t)+I_{1,n}(t)\right\}+\frac{C_2}{\varepsilon}\Vert\varphi(t)\Vert_{L^2(\mathbb R)}^2\,.
\end{split}
\end{equation}
%%%%%%%%%%%%%%%%%%%%%%%%%%%%%%%%%%%%%%%%%%%%%%%%%%%%%%%%%%%%%%%%%%%%%%%%%%
\subsection{Estimate of the front $\varphi$}\label{ front_sct} To estimate the $L^2-$norm of the front we multiply by $\varphi$ the first boundary condition in \eqref{BC} (we choose the $+$ side) and integrate by parts over $\Gamma_t$ to get
\begin{equation}\label{en-id-phi}
\int_{\mathbb R}\vert\varphi(t)\vert^2dx_2-\int_0^t\!\!\!\int_{\mathbb R}(\partial_2\hat u^+_2)\varphi^2\,dx_2\,ds-2\int_0^t\!\!\!\int_{\mathbb R}\dot u^+_N\varphi\,dx_2\,ds-2\int_0^t\!\!\!\int_{\mathbb R}(\partial_1\hat u^+_N)\varphi^2\,dx_2\,ds=0\,,
\end{equation}
from which, by Cauchy--Schwarz and Young's inequalities, and the trace type inequality \eqref{trace-in}
\begin{equation}\label{est-phi}
\begin{split}
\Vert\varphi(t)\Vert^2_{L^2(\mathbb R)}&\le C_1\int_0^t\Vert\varphi(s)\Vert^2_{L^2(\mathbb R)}\,ds+\Vert\dot u^+_N\vert_{x_1=0}\Vert^2_{L^2(\Gamma_t)}\\
&\le C_1\int_0^t\Vert\varphi(s)\Vert^2_{L^2(\mathbb R)}\,ds+\Vert\partial_1{\mathbf V}_n^+\Vert^2_{L^2(\Omega_t)}+\Vert{\mathbf V}^+\Vert^2_{L^2(\Omega_t)}\\
&\le C_1\int_0^t\Vert\varphi(s)\Vert^2_{L^2(\mathbb R)}\,ds+\int_0^t(I_{1,n}+I)(s)ds\,.
\end{split}
\end{equation}
Using the expressions of $\partial_2\varphi$ and $\partial_t\varphi$ in \eqref{d2phi} and \eqref{dtphi} we can recover an estimate of the $L^2-$norms of those derivatives from the estimate \eqref{est-phi} and \eqref{trace-in}. Precisely, integrating over $\mathbb R$  \eqref{d2phi} and \eqref{dtphi} and using \eqref{trace-in} we get
\begin{equation}\label{est_d2phi}
\begin{split}
\Vert\partial_2\varphi(t)\Vert^2_{L^2(\mathbb R)}&\le C_2\{\Vert{\mathbf V}_n(t)\vert_{x_1=0}\Vert^2_{L^2(\mathbb R)}+\Vert\varphi(t)\Vert^2_{L^2(\mathbb R)}\}\\
&\le C_2\{I(t)+I_{1,n}(t)+\Vert\varphi(t)\Vert^2_{L^2(\mathbb R)}\}\,,
\end{split}
\end{equation}
see \eqref{I0}, \eqref{I1n}, and analogously
\begin{equation}\label{est_dtphi}
\begin{split}
\Vert\partial_t\varphi(t)\Vert^2_{L^2(\mathbb R)}\le C_2\{I(t)+I_{1,n}(t)+\Vert\varphi(t)\Vert^2_{L^2(\mathbb R)}\}\,.
\end{split}
\end{equation}
%%%%%%%%%%%%%%%%%%%%%%%%%%%%%%%%%%%%%%%%%%%%%%%%%%%%%%%%%%%%%%%%%%%%%%%%
\subsection{$H^1_\ast-$estimate}\label{H1ast_sct}
We add estimates \eqref{en-ineq-I0}, \eqref{en-ineq-Is}, \eqref{en-ineq-I2}, \eqref{en-ineq-It} and \eqref{est-phi} to get
\[
\begin{split}
I_{1,\ast}(t)+\Vert\varphi(t)\Vert^2_{L^2(\mathbb R)}&=I(t)+I_0(t)+I_{\sigma}(t)+I_{2}(t)+\Vert\varphi(t)\Vert^2_{L^2(\mathbb R)}\\
&\le \frac{C_3}{\varepsilon}\int_0^t(I_{1,\ast}(s)+I_{1,n}(s)+\Vert\varphi(s)\Vert^2_{L^2(\mathbb R)})ds\\
&\quad+\varepsilon\left\{I(t)+I_{1,n}(t)\right\}+\frac{C_2}{\varepsilon}\Vert\varphi(t)\Vert_{L^2(\mathbb R)}^2\\
&\quad+\frac{C_3}{\varepsilon}\left\{\Vert{\mathbf F}\Vert^2_{L^2(\Omega_t)}+\Vert\sigma\partial_1{\mathbf F}\Vert_{L^2(\Omega_t)}^2+\Vert\partial_2{\mathbf F}\Vert_{L^2(\Omega_t)}^2+\Vert\partial_s{\mathbf F}\Vert_{L^2(\Omega_t)}^2\right\}\\
&\le \frac{C_3}{\varepsilon}\int_0^t(I_{1,\ast}(s)+I_{1,n}(s)+\Vert\varphi(s)\Vert^2_{L^2(\mathbb R)})ds\\
&\quad+\varepsilon\left\{I(t)+I_{1,n}(t)\right\}+\frac{C_2}{\varepsilon}\Vert\varphi(t)\Vert_{L^2(\mathbb R)}^2+\frac{C_3}{\varepsilon}\Vert{\mathbf F}\Vert^2_{H^1_\ast(\Omega_t)}\,.
\end{split}
\]
Then using \eqref{est-phi} to estimate $\Vert\varphi(t)\Vert_{L^2(\mathbb R)}^2$ in the right-hand side above, the previous inequality reduces to
\[
\begin{split}
I_{1,\ast}(t)+\Vert\varphi(t)\Vert^2_{L^2(\mathbb R)}\le &\frac{C_3}{\varepsilon}\int_0^t(I_{1,\ast}(s)+I_{1,n}(s)+\Vert\varphi(s)\Vert^2_{L^2(\mathbb R)})ds\\
&+\varepsilon\left\{I(t)+I_{1,n}(t)\right\}+\frac{C_3}{\varepsilon}\Vert{\mathbf F}\Vert^2_{H^1_\ast(\Omega_t)}\,.
\end{split}
\]
Then we use \eqref{en-ineq-I1n} to get a control of the normal derivatives of ${\mathbf V}^\pm_n$ involved in the term $I_{1,n}$ in the right-hand side above to get
\[
\begin{split}
I_{1,\ast}(t)+\Vert\varphi(t)\Vert^2_{L^2(\mathbb R)}\le &\frac{C_3}{\varepsilon}\int_0^t(I_{1,\ast}(s)+\Vert\varphi(s)\Vert^2_{L^2(\mathbb R)})ds\\
&+C_1\varepsilon\left\{I_{1,\ast}(t)+\Vert{\mathbf F}(t)\Vert^2_{L^2(\mathbb R^2_+)}\right\}+\frac{C_3}{\varepsilon}\Vert{\mathbf F}\Vert^2_{H^1_\ast(\Omega_t)}\,.
\end{split}
\]
In order to estimate the spatial $L^2-$norm of the source ${\mathbf F}(t)$ in the right-hand side above we use the following argument
\[
\begin{split}
\Vert{\mathbf F}(t)\Vert^2_{L^2(\mathbb R^2_+)}&=\int_{\mathbb R^2_+}\vert {\mathbf F}(t)\vert^2 d{\bf x}=\int_{\mathbb R^2_+}\int_0^t\partial_s(\vert{\mathbf F}(s)\vert^2)ds\,d{\bf x}=\int_{\mathbb R^2_+}\int_0^t2{\mathbf F}(s)\partial_s{\mathbf F}(s)\,ds\,d{\bf x}\\
&\le\int_{\mathbb R^2_+}\int_0^t\vert{\mathbf F}(s)\vert^2\,ds\,d{\bf x}+\int_{\mathbb R^2_+}\int_0^t\vert\partial_s{\mathbf F}(s)\vert^2\,ds\,d{\bf x}=\Vert{\mathbf F}\Vert^2_{L^2(\Omega_t)}+\Vert\partial_s{\mathbf F}\Vert^2_{L^2(\Omega_t)}\,,
\end{split}
\]
so that
\[
\begin{split}
I_{1,\ast}(t)+\Vert\varphi(t)\Vert^2_{L^2(\mathbb R)}\le &\frac{C_3}{\varepsilon}\int_0^t(I_{1,\ast}(s)+\Vert\varphi(s)\Vert^2_{L^2(\mathbb R)})ds+C_1\varepsilon I_{1,\ast}(t)+\frac{C_3}{\varepsilon}\Vert{\mathbf F}\Vert^2_{H^1_\ast(\Omega_t)}\,,
\end{split}
\]
hence taking $\varepsilon>0$ small enough in order to absorb $C_1\varepsilon I_{1,\ast}(t)$ in the left-hand side above, we end up with
\[
\begin{split}
I_{1,\ast}(t)+\Vert\varphi(t)\Vert^2_{L^2(\mathbb R)}\le & C_3\int_0^t(I_{1,\ast}(s)+\Vert\varphi(s)\Vert^2_{L^2(\mathbb R)})ds+C_3\Vert{\mathbf F}\Vert^2_{H^1_\ast(\Omega_t)}\,.
\end{split}
\]
Applying Gr\"onwall's lemma, we obtain
\[
I_{1,\ast}(t)+\Vert\varphi(t)\Vert^2_{L^2(\mathbb R)}\le C_3\Vert{\mathbf F}\Vert^2_{H^1_\ast(\Omega_t)}e^{C_3t}
\]
and integrating on $t$ over $(0,T)$
\begin{equation}\label{stima_H1ast+L2phi}
\Vert{\mathbf V}\Vert^2_{1,\ast,T}+\Vert\varphi\Vert^2_{L^2(\Gamma_T)}\le C\Vert{\mathbf F}\Vert^2_{H^1_\ast(\Omega_T)}\,,
\end{equation}
where $C$ is a positive constant depending only on $T$ and $K$ from \eqref{space}.
\newline
From the use of estimates \eqref{est_d2phi}, \eqref{est_dtphi} we can manage to include the $L^2(\Gamma_T)-$norms of $\nabla_{t,x_2}\varphi$ in the left-hand side of \eqref{stima_H1ast+L2phi}, so as to get a control of the $H^1(\Gamma_T)-$norm of the front $\varphi$.
\newline
From integration of \eqref{est_d2phi}, \eqref{est_dtphi} with respect to $t$ over $(0,T)$ and the use of \eqref{en-ineq-I1n} we get
\[
\begin{split}
\Vert\nabla_{t,x_2}\varphi\Vert^2_{L^2(\Gamma_T)}&\le C_2\left\{\int_0^T(I(t)+I_{1,n}(t))dt+\int_0^T\Vert\varphi(t)\Vert^2_{L^2(\mathbb R)}dt\right\}\\
&\le C_2\left\{\int_0^T I_{1,\ast}(t)\,dt+\int_0^T\Vert{\mathbf F}(t)\Vert^2_{L^2(\mathbb R^2_+)}dt+\int_0^T\Vert\varphi(t)\Vert^2_{L^2(\mathbb R)}dt\right\}\\
&\le C_2\left\{\Vert{\mathbf V}\Vert^2_{1,\ast,T}+\Vert{\mathbf F}\Vert^2_{L^2(\Omega_T)}+\Vert\varphi\Vert^2_{L^2(\Gamma_T)}\right\}\,,
\end{split}
\]
where, in the last inequality, it is used that
$$
\int_0^T I_{1,\ast}(t)\,dt\cong\Vert{\mathbf V}\Vert^2_{1,\ast,T}\,.
$$
Using \eqref{stima_H1ast+L2phi} to estimate the right-hand side above, we end up with
\begin{equation}\label{stima_L2nablaphi}
\begin{split}
\Vert\nabla_{t,x_2}\varphi\Vert^2_{L^2(\Gamma_T)}\le C\Vert{\mathbf F}\Vert^2_{H^1_\ast(\Omega_T)}\,.
\end{split}
\end{equation}
Then adding the latter to \eqref{stima_H1ast+L2phi} we get
\begin{equation}\label{stima_H1ast+H1phi}
\Vert{\mathbf V}\Vert^2_{1,\ast,T}+\Vert\varphi\Vert^2_{H^1(\Gamma_T)}\le C\Vert{\mathbf F}\Vert^2_{H^1_\ast(\Omega_T)}\,,
\end{equation}
which is just the estimate \eqref{est-wp-hom}, in view of \eqref{V} (recalling that to shortcut notation we have set $\dot{\mathbf U}=\dot{\mathbf U}^\natural$ and $\dot{\mathbf U}=J{\mathbf V}$, see \eqref{V}).

%%%%%%%%%%%%%%%%%%%%%%%%%%%%%%%%%%%%%%%%%%%%%%%%%%%%%%%%%%%5da cancellare%%%%%%%%%%%%%%%%%%%%%%%%
%%%%%%%%%%%%%%%%%%%%%%%%%%%%%%%%%%%%%%%%%%%%%%%%%%%%%%%%%%%%%%%%%%%%%%%%%%%%%%%%%%%%%%%%%%%%%

%%%%%%%%%%
%

\bigskip

\section*{Acknowledgments}

The authors are grateful to the anonymous referee for her/his comments and suggestions, that have contributed to improve this paper.

The research of A. Morando, P. Secchi, P. Trebeschi was supported in part by the Italian MUR Project PRIN prot. 20204NT8W4.
 D. Yuan was supported by NSFC Grant No.12001045 and China Postdoctoral Science Foundation No.2020M680428, No.2021T140063. D. Yuan thanks the Department of Mathematics of the University of Brescia for its kind hospitality.

%%%%%%%%%%
%
\section*{Statements and Declarations}
This research does not have any associated data.

%%%%%%%%%%%%%%%%%%%%%%%%%%%%%%%%%%%%%%%%%%%%%%
\bigskip

\end{document}